\numberwithin{equation}{section}
\numberwithin{figure}{section}
\theoremstyle{plain}
\newtheorem{thm}{\protect\theoremname}[section]
\theoremstyle{plain}
\newtheorem{conjecture}[thm]{\protect\conjecturename}
\theoremstyle{remark}
\newtheorem{rem}[thm]{\protect\remarkname}
\theoremstyle{plain}
\newtheorem{lem}[thm]{\protect\lemmaname}
\theoremstyle{definition}
\newtheorem{defn}[thm]{\protect\definitionname}
\theoremstyle{definition}
\newtheorem{example}[thm]{\protect\examplename}
\theoremstyle{plain}
\newtheorem{prop}[thm]{\protect\propositionname}
\theoremstyle{remark}
\newtheorem{notation}[thm]{\protect\notationname}
\theoremstyle{plain}
\newtheorem{cor}[thm]{\protect\corollaryname}
\theoremstyle{remark}
\newtheorem*{rem*}{\protect\remarkname}
\renewcommand{\marginpar}[2][]{}
\providecommand{\conjecturename}{Conjecture}
\providecommand{\corollaryname}{Corollary}
\providecommand{\definitionname}{Definition}
\providecommand{\examplename}{Example}
\providecommand{\lemmaname}{Lemma}
\providecommand{\notationname}{Notation}
\providecommand{\propositionname}{Proposition}
\providecommand{\remarkname}{Remark}
\providecommand{\theoremname}{Theorem}
\begin{document}
\global\long\def\F{\mathrm{\mathbf{F}} }%
\global\long\def\Aut{\mathrm{Aut}}%
\global\long\def\C{\mathbf{C}}%
\global\long\def\H{\mathcal{H}}%
\global\long\def\U{\mathcal{U}}%
\global\long\def\ext{\mathrm{ext}}%
\global\long\def\hull{\mathrm{hull}}%
\global\long\def\triv{\mathrm{triv}}%
\global\long\def\Hom{\mathrm{Hom}}%

\global\long\def\trace{\mathrm{tr}}%
\global\long\def\End{\mathrm{End}}%

\global\long\def\L{\mathcal{L}}%
\global\long\def\W{\mathcal{W}}%
\global\long\def\E{\mathbb{E}}%
\global\long\def\SL{\mathrm{SL}}%
\global\long\def\R{\mathbf{R}}%
\global\long\def\Pairs{\mathrm{PowerPairs}}%
\global\long\def\Z{\mathbf{Z}}%
\global\long\def\rs{\to}%
\global\long\def\A{\mathcal{A}}%
\global\long\def\a{\mathbf{a}}%
\global\long\def\rsa{\rightsquigarrow}%

\global\long\def\b{\mathbf{b}}%
\global\long\def\df{\mathrm{def}}%
\global\long\def\eqdf{\stackrel{\df}{=}}%
\global\long\def\ZZ{\overline{Z}}%
\global\long\def\Tr{\mathrm{Tr}}%
\global\long\def\N{\mathbf{N}}%
\global\long\def\std{\mathrm{std}}%
\global\long\def\HS{\mathrm{H.S.}}%
\global\long\def\spec{\mathrm{spec}}%
\global\long\def\Ind{\mathrm{Ind}}%
\global\long\def\half{\frac{1}{2}}%
\global\long\def\Re{\mathrm{Re}}%
\global\long\def\Im{\mathrm{Im}}%
\global\long\def\Rect{\mathrm{Rect}}%
\global\long\def\Crit{\mathrm{Crit}}%
\global\long\def\Stab{\mathrm{Stab}}%
\global\long\def\SL{\mathrm{SL}}%
\global\long\def\Tab{\mathrm{Tab}}%
\global\long\def\Cont{\mathrm{Cont}}%
\global\long\def\I{\mathcal{I}}%
\global\long\def\J{\mathcal{J}}%
\global\long\def\short{\mathrm{short}}%
\global\long\def\Id{\mathrm{Id}}%
\global\long\def\B{\mathcal{B}}%
\global\long\def\ax{\mathrm{ax}}%
\global\long\def\cox{\mathrm{cox}}%
\global\long\def\row{\mathrm{row}}%
\global\long\def\col{\mathrm{col}}%
\global\long\def\X{\mathbb{X}}%
\global\long\def\Fat{\mathsf{Fat}}%

\global\long\def\V{\mathcal{V}}%
\global\long\def\P{\mathbb{P}}%
\global\long\def\Fill{\mathsf{Fill}}%
\global\long\def\fix{\mathsf{fix}}%
\global\long\def\reg{\mathrm{reg}}%
\global\long\def\edge{E}%
\global\long\def\id{\mathrm{id}}%
\global\long\def\emb{\mathrm{emb}}%

\global\long\def\Hom{\mathrm{Hom}}%
 
\global\long\def\F{\mathrm{\mathbf{F}} }%
  
\global\long\def\pr{\mathrm{Prob} }%
 
\global\long\def\tr{{\cal T}r }%
\global\long\def\gs{\mathsf{GS}}%
 
\global\long\def\Xcov{{\scriptscriptstyle \overset{\twoheadrightarrow}{\mathcal{\gs}}}}%
 
\global\long\def\covers{\leq_{\Xcov}}%
\global\long\def\core{\mathrm{Core}}%
\global\long\def\pcore{\mathrm{PCore}}%
\global\long\def\im{\mathrm{imm}}%
\global\long\def\br{\mathsf{BR}}%
\global\long\def\ebs{\mathsf{EBS}}%
\global\long\def\ev{\mathrm{ev}}%
\global\long\def\CC{\mathcal{C}}%
\global\long\def\sides{\mathrm{Sides}}%
\global\long\def\tp{\mathrm{top}}%
\global\long\def\lf{\mathrm{left}}%
\global\long\def\MCG{\mathrm{MCG}}%

\global\long\def\defect{\mathrm{Defect}}%
\global\long\def\M{\mathcal{M}}%
\global\long\def\O{\mathcal{O}}%
\global\long\def\EE{\mathcal{E}}%
\global\long\def\sign{\mathrm{sign}}%

\global\long\def\v{\mathfrak{v}}%
\global\long\def\e{\mathfrak{e}}%
\global\long\def\f{\mathfrak{f}}%
\global\long\def\D{\mathfrak{D}}%
\global\long\def\d{\mathfrak{d}}%
\global\long\def\he{\mathfrak{he}}%
\global\long\def\OVB{\mathsf{OvB}}%
\global\long\def\G{\mathcal{G}}%
 
\global\long\def\ts{\widetilde{\Sigma_{2}}}%

\title{A random cover of a compact hyperbolic surface has relative spectral
gap $\frac{3}{16}-\varepsilon$}
\author{Michael Magee, Frédéric Naud, Doron Puder}
\maketitle
\begin{abstract}
Let $X$ be a compact connected hyperbolic surface, that is, a closed
connected orientable smooth surface with a Riemannian metric of constant
curvature -1. For each $n\in\N$, let $X_{n}$ be a random degree-$n$
cover of $X$ sampled uniformly from all degree-$n$ Riemannian covering
spaces of $X$. An eigenvalue of $X$ or $X_{n}$ is an eigenvalue
of the associated Laplacian operator $\Delta_{X}$ or $\Delta_{X_{n}}$.
We say that an eigenvalue of $X_{n}$ is \emph{new }if it occurs with
greater multiplicity than in $X$. We prove that for any $\varepsilon>0$,
with probability tending to 1 as $n\to\infty$, there are no new eigenvalues
of $X_{n}$ below $\frac{3}{16}-\varepsilon$. We conjecture that
the same result holds with $\frac{3}{16}$ replaced by $\frac{1}{4}$.
\end{abstract}
\tableofcontents{}

\section{Introduction\label{sec:Introduction}}

Spectral gap is a fundamental concept in mathematics and related sciences
as it governs the rate at which a process converges towards its stationary
state. The question that motivates this paper is whether random objects
have large, or even optimal, spectral gaps. This will be made precise
below.

One of the simplest examples of spectral gap is the spectral gap of
a \emph{graph. }The spectrum of a graph $\mathcal{G}$ on $n$ vertices
is the collection of eigenvalues of its adjacency matrix $A_{\mathcal{G}}$.
Assuming that $\mathcal{G}$ is $d$-regular, the largest eigenvalue
occurs at $d$ and is simple if and only if $\mathcal{G}$ is connected.
This means, writing 
\[
\lambda_{0}=d\geq\lambda_{1}\geq\lambda_{2}\geq\cdots\geq\lambda_{n-1}
\]
for the eigenvalues of $A_{\mathcal{G}}$, then there is a \emph{spectral
gap} between $\lambda_{0}$ and $\lambda_{1}$ (i.e.~$\lambda_{0}>\lambda_{1}$)
if and only if $\mathcal{G}$ is connected. In fact, the Cheeger inequalities
for graphs due to Alon and Milman \cite{AlonMilman} show that the
size of the spectral gap (i.e.~$\lambda_{0}-\lambda_{1}$) quantifies
how difficult it is, roughly speaking, to separate the vertices of
$\mathcal{G}$ into two sets, each not too small, with few edges between
them. This is in tension with the fact that a $d$-regular graph is
\emph{sparse. }Sparse yet highly-connected graphs are called \emph{expander
graphs }and are relevant to many real-world examples\footnote{Following Barzdin and Kolmogorov \cite{Barzdin1993}, consider the
network of neurons in a human brain\@.}.

However, a result of Alon and Boppana \cite{Nilli} puts a sharp bound
on what one can achieve: for a sequence of $d$-regular graphs $\mathcal{G}_{n}$
on $n$ vertices, as $n\to\infty$, $\lambda_{1}(\mathcal{G}_{n})\geq2\sqrt{d-1}-o(1)$.
The trivial eigenvalues of a graph occur at $d$, and if $\mathcal{G}$
has a bipartite component, at $-d$. A connected $d$-regular graph
with all its non-trivial eigenvalues in the interval $[-2\sqrt{d-1},2\sqrt{d-1]}$
is called a \emph{Ramanujan graph }after Lubotzky, Phillips, and Sarnak
\cite{LPS}. 

\emph{In the rest of the paper, if an event depending on a parameter
$n$ holds with probability tending to 1 as $n\to\infty$, then we
say it holds asymptotically almost surely (a.a.s.).} A famous conjecture
of Alon \cite{Alon}, now a theorem due to Friedman \cite{Friedman},
states that for any $\varepsilon>0$, a.a.s.~a random $d$-regular
graph on $n$ vertices, chosen uniformly amongst such graphs, has
all its non-trivial eigenvalues bounded in absolute value by $2\sqrt{d-1}+\varepsilon$.
In other words, almost all $d$-regular graphs have almost optimal
spectral gaps. In \cite{bordenave2015new}, Bordenave has given a
shorter proof of Friedman's theorem. A first result about uniform
spectral gap for random regular graphs is due to Broder and Shamir
\cite{BroderShamir} who proved a.a.s.~$\lambda_{1}\leq3d^{3/4}$.
The approach in the current paper is similar to the direct trace method
introduced by Broder-Shamir, which was subsequently improved by Puder
and Friedman-Puder \cite{PUDER,friedman2020note} to show a.a.s.~that
$\lambda_{1}\leq2\sqrt{d-1}+\frac{2}{\sqrt{d-1}}$.

Friedman conjectured in \cite{FriedmanRelExp} that the following
extension of Alon's conjecture holds. Given any finite graph $\mathcal{G}$
there is a notion of a degree-$n$ cover\footnote{The precise definition of a cover of a graph is not important here;
only that it is analogous to a covering space of a surface.} $\mathcal{G}_{n}$ of the graph. Elements of the spectrum\footnote{We also take multiplicities into account in this statement.}
of $\mathcal{G}_{n}$ that are not elements of the spectrum of $\mathcal{G}$
are called \emph{new eigenvalues} of $\mathcal{G}_{n}$. Friedman
conjectured that for a fixed finite graph $\mathcal{G}$, for any
$\varepsilon>0$ a random degree-$n$ cover of $\mathcal{G}$ a.a.s.~has
no new eigenvalues of absolute value larger than $\rho(\mathcal{G})+\varepsilon$,
where $\rho(\mathcal{G})$ is the spectral radius of the adjacency
operator of the universal cover of $\mathcal{G}$, acting on $\ell^{2}$
functions\@. For $d$ even, the special case where $\G$ is a bouquet
of $\frac{d}{2}$ loops recovers Alon's conjecture. Friedman's conjecture
has recently been proved in a breakthrough by Bordenave and Collins
\cite{BordenaveCollins}.

\emph{The focus of this paper is the extension of Alon's and Friedman's
conjectures to compact hyperbolic surfaces. }

A \emph{hyperbolic surface} is a Riemannian surface of constant curvature
$-1$ without boundary. In this paper, all surfaces will be orientable.
By uniformization \cite[\S 9.2]{Beardon}, a connected compact hyperbolic
surface can be realized as $\Gamma\backslash\mathbb{H}$ where $\Gamma$
is a discrete subgroup of $\mathrm{PSL_{2}(\R)}$ and 
\begin{align*}
\mathbb{H} & =\{\,x+iy\,:\,x,y\in\R,\,y>0\,\}
\end{align*}
is the hyperbolic upper half plane, upon which $\mathrm{PSL_{2}(\R)}$
acts via Möbius transformations preserving the hyperbolic metric 
\[
\frac{dx^{2}+dy^{2}}{y^{2}}.
\]
Let $X=\Gamma\backslash\mathbb{H}$ be a connected compact hyperbolic
surface. Topologically, $X$ is a connected closed surface of some
genus $g\geq2$.

Since the Laplacian $\Delta_{\mathbb{H}}$ on $\mathbb{H}$ is invariant
under $\mathrm{PSL}_{2}(\R)$, it descends to a differential operator
on $C^{\infty}(X)$ and extends to a non-negative, unbounded, self-adjoint
operator $\Delta_{X}$ on $L^{2}(X)$. The spectrum of $\Delta_{X}$
consists of real eigenvalues
\[
0=\lambda_{0}(X)\leq\lambda_{1}(X)\leq\cdots\leq\lambda_{n}(X)\leq\cdots
\]
with $\lambda_{i}\to\infty$ as $i\to\infty$. The same discussion
also applies if we drop the condition that $X$ is connected\footnote{In which case $X$ is a finite union of connected compact hyperbolic
surfaces, each of which can be realized as a quotient of $\mathbb{H}$.}. We have $\lambda_{0}(X)<\lambda_{1}(X)$ if and only if $X$ is
connected, as for graphs\@. With Friedman's conjecture in mind, we
also note that the spectrum of $\Delta_{\mathbb{H}}$ is absolutely
continuous and supported on the interval $[\frac{1}{4},\infty)$ (e.g.~\cite[Thm. 4.3]{Borthwick}).
There is also an analog of the Alon-Boppana Theorem in this setting:
a result of Huber \cite{Huber} states that for any sequence of compact
hyperbolic surfaces $X_{i}$ with genera $g(X_{i})$ tending to infinity,
\[
\limsup_{i\to\infty}\lambda_{1}(X_{i})\leq\frac{1}{4}.
\]

To state an analog of the Alon/Friedman conjecture for surfaces, we
need a notion of a \emph{random cover. }Suppose $X$ is a compact
connected hyperbolic surface, and suppose $\tilde{X}$ is a degree-$n$
Riemannian cover of $X$. Fix a point $x_{0}\in X$ and label the
fiber above it by $[n]\eqdf\{1,\ldots,n\}$. There is a monodromy
map
\[
\pi_{1}(X,x_{0})\to S_{n}
\]
that describes how the fiber of $x_{0}$ is permuted when following
lifts of a closed loop from $X$ to $\tilde{X}$. Here $S_{n}$ is
the symmetric group of permutations of the set $[n]$. The cover $\tilde{X}$
is uniquely determined by the monodromy homomorphism. Let $g$ denote
the genus of $X$. We fix an isomorphism
\begin{equation}
\pi_{1}(X,x_{0})\cong\Gamma_{g}\eqdf\left\langle a_{1},b_{1},a_{2},b_{2},\ldots,a_{g},b_{g}\,\middle|\,\left[a_{1},b_{1}\right]\cdots\left[a_{g},b_{g}\right]=1\right\rangle .\label{eq:fundamental-group-isomorphism}
\end{equation}
Now, given any
\[
\phi\in\X_{g,n}\eqdf\Hom(\Gamma_{g},S_{n})
\]
we can construct a cover of $X$ whose monodromy map is $\phi$ as
follows. Using the fixed isomorphism of (\ref{eq:fundamental-group-isomorphism}),
we have a free properly discontinuous action of $\Gamma_{g}$ on $\mathbb{H}$
by isometries. Define a new action of $\Gamma_{g}$ on $\mathbb{H}\times[n]$
by
\[
\gamma(z,i)=(\gamma z,\phi[\gamma](i)).
\]
The quotient of $\mathbb{H}\times[n]$ by this action is named $X_{\phi}$
and is a hyperbolic cover of $X$ with monodromy $\phi$. This construction
establishes a one-to-one correspondence between $\phi\in\X_{g,n}$
and degree-$n$ covers with a labeled fiber $X_{\phi}$ of $X$. See
also Example \ref{exa:x_phi}.

As for graphs, any eigenvalue of $\Delta_{X}$ will also be an eigenvalue
of $\Delta_{X_{\phi}}$: every eigenfunction of $\Delta_{X}$ can
be pulled back to an eigenfunction of $\Delta_{X_{\phi}}$ with the
same eigenvalue. We say that an eigenvalue of $\Delta_{X_{\phi}}$
is \emph{new }if it is not one of $\Delta_{X},$ or more generally,
appears with greater multiplicity in $X_{\phi}$. To pick a random
cover of $X$, we simply use the uniform probability measure on the
finite set $\X_{g,n}$. Recall we say an event that pertains to any
$n$ holds a.a.s.~if it holds with probability tending to one as
$n\to\infty$. The analog of Friedman's conjecture for surfaces is
the following.
\begin{conjecture}
\label{conj:Alon-friedman-for-surfaces}Let $X$ be a compact connected
hyperbolic surface. Then for any $\varepsilon>0$, a.a.s. 
\[
\spec\left(\Delta_{X_{\phi}}\right)\cap\left[0,\frac{1}{4}-\varepsilon\right]=\spec\left(\Delta_{X}\right)\cap\left[0,\frac{1}{4}-\varepsilon\right]
\]
and the multiplicities on both sides are the same. 
\end{conjecture}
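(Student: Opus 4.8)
\noindent\textbf{Proof strategy for Conjecture~\ref{conj:Alon-friedman-for-surfaces}.}
The plan is to prove a surface-group analogue of the Bordenave--Collins strong-convergence theorem and transfer it through the unitary representation theory of $\mathrm{PSL}_{2}(\R)$. First I would recast the statement spectrally. Write $\mathbb{C}^{n}=\mathbb{C}\mathbf{1}\oplus(\mathbb{C}^{n})_{0}$ for the splitting of the permutation module into the trivial line and the standard $(n-1)$-dimensional piece, and let $\rho_{\phi}\colon\Gamma_{g}\to\mathrm{U}\big((\mathbb{C}^{n})_{0}\big)$ be the restriction of $\phi\in\X_{g,n}$. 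Then
\[
\Ind_{\Gamma_{g}}^{\mathrm{PSL}_{2}(\R)}(\phi)\;\cong\;\Ind_{\Gamma_{g}}^{\mathrm{PSL}_{2}(\R)}(\triv)\;\oplus\;\Ind_{\Gamma_{g}}^{\mathrm{PSL}_{2}(\R)}(\rho_{\phi}),
\]
where the first summand accounts for $L^{2}(X)$ (all eigenvalues of $\Delta_{X}$) and the second, call it $V_{\phi}$, carries exactly the new spectrum of $\Delta_{X_{\phi}}$ (here $\Delta$ is the Casimir, a scalar on each irreducible constituent). A new eigenvalue in $[0,\tfrac14-\varepsilon]$ is precisely a complementary-series constituent of $V_{\phi}$ with parameter bounded away from the tempered point $s=\tfrac12$. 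Since the regular representation $\lambda_{\mathrm{PSL}_{2}(\R)}$ has Casimir spectrum exactly $[\tfrac14,\infty)$, it therefore suffices to show that, a.a.s., $V_{\phi}$ is weakly contained in $\lambda_{\mathrm{PSL}_{2}(\R)}$ up to an $\varepsilon$-error in Casimir spectrum.

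The engine is strong convergence of $\rho_{\phi}$ to the left-regular representation $\lambda_{\Gamma_{g}}$ of $\Gamma_{g}$: for every self-adjoint $Q\in M_{d}(\mathbb{C})\otimes\mathbb{C}[\Gamma_{g}]$ and every $\varepsilon>0$, a.a.s.
\[
\spec\big((\id_{d}\otimes\rho_{\phi})(Q)\big)\;\subseteq\;\spec\big((\id_{d}\otimes\lambda_{\Gamma_{g}})(Q)\big)+[-\varepsilon,\varepsilon].
\]
The weak-$*$ convergence $\tfrac{1}{n-1}\trace\,\rho_{\phi}(\gamma)=\tfrac{1}{n-1}\big(\fix_{\phi}(\gamma)-1\big)\to[\gamma=e]$ already follows from the Magee--Puder expansion of $\mathbb{E}_{\phi}[\fix_{\phi}(\gamma)]$ together with concentration; the work is upgrading it to the spectral statement. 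Via the Haagerup--Thorbj{\o}rnsen linearization and the moment method, one is reduced to controlling $\mathbb{E}_{\phi}\big[\trace\big((\id\otimes\rho_{\phi})(Q)^{m}\big)\big]$ for growing $m$, which after expanding into words is a weighted sum of the quantities $\mathbb{E}_{\phi\in\X_{g,n}}[\fix_{\phi}(\gamma)]$ over $\gamma\in\Gamma_{g}$ of controlled length. The Magee--Puder expansion of $\mathbb{E}_{\phi}[\fix_{\phi}(\gamma)]$ in powers of $1/n$ --- whose leading term is governed by the Euler characteristics (the surface-group analogue of primitivity rank) of the subgroups of $\Gamma_{g}$ through which $\gamma$ factors --- is exactly what reproduces, to leading order, the free-probability value $\tau\big((\id\otimes\lambda_{\Gamma_{g}})(Q)^{m}\big)$; the point is to make this expansion uniform, with coefficients and $1/n$-tails growing at most polynomially in $m$ and in the word length, so that after linearization and a concentration/smoothing step the spectra genuinely collapse onto those of $\lambda_{\Gamma_{g}}$.

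Granted strong convergence, the transfer to surfaces is the spherical calculus. Choose $h$ on $[0,\infty)$ with $h\ge c>0$ on $[0,\tfrac14-\varepsilon]$ and $\sup_{\lambda\ge1/4}|h(\lambda)|$ as small as we like; its inverse Selberg--Harish-Chandra transform $k_{h}$ is a radial $L^{1}$ kernel on $\mathbb{H}$ with rapid decay, and the point-pair-invariant operator $L_{k_{h}}$ acts by the scalar $h(\lambda)$ on every $\Delta_{\mathbb{H}}$-eigenfunction of eigenvalue $\lambda$. A new $L^{2}$-eigenfunction of $\Delta_{X_{\phi}}$ with eigenvalue $\lambda<\tfrac14-\varepsilon$, lifted to $\mathbb{H}\times[n]$ and hence bounded, is then an eigenvector of the descended operator $(L_{k_{h}})_{X_{\phi}}|_{V_{\phi}}$ with eigenvalue $h(\lambda)$, so $\big\|(L_{k_{h}})_{X_{\phi}}|_{V_{\phi}}\big\|\ge c$. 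On the other hand, fixing a fundamental domain $F$ for $\Gamma_{g}$ in $\mathbb{H}$ identifies $(L_{k_{h}})_{X_{\phi}}|_{V_{\phi}}$ with $\sum_{\gamma\in\Gamma_{g}}A_{\gamma}\otimes\rho_{\phi}(\gamma)$ on $L^{2}(F)\otimes(\mathbb{C}^{n})_{0}$, where $A_{\gamma}\in B\big(L^{2}(F)\big)$ has kernel $k_{h}(z,\gamma w)$; the same sum with $\rho_{\phi}$ replaced by $\lambda_{\Gamma_{g}}$ is just $L_{k_{h}}$ on $L^{2}(\mathbb{H})$, of norm $\sup_{\lambda\ge1/4}|h(\lambda)|$. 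Truncating the rapidly decaying $k_{h}$ to a large ball and approximating the $A_{\gamma}$ by finite-rank operators reduces the comparison to finitely many terms, and strong convergence (for $M_{d}(\mathbb{C})\otimes\mathbb{C}[\Gamma_{g}]$, extended to compact coefficients by approximation) then gives, a.a.s., $\big\|(L_{k_{h}})_{X_{\phi}}|_{V_{\phi}}\big\|\le\sup_{\lambda\ge1/4}|h(\lambda)|+\delta<c$ --- a contradiction. Letting $\varepsilon\to0$ yields Conjecture~\ref{conj:Alon-friedman-for-surfaces}. (Minor care is needed with the unboundedness of $\Delta$, handled by taking $h$ with enough decay that $k_{h}$ and its $\Gamma_{g}$-tails are genuinely summable, and with the a.a.s.\ absence of almost-invariant vectors for $\rho_{\phi}$, itself a consequence of strong convergence.)

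The main obstacle is the uniform combinatorial input of the second step: extending the Magee--Puder expansion of $\mathbb{E}_{\phi}[\fix_{\phi}(\gamma)]$ to an estimate with explicit polynomial dependence on word length and on the number of letters, valid for all words simultaneously, and showing that the subleading terms do not accumulate after linearization. The single long relator $[a_{1},b_{1}]\cdots[a_{g},b_{g}]$ makes the relevant subgroup geometry --- algebraic extensions in $\Gamma_{g}$, the surface analogue of primitivity rank --- substantially more delicate than the free-group combinatorics underpinning the Bordenave--Collins theorem, and this is precisely what currently restricts unconditional results to $\tfrac{3}{16}-\varepsilon$. A first-moment version of the argument, using only $\mathbb{E}_{\phi}[\fix_{\phi}(\gamma)]$ inside the Selberg trace formula as in the body of this paper, recovers only the halfway spectral parameter $s=\tfrac34$, that is $\lambda\ge\tfrac{3}{16}$; reaching the tempered parameter $s=\tfrac12$, i.e.\ $\lambda\ge\tfrac14-\varepsilon$, genuinely requires the high-moment strong-convergence input.
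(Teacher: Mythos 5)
The statement you are addressing is Conjecture \ref{conj:Alon-friedman-for-surfaces}, which the paper explicitly leaves open: the paper proves only the weaker bound $\frac{3}{16}-\varepsilon$ (Theorem \ref{thm:three-sixteenths}), via a first-moment use of $\E_{g,n}[\fix_{\gamma}]$ in the Selberg trace formula. So there is no proof in the paper to compare against, and your submission is, by your own framing, a strategy rather than a proof.

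As a strategy it is sensible and well-informed, and the transfer step is essentially correct: the decomposition $\Ind(\phi)\cong\Ind(\triv)\oplus\Ind(\rho_{\phi})$, the identification of new eigenvalues below $\frac14$ with complementary-series constituents of $V_{\phi}$, and the passage from strong convergence of $\rho_{\phi}$ to a norm bound on the point-pair-invariant operator $(L_{k_h})_{X_\phi}|_{V_\phi}$ are all standard once one has the operator-norm input. But the entire mathematical content of the conjecture is concentrated in the step you state as "the engine": strong convergence of $(\id_d\otimes\rho_{\phi})(Q)$ to $(\id_d\otimes\lambda_{\Gamma_g})(Q)$ in operator norm, uniformly over self-adjoint $Q$ with matrix coefficients, with error terms controlled for moments of degree $m$ growing with $n$. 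You do not prove this, and nothing in the present paper comes close to it: Theorem \ref{thm:effective-error} is a first-moment estimate for a single word $\gamma$ of length $O(\log n)$ with a $(\log n)^A/n$ error, whereas the moment method after Haagerup--Thorbj{\o}rnsen linearization requires summing such estimates over exponentially many words with coefficients, and requires the subleading terms in the $1/n$-expansion (governed by Euler characteristics of quotients, as in $\S$\ref{subsec:UnderstandingX*_n}) to be uniformly controlled to all orders, not just to order $1/n$. Asserting that "the point is to make this expansion uniform" names the obstacle but does not overcome it; as written, the proposal reduces one open problem to another of at least comparable depth. Your closing diagnosis — that the first-moment version recovers only $s=\frac34$, i.e.\ $\frac{3}{16}$, and that reaching $\frac14-\varepsilon$ genuinely requires the high-moment input — is accurate and matches the paper's own discussion, but it also makes clear that the conjecture remains unproven by your argument.
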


\begin{rem}
The analog of Conjecture \ref{conj:Alon-friedman-for-surfaces} for
finite area non-compact surfaces appeared previously in the work of
Golubev and Kamber \cite[Conj. 1.6(1)]{GolubevKamber}.
\end{rem}

\begin{rem}
We have explained the number $\frac{1}{4}$ in terms of the spectrum
of the Laplacian on the hyperbolic plane and as an asymptotically
optimal spectral gap in light of Huber's result \cite{Huber}. The
number $\frac{1}{4}$ also features prominently in Selberg's eigenvalue
conjecture \cite{SelbergFourier}, that states for $X=\SL_{2}(\Z)\backslash\mathbb{H},$
the (deterministic) family of congruence covers of $X$ never have
new eigenvalues below $\text{\ensuremath{\frac{1}{4}}}.$ Although
Selberg's conjecture is for a finite-area, non-compact hyperbolic
orbifold, the Jacquet-Langlands correspondence \cite{JacquetLanglands}
means that it also applies to certain arithmetic compact hyperbolic
surfaces.
\end{rem}

\begin{rem}
\label{rem:lambda1-one-quarter-remark}In \cite[Problem 10.4]{wright2020tour},
Wright asks, for random compact hyperbolic surfaces sampled according
to the Weil-Petersson volume form on the moduli space of genus $g$
closed hyperbolic surfaces, whether $\liminf_{g\to\infty}(\P(\lambda_{1}>\frac{1}{4}))>0$.
See $\S\S$\ref{subsec:Related-works} for what is known in this setting.
It is not even known \cite[Problem 10.3]{wright2020tour} whether
there is a sequence of Riemann surfaces $X_{n}$ with genus tending
to $\infty$ such that $\lambda_{1}(X_{n})\to\frac{1}{4}$. Conjecture
\ref{conj:Alon-friedman-for-surfaces} offers a new route to resolving
this problem via the probabilistic method, since it is known by work
of Jenni \cite{Jenni} that there exists a genus 2 hyperbolic surface
$X$ with $\lambda_{1}(X)>\frac{1}{4}$ and this $X$ can be taken
as the base surface in Conjecture \ref{conj:Alon-friedman-for-surfaces}.
(See $\S\S$\ref{subsec:Subsequent-results} for important developments
in this area after the current paper was written.)
\end{rem}

The main theorem of the paper, described in the title, is the following.
\begin{thm}
\label{thm:three-sixteenths}Let $X$ be a compact connected hyperbolic
surface. Then for any $\varepsilon>0$, a.a.s.
\[
\spec\left(\Delta_{X_{\phi}}\right)\cap\left[0,\frac{3}{16}-\varepsilon\right]=\spec\left(\Delta_{X}\right)\cap\left[0,\frac{3}{16}-\varepsilon\right]
\]
and the multiplicities on both sides are the same. 
\end{thm}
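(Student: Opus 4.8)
The plan is to prove a quantitative bound on the expected value of a suitable spectral observable on $X_\phi$, and then pass from expectation to an almost sure statement. Concretely, following the strategy that has been successful for random covers in the graph and surface settings, I would study the Selberg/Laplace-type trace functional associated to the \emph{new} part of the spectrum. The key analytic object is the function $\mathbf{F} \colon X_\phi \to \mathbf{R}$ obtained from the difference of the heat kernels, or better, a resolvent-type or wave-group operator applied to the new spectrum. Since eigenfunctions orthogonal to the pullbacks of functions on $X$ correspond to nontrivial isotypic components of the $S_n$-action on $L^2(X_\phi)$, it suffices to control, for the standard representation $\std$ of $S_n$ (the permutation representation minus the trivial), the twisted Laplacian $\Delta_{X,\std \circ \phi}$ acting on sections of the associated flat bundle. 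The target is: a.a.s. this twisted operator has no eigenvalue below $\tfrac{3}{16}-\varepsilon$.

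The heart of the argument is to estimate $\mathbb{E}_{\phi}\bigl[\operatorname{Tr} f(\Delta_{X,\std\circ\phi})\bigr]$ for a well-chosen test function $f$, expand it geometrically via the Selberg trace formula (or directly via the heat kernel on $\mathbb{H}$ lifted along closed geodesics), and recognize that the contribution of a closed geodesic corresponding to $\gamma \in \Gamma_g$ is weighted by $\mathbb{E}_\phi[\fix(\phi(\gamma))] - 1$, i.e.\ the expected number of fixed points of a random permutation image of $\gamma$, minus one. This is exactly where the representation theory of $S_n$ and the combinatorics of word maps enter: one needs asymptotic expansions, in powers of $1/n$, for the expected number of fixed points of $\phi(\gamma)$ when $\phi$ is uniform in $\Hom(\Gamma_g,S_n)$. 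The relevant machinery is the theory developed by the authors (and Hanany--Puder) on the ``$\ebs$'' / Euler characteristic expansion, showing that $\mathbb{E}_\phi[\fix(\phi(\gamma))] = 1 + O(n^{-\pi(\gamma)})$ where $\pi(\gamma)$ is a non-negative integer controlled by the primitivity rank of $\gamma$ in $\Gamma_g$, with the second-order term governed by how $\gamma$ sits inside the one-relator group $\Gamma_g$. Summing the geometric series over $\gamma$ and matching against the spectral side, the ``$1$'' cancels the base-surface contribution, and the leading error term produces a bound on the new spectrum; the exponent $3/16 = \tfrac14 \cdot \tfrac34$ arises because the first genuinely dangerous geodesic contributions are suppressed by only one power of $n^{-1/2}$ in amplitude, which via the $L^2\to L^2$ spectral interpretation (Cauchy--Schwarz on the flat bundle, as in the graph Ramanujan story where $\rho$ is the square root of the ``volume'' growth) costs a square root, yielding $\tfrac14$ degraded by the factor controlling how far one can push before the error terms dominate.

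After the expectation bound, I would upgrade to an a.a.s.\ statement. The cleanest route is a ``polynomial method'' / higher moment argument: rather than controlling the full trace at one scale, control $\mathbb{E}_\phi\bigl[(\operatorname{Tr} f(\Delta_{X,\std\circ\phi}))^{k}\bigr]$ for $k$ growing slowly with $n$, or equivalently control the expected value of traces of high powers of the wave operator, so that Markov's inequality upgrades a bound of the form ``expected count of new eigenvalues below $\tfrac{3}{16}-\varepsilon$ is $o(1)$'' into the desired conclusion. Alternatively, one can use the fact that the number of new eigenvalues below any fixed threshold is, by Weyl's law on $X_\phi$, at most linear in $n$, so an $o(1)$ bound on the expectation of a nonnegative integer-valued count already gives the a.a.s.\ statement by Markov — provided the test function $f$ is chosen to dominate the indicator of $[0,\tfrac{3}{16}-\varepsilon]$ while keeping its geometric (length-spectrum) expansion convergent and small. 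Some care is needed here because the Selberg transform of a sharp cutoff has slowly decaying Fourier data; the standard fix is to smooth and to absorb the smoothing error into $\varepsilon$.

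The main obstacle, and the technical core of the paper, will be the precise second-order analysis of $\mathbb{E}_\phi[\fix(\phi(\gamma))]$ \emph{uniformly enough in $\gamma$} that the sum over the length spectrum of $X$ converges and is controlled with the right power of $n$. In the free-group case (covers of graphs) this is the theory of Stallings core graphs and primitivity rank due to Puder and Parzanchevski; for the surface group $\Gamma_g$, with its single long relator, one must develop the analogous combinatorial-topological classification of subgroups/subsurfaces — roughly, tessellations of subsurfaces by the relator polygon, and an Euler-characteristic-driven estimate — and show that the ``defect'' terms responsible for the gap between $\tfrac{3}{16}$ and $\tfrac14$ are exactly those carrying primitivity rank $2$ (or the surface-group analogue), contributing at order $n^{-1/2}$ in amplitude. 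Equivalently: proving $\tfrac14$ would require showing these rank-$2$ contributions are actually harmless (they are the ``one-sided'' / $\std$ analogue of bipartite-type obstructions that do not occur here), which is why the theorem as stated stops at $\tfrac{3}{16}$ and the improvement to $\tfrac14$ is only conjectured.
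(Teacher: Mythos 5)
Your proposal follows essentially the same route as the paper: subtract the Selberg trace formula for $X$ from that for $X_{\phi}$ (equivalently, work with the $\std\circ\phi$-twisted Laplacian), so that the geometric side is weighted by $\E[\fix_{\gamma}(\phi)]-1$; feed in a uniform estimate $\E[\fix_{\gamma}(\phi)]=1+O((\log n)^{A}/n)$ for non-powers $\gamma$ of word length $O(\log n)$; and conclude via Markov's inequality applied to the non-negative spectral sum coming from a test function whose Fourier transform is non-negative on $\R\cup i\R$. Your ``alternative'' route for the a.a.s.\ upgrade is exactly what the paper does; no higher moments are needed.

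The one place your account is off is the mechanism producing $\frac{3}{16}$. It is not that certain ``dangerous'' or primitivity-rank-two classes are suppressed only by $n^{-1/2}$: the estimate actually used is a full $O((\log n)^{A}/n)$ for \emph{every} non-power of length at most $c\log n$, while proper powers are handled by the trivial bound $\fix_{\gamma}-1\le n-1$ together with their exponential sparsity in the length spectrum. The exponent $\frac34$ comes from the Broder--Shamir balance: with the test function supported in $(-T,T)$, the primitive geodesics contribute roughly $e^{T/2}(\log n)^{A}/n$ and the identity plus proper-power terms contribute roughly $nT^{2}$; choosing $T=4\log n$ makes both of size $n^{1+o(1)}$, and the amplifier $\widehat{\varphi_{T}}(it)\gg Te^{(1-\varepsilon)Tt}=n^{4(1-\varepsilon)t+o(1)}$ beats $n$ exactly when $t=s-\frac12>\frac14$, i.e.\ $\lambda<\frac{3}{16}$. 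Thus the obstruction to reaching $\frac14$ by this method is the quality of the $O(1/n)$ error term (an $O(n^{-k})$ error would permit $T=2(k+1)\log n$ and a gap approaching $\frac14$), not a distinguished family of geodesics contributing at amplitude $n^{-1/2}$.
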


\begin{rem}
The appearance of the number $\frac{3}{16}$ in Theorem \ref{thm:three-sixteenths}
is essentially for the same reason that $\frac{3}{4}$ appears in
\cite{MageeNaud} (note that $\frac{3}{16}=\frac{3}{4}(1-\frac{3}{4})$,
and eigenvalues of the Laplacian are naturally parameterized as $s(1-s)$).
Ultimately, the appearance of $\frac{3}{4}$ can be traced back to
the method of Broder and Shamir \cite{BroderShamir} who proved that
a.a.s.~a random $2d$-regular graph on $n$ vertices has $\lambda_{1}\leq O\left(d^{3/4}\right)$,
using an estimate analogous to Theorem \ref{thm:effective-error}
below.
\end{rem}

\begin{rem}
More mysteriously, $\frac{3}{16}$ is also the lower bound that Selberg
obtained for the smallest new eigenvalue of a congruence cover of
the modular curve $\SL_{2}(\Z)\backslash\mathbb{H}$, in the same
paper \cite{SelbergFourier} as his eigenvalue conjecture. In this
context, the number arises ultimately from bounds on Kloosterman sums
due to Weil \cite{Weil} that follow from Weil's resolution of the
Riemann hypothesis for curves over finite fields. The state of the
art on Selberg's eigenvalue conjecture, after decades of intermediate
results \cite{GelbartJacquet,Iwaniec89,LRS,Iwaniec96,KimShahidi},
is due to Kim and Sarnak \cite{KIM} who produced a spectral gap of
size $\frac{975}{4096}$ for congruence covers of $\SL_{2}(\Z)\backslash\mathbb{H}$.
\end{rem}

It was pointed out to us by A.~Kamber that our methods also yield
the following estimate on the density of new eigenvalues of a random
cover.
\begin{thm}
\label{thm:density}Let 
\[
0\leq\lambda_{i_{1}}(X_{\phi})\leq\lambda_{i_{2}}(X_{\phi})\leq\cdots\leq\lambda_{i_{k(\phi)}}(X_{\phi})\leq\frac{1}{4}
\]
denote the collection of new eigenvalues of $\Delta_{X_{\phi}}$ of
size at most $\frac{1}{4}$, included with multiplicity. For each
of these, we write $\lambda_{i_{j}}=s_{i_{j}}(1-s_{i_{j}})$ with
$s_{i_{j}}=s_{i_{j}}(X_{\phi})\in\left[\frac{1}{2},1\right]$. For
any $\varepsilon>0$ and $\sigma\in\left(\frac{1}{2},1\right)$, a.a.s.
\begin{equation}
\#\left\{ 1\leq j\leq k(\phi)\,:\,\lambda_{i_{j}}<\sigma\left(1-\sigma\right)\right\} =\#\left\{ 1\leq j\leq k(\phi)\,:\,s_{i_{j}}>\sigma\right\} \leq n^{3-4\sigma+\varepsilon}.\label{eq:density-inequality}
\end{equation}
\end{thm}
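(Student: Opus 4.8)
The plan is to reduce the statement to a bound on the number of small eigenvalues of a twisted Laplacian, estimate that number by the Selberg trace formula with a test function of length parameter $L = 4\log n$, and feed in the paper's main probabilistic estimate — the one already behind Theorem~\ref{thm:three-sixteenths} — for the expected number of fixed points of $\phi(\gamma)$ as $\gamma$ runs over closed geodesics. First, reduce to the twisted setting: decompose the permutation representation $\C^{n}$ of $\Gamma_{g}$ (acting through $\phi$) as $\C \oplus V_{\phi}$ with $V_{\phi} \cong \C^{n-1}$ carrying the (unitary) standard representation $\std\circ\phi$; then $L^{2}(X_{\phi}) = L^{2}(X) \oplus L^{2}(X;V_{\phi})$ as $\Delta$-invariant subspaces, where $L^{2}(X;V_{\phi})$ consists of $L^{2}$ sections of the flat bundle attached to $\std\circ\phi$. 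Writing $\Delta_{X;V_{\phi}}$ for the Laplacian on it, the new spectrum of $X_{\phi}$ (with multiplicity) is exactly $\spec(\Delta_{X;V_{\phi}})$, and since $\lambda = s(1-s) < \sigma(1-\sigma) \iff s > \sigma$ for $s \in (\tfrac12,1]$, the quantity to bound is the multiplicity-count $m(\sigma,\phi) := \#\{\, j : \lambda_{j}(\Delta_{X;V_{\phi}}) < \sigma(1-\sigma)\,\}$, whose eigenvalues have spectral parameter $r = i(s-\tfrac12)$ with $s - \tfrac12 \in (\sigma-\tfrac12, \tfrac12]$.

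Next, the trace formula. Choose an even, nonnegative $\psi \in C_{c}^{\infty}(\R)$ supported in $[-L/2,L/2]$ with $\int\psi = 1$ and a fixed positive fraction of its mass within distance $1$ of each of $\pm L/2$, set $g_{L} := \psi\ast\psi \in C_{c}^{\infty}$ (nonnegative, supported in $[-L,L]$, with $\|g_{L}\|_{\infty} \ll 1$), and let $h_{L} := \widehat{g_{L}}$ be its Selberg transform, so that $h_{L} = |\widehat{\psi}|^{2} \ge 0$ on $\R$ and $h_{L}(it) = \bigl(\int\psi(x)\cosh(tx)\,dx\bigr)^{2} \ge c_{0}\,e^{tL}$ for $t \in [0,\tfrac12]$, with $c_{0} > 0$ absolute. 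Applying the Selberg trace formula for $\Delta_{X;V_{\phi}}$ to the admissible pair $(h_{L},g_{L})$, and using that every spectral-side term is $\ge 0$ while the subsum over new eigenvalues with $s_{j} > \sigma$ is $\ge c_{0}\,e^{(\sigma-1/2)L}\,m(\sigma,\phi)$, we obtain the deterministic bound
\[
c_{0}\,e^{(\sigma-1/2)L}\,m(\sigma,\phi) \;\le\; M_{L} + G_{L}(\phi), \qquad M_{L} := \frac{(n-1)\,\mathrm{Area}(X)}{4\pi}\int_{\R} h_{L}(r)\,r\tanh(\pi r)\,dr \;\ll\; n,
\]
where $G_{L}(\phi) = \sum_{\{\gamma\}\ne\{1\}} \frac{\ell_{0}(\gamma)}{2\sinh(\ell(\gamma)/2)}\bigl(\fix(\phi(\gamma))-1\bigr)\,g_{L}(\ell(\gamma))$, the sum over nontrivial conjugacy classes of $\Gamma_{g}$, equivalently closed geodesics of $X$ of length $\le L$ (here $\ell_{0}(\gamma)$ is the length of the underlying primitive geodesic and $\fix(\phi(\gamma))-1 = \trace(\std\circ\phi(\gamma))$); each weight is $\ll L\,e^{-\ell(\gamma)/2}$.

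Now fix $L = 4\log n$ and take expectations: $c_{0}\,e^{(\sigma-1/2)L}\E_{\phi}[m(\sigma,\phi)] \le M_{L} + \E_{\phi}[G_{L}(\phi)]$, with $\E_{\phi}[G_{L}(\phi)] = \sum_{\{\gamma\}\ne\{1\}}(\text{weight})\,\E_{\phi}[\fix(\phi(\gamma))-1]$; crucially only the \emph{expectation} $\E_{\phi}[\fix(\phi(\gamma))-1]$ appears, not $\E_{\phi}|\fix(\phi(\gamma))-1|$, so no variance estimate is needed. The decisive input, supplied by the paper's core estimate (Theorem~\ref{thm:effective-error} and the analysis around it), is that
\[
\bigl|\E_{\phi}[\fix(\phi(\gamma))]-1\bigr| \;\ll_{\varepsilon}\; n^{\varepsilon-1}
\]
uniformly over primitive closed geodesics $\gamma$ with $\ell(\gamma) \le 4\log n$, while proper powers $\gamma = \gamma_{0}^{d}$, which satisfy only $|\E_{\phi}[\fix(\phi(\gamma))]-1| \ll_{\varepsilon} n^{\varepsilon}$, have total weight $O(\log n)$ and contribute just $O_{\varepsilon}(n^{\varepsilon}\log n)$. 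Using the prime geodesic theorem in the form $\sum_{\ell(\gamma)\le L} L\,e^{-\ell(\gamma)/2} \ll e^{L/2}$, we get $|\E_{\phi}[G_{L}(\phi)]| \ll_{\varepsilon} n^{\varepsilon-1}e^{L/2} + n^{\varepsilon}\log n = n^{1+\varepsilon}$, hence $\E_{\phi}[m(\sigma,\phi)] \ll_{\varepsilon} e^{-(\sigma-1/2)\cdot 4\log n}\cdot n^{1+\varepsilon} = n^{3-4\sigma+\varepsilon}$; Markov's inequality then gives $m(\sigma,\phi) \le n^{3-4\sigma+2\varepsilon}$ a.a.s., which is \eqref{eq:density-inequality} after renaming $\varepsilon$. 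The value $L = 4\log n$ is the balance point: $e^{-(\sigma-1/2)L}M_{L} \ll n^{1-(\sigma-1/2)L/\log n}$ decreases in $L$ while $e^{-(\sigma-1/2)L}|\E_{\phi}G_{L}| \ll n^{(1-\sigma)L/\log n - 1 + \varepsilon}$ increases in $L$; the two meet at $L/\log n = 4$ with common exponent $3-4\sigma$, and the fact that this exponent is negative exactly when $\sigma > \tfrac34$ recovers Theorem~\ref{thm:three-sixteenths}.

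The one non-soft ingredient — the main obstacle — is the uniform estimate $|\E_{\phi}[\fix(\phi(\gamma))] - 1| \ll_{\varepsilon} n^{\varepsilon-1}$ over the exponentially many primitive closed geodesics of length up to $4\log n$. For a free group this would be essentially automatic (primitive elements map to uniform random permutations, and the primitivity-rank expansion bounds the remaining elements by $O(n^{-1})$), but here $\phi(a_{1}),\phi(b_{1}),\dots,\phi(a_{g}),\phi(b_{g})$ obey the single long relator $[a_{1},b_{1}]\cdots[a_{g},b_{g}] = 1$ with $g \ge 2$, and controlling, uniformly in $\gamma$, the correction terms this relator introduces is exactly what requires the representation theory of $S_{n}$ together with the combinatorics of tiled surfaces / word measures on $\Gamma_{g}$ — the technical heart of the paper. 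Granting it, the density estimate follows as sketched above.
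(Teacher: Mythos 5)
Your proposal is correct and follows essentially the same route as the paper: the same test function $\psi\star\psi$ at scale $T=4\log n$, positivity of its Fourier transform on $\R\cup i\R$, the difference of trace formulas (your twisted-Laplacian formulation is an equivalent repackaging of this), Theorem \ref{thm:effective-error} for the primitive terms, and Markov's inequality applied to the full spectral sum. The one unjustified sub-claim is that proper powers satisfy $\left|\E_{\phi}[\fix_{\gamma}]-1\right|\ll_{\varepsilon}n^{\varepsilon}$ — the paper proves no effective bound for powers and none is needed, since the trivial bound $n$ multiplied by the $O\left((\log n)^{2}\right)$ total weight of the non-primitive terms already yields $O\left(n(\log n)^{2}\right)\ll n^{1+\varepsilon}$, which is exactly the estimate the paper uses.
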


\begin{rem}
The estimate (\ref{eq:density-inequality}) was established by Iwaniec
\cite[Thm 11.7]{Iwaniecbook} for congruence covers of $\SL_{2}(\Z)\backslash\mathbb{H}$.
Although Iwaniec's theorem has been generalized in various directions
\cite{Huxley,Sarnak,Humphries}, as far as we know, Iwaniec's result
has not been directly improved, so speaking about density of eigenvalues,
Theorem \ref{thm:density} establishes for random covers the best
result known in the arithmetic setting for eigenvalues above the Kim-Sarnak
bound $\frac{975}{4096}$ \cite{KIM}. Density estimates such as Theorem
\ref{thm:density} have applications to the cutoff phenomenon on hyperbolic
surfaces by work of Golubev and Kamber \cite{GolubevKamber}.
\end{rem}

We prove Theorems \ref{thm:three-sixteenths} and \ref{thm:density}
using Selberg's trace formula in $\S$\ref{sec:The-proof-of-spectral-gap-theorem}.
We use as a `black-box' in this method a statistical result (Theorem
\ref{thm:effective-error}) about the expected number of fixed points
of a fixed $\gamma\in\Gamma_{g}$ under a random $\phi$. 

If $\pi\in S_{n}$ then we write $\fix(\pi)$ for the number of fixed
points of the permutation $\pi$. Given an element $\gamma\in\Gamma_{g}$,
we let $\fix_{\gamma}$ be the function
\[
\fix_{\gamma}:\X_{g,n}\to\Z,\quad\fix_{\gamma}(\phi)\eqdf\fix(\phi(\gamma)).
\]
We write $\E_{g,n}[\fix_{\gamma}]$ for the expected value of $\fix_{\gamma}$
with respect to the uniform probability measure on $\X_{g,n}$. In
\cite{MPasympcover}, the first and third named authors proved the
following theorem.
\begin{thm}
\noindent \label{thm:asymptotic-non-effective}Let $g\geq2$ and $1\ne\gamma\in\Gamma_{g}$.
If $q\in\N$ is maximal such that $\gamma=\gamma_{0}^{~q}$ for some
$\gamma_{0}\in\Gamma_{g}$, then, as $n\to\infty$,
\begin{align*}
\E_{g,n}[\fix_{\gamma}] & =d(q)+O_{\gamma}\left(n^{-1}\right),
\end{align*}
where $d(q)$ is the number of divisors of $q$.
\end{thm}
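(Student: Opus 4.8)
The plan is to reinterpret $\E_{g,n}[\fix_\gamma]$ geometrically and then evaluate it, handling a clean model case by character theory and the general case by a surface-group analog of Stallings' core-graph machinery. A homomorphism $\phi\in\X_{g,n}=\Hom(\Gamma_g,S_n)$ is the monodromy of a degree-$n$ cover of $X$ with a labelled fibre, and $\fix_\gamma(\phi)=\fix(\phi(\gamma))$ counts exactly the closed lifts of the based loop representing $\gamma$; thus $\E_{g,n}[\fix_\gamma]$ is the expected number of closed lifts of $\gamma$ in a uniformly random degree-$n$ cover. Writing $\gamma=\gamma_0^{~q}$ with $\gamma_0$ not a proper power, a closed lift of $\gamma$ wraps some number of times around a primitive loop lying over $\gamma_0$; concretely $\fix(\phi(\gamma_0^{~q}))=\sum_{d\mid q}d\cdot c_d(\phi(\gamma_0))$, where $c_d(\sigma)$ denotes the number of $d$-cycles of $\sigma$. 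Since the number of divisors of $q$ is fixed, the theorem is equivalent to the statement that $\E_{g,n}[c_d(\phi(\gamma_0))]=\tfrac1d+O(n^{-1})$ for each fixed $d$ — i.e.\ that the cycle statistics of $\phi(\gamma_0)$ match those of a uniformly random permutation to leading order, for which $\E_{\sigma\sim S_n}[c_d(\sigma)]=\tfrac1d$ and hence $\E_{\sigma\sim S_n}[\fix(\sigma^q)]=\sum_{d\mid q}d\cdot\tfrac1d=d(q)$ exactly (for $n\ge q$). This is where the constant $d(q)$ comes from.

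\emph{Model case.} Suppose first $\gamma_0$ is represented by a non-separating simple closed curve. Every such curve is carried to $a_1$ by an automorphism $\alpha$ of $\Gamma_g$, and $\phi\mapsto\phi\circ\alpha$ is a measure-preserving bijection of $\X_{g,n}$, so $\E_{g,n}[\fix_\gamma]=\E_{g,n}[\fix_{a_1^{~q}}]$. Writing $\chi_{\std}$ for the character of the standard $(n-1)$-dimensional representation of $S_n$, so that $\fix=1+\chi_{\std}$, a standard character computation (Frobenius's formula for products of commutators, plus Schur orthogonality) expresses $\E_{g,n}[\chi_{\std}(\phi(a_1^{~q}))]$ as an explicit sum over the irreducible characters $\psi$ of $S_n$, in which $\psi$ appears with weight $\psi(1)^{2-2g}$. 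Because only boundedly many irreducibles of $S_n$ have small dimension, and all but the trivial and sign characters have dimension $\ge n-1$, one has $\sum_\psi\psi(1)^{2-2g}=2+O(n^{-2})$ and the whole sum collapses to the contributions of the trivial and sign characters, each of which evaluates to $\E_{\sigma\sim S_n}[\fix(\sigma^q)]-1=d(q)-1$. Hence $\E_{g,n}[\fix_{a_1^{~q}}]=1+(d(q)-1)+O(n^{-1})=d(q)+O(n^{-1})$. (The remaining, separating, simple curves can be treated the same way with a somewhat heavier character sum.)

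\emph{General case.} When $\gamma_0$ is non-simple no automorphism reduces it to a generator and the character sum does not collapse; here one builds the surface-group analog of Stallings' core-graph theory. The overgroups of $\langle\gamma_0^{~q}\rangle$ in $\Gamma_g$ relevant to closed lifts are encoded by compact surfaces with boundary $\Sigma_f$ equipped with a suitable map $f\colon\Sigma_f\to X$ through which $\gamma$ factors, and one proves
\[
\E_{g,n}[\fix_\gamma]=\sum_{f}w(f)\,n^{\chi(\Sigma_f)}\bigl(1+O(n^{-1})\bigr),
\]
the sum over the countable poset of isomorphism types of such core surfaces, with $w(f)$ bounded independently of $n$ and the power of $n$ governed by the Euler characteristic (one counts embeddings into the $n$-sheeted cover of a spine of $\Sigma_f$, a graph of Euler characteristic $\chi(\Sigma_f)$). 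A compact surface with boundary carrying a nontrivial loop has $\chi\le0$, with equality exactly for an annulus; an annulus $A\to X$ carrying $\gamma$ in $\pi_1(A)\cong\Z$ has core $\gamma_0^{~d}$ for some $d\mid q$ (if a generator $t$ of $\pi_1(A)$ maps to $\gamma_0^{~d}$ then $\gamma=f_*(t^{q/d})$, forcing $d\mid q$); because $\gamma_0$ is not a proper power and centralizers in $\Gamma_g$ of nontrivial elements are infinite cyclic, the subgroups $\langle\gamma_0^{~d}\rangle$, $d\mid q$, are precisely the cyclic subgroups of $\Gamma_g$ containing $\gamma$, so there is exactly one such annulus for each $d\mid q$. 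Each contributes $w(f)=1$ — the cyclic case of the weight formula, mirroring the free-group fact that each rank-one algebraic extension contributes $1$ to the constant term — while every core surface with $\chi\le-1$ contributes $O(n^{-1})$. This yields the constant term $d(q)$, and this argument also subsumes the model case.

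\emph{Main obstacle.} The entire difficulty is in the general case. There is no off-the-shelf Stallings theory for surface groups, so one must isolate the correct category of core surfaces, establish a folding/canonicalization procedure that attaches a well-defined core surface to each relevant overgroup, determine the weights $w(f)$ and the expansion in $n$ uniformly in $n$, and — the crux — prove a finiteness theorem, the $\Gamma_g$-analog of Takahasi's theorem on algebraic extensions of finitely generated subgroups, guaranteeing that only finitely many core surfaces have any prescribed Euler characteristic, so that the $O(n^{-1})$ errors do not accumulate. By comparison, the geometric reformulation, the model-case character computation, and the identification of the constant term with $d(q)$ are routine.
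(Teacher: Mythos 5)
Your outline reproduces the architecture of the actual argument (Theorem \ref{thm:asymptotic-non-effective} is quoted here from \cite{MPasympcover}, whose proof runs exactly through a decomposition $\E_{g,n}[\fix_{\gamma}]=\sum_{Y\in\mathcal{R}}\E_{n}^{\emb}(Y)$ over a finite resolution of ``core''/tiled surfaces, with the $\chi=0$ terms being annuli). Your combinatorial identity $\fix(\sigma^{q})=\sum_{d\mid q}d\,c_{d}(\sigma)$ correctly locates the source of the constant $d(q)$, and your identification of the $\chi=0$ annuli with the cyclic subgroups $\langle\gamma_{0}^{\,d}\rangle$, $d\mid q$, via uniqueness of roots and cyclic centralizers in $\Gamma_{g}$, is correct.

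The genuine gap is at the load-bearing step: the displayed expansion $\E_{g,n}[\fix_{\gamma}]=\sum_{f}w(f)\,n^{\chi(\Sigma_{f})}(1+O(n^{-1}))$ is asserted, and the only justification offered --- ``one counts embeddings into the $n$-sheeted cover of a spine of $\Sigma_{f}$'' --- is the free-group computation. It ignores the surface relator and is false for general subcomplexes: the expected number of embeddings of a labelled complex $Y$ into $X_{\phi}$ equals $\frac{(n!)^{2g-1}}{|\X_{g,n}|}\cdot\frac{(n)_{\v}(n)_{\f}}{\prod_{f}(n)_{\e_{f}}}\,\Xi_{n}(Y)$ with $\Xi_{n}(Y)$ a sum over irreducible representations of $S_{n}$ (Theorem \ref{thm:E_n-emb-exact-expression}), and $\Xi_{n}(Y)\to2$ --- equivalently $\E_{n}^{\emb}(Y)=n^{\chi(Y)}(1+O_{Y}(n^{-1}))$ --- holds only for the restricted class of (strongly) boundary reduced surfaces; for other $Y$ the order of magnitude is different, which is precisely why the resolution must be engineered to land in that class. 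So one must (i) isolate the correct class of core surfaces, (ii) construct a resolution supported on it by a folding/absorption procedure, (iii) prove the representation-theoretic asymptotic for that class, and (iv) prove the Takahasi-type finiteness statement so the $O(n^{-1})$ errors from infinitely many potential $\chi\le-1$ terms do not accumulate. You name (i)--(iv) as ``the main obstacle'' but do not resolve any of them; since they constitute essentially the entire content of the theorem, what you have is a correct road map rather than a proof. (The model case via mapping-class-group reduction to $a_{1}$ and Frobenius-type character sums is plausible but is subsumed by, and does not help with, the general case.)
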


In the current paper, we need an effective version of Theorem \ref{thm:asymptotic-non-effective}
that controls the dependence of the error term on $\gamma$. We need
this estimate only for $\gamma$ that are not a proper power. For
$\gamma\in\Gamma_{g}$, we write $\ell_{w}(\gamma)$ \label{ell_w}
for the \emph{cyclic-word-length }of $\gamma$, namely, for the length
of a shortest word in the generators $a_{1},b_{1},\ldots,a_{g},b_{g}$
of $\Gamma_{g}$ that represents an element in the conjugacy class
of $\gamma$ in $\Gamma_{g}$. The effective version of Theorem \ref{thm:asymptotic-non-effective}
that we prove here is the following.
\begin{thm}
\label{thm:effective-error}For each genus $g\geq2$, there is a constant
$A=A(g)$ such that for any $c>0$, if $1\neq\gamma\in\Gamma_{g}$
is not a proper power of another element in $\Gamma_{g}$ and $\ell_{w}(\gamma)\leq c\log n$
then
\[
\E_{g,n}[\fix_{\gamma}]=1+O_{c,g}\left(\frac{(\log n)^{A}}{n}\right).
\]
The implied constant in the big-O depends only on $c$ and $g$.
\end{thm}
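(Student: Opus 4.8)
The plan is to revisit the argument behind Theorem~\ref{thm:asymptotic-non-effective} and carry out every estimate uniformly in $\gamma$, or rather in $L\eqdf\ell_{w}(\gamma)$. That argument (of Magee and Puder) rests on the combinatorics of \emph{tiled surfaces}. Equip $\Sigma_{g}$ with the CW structure having one vertex, $2g$ edges labelled $a_{1},b_{1},\dots,a_{g},b_{g}$, and a single $4g$-gon $2$-cell glued along $[a_{1},b_{1}]\cdots[a_{g},b_{g}]$, so $\pi_{1}(\Sigma_{g})=\Gamma_{g}$. As $\gamma$ is not a proper power it is represented by a cyclically reduced word of length $L$; let $A_{\gamma}$ be the \emph{$\gamma$-annulus} --- the cycle of $L$ edges, with no octagons, carrying the evident immersion to $\Sigma_{g}$. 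The input I would reuse is the expansion
\[
\E_{g,n}[\fix_{\gamma}]\;=\;\sum_{Y}\mathcal{L}_{Y}(n),
\]
where $Y$ ranges over isomorphism classes of \emph{$\gamma$-resolutions}: $\pi_{1}$-injective, boundary-reduced tiled surfaces together with a morphism $A_{\gamma}\to Y$ commuting with the immersions to $\Sigma_{g}$ --- the surface-group analogue of the algebraic extensions of $\langle\gamma\rangle$ used in the free case by Puder--Parzanchevski. Here $\mathcal{L}_{Y}(n)$ is an explicit rational function with $\mathcal{L}_{Y}(n)=c_{Y}\,n^{\chi(Y)}+O(n^{\chi(Y)-1})$, $c_{Y}\in\mathbf{Q}_{>0}$, and for each fixed $K$ only finitely many $Y$ have $\chi(Y)\geq-K$. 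For a fixed $\gamma$ this already yields Theorem~\ref{thm:asymptotic-non-effective}; the new task is to track the $L$-dependence of all constants.

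The first step is to treat the top stratum $\chi(Y)=0$. Such a $Y$ has cyclic fundamental group into which $\langle\gamma\rangle$ injects, and since $\gamma$ is not a proper power this forces $\pi_{1}(Y)=\langle\gamma\rangle$ and $Y\cong A_{\gamma}$; so there is exactly one resolution with $\chi(Y)=0$ --- the $q=1$, $d(q)=1$ case of Theorem~\ref{thm:asymptotic-non-effective}. Its contribution is $\mathcal{L}_{A_{\gamma}}(n)=1+O(L^{2}/n)$, the lower-order part being a ratio of falling factorials in $n$, which for $L\leq c\log n$ is $1+O_{c}\!\left((\log n)^{2}/n\right)$. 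It then remains to prove $\sum_{Y:\,\chi(Y)\leq-1}|\mathcal{L}_{Y}(n)|=O_{c}\!\left((\log n)^{A}/n\right)$.

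The second and decisive step is a uniform counting estimate: there are constants $C=C(g),D=D(g)$ so that for every $k\geq1$ there are at most $(C\,L^{D})^{k}$ resolutions $Y$ with $\chi(Y)=-k$, each with $|c_{Y}|\leq(C\,L^{D})^{k}$ and with its $O(n^{\chi(Y)-1})$ tail controlled accordingly. The mechanism is that every $\gamma$-resolution is obtained from $A_{\gamma}$ by a sequence of elementary moves --- identify two edges, or attach a $4g$-gon along a boundary arc --- and that boundary-reducedness forces octagon attachments to be ``paid for'': gluing a $4g$-gon along an arc of length $\leq2g$ leaves a boundary arc of length $\geq2g$ running the long way around that octagon, which is not reduced and must be folded in, and each fold lowers $\chi$ by $1$. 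Hence a resolution with $\chi(Y)=-k$ involves only $O_{g}(k)$ octagons and only $O_{g}(k)$ edges off the distinguished $\gamma$-boundary, so it is pinned down by choosing $O_{g}(k)$ sites along a cycle of length $L$ plus a bounded amount of local gluing data, giving the claimed $L^{O_{g}(k)}$ bound and a similar polynomial bound on $|c_{Y}|$ and on the size of $Y$. Making this precise --- identifying the correct reducedness notion (the ``$\OVB$-reduced'' surfaces), proving that reducedness can always be restored by a controlled number of folds, and extracting the resulting size and multiplicity bounds --- is where the real work lies, and I expect it to be the main obstacle.

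Granting the counting estimate, the conclusion follows: using $|\mathcal{L}_{Y}(n)|\leq(C\,L^{D})^{k}n^{-k}$ for $\chi(Y)=-k$ (after absorbing $c_{Y}$ and the tail),
\[
\sum_{Y:\,\chi(Y)\leq-1}|\mathcal{L}_{Y}(n)|\;\leq\;\sum_{k\geq1}(C\,L^{D})^{k}(C\,L^{D})^{k}n^{-k}\;=\;\sum_{k\geq1}\left(\frac{C^{2}L^{2D}}{n}\right)^{k}\;\leq\;\frac{2\,C^{2}L^{2D}}{n}
\]
once $n\geq2C^{2}L^{2D}$, which holds for all large $n$ since $L\leq c\log n$ (the terms with $k\geq2$ are $O((\log n)^{O_{g}(1)}/n^{2})=o((\log n)^{A}/n)$). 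Combining with the $\chi=0$ term gives $\E_{g,n}[\fix_{\gamma}]=1+O_{c}\!\left((\log n)^{A}/n\right)$ with $A=A(g)\eqdf\max\{2,2D(g)\}$, the dependence of the implied constant on $c$ entering only through the threshold ``$n$ large enough''. One could instead try to extract the estimate from Frobenius-type formulas and character bounds on $S_{n}$, but the tiled-surface expansion is what delivers both the sharp main term $1$ and an explicit error, so I would stay with it.
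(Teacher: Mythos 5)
Your overall architecture (expand $\E_{g,n}[\fix_\gamma]$ over a resolution of the annulus $\CC_\gamma$, isolate the unique $\chi=0$ term, and bound the rest) matches the paper's, but the step you yourself flag as decisive --- the uniform counting estimate --- is where the argument breaks, and it breaks for a structural reason rather than a technical one. Your justification rests on the claim that a boundary-reduced resolution element with $\chi(Y)=-k$ involves only $O_g(k)$ octagons and $O_g(k)$ edges off the $\gamma$-cycle. This is false. Attaching a $4g$-gon along a boundary arc of length $j$ changes $\chi$ by $(7-j)-(8-j)+1=0$ (and by $+1$ along a closed cycle), so octagon attachment never lowers $\chi$; the surfaces produced by growing $\CC_\gamma$ to be boundary reduced or $\varepsilon$-adapted can contain up to $\Theta(L^2)$ octagons while having $\chi=0$ or $\chi=-1$. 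Indeed, even the unique $\chi=0$ element of the paper's resolution is an $\varepsilon$-adapted annulus packed with octagons, not the bare cycle $A_\gamma$ (which is not boundary reduced and hence not a resolution element at all --- this also undermines your identification $Y\cong A_\gamma$ in the top stratum and the claim that its contribution is just a ratio of falling factorials). Consequently a resolution element with $\chi(Y)=-k$ is not pinned down by $O_g(k)$ choices along a cycle of length $L$, and the bound of $(CL^D)^k$ elements per Euler characteristic has no proof. The authors state explicitly that they \emph{lose control of how many $\varepsilon$-adapted $Y$ there are of a given Euler characteristic}, and this is exactly the obstruction.

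The paper circumvents this with a mechanism absent from your proposal: a self-bounding comparison at an auxiliary degree $m\approx(\log n)^{1/\eta}\ll n$. For $\varepsilon$-adapted $Y$ they prove $\E_n^{\emb}(Y)/n^{\chi(Y)}=1+O((\log n)^4/n)+O\bigl(\tfrac{m}{n}\,\E_m^{\emb}(Y)/m^{\chi(Y)}\bigr)$ and $n^{\chi(Y)}\ll\tfrac{m}{n}\E_m^{\emb}(Y)$ when $\chi(Y)<0$, and then control \emph{all} the error terms simultaneously via $\sum_Y\E_m^{\emb}(Y)=\E_{g,m}[\fix_\gamma]\le m$, which replaces any count of resolution elements by Euler characteristic. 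A genuinely different, purely counting-based route might exist, but you would have to count surfaces by some other statistic (the paper uses $\d$ and $\f$, with the terminating condition $\f>\d$ of its octagons-vs-boundary algorithm forcing $\chi<-\f$ on the non-adapted stratum so that counting by $\f$ suffices there), and you would also need to supply the uniformity in $Y$ of the expansion $\mathcal{L}_Y(n)=c_Y n^{\chi(Y)}(1+\cdots)$, which is itself a substantial part of the paper (the representation-theoretic bounds on $\Xi_n$ and the rational function $Q_Y$, whose order of growth the authors cannot read off directly). As it stands, the proposal has a genuine gap at its central step.
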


\begin{rem}
In the rest of the paper, just to avoid complications in notation
and formulas that would obfuscate our arguments, we give the proof
of Theorem \ref{thm:effective-error} when $g=2$. The extension to
arbitrary genus is for the most part obvious: if it is not at some
point, we will point out the necessary changes.
\end{rem}

The proof of Theorem \ref{thm:effective-error} takes up the bulk
of the paper, spanning $\S$\ref{sec:rep-theory-symmetric}-$\S$\ref{sec:Proof-of-Effective-error-theorem}.
The proof of Theorem \ref{thm:effective-error} involves delving into
the proof of Theorem \ref{thm:asymptotic-non-effective} and refining
the estimates, as well as introducing some completely new ideas.

\subsection{Related works\label{subsec:Related-works}}

\paragraph*{The Brooks-Makover model}

The first study of spectral gap for random surfaces in the literature
is due to Brooks and Makover \cite{BrooksMakover} who form a model
of a random compact surface as follows. Firstly, for a parameter $n$,
they glue together $n$ copies of an ideal hyperbolic triangle where
the gluing scheme is given by a random trivalent ribbon graph. Their
model for this random ribbon graph is a modification of the Bollobás
bin model from \cite{Bollobas}. This yields a random finite-area,
non compact hyperbolic surface. Then they perform a compactification
procedure to obtain a random compact hyperbolic surface $X_{\mathrm{BM}}(n)$.
The genus of this surface is not deterministic, however. Brooks and
Makover prove that for this random model, there is a non-explicit
constant $C>0$ such that a.a.s.~(as $n\to\infty$)
\[
\lambda_{1}(X_{\mathrm{BM}}(n))\geq C.
\]
Theorem \ref{thm:three-sixteenths} concerns a different random model,
but improves on the Brooks-Makover result in two important ways: the
bound on new eigenvalues is explicit, and this bound is independent
of the compact hyperbolic surface $X$ with which we begin.

It is also worth mentioning a recent result of Budzinski, Curien,
and Petri \cite[Thm.$\:$1]{budzinski2021diameter} who prove that
the ratios 
\[
\frac{\mathrm{diameter}(X_{\mathrm{BM}}(n))}{\log n}
\]
converge to $2$ in probability as $n\to\infty$; they also observe
that this is not the optimal value by a factor of $2$. 

\paragraph*{The Weil-Petersson model }

Another reasonable model of random surfaces comes from the Weil-Petersson
volume form on the moduli space $\M_{g}$ of compact hyperbolic surfaces
of genus $g$. Let $X_{\mathrm{WP}}(g)$ denote a random surface in
$\M_{g}$ sampled according to the (normalized) Weil-Petersson volume
form. Mirzakhani proved in \cite[\S\S\S 1.2.I]{MirzakhaniRandom}
that with probability tending to 1 as $g\to\infty$,
\[
\lambda_{1}(X_{\mathrm{WP}}(g))\geq\frac{1}{4}\left(\frac{\log2}{2\pi+\log2}\right)^{2}\approx0.00247.
\]
We also note recent work of Monk \cite{Monk} who gives estimates
on the density of eigenvalues below $\frac{1}{4}$ of the Laplacian
on $X_{\mathrm{WP}}(g)$.

\paragraph*{Prior work of the authors}

In some sense, the closest result to Theorem \ref{thm:three-sixteenths}
in the literature is due to the first and second named authors of
the paper \cite{MageeNaud}, but it does not apply to compact surfaces,
rather to infinite area convex co-compact hyperbolic surfaces. Because
these surfaces have infinite area, their spectral theory is more involved.
We will focus on one result of \cite{MageeNaud} to illustrate the
comparison with this paper. 

Suppose $X$ is a connected non-elementary, non-compact, convex co-compact
hyperbolic surface. The spectral theory of $X$ is driven by a critical
parameter $\delta=\delta(X)\in(0,1)$. This parameter is both the
critical exponent of a Poincaré series and the Hausdorff dimension
of the limit set of $X$. If $\delta>\frac{1}{2}$ then results of
Patterson \cite{Patterson} and Lax-Phillips \cite{LP} say that the
bottom of the spectrum of $X$ is a simple eigenvalue at $\delta(1-\delta)$
and there are finitely many eigenvalues in the range $[\delta(1-\delta),\frac{1}{4})$.
In \cite{MageeNaud}, a model of a random degree-$n$ cover of $X$
was introduced that is completely analogous to the one used here;
the only difference in the construction is that the fundamental group
of $X$ is a free group $\F_{r}$ and hence one uses random $\phi\in\Hom(\F_{r},S_{n})$
to construct the random surface $X_{\phi}$. The following theorem
was obtained in \cite[Thm. 1.3.]{MageeNaud}.
\begin{thm}
\label{thm:L2-gap}Assume that $\delta=\delta(X)>\frac{1}{2}$. Then
for any $\sigma_{0}\in\left(\frac{3}{4}\delta,\delta\right)$, a.a.s.
\begin{equation}
\spec\left(\Delta_{X_{\phi}}\right)\cap\left[\delta\left(1-\delta\right),\sigma_{0}(1-\sigma_{0})\right]=\spec\left(\Delta_{X}\right)\cap\left[\delta\left(1-\delta\right),\sigma_{0}(1-\sigma_{0})\right]\label{eq:gap-1}
\end{equation}
and the multiplicities on both sides are the same.
\end{thm}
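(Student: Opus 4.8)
The plan is to translate the statement into a zero-free region for a twisted Selberg zeta function and then to estimate that zeta function probabilistically, in the spirit of Broder--Shamir \cite{BroderShamir}. Since $X=\Gamma\backslash\mathbb{H}$ has infinite area, it is convenient to pass to resonances: by Patterson--Sullivan and Lax--Phillips theory \cite{Patterson,LP}, the $L^{2}$-spectrum of $\Delta_{X}$ below $\frac{1}{4}$ consists of finitely many eigenvalues $s_{j}(1-s_{j})$ with $s_{j}\in(\frac{1}{2},\delta]$ and $\lambda_{0}=\delta(1-\delta)$ simple, and in $\{\Re s>\frac{1}{2}\}$ these coincide with the resonances of $\Delta_{X}$, i.e.\ with the zeros of the Selberg zeta function $Z_{\Gamma}(s)$. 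Because $\pi_{1}(X)\cong\Gamma\cong\F_{r}$ is free, a degree-$n$ cover $X_{\phi}=\Gamma_{\phi}\backslash\mathbb{H}$ is given by $\phi\in\Hom(\F_{r},S_{n})$, and decomposing the associated permutation representation of $\F_{r}$ on $\C^{n}$ as $\C_{\triv}\oplus\rho_{\phi}$ with $\rho_{\phi}$ the $(n-1)$-dimensional standard representation, the new $L^{2}$-eigenvalues of $X_{\phi}$, with multiplicities, are exactly the $L^{2}$-eigenvalues of the $\rho_{\phi}$-twisted Laplacian $\Delta_{X}^{\rho_{\phi}}$; equivalently $Z_{\Gamma_{\phi}}(s)=Z_{\Gamma}(s)\,Z_{\Gamma}(s,\rho_{\phi})$, and the new eigenvalues with parameter $s\in(\frac{1}{2},\delta]$ are the zeros of $Z_{\Gamma}(\cdot,\rho_{\phi})$ there (the endpoint $s=\delta$ included, where a zero would mean that $\delta(1-\delta)$ gains multiplicity in $X_{\phi}$). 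Thus it suffices to show: a.a.s.\ $Z_{\Gamma}(s,\rho_{\phi})$ has no zero with $\Re s\geq\sigma_{0}$ (for the interval $[\delta(1-\delta),\sigma_{0}(1-\sigma_{0})]$ the relevant range of $s$ is contained in $\{\Re s>\frac{3}{4}\delta\}$).

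Next I would realise $Z_{\Gamma}(s,\rho)$ as a Fredholm determinant. As $\Gamma$ is, after conjugation, a Schottky group, the Bowen--Series coding supplies finitely many holomorphic contractions of a disjoint union of disks $\bigsqcup_{a}D_{a}$ and an associated $\rho$-twisted Ruelle transfer operator $\L_{s,\rho}$, acting on a Bergman space of $\rho$-valued holomorphic functions on $\bigsqcup_{a}D_{a}$, which is trace class, holomorphic in $s$, and satisfies $Z_{\Gamma}(s,\rho)=\det(I-\L_{s,\rho})$. A zero of $Z_{\Gamma}(\cdot,\rho_{\phi})$ at $s$ is then exactly an eigenvalue $1$ of $\L_{s,\rho_{\phi}}$, so it is enough to prove that a.a.s., for a suitable $m=m(n)$, one has $\|\L_{s,\rho_{\phi}}^{m}\|_{\mathrm{HS}}<1$ uniformly over $\Re s\geq\sigma_{0}$.

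The heart of the matter is a moment estimate. Expanding $\E_{\phi}[\|\L_{s,\rho_{\phi}}^{m}\|_{\mathrm{HS}}^{2}]$ over pairs of reduced words $w,w'$ of length $m$, the representation-dependence factors out as $\sum_{\alpha}\langle\rho_{\phi}(w)v_{\alpha},\rho_{\phi}(w')v_{\alpha}\rangle=\trace\rho_{\phi}((w')^{-1}w)=\fix(\phi((w')^{-1}w))-1$, while the remaining factor is a geometric weight of size roughly $e^{-2\Re s(\ell(w)+\ell(w'))}$ times a Bergman-overlap factor that vanishes unless $w,w'$ share a first letter and otherwise decays with the separation of the branches. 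Feeding in the known asymptotics $\E_{\phi}[\fix_{u}]=1+O(n^{-1})$ for $u\neq1$ not a proper power over uniform $\phi\in\Hom(\F_{r},S_{n})$ — together with a version effective in the word length of $u$, in the spirit of Theorem~\ref{thm:effective-error}, and a harmless bound for proper powers — leaves a ``diagonal'' term ($w=w'$) of size $\asymp n\cdot\theta_{1}^{m}$ and ``off-diagonal'' terms of size $\asymp n^{-1}\cdot\theta_{2}^{m}$ with $\theta_{1},\theta_{2}<1$ governed by the leading eigenvalue of the untwisted transfer operator; taking $m$ of order $\log n$ and balancing the two forces $\E_{\phi}[\|\L_{s,\rho_{\phi}}^{m}\|_{\mathrm{HS}}^{2}]\to0$ precisely in the range $\Re s>\frac{3}{4}\delta$. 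This trade-off between a walk-length $\asymp\log n$ and the $1/n$ cost of a ``cycle'' is exactly the Broder--Shamir mechanism \cite{BroderShamir} and is where $\frac{3}{4}$ comes from. Finally I would upgrade the pointwise-in-$s$ bound to a uniform one: the zeros of $Z_{\Gamma}(\cdot,\rho_{\phi})$ with $\Re s>\frac{1}{2}$ are $L^{2}$-eigenvalues of $\Delta_{X}^{\rho_{\phi}}$, hence lie in the compact real segment $[\sigma_{0},\delta]$; covering it by a net of $\mathrm{poly}(n)$ points, using that $s\mapsto\L_{s,\rho_{\phi}}$ is holomorphic with operator-norm derivative bounded uniformly in $\phi$, and applying Markov's inequality at each point, gives the a.a.s.\ statement.

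The main obstacle, I expect, is this moment estimate: obtaining the $\E_{\phi}[\fix_{u}]$ asymptotics for the free group in a form effective in the word length of $u$ (with a satisfactory treatment of proper powers and of the products $u=(w')^{-1}w$), correctly isolating and bounding the diagonal versus off-diagonal contributions to $\E_{\phi}[\|\L_{s,\rho_{\phi}}^{m}\|_{\mathrm{HS}}^{2}]$ from the geometry of the Schottky coding, and carrying the trade-off through with enough precision to reach the sharp constant $\frac{3}{4}$ rather than a weaker one. A secondary technical point is the rigorous identification of new $L^{2}$-eigenvalues of $X_{\phi}$ with zeros of $Z_{\Gamma}(\cdot,\rho_{\phi})$ in $\{\Re s>\frac{1}{2}\}$ and the behaviour of $\L_{s,\rho}$ near $s=\delta$, both standard in the theory of infinite-area hyperbolic surfaces \cite{Borthwick} but requiring care.
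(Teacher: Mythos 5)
This theorem is not proved in the present paper: it is imported as a black box from the authors' earlier work \cite[Thm.~1.3]{MageeNaud}, which the introduction here describes only as relying on ``bespoke analytic machinery'' for the free-fundamental-group, infinite-area setting. Your outline --- identifying new eigenvalues with zeros of the $\rho_{\phi}$-twisted Selberg zeta function, realising that zeta function as the Fredholm determinant of a twisted Ruelle transfer operator for the Schottky/Bowen--Series coding, and running a Broder--Shamir second-moment estimate on $\E_{\phi}\left[\|\L_{s,\rho_{\phi}}^{m}\|_{\mathrm{HS}}^{2}\right]$ with $m\asymp\log n$, balancing the diagonal $n\theta_{1}^{m}$ against the off-diagonal $n^{-1}\theta_{2}^{m}$ to reach the threshold $\Re s>\frac{3}{4}\delta$ --- is essentially the strategy of that reference, so it takes the same route as the actual proof rather than a new one.
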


Although Theorem \ref{thm:L2-gap} is analogous to Theorem \ref{thm:three-sixteenths}
(for compact $X$, $\delta(X)=1$), the methods used in \cite{MageeNaud}
have almost no overlap with the methods used here. For infinite area
$X$, the fundamental group is free, so the replacement of Theorem
\ref{thm:effective-error} was already known by results of Broder-Shamir
\cite{BroderShamir} and the third named author \cite{PUDER}. The
challenge in \cite{MageeNaud} was to develop bespoke analytic machinery
to access these estimates.

Conversely, in the current paper, the needed analytic machinery already
exists (Selberg's trace formula) and rather, it is the establishment
of Theorem \ref{thm:effective-error} that is the main challenge here,
stemming from the non-free fundamental group $\Gamma_{g}$.

\subsection{Subsequent results \label{subsec:Subsequent-results}}

Since the preprint version of the current paper appeared in March
2020, several important results have been obtained in the area of
spectral gap of random surfaces. Independently of each other, Wu and
Xue \cite{wu2021random} and Lipnowski and Wright \cite{lipnowski2021optimal}
proved that for any $\varepsilon>0$, a Weil-Petersson random compact
hyperbolic surface of genus $g$ has spectral gap of size at least
$\frac{3}{16}-\varepsilon$ with probability tending to one as $g\to\infty$.
This result has been extended to the case of Weil-Petersson random
surfaces with not too many cusps by Hide in \cite{HideWP}.

In \cite{MageeNaud2}, the results of \cite{MageeNaud} have been
strengthened by the first and second named author to an (essentially
optimal) analog of Friedman's theorem for bounded frequency resonances
on infinite area Schottky surfaces.

Hide and the first named author have recently proved in \cite{HideMagee}
that the analog of Conjecture \ref{conj:Alon-friedman-for-surfaces}
for finite area non-compact hyperbolic surfaces holds true, and by
combining this result with a cusp removal argument of Buser, Burger,
and Dodziuk \cite{BBD}, in \cite{HideMagee} it is also proved that
there exist compact hyperbolic surfaces with genera tending to infinity
and $\lambda_{1}\to\frac{1}{4}$. (We have chosen to preserve Remark
\ref{rem:lambda1-one-quarter-remark} as originally written here for
posterity.)

\subsection{Structure of the proofs and the issues that arise\label{subsec:Overview-of-the-paper}}

\subsection*{Proof of Theorem \ref{thm:three-sixteenths} given Theorem \ref{thm:effective-error}}

First, we explain the outline of the proof of Theorem \ref{thm:three-sixteenths}
from Theorem \ref{thm:effective-error}. Theorem \ref{thm:density}
also follows from Theorem \ref{thm:effective-error} using the same
ideas. Both proofs are presented in full in $\S\ref{sec:The-proof-of-spectral-gap-theorem}$.

Our method of proving Theorem \ref{thm:three-sixteenths} is analogous
to the method of Broder and Shamir \cite{BroderShamir} for proving
that a random $2d$-regular graph has a large spectral gap. For us,
the Selberg trace formula replaces a more elementary formula for the
trace of a power of the adjacency operator of a graph in terms of
closed paths in the graph.

Let $\Gamma$ denote the fundamental group of $X$. By taking the
difference of the Selberg trace formula for $X_{\phi}$ and that for
$X$ we obtain a formula of the form
\begin{equation}
\sum_{\text{new eigenvalues \ensuremath{\lambda} of \ensuremath{X_{\phi}}}}F(\lambda)=\sum_{[\gamma]\in C(\Gamma)}G(\gamma)\left(\fix_{\gamma}(\phi)-1\right),\label{eq:trace-formula-simplicstic}
\end{equation}
where $C(\Gamma)$ is the collection of conjugacy classes in $\Gamma$,
and $F$ and $G$ are interdependent functions that depend on $n$.
The way we choose $F$ and $G$ together is to ensure
\begin{itemize}
\item $F(\lambda)$ is non-negative for any possible $\lambda$, and large
if $\lambda$ is an eigenvalue we want to forbid, and 
\item $G(\gamma)$ localizes to $\gamma$ with $\ell_{w}(\gamma)\leq c\log n$
for some $c=c(X)$.
\end{itemize}
By taking expectations of (\ref{eq:trace-formula-simplicstic}) we
obtain
\begin{equation}
\E\left[\sum_{\text{new eigenvalues \ensuremath{\lambda} of \ensuremath{X_{\phi}}}}F(\lambda)\right]=\sum_{[\gamma]\in C(\Gamma)}G(\gamma)\E\left[\fix_{\gamma}(\phi)-1\right].\label{eq:expected-trace-formula}
\end{equation}

The proof will conclude by bounding the right hand side and applying
Markov's inequality to conclude that there are no new eigenvalues
in the desired forbidden region. Since $G$ is well-controlled in
our proof, it remains to estimate each term $\E\left[\fix_{\gamma}(\phi)-1\right]$.
To do this, we echo Broder-Shamir \cite{BroderShamir} and partition
the summation on the right-hand side of (\ref{eq:expected-trace-formula})
according to three groups.
\begin{itemize}
\item If $\gamma$ is the identity, then $G(1)$ is easily analyzed, and
$\E\left[\fix_{\gamma}(\phi)-1\right]=n-1$.
\item If $\gamma$ is a proper power of a non-trivial element of $\Gamma$,
then we use a trivial bound $\E\left[\fix_{\gamma}(\phi)-1\right]\leq n-1$,
so we get no gain from the expectation. On the other hand, the contribution
to 
\[
\sum_{[\gamma]\in C(\Gamma)}G(\gamma)
\]
from these elements is negligible. Intuitively, this is because the
number of elements of $\Gamma$ with $\ell_{w}(\gamma)\leq L$ and
that are proper powers is (exponentially) negligible compared to the
total number of elements.
\item If $\gamma$ is not a proper power and not the identity, then we use
Theorem \ref{thm:effective-error} to obtain\\
 $\E\left[\fix_{\gamma}(\phi)-1\right]=O_{X}\left(\frac{(\log n)^{A}}{n}\right)$.
Thus for `most' summands in the right-hand side of (\ref{eq:expected-trace-formula})
we obtain a significant gain from the expectation.
\end{itemize}
Assembling all these estimates together gives a sufficiently upper
strong bound on (\ref{eq:expected-trace-formula}) to obtain Theorem
\ref{thm:three-sixteenths} via Markov's inequality.

\subsection*{Proof of Theorem \ref{thm:effective-error} }

To understand the proof of Theorem \ref{thm:effective-error}, we
suggest that the reader first read the overview below, then $\S$\ref{sec:Proof-of-Effective-error-theorem}
where all the components of the proof are brought together, and then
$\S$\ref{sec:tiled-surfaces}-$\S$\ref{sec:Estimates-for-the-probabilities-of-tiled-surfaces}
where the technical ingredients are proved. As throughout the paper,
we assume $g=2$ in this overview and we will forgo precision to give
a bird's-eye view of the proof.

Fixing an octagonal fundamental domain for $X$, any $X_{\phi}$ is
tiled by octagons; this tiling comes with some extra labelings of
edges corresponding to the generators of $\Gamma$. Any labeled 2-dimensional
CW-complex that can occur as a subcomplex of some $X_{\phi}$ is called
a tiled surface. For any tiled surface $Y$, we write $\E_{n}^{\emb}\left(Y\right)$
for the expected number, when $\phi$ is chosen uniformly at random
in $\Hom(\Gamma,S_{n})$, of embedded copies of $Y$ in $X_{\phi}$. 

In the previous paper \cite{MPasympcover}, we axiomatized certain
collections $\mathcal{R}$ of tiled surfaces, depending on $\gamma$,
that have the property that 
\begin{equation}
\E_{2,n}[\fix_{\gamma}]=\sum_{Y\in\mathcal{\mathcal{R}}}\E_{n}^{\emb}(Y).\label{eq:resolution-expectation}
\end{equation}
These collections are called \emph{resolutions. }Here we have oversimplified
the definitions to give an overview of the main ideas.

In \cite{MPasympcover}, we chose a resolution, depending on $\gamma$,
that consisted of two special types of tiled surfaces: those that
are boundary reduced or strongly boundary reduced. The motivation
for these definitions is that they make our methods for estimating
$\E_{n}^{\emb}(Y)$ more accurate. To give an example, if $Y$ is
strongly boundary reduced then we prove that for $Y$ fixed and $n\to\infty$,
we obtain\footnote{Some of the notation we use here is detailed in $\S\S$ \ref{subsec:Notation}.}
\begin{equation}
\E_{n}^{\emb}(Y)=n^{\chi(Y)}\left(1+O_{Y}\left(n^{-1}\right)\right).\label{eq:sbr-asymptotic}
\end{equation}
However, the implied constant depends on $Y$, and in the current
paper we have to control uniformly all $\gamma$ with $\ell_{w}(\gamma)\leq c\log n$.
The methods of \cite{MPasympcover} are not good enough for this goal.
To deal with this, we introduce in Definition \ref{def:e-adapted}
a new type of tiled surface called `$\varepsilon$-adapted' (for some
$\varepsilon\geq0$) that directly generalizes, and quantifies, the
concept of being strongly boundary reduced. We will explain the benefits
of this definition momentarily. We also introduce a new algorithm
called the \emph{octagons-vs-boundary} \emph{algorithm} that given
$\gamma$, produces a finite resolution $\mathcal{R}$ as in (\ref{eq:resolution-expectation})
such that every $Y\in\mathcal{R}$ is either
\begin{itemize}
\item $\varepsilon$-adapted for some $\varepsilon>0$, or
\item boundary reduced, with the additional condition that $\d\left(Y\right)<\f\left(Y\right)<-\chi(Y)$,
where $\d(Y)$ is the length of the boundary of $Y$ and $\f(Y)$
is the number of octagons in $Y$.
\end{itemize}
Any $Y\in\mathcal{R}$ has $\d(Y)\leq c'(\log n)$ and $\f(Y)\leq c'(\log n)^{2}$
given that $\ell_{w}(\gamma)\leq c\log n$ (Corollary \ref{cor:facts-about-the-resolution}).
The fact that we maintain control on these quantities during the algorithm
is essential. However, a defect of this algorithm is that we lose
control of how many $\varepsilon$-adapted $Y\in\mathcal{R}$ there
are of a given Euler characteristic. In contrast, in the algorithm
of \cite{MPasympcover} we control, at least, the number of elements
in the resolution of Euler characteristic zero. We later have to work
to get around this.

We run the octagons-vs-boundary algorithm for a fixed $\varepsilon=\frac{1}{32}$
to obtain a resolution $\mathcal{R}.$ Let us explain the benefits
of this resolution we have constructed. The $\varepsilon$-adapted
$Y\in\mathcal{R}$ contribute the main contributions to (\ref{eq:resolution-expectation}),
and the merely boundary reduced $Y$ contribute something negligible. 

Indeed, we prove for any boundary reduced $Y\in\mathcal{R}$ in the
regime of parameters we care about, that
\begin{equation}
\E_{n}^{\emb}(Y)\ll(A_{0}\f(Y))^{A_{0}\f(Y)}n^{\chi(Y)},\label{eq:br-bound-overview}
\end{equation}
where $A_{0}>0$. This bound (\ref{eq:br-bound-overview}) appears
in (\ref{eq:for-overview}) as the result of combining Corollary \ref{cor:size-of-X_n},
Theorem \ref{thm:E_n-emb-exact-expression}, Proposition \ref{prop:Xi-bound-final-BR}
and Lemma \ref{lem:D-vs-d}; the proof is by carefully effectivizing
the arguments of \cite{MPasympcover}.

While the bound (\ref{eq:br-bound-overview}) is quite bad (for example,
using it on all terms in (\ref{eq:resolution-expectation}) would
not even recover the results of \cite{MPasympcover}), the control
of the dependence on $\d(Y)$ is enough so that when combined with
$\d\left(Y\right)<\f\left(Y\right)<-\chi(Y)$ we obtain
\[
\E_{n}^{\emb}(Y)\ll(A_{0}\f(Y))^{A_{0}\f(Y)}n^{-\f(Y)}\ll\left(\frac{\left(c'(\log n)^{2}\right)^{A_{0}}}{n}\right)^{\f(Y)}.
\]
This is good enough that it can simply be combined with counting \emph{all
possible $Y$ }with $\d(Y)\leq c'(\log n)$ and $\f(Y)\leq c'(\log n)^{2}$
to obtain that the non-$\varepsilon$-adapted surfaces in $\mathcal{R}$
contribute $\ll\frac{(\log n)^{A}}{n}$ to (\ref{eq:resolution-expectation})
for $A>0$. This is Proposition \ref{prop:contributions-from-non-adapted}.

So from now on \uline{assume \mbox{$Y\in\mathcal{R}$} is \mbox{$\varepsilon$}-adapted}
and we explain how to control the contributions to (\ref{eq:resolution-expectation})
from these remaining $Y$. We first prove that there is a rational
function $Q_{Y}$ such that 
\begin{equation}
\E_{n}^{\emb}(Y)=n^{\chi(Y)}\left(Q_{Y}(n)+O\left(\frac{1}{n}\right)\right)\left(1+O\left(\frac{(\log n)^{2}}{n}\right)\right),\label{eq:E_n-to-Q}
\end{equation}
where the implied constants hold for any $\varepsilon$-adapted $Y\in{\cal R}$
as long as $\ell_{w}\left(\gamma\right)\le c\log n$ (Theorem \ref{thm:E_n-emb-exact-expression},
Proposition \ref{prop:Xi-bound-final-e-adapted} and Corollary \ref{cor:rational-form-Xi-dnu}).
In fact, this expression remains approximately valid for the same
$Y$ if $n$ is replaced throughout by $m$ with $m\approx(\log n)^{B}$
for some $B>0$; this will become relevant momentarily. 

The rational function $Q_{Y}$ is new to this paper; it appears through
Corollary \ref{cor:size-xn*} and Lemma \ref{lem:x_n^*-rational}
and results from refining the representation-theoretic arguments in
\cite{MPasympcover}. The description of $Q_{Y}$ is in terms of Stallings
core graphs \cite{stallings1983topology}, and related to the theory
of expected number of fixed points of words in the free group. In
the notation of the rest of the paper, 
\begin{equation}
Q_{Y}(n)=\frac{\left(n\right)_{\v(Y)}}{n^{\chi(Y)}}\sum_{H\in{\cal Q}\left(Y\right)}\frac{\left(n\right)_{\v(H)}}{\prod_{f\in\left\{ a,b,c,d\right\} }(n)_{\e_{f}(H)}},\label{eq:Q_Y-expression}
\end{equation}
where $\mathcal{Q}(Y)$ is a collection of core graphs obtained by
adding handles to the one-skeleton of $Y$, performing `folding' operations,
and taking quotients in a particular way (see $\S\S\ref{subsec:UnderstandingX*_n}$
for details). 

The argument leading to (\ref{eq:E_n-to-Q}) involves isolating some
of the terms that contribute to $\E_{n}^{\emb}(Y)$, and reinterpreting
these as related to the size of a set $\X_{n}^{*}(Y,\J)$ of maps
$\F_{4}\to S_{n}$ that contain, in an appropriate sense, an embedded
copy of $Y$ but only satisfy the relation of $\Gamma$ modulo $S_{n-\v(Y)}$
rather than absolutely (Proposition \ref{prop:Xi_n-refined}). Then
by topological arguments the set $\X_{n}^{*}(Y,\J)$ is counted in
terms of core graphs leading to Lemma \ref{lem:x_n^*-rational} that
gives (\ref{eq:Q_Y-expression}) here.

One unusual thing is that our combinatorial description of $Q_{Y}$
does not immediately tell us the order of growth of $Q_{Y}(n)$, because
we do not know much about $\mathcal{Q}(Y)$. On the other hand, we
know enough about $Q_{Y}$ (for example, for what range of parameters
it is positive) so that we can `black-box' results from \cite{MPasympcover}
to learn that if $Y$ is fixed and $n\to\infty$, $Q_{Y}(n)\to1$.
(We also learn from this argument the interesting topological fact
that there is exactly one element of $\mathcal{Q}(Y)$ of maximal
Euler characteristic.)

This algebraic properties of $Q_{Y}$, together with a priori facts
about $Q_{Y}$, allow us to use (\ref{eq:E_n-to-Q}) to establish
the two following important inequalities: 
\begin{align}
\E_{n}^{\emb}(Y) & =n^{\chi(Y)}\left(1+O_{c}\left(\frac{(\log n)^{4}}{n}\right)+O\left(\frac{m}{n}\frac{\E_{m}^{\emb}(Y)}{m^{\chi(Y)}}\right)\right)\label{eq:n-to-m2}\\
n^{\chi(Y)} & \ll\frac{m}{n}\E_{m}^{\emb}(Y),\quad\text{if \ensuremath{\chi(Y)<0}}\label{eq:n-to-m1}
\end{align}
where $m\approx(\log n)^{B}$ is \uline{much smaller} than $n$.
These inequalities are provided by Proposition \ref{prop:main-term-asymp}
and Corollary \ref{cor:Em-lower-bound} (see also Remark \ref{rem:extra-remark}).
While (\ref{eq:n-to-m1}) may look surprising, its purpose is for
running our argument in reverse with decreased parameters as explained
below.

Let us now explain precisely the purpose of (\ref{eq:n-to-m1}) and
(\ref{eq:n-to-m2}). By black-boxing the results of \cite{MPasympcover}
one more time, we learn that there is exactly one $\varepsilon$-adapted
$Y\in\mathcal{R}$ with $\chi(Y)=0$, and none with $\chi(Y)>0$.
This single $Y$ with $\chi(Y)=0$ contributes the main term of Theorems
\ref{thm:asymptotic-non-effective} and \ref{thm:effective-error}
through (\ref{eq:n-to-m2}). Any other term coming from $\varepsilon$-adapted
$Y$ can be controlled in terms of $\E_{m}^{\emb}(Y)$ using (\ref{eq:n-to-m2})
and (\ref{eq:n-to-m1}). These errors could accumulate, but we can
control them \uline{all at once} by using (\ref{eq:resolution-expectation})
in reverse with $n$ replaced by $m$ to obtain

\[
\sum_{Y\in\mathcal{R}}\E_{m}^{\emb}(Y)=\E_{2,m}[\fix_{\gamma}]\leq m\approx(\log n)^{B}.
\]
Putting the previous arguments together proves Theorem \ref{thm:effective-error}.

\subsection{Notation\label{subsec:Notation}}

The commutator of two group elements is $\left[a,b\right]\eqdf aba^{-1}b^{-1}.$
For $m,n\in\N$, $m\leq n$, we use the notation $[m,n]$ for the
set $\{m,m+1,\ldots,n\}$ and $[n]$ for the set $\{1,\ldots,n\}$.
For $q,n\in\N$ with $q\leq n$ we use the Pochammer symbol 
\[
(n)_{q}\eqdf n(n-1)\cdots(n-q+1).
\]
For real-valued functions $f,g$ that depend on a parameter $n$ we
write $f=O(g)$ to mean there exist constants $C,N>0$ such that for
$n>N$, $|f(n)|\leq Cg(n)$. We write $f\ll g$ if there are $C,N>0$
such that $f(n)\leq Cg(n)$ for $n>N$. We add constants as a subscript
to the big $O$ or the $\ll$ sign to mean that the constants $C$
and $N$ depend on these other constants, for example, $f=O_{\varepsilon}(g)$
means that both $C=C(\varepsilon)$ and $N=N(\varepsilon)$ may depend
on $\varepsilon$. If there are no subscripts, it means the implied
constants depend only on the genus $g$, which is fixed throughout
most of the paper. We use the notation $f\asymp g$ to mean $f\ll g$
and $g\ll f$; the use of subscripts is the same as before.

\subsection*{Acknowledgments}

We thank Amitay Kamber for the observation that our methods prove
Theorem 1.7. We thank the anonymous referees for their careful readings
and insightful comments.

Frédéric Naud is supported by Institut Universitaire de France. Doron
Puder was supported by the Israel Science Foundation: ISF grant 1071/16.
This project has received funding from the European Research Council
(ERC) under the European Union’s Horizon 2020 research and innovation
programme (grant agreement No 850956 and grant agreement No 949143).

\section{The proof of Theorem \ref{thm:three-sixteenths} given Theorem \ref{thm:effective-error}
\label{sec:The-proof-of-spectral-gap-theorem}}

\subsection{Selberg's trace formula and counting closed geodesics}

Here we describe the main tool of this $\S$\ref{sec:The-proof-of-spectral-gap-theorem}:
Selberg's trace formula for compact hyperbolic surfaces. Let $C_{c}^{\infty}(\R)$
denote the infinitely differentiable real functions on $\R$ with
compact support. Given an \textit{\emph{even}} function $\varphi\in C_{c}^{\infty}(\R)$,
its Fourier transform is defined by
\[
\widehat{\varphi}(\xi)\eqdf\int_{-\infty}^{\infty}\varphi(x)e^{-ix\xi}dx
\]
for any $\xi\in\C$. As $\varphi\in C_{c}^{\infty}(\R)$, the integral
above converges for all $\xi\in\C$ to an entire function. 

Given a compact hyperbolic surface $X$, we write $\mathcal{L}(X)$
for the set of closed oriented geodesics in $X$. A geodesic is called
\emph{primitive} if it is not the result of repeating another geodesic
$q$ times for $q\geq2$. Let $\mathcal{P}(X)$ denote the set of
closed oriented primitive\emph{ }geodesics on $X$. Every closed geodesic
$\gamma$ has a length $\ell(\gamma)$ according to the hyperbolic
metric on $X$. Every closed oriented geodesic $\gamma\in\L(X)$ determines
a conjugacy class $[\tilde{\gamma}]$ in $\pi_{1}(X,x_{0})$ for any
basepoint $x_{0}$. Clearly, a closed oriented geodesic in $X$ is
primitive if and only if the elements of the corresponding conjugacy
class are not proper powers in $\pi_{1}(X,x_{0})$. For $\gamma\in\L(X)$
we write $\Lambda(\gamma)=\ell(\gamma_{0})$ where $\gamma_{0}$ is
the unique primitive closed oriented geodesic such that $\gamma=\gamma_{0}^{q}$
for some $q\geq1$.

We now give Selberg's trace formula for a compact hyperbolic surface
in the form of \cite[Thm. 9.5.3]{Buser} (see Selberg \cite{Selberg}
for the original appearance of this formula and Hejhal \cite{Hejhal1,Hejhal2}
for an encyclopedic treatment). 
\begin{thm}[Selberg's trace formula]
\label{thm:Selberg-trace-formual}Let $X$ be a compact hyperbolic
surface and let 
\[
0=\lambda_{0}(X)\leq\lambda_{1}(X)\leq\cdots\leq\lambda_{n}(X)\leq\cdots
\]
denote the spectrum of the Laplacian on $X$. For $i\in\N\cup\{0\}$
let
\[
r_{i}(X)\eqdf\begin{cases}
\sqrt{\lambda_{i}(X)-\frac{1}{4}} & if\ \lambda_{i}(X)>1/4\\
i\sqrt{\frac{1}{4}-\lambda_{i}(X)} & if\ \lambda_{i}(X)\leq1/4
\end{cases}.
\]
Then for any even $\varphi\in C_{c}^{\infty}(\R)$
\[
\sum_{i=0}^{\infty}\widehat{\varphi}(r_{i}(X))=\frac{\mathrm{\mathrm{area}}(X)}{4\pi}\int_{-\infty}^{\infty}r\widehat{\varphi}(r)\tanh(\pi r)dr+\sum_{\gamma\in\L(X)}\frac{\Lambda(\gamma)}{2\sinh\left(\frac{\ell(\gamma)}{2}\right)}\varphi(\ell(\gamma)).
\]
(Both sides of the formula are absolutely convergent.)
\end{thm}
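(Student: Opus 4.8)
The plan is to run the classical derivation of the trace formula via an automorphic kernel, in the style of Selberg and of \cite{Buser,Hejhal1,Hejhal2}. Given the even $\varphi\in C_{c}^{\infty}(\R)$, the first step is to manufacture a radial integral kernel on $\mathbb{H}$: through a chain of Abel-type transforms (the Harish--Chandra/Selberg transform) one produces a function $k$ on $[0,\infty)$ such that the point-pair invariant $k(z,w)\eqdf k(d(z,w))$, with $d$ the hyperbolic distance, defines an integral operator $f\mapsto\int_{\mathbb{H}}k(z,w)f(w)\,d\mu(w)$ that commutes with $\Delta_{\mathbb{H}}$ and acts on a $\Delta_{\mathbb{H}}$-eigenfunction of eigenvalue $\tfrac14+r^{2}$ by the scalar $\widehat{\varphi}(r)$. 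Since $\varphi$ has compact support, the Abel transform chain keeps $k$ compactly supported, which makes all the sums below locally finite. One then forms the automorphic kernel $K(z,w)\eqdf\sum_{\gamma\in\Gamma}k(z,\gamma w)$, where $\Gamma\cong\pi_{1}(X)$ is realized in $\mathrm{PSL}_{2}(\R)$; this is a smooth $\Gamma\times\Gamma$-invariant kernel, hence descends to the kernel of an operator $L_{\varphi}$ on $L^{2}(X)$. The formula is obtained by computing $\Tr L_{\varphi}$ in two ways.

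\textbf{Spectral side.} Because $X$ is compact, $L_{\varphi}$ has continuous kernel on a compact manifold and is trace class (for instance by factoring $L_{\varphi}=L_{\varphi_{1}}L_{\varphi_{2}}$ with $\widehat{\varphi_{1}}\widehat{\varphi_{2}}=\widehat{\varphi}$ into a product of Hilbert--Schmidt operators), so $\Tr L_{\varphi}=\int_{X}K(z,z)\,d\mu(z)$. It is self-adjoint (as $\varphi$ is even and real $k$ is symmetric and real), and by elliptic regularity plus the spectral theorem it is simultaneously diagonalized with $\Delta_{X}$ by an orthonormal basis of eigenfunctions $\{u_{i}\}$; on $u_{i}$ it acts by $\widehat{\varphi}(r_{i}(X))$. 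Hence $\Tr L_{\varphi}=\sum_{i}\widehat{\varphi}(r_{i}(X))$, with absolute convergence from the Paley--Wiener bound on $\widehat{\varphi}$ (entire of exponential type, rapid decay along horizontal lines) together with Weyl's law $\lambda_{i}(X)\asymp i$, noting only finitely many $r_{i}$ are purely imaginary.

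\textbf{Geometric side.} Expand $\Tr L_{\varphi}=\int_{\Gamma\backslash\mathbb{H}}\sum_{\gamma\in\Gamma}k(z,\gamma z)\,d\mu(z)$ and group $\gamma$ by $\Gamma$-conjugacy classes. The identity class contributes $\int_{X}k(0)\,d\mu=\frac{\mathrm{area}(X)}{4\pi}\int_{-\infty}^{\infty}r\,\widehat{\varphi}(r)\tanh(\pi r)\,dr$; this is the Plancherel term, obtained by inverting the Selberg transform at distance $0$, i.e.\ integrating against the spherical Plancherel density of $\mathbb{H}=\mathrm{PSL}_{2}(\R)/\mathrm{PSO}(2)$. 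For a non-identity class, every element of $\Gamma$ is hyperbolic since $X$ is compact; writing $\gamma=\gamma_{0}^{~q}$ with $\gamma_{0}$ primitive and $Z_{\Gamma}(\gamma)=\langle\gamma_{0}\rangle$, the standard unfolding
\[
\int_{\Gamma\backslash\mathbb{H}}\sum_{\delta\in\langle\gamma_{0}\rangle\backslash\Gamma}k\bigl(z,\delta^{-1}\gamma\delta z\bigr)\,d\mu(z)=\int_{\langle\gamma_{0}\rangle\backslash\mathbb{H}}k(z,\gamma z)\,d\mu(z)
\]
reduces the computation to an integral over a cylinder; conjugating $\gamma$ to the dilation $z\mapsto e^{\ell(\gamma)}z$ and integrating in coordinates adapted to this cylinder — of ``circumference'' $\Lambda(\gamma)=\ell(\gamma_{0})$ — and then invoking the Abel-transform relation between $k$ and $\varphi$ to collapse the transverse integral, yields $\frac{\Lambda(\gamma)}{2\sinh(\ell(\gamma)/2)}\varphi(\ell(\gamma))$. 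Summing over the classes (equivalently over $\gamma\in\L(X)$) gives the right-hand side; absolute convergence of the geodesic sum is immediate from the compact support of $\varphi$ and the lattice-point bound $\#\{\gamma\in\Gamma:d(z,\gamma z)\le R\}\ll e^{R}$.

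\textbf{Main obstacle.} The genuinely technical part is the setup and inversion of the Harish--Chandra/Selberg transform and the resulting exact constants: one must check that the transform chain is invertible on $C_{c}^{\infty}$, that it intertwines the integral operator with multiplication by $\widehat{\varphi}(r)$ on each $\Delta$-eigenspace, and that it produces precisely the factor $\tanh(\pi r)$ in the identity term and $1/(2\sinh(\ell(\gamma)/2))$ in the hyperbolic terms. Everything else — trace-class property, simultaneous diagonalization, unfolding, local finiteness — is routine given compactness of $X$. Since this is a classical statement that we only invoke, in practice we simply cite \cite[Thm.\ 9.5.3]{Buser} (see also \cite{Hejhal1,Hejhal2}); the above records the structure of that argument.
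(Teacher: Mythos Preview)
The paper does not prove this theorem at all: it is stated as a classical result and simply cited from \cite[Thm.~9.5.3]{Buser} (with further references to Selberg and Hejhal). Your sketch is the standard pre-trace-formula derivation and is correct as an outline; since the paper's ``proof'' is a citation, there is nothing to compare beyond noting that you have reproduced the structure of the argument those references carry out in full.
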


We will also need a bound on the number of closed oriented geodesics
with length $\ell(\gamma)\leq T$. In fact we only need the following
very soft bound from e.g.~\cite[Lem.~9.2.7]{Buser}.
\begin{lem}
\label{lem:coarse-counting-bound}For a compact hyperbolic surface
$X$, there is a constant $C=C(X)$ such that
\[
\left|\left\{ \gamma\in\L(X)\,:\,\ell(\gamma)\leq T\right\} \right|\leq Ce^{T}.
\]
\end{lem}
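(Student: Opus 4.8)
The statement to prove is Lemma~\ref{lem:coarse-counting-bound}: a coarse exponential bound on the number of closed oriented geodesics of length at most $T$ on a compact hyperbolic surface $X$. The plan is to relate the counting of closed oriented geodesics to the counting of conjugacy classes in $\Gamma = \pi_1(X,x_0) \le \mathrm{PSL}_2(\R)$, and then to the orbit-counting of $\Gamma$ acting on $\mathbb{H}$. Recall that closed oriented geodesics on $X$ are in bijection with nontrivial conjugacy classes $[\gamma]$ in $\Gamma$, and the length of the geodesic equals the translation length $\ell(\gamma) = \inf_{z \in \mathbb{H}} d_{\mathbb{H}}(z, \gamma z)$ of the corresponding hyperbolic isometry. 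So it suffices to bound the number of conjugacy classes $[\gamma]$ with translation length at most $T$.

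First I would fix a basepoint $z_0 \in \mathbb{H}$ and a compact fundamental domain $D$ for the $\Gamma$-action with $z_0 \in D$; let $R = \mathrm{diam}(D)$. For any $\gamma \in \Gamma$, some conjugate $\gamma'$ of $\gamma$ moves $z_0$ by at most $\ell(\gamma) + 2R$: indeed, if $w$ is a point near the axis of $\gamma$ with $d(w, \gamma w)$ close to $\ell(\gamma)$, translate $w$ back into $D$ by an element $\delta \in \Gamma$, and then $\gamma' = \delta \gamma \delta^{-1}$ satisfies $d(z_0, \gamma' z_0) \le d(z_0, \delta w) + d(\delta w, \gamma' \delta w) + d(\gamma' \delta w, \gamma' z_0) \le R + (\ell(\gamma) + \text{small}) + R$. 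Hence the number of conjugacy classes with $\ell(\gamma) \le T$ is at most the number of \emph{elements} $\gamma' \in \Gamma$ with $d(z_0, \gamma' z_0) \le T + 2R + 1$.

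Next I would bound this lattice-point count by a volume (packing) argument. The points $\{\gamma' z_0 : \gamma' \in \Gamma\}$ are the centers of the translates $\{\gamma' D\}$, which tile $\mathbb{H}$ with disjoint interiors; each has area $\mathrm{area}(X)$ and is contained in a ball of radius $R$ about $\gamma' z_0$. If $d(z_0, \gamma' z_0) \le T + 2R + 1$ then $\gamma' D$ lies in the ball $B(z_0, T + 3R + 1)$, so the number of such $\gamma'$ is at most $\mathrm{area}\big(B(z_0, T+3R+1)\big) / \mathrm{area}(X)$. Since the area of a hyperbolic ball of radius $\rho$ is $2\pi(\cosh \rho - 1) \le \pi e^{\rho}$, this gives a bound of the form $C'(X) e^{T}$, which is exactly the claimed estimate with $C = C(X)$ absorbing the factor $e^{3R+1}/\mathrm{area}(X)$ and the like.

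The only mild subtlety — and the step I would be most careful about — is the first reduction: ensuring that every conjugacy class of a hyperbolic element has a representative moving $z_0$ a controlled amount, and handling the constants cleanly (the "$+\text{small}$" from approximating the infimum defining $\ell(\gamma)$ is harmless and can be absorbed into the additive constant). There is nothing deep here; the lemma is soft, as the paper notes, and in practice one would simply cite \cite[Lemma 9.2.7]{Buser}. I would present the above as a one-paragraph sketch or defer entirely to Buser.
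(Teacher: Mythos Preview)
Your argument is correct and is the standard proof one finds in Buser. The paper itself does not give a proof of this lemma at all: it simply states the bound and cites \cite[Lemma 9.2.7]{Buser}, exactly as you suggest doing in your final sentence.
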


Much sharper versions of this estimate are known, but Lemma \ref{lem:coarse-counting-bound}
suffices for our purposes. 

Suppose that $X$ is a connected compact hyperbolic surface. We fix
a basepoint $x_{0}\in X$ and an isomorphism $\pi_{1}(X,x_{0})\cong\Gamma_{g}$
as in (\ref{eq:fundamental-group-isomorphism}) where $g\geq2$ is
the genus of $X$. If $\gamma$ is a closed oriented geodesic, by
abuse of notation we let $\ell_{w}\left(\gamma\right)$ denote the
minimal word-length of an element in the conjugacy class in $\Gamma_{g}$
specified by $\gamma$ (on page \pageref{ell_w} we used the same
notation for an element of $\Gamma_{g}$). We want to compare $\ell(\gamma)$
and $\ell_{w}(\gamma)$. We will use the following simple consequence
of the \u{S}varc-Milnor lemma \cite[Prop. 8.19]{BridsonHaefliger}.
\begin{lem}
\label{lem:length-comparison}With notations as above, there exist
constants $K_{1},K_{2}\geq0$ depending on $X$ such that 
\[
\ell_{w}(\gamma)\leq K_{1}\ell(\gamma)+K_{2}.
\]
\end{lem}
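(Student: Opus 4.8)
The plan is to invoke the Švarc–Milnor lemma in the form stated in \cite[Prop.\ 8.19]{BridsonHaefliger}. Recall its content: if a group $\Gamma$ acts properly discontinuously and cocompactly by isometries on a length space $(M,d)$, then $\Gamma$ is finitely generated and, for any choice of finite generating set $S$ and any basepoint $p\in M$, the orbit map $\gamma\mapsto\gamma\cdot p$ is a quasi-isometry from $(\Gamma,d_S)$ to $(M,d)$, where $d_S$ is the word metric. We apply this with $\Gamma=\Gamma_g=\pi_1(X,x_0)$, with $S=\{a_1^{\pm1},b_1^{\pm1},\ldots,a_g^{\pm1},b_g^{\pm1}\}$, with $M=\mathbb{H}$ equipped with its hyperbolic metric $d_{\mathbb H}$, and with $p=\tilde x_0$ a fixed lift of $x_0$. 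The action is free and properly discontinuous by construction (Section~1), and it is cocompact because $X=\Gamma_g\backslash\mathbb{H}$ is compact. Hence there are constants $K_1\ge1$, $K_2\ge0$ (depending only on $X$ and the chosen generating set) such that for all $\gamma\in\Gamma_g$,
\[
\tfrac{1}{K_1}\,|\gamma|_S-K_2\;\le\;d_{\mathbb H}(\tilde x_0,\gamma\tilde x_0)\;\le\;K_1\,|\gamma|_S+K_2 ,
\]
where $|\gamma|_S$ is the word length. We only need the left-hand inequality, rearranged as $|\gamma|_S\le K_1\, d_{\mathbb H}(\tilde x_0,\gamma\tilde x_0)+K_1K_2$.

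Next I would relate the two quantities appearing in the lemma to the above. For a closed oriented geodesic $\gamma$ on $X$, pick a representative $\gamma$ of the associated conjugacy class in $\Gamma_g$ that is length-minimizing in the geometric sense; viewing $\gamma$ as a hyperbolic isometry of $\mathbb H$, its translation length equals $\ell(\gamma)$, i.e.\ $\ell(\gamma)=\inf_{z\in\mathbb H}d_{\mathbb H}(z,\gamma z)$. This gives $d_{\mathbb H}(\tilde x_0,\gamma\tilde x_0)\ge \ell(\gamma)$; but we want an \emph{upper} bound on $d_{\mathbb H}(\tilde x_0,\gamma\tilde x_0)$, so instead I use conjugation-invariance of word length: both $\ell_w(\gamma)$ and $\ell(\gamma)$ depend only on the conjugacy class. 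Choose $g_0\in\Gamma_g$ so that $g_0\gamma g_0^{-1}$ has its translation axis passing within bounded distance $R=\operatorname{diam}(X)$ of $\tilde x_0$ (possible since the axis projects into the compact surface $X$, hence some $\Gamma_g$-translate of it comes within $R$ of $\tilde x_0$). Writing $\gamma':=g_0\gamma g_0^{-1}$ and letting $z_0$ be the point on the axis of $\gamma'$ closest to $\tilde x_0$, we get $d_{\mathbb H}(\tilde x_0,\gamma'\tilde x_0)\le d_{\mathbb H}(\tilde x_0,z_0)+d_{\mathbb H}(z_0,\gamma'z_0)+d_{\mathbb H}(\gamma'z_0,\gamma'\tilde x_0)\le 2R+\ell(\gamma)$, using that $\gamma'$ is an isometry and that $z_0$ moves exactly $\ell(\gamma)$ along the axis. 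Combining with the Švarc–Milnor estimate applied to $\gamma'$, and using $\ell_w(\gamma)=|\gamma'|_S$ (conjugacy-invariance realized by the minimizing representative, or simply the crude bound $\ell_w(\gamma)\le |\gamma'|_S$ since $\gamma'$ is conjugate to $\gamma$):
\[
\ell_w(\gamma)\;\le\;|\gamma'|_S\;\le\;K_1\,d_{\mathbb H}(\tilde x_0,\gamma'\tilde x_0)+K_1K_2\;\le\;K_1\bigl(2R+\ell(\gamma)\bigr)+K_1K_2\;=\;K_1\,\ell(\gamma)+\bigl(2RK_1+K_1K_2\bigr).
\]
Setting the new $K_2$ to be $2RK_1+K_1K_2$ (and keeping $K_1$, or relabeling), this is exactly the claimed inequality.

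The only genuinely delicate point is the bookkeeping needed to pass from the orbit-map quasi-isometry estimate (which controls $d_{\mathbb H}(\tilde x_0,\gamma\tilde x_0)$ for a \emph{specific} group element $\gamma$) to a statement about the \emph{conjugacy class}, i.e.\ about $\ell(\gamma)$ and $\ell_w(\gamma)$, both of which are conjugacy invariants: one must choose a good conjugate whose axis passes near the chosen lift $\tilde x_0$, and the bound $\operatorname{diam}(X)$ enters exactly here. Everything else is a direct citation of \cite[Prop.\ 8.19]{BridsonHaefliger} together with elementary hyperbolic geometry (translation length equals displacement along the axis). No control of the multiplicative constant beyond its dependence on $X$ is needed, since in the application (the bound $\ell_w(\gamma)\le c\log n$ in Theorem~\ref{thm:effective-error}) only the qualitative comparison matters.
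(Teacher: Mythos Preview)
Your proof is correct and follows exactly the approach indicated by the paper, which does not give a detailed argument but simply states the lemma as ``a simple consequence of the \v{S}varc--Milnor lemma \cite[Prop.~8.19]{BridsonHaefliger}.'' You have supplied precisely the details the paper omits, including the conjugation step needed to pass from the orbit-map estimate to conjugacy-invariant quantities.
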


\subsection{Choice of function for use in Selberg's trace formula}

We now fix a function $\varphi_{0}\in C_{c}^{\infty}(\mathbb{\R})$
which has the following key properties:
\begin{enumerate}
\item $\varphi_{0}$ is non-negative and even.
\item $\mathrm{Supp}(\varphi_{0})=(-1,1)$.
\item The Fourier transform $\widehat{\varphi_{0}}$ satisfies $\widehat{\varphi_{0}}(\xi)\geq0$
for all $\xi\in\mathbb{\R}\cup i\mathbb{\R}$.
\end{enumerate}
\begin{proof}[Proof that such a function exists]
Let $\psi_{0}$ be a $C^{\infty}$, even, real-valued non-negative
function whose support is exactly $(-\frac{1}{2},\frac{1}{2})$. Let
$\varphi_{0}\eqdf\psi_{0}\star\psi_{0}$ where
\[
\psi_{0}\star\psi_{0}(x)\eqdf\int_{\mathbb{\R}}\psi_{0}(x-t)\psi_{0}(t)dt.
\]
 Then $\varphi_{0}$ has the desired properties. 
\end{proof}
We now fix a function $\varphi_{0}$ as above and for any $T>0$ define
\[
\varphi_{T}(x)\eqdf\varphi_{0}\left(\frac{x}{T}\right).
\]

\begin{lem}
\label{lem:lower-bound-on-fourier}For all $\varepsilon>0$, there
exists $C_{\varepsilon}>0$ such that for all $t\in\mathbb{\R}_{\geq0}$
and for all $T>0$
\[
\widehat{\varphi_{T}}(it)\geq C_{\varepsilon}Te^{T(1-\varepsilon)t}.
\]
\end{lem}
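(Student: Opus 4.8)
The plan is to unwind the definition of the Fourier transform and exploit the scaling relation between $\varphi_T$ and $\varphi_0$. First I would record that $\varphi_T(x) = \varphi_0(x/T)$ gives, by the change of variables $x = Tu$,
\[
\widehat{\varphi_T}(\xi) = \int_{-\infty}^{\infty}\varphi_0\!\left(\tfrac{x}{T}\right)e^{-ix\xi}\,dx = T\int_{-\infty}^{\infty}\varphi_0(u)e^{-iuT\xi}\,du = T\,\widehat{\varphi_0}(T\xi).
\]
Setting $\xi = it$ with $t \geq 0$, this reduces the claim to showing that $\widehat{\varphi_0}(iTt) \geq C_\varepsilon e^{T(1-\varepsilon)t}$, i.e. it suffices to prove a single estimate of the form $\widehat{\varphi_0}(is) \geq C_\varepsilon e^{(1-\varepsilon)s}$ for all $s \geq 0$ (then apply it with $s = Tt$).

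Next I would estimate $\widehat{\varphi_0}(is)$ directly. Since $\varphi_0$ is even and real-valued,
\[
\widehat{\varphi_0}(is) = \int_{-\infty}^{\infty}\varphi_0(x)e^{sx}\,dx = \int_{-1}^{1}\varphi_0(x)\cosh(sx)\,dx \geq \int_{-1}^{1}\varphi_0(x)\cosh(sx)\,dx.
\]
Because $\varphi_0 \geq 0$ with support exactly $(-1,1)$, it is bounded below by a positive constant on some subinterval: fix $\delta \in (0,1)$ and $c_0 > 0$ with $\varphi_0(x) \geq c_0$ for $x \in [1-2\delta, 1-\delta]$ (such $\delta, c_0$ exist by continuity and positivity of $\varphi_0$ near a point inside its support). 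Then
\[
\widehat{\varphi_0}(is) \geq c_0\int_{1-2\delta}^{1-\delta}\cosh(sx)\,dx \geq c_0\int_{1-2\delta}^{1-\delta}\tfrac{1}{2}e^{sx}\,dx \geq \tfrac{c_0}{2}\,\delta\, e^{s(1-2\delta)}.
\]
Given $\varepsilon > 0$, choose $\delta = \varepsilon/2$; this yields $\widehat{\varphi_0}(is) \geq \tfrac{c_0\varepsilon}{4}e^{(1-\varepsilon)s}$ for all $s \geq 0$, which is the required single-variable estimate with an explicit constant depending only on $\varepsilon$ (through $\delta = \varepsilon/2$ and the resulting $c_0$).

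Finally I would assemble the pieces: with $s = Tt$,
\[
\widehat{\varphi_T}(it) = T\,\widehat{\varphi_0}(iTt) \geq \tfrac{c_0\varepsilon}{4}\,T\,e^{T(1-\varepsilon)t},
\]
so the lemma holds with $C_\varepsilon = c_0\varepsilon/4$. I do not anticipate a genuine obstacle here — the only mild subtlety is making sure the lower bound $\varphi_0(x) \geq c_0$ on a fixed subinterval near the edge of the support is valid; this is where property (2) ($\mathrm{Supp}(\varphi_0) = (-1,1)$, so $\varphi_0$ is genuinely positive on points arbitrarily close to $\pm 1$) and continuity of $\varphi_0$ are used, and it is exactly what lets the exponent $1-\varepsilon$ approach the critical value $1$. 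Property (3) (positivity of $\widehat{\varphi_0}$ on $\R \cup i\R$) is not needed for this particular lemma, though it will be used elsewhere in the trace-formula argument.
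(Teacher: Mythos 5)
Your proposal is correct and follows essentially the same route as the paper: rescale to reduce to $\widehat{\varphi_0}(is)$, then restrict the integral to a subinterval near the right edge of $\mathrm{Supp}(\varphi_0)=(-1,1)$ and bound the exponential factor from below by $e^{(1-\varepsilon)s}$ there. The paper simply takes $C_\varepsilon=\int_{1-\varepsilon}^1\varphi_0$ directly rather than passing through a pointwise lower bound $c_0$ and the $\cosh$ identity, but the argument is the same.
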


\begin{proof}
First observe that 
\[
\widehat{\varphi_{T}}(it)=T\widehat{\varphi_{0}}(Tit)=T\int_{\mathbb{\R}}\varphi_{0}(x)e^{Txt}dx.
\]
Using $t\geq0$ and $\mathrm{Supp}(\varphi_{0})=(-1,1)$ with $\varphi_{0}$
non-negative, we have for some $C_{\varepsilon}>0$
\[
\widehat{\varphi_{T}}(it)\geq T\int_{1-\varepsilon}^{1}\varphi_{0}(x)e^{Txt}dx\geq TC_{\varepsilon}e^{T(1-\varepsilon)t}.
\]
\end{proof}

\subsection{Proof of Theorem \ref{thm:three-sixteenths}}

Let $X$ be a genus $g$ compact hyperbolic surface and let $X_{\phi}$
be the cover of $X$ corresponding to $\phi\in\Hom(\Gamma_{g},S_{n})$
constructed in the introduction. In what follows we let 
\[
T=4\log n.
\]
For every $\gamma\in\L(X)$, we pick $\tilde{\gamma}\in\Gamma_{g}$
in the conjugacy class in $\Gamma_{g}$ corresponding to $\gamma$
(so in particular $\ell_{w}\left(\tilde{\gamma}\right)=\ell_{w}\left(\gamma\right)$).
Every closed oriented geodesic $\delta$ in $X_{\phi}$ covers, via
the Riemannian covering map $X_{\phi}\to X,$ a unique closed oriented
geodesic in $X$ that we will call $\pi(\delta)$. This gives a map
\[
\pi:\L(X_{\phi})\to\L(X).
\]
Note that $\ell(\delta)=\ell(\pi(\delta)).$ We claim that $|\pi^{-1}(\gamma)|=\fix_{\tilde{\gamma}}(\phi)$,
recalling that $\fix_{\tilde{\gamma}}(\phi)$ is the number of fixed
points of $\phi(\tilde{\gamma})$. Indeed, by its very definition,
$X_{\phi}$ is a fiber bundle over $X$ with fiber $[n]$. If $\gamma\in\mathcal{P}(X)$,
and we fix some regular point $o\in\gamma$ (not a self-intersection
point), then in $X_{\phi}$, the fiber of $o$ can be identified with
$[n]$. The oriented geodesic path $\gamma\backslash\{o\}$ lifts
to $n$ oriented geodesic paths with start and end points equal to
$[n]$. The permutation of $[n]$ obtained by following these from
start to end is (up to conjugation) $\phi(\tilde{\gamma})$ and hence,
the $\delta$'s with $\pi(\delta)=\gamma$ are precisely the paths
that close up, or in other words, the $\delta$'s with $\pi(\delta)=\gamma$
correspond to fixed points of $\phi(\tilde{\gamma})$. For general
$\gamma\in\L\left(X\right)$, assume that $\gamma=\gamma_{0}^{~q}$
with $q\ge1$ and $\gamma_{0}\in\mathcal{P}\left(X\right)$. A similar
argument shows that the elements in $\pi^{-1}\left(\gamma\right)$
are in bijection with fixed points of $\tilde{\gamma_{0}}^{q}$ which
we may take as our $\tilde{\gamma}$.

We also have $\mathrm{\mathrm{area}}(X_{\phi})=n\cdot\mathrm{\mathrm{area}}(X)$.
Now applying Theorem \ref{thm:Selberg-trace-formual} to $X_{\phi}$
with the function $\varphi_{T}$ gives
\begin{align*}
\sum_{i=0}^{\infty}\widehat{\varphi_{T}}(r_{i}(X_{\phi})) & =\frac{\mathrm{area}(X_{\phi})}{4\pi}\int_{-\infty}^{\infty}r\widehat{\varphi_{T}}(r)\tanh(\pi r)dr+\sum_{\delta\in\L(X_{\phi})}\frac{\Lambda(\delta)}{2\sinh\left(\frac{\ell(\delta)}{2}\right)}\varphi_{T}(\ell(\delta))\\
 & =\frac{n\cdot\mathrm{area}(X)}{4\pi}\int_{-\infty}^{\infty}r\widehat{\varphi_{T}}(r)\tanh(\pi r)dr+\sum_{\gamma\in\L(X)}\sum_{\delta\in\pi^{-1}(\gamma)}\frac{\Lambda(\delta)}{2\sinh\left(\frac{\ell(\gamma)}{2}\right)}\varphi_{T}(\ell(\gamma))\\
 & =\frac{n\cdot\mathrm{area}(X)}{4\pi}\int_{-\infty}^{\infty}r\widehat{\varphi_{T}}(r)\tanh(\pi r)dr+\sum_{\gamma\in\mathcal{P}(X)}\frac{\fix_{\tilde{\gamma}}(\phi)\ell(\gamma)}{2\sinh\left(\frac{\ell(\gamma)}{2}\right)}\varphi_{T}(\ell(\gamma))\\
 & ~~~+\sum_{\gamma\in\mathcal{\L}(X)-\mathcal{P}(X)}\sum_{\delta\in\pi^{-1}(\gamma)}\frac{\Lambda(\delta)}{2\sinh\left(\frac{\ell(\gamma)}{2}\right)}\varphi_{T}(\ell(\gamma)),
\end{align*}
where in the second equality we used the fact that for $\delta\in\L\left(X_{\phi}\right)$,
$\ell\left(\delta\right)=\ell\left(\pi\left(\delta\right)\right)$,
and in the third equality we used that if $\gamma\in\mathcal{P}(X)$,
then $\delta\in\mathcal{P}(X_{\phi})$ for all $\delta\in\pi^{-1}(\gamma)$,
so $\Lambda(\delta)=\Lambda(\gamma)=\ell(\gamma)$. Let $i_{1},i_{2},i_{3},\ldots$
be a subsequence of $1,2,3,\ldots$ such that 
\[
0\leq\lambda_{i_{1}}(X_{\phi})\leq\lambda_{i_{2}}(X_{\phi})\leq\cdots
\]
are the \emph{new }eigenvalues of $X_{\phi}$. Thus $\lambda_{i_{1}}\left(X_{\phi}\right)$
is the smallest new eigenvalue of $X_{\phi}$. Taking the difference
of the above formula with the trace formula for $X$ (with the same
function $\varphi_{T}$) gives 
\begin{align}
\sum_{j=1}^{\infty}\widehat{\varphi_{T}}(r_{i_{j}}(X_{\phi})) & =\frac{(n-1)\cdot\mathrm{area}(X)}{4\pi}\int_{-\infty}^{\infty}r\widehat{\varphi_{T}}(r)\tanh(\pi r)dr+\sum_{\gamma\in\mathcal{P}(X)}\frac{(\fix_{\tilde{\gamma}}(\phi)-1)\ell(\gamma)}{2\sinh\left(\frac{\ell(\gamma)}{2}\right)}\varphi_{T}(\ell(\gamma))\nonumber \\
 & ~~~+\sum_{\gamma\in\mathcal{\L}(X)-\mathcal{P}(X)}\frac{\varphi_{T}\left(\ell(\gamma)\right)}{2\sinh\left(\frac{\ell(\gamma)}{2}\right)}\left(\left(\sum_{\delta\in\pi^{-1}(\gamma)}\Lambda(\delta)\right)-\Lambda(\gamma)\right).\label{eq:trace-formula-difference}
\end{align}
Since $\varphi_{T}$ is non-negative and for any $\gamma\in\L(X)$,
$|\pi^{-1}(\gamma)|\leq n$, and $\Lambda(\delta)\leq\ell\left(\delta\right)=\ell(\gamma)$
for all $\delta\in\pi^{-1}(\gamma)$, the sum on the bottom line of
(\ref{eq:trace-formula-difference}) is bounded from above by
\begin{eqnarray}
n\sum_{\gamma\in\mathcal{\L}(X)-\mathcal{P}(X)}\frac{\varphi_{T}\left(\ell(\gamma)\right)}{2\sinh\left(\frac{\ell(\gamma)}{2}\right)}\cdot\ell(\gamma) & = & n\sum_{\gamma\in\mathcal{P}(X)}\sum_{k=2}^{\infty}\frac{\varphi_{T}\left(k\ell(\gamma)\right)}{2\sinh\left(\frac{k\ell(\gamma)}{2}\right)}k\ell(\gamma).\label{eq:error-in-tf-diff}
\end{eqnarray}
We have
\begin{equation}
\sum_{k=2}^{\infty}\frac{\varphi_{T}\left(k\ell(\gamma)\right)}{2\sinh\left(\frac{k\ell(\gamma)}{2}\right)}k\ell(\gamma)\stackrel{\left(*\right)}{\ll}_{X}\ell(\gamma)\sum_{k=2}^{\infty}ke^{-\frac{k\ell(\gamma)}{2}}\stackrel{\left(**\right)}{\ll}_{X}\ell(\gamma)e^{-\ell(\gamma)},\label{eq:sum-over-k-est}
\end{equation}
where in $\left(*\right)$ we relied on that $\varphi_{T}$ is bounded,
and in both $\left(*\right)$ and $\left(**\right)$ on that there
is a positive lower bound on the lengths of closed geodesics in $X$.
As $\varphi_{T}$ is supported on $\left(-T,T\right)$, the left hand
side of (\ref{eq:sum-over-k-est}) vanishes whenever $\ell\left(\gamma\right)\ge T/2$.
Using Lemma \ref{lem:coarse-counting-bound} we thus get
\begin{align}
n\sum_{\gamma\in\mathcal{P}(X)}\sum_{k=2}^{\infty}\frac{\varphi_{T}\left(k\ell(\gamma)\right)}{2\sinh\left(\frac{k\ell(\gamma)}{2}\right)}k\ell(\gamma) & \ll_{X}n\sum_{\gamma\in\mathcal{P}(X):\ell(\gamma)\leq T}\ell(\gamma)e^{-\ell(\gamma)}\nonumber \\
 & \leq n\sum_{m=0}^{T}\sum_{\gamma\in\L(X)\,:\,m\leq\ell(\gamma)<m+1}\left(m+1\right)e^{-m}\nonumber \\
 & \ll_{X}n\sum_{m=0}^{T}(m+1)e^{m+1}e^{-m}\ll nT^{2}.\label{eq:error-in-tf-diff-est-2}
\end{align}
We also have
\begin{align}
\int_{-\infty}^{\infty}r\widehat{\varphi_{T}}(r)\tanh(\pi r)dr & =T\int_{-\infty}^{\infty}r\widehat{\varphi_{0}}(Tr)\tanh(\pi r)dr\nonumber \\
 & =\frac{1}{T}\int_{-\infty}^{\infty}r'\widehat{\varphi_{0}}(r')\tanh(\pi\frac{r'}{T})dr'\nonumber \\
 & \leq\frac{2}{T}\int_{0}^{\infty}|r'||\widehat{\varphi_{0}}(r')|dr'\ll\frac{1}{T}.\label{eq:integrak-term}
\end{align}
The final estimate uses that, since $\varphi_{0}$ is compactly supported,
$\widehat{\varphi_{0}}$ is a Schwartz function and decays faster
than any inverse of a polynomial. Combining (\ref{eq:trace-formula-difference}),
(\ref{eq:error-in-tf-diff}), (\ref{eq:error-in-tf-diff-est-2}) and
(\ref{eq:integrak-term}) gives
\begin{align}
\sum_{j=1}^{\infty}\widehat{\varphi_{T}}(r_{i_{j}}(X_{\phi})) & =O\left(\frac{(n-1)\cdot\mathrm{area}(X)}{4\pi}\cdot\frac{1}{T}\right)+\sum_{\gamma\in\mathcal{P}(X)}\frac{\left(\fix_{\tilde{\gamma}}(\phi)-1\right)\ell(\gamma)}{2\sinh\left(\frac{\ell(\gamma)}{2}\right)}\varphi_{T}\left(\ell(\gamma)\right)+O_{X}\left(T^{2}n\right)\nonumber \\
 & =\sum_{\gamma\in\mathcal{P}(X)}\frac{\left(\fix_{\tilde{\gamma}}(\phi)-1\right)\ell(\gamma)}{2\sinh\left(\frac{\ell(\gamma)}{2}\right)}\varphi_{T}\left(\ell(\gamma)\right)+O_{X}\left(T^{2}n\right),\label{eq:pre-taking-expectations}
\end{align}
where in the last equality we used $T>1$. 

We are now in a position to use Theorem \ref{thm:effective-error}.
The contributions to the sum above come from $\gamma$ with $\ell(\gamma)\leq T$.
By Lemma \ref{lem:length-comparison}, this entails $\ell_{w}(\tilde{\gamma})=\ell_{w}\left(\gamma\right)\leq K_{1}T+K_{2}\leq c\log n$
for some $c=c(X)>0$ and $n$ sufficiently large. Moreover, if $\gamma\in\mathcal{P}(X)$,
then $\tilde{\gamma}$ is not a proper power in $\Gamma_{g}$. Thus
for each $\gamma$ appearing in (\ref{eq:pre-taking-expectations}),
Theorem \ref{thm:effective-error} applies to give
\[
\E_{g,n}\left[\fix_{\tilde{\gamma}}(\phi)-1\right]\ll_{X}\frac{\left(\log n\right)^{A}}{n}
\]
where $A=A(g)>0$ and the implied constant depends on $X$. Now using
that $\text{\ensuremath{\widehat{\varphi_{T}}}}$ is non-negative
on $\ensuremath{\R\cup i\R}$, we take expectations of (\ref{eq:pre-taking-expectations})
with respect to the uniform measure on $\X_{g,n}$ to obtain
\begin{eqnarray}
\E_{g,n}\left[\widehat{\varphi_{T}}\left(r_{i_{1}}(X_{\phi})\right)\right] & \leq & \sum_{\gamma\in\mathcal{P}(X)}\frac{\E_{g,n}\left[\fix_{\tilde{\gamma}}(\phi)-1\right]\ell(\gamma)}{2\sinh\left(\frac{\ell(\gamma)}{2}\right)}\varphi_{T}(\ell(\gamma))+O_{X}\left(T^{2}n\right)\nonumber \\
 & \stackrel{\text{Theorem \ref{thm:effective-error}}}{\ll_{X}} & \frac{\left(\log n\right)^{A}}{n}\sum_{\gamma\in\mathcal{P}(X)}\frac{\ell(\gamma)}{2\sinh\left(\frac{\ell(\gamma)}{2}\right)}\varphi_{T}(\ell(\gamma))+T^{2}n\nonumber \\
 & \ll_{X} & \frac{\left(\log n\right)^{A}}{n}\sum_{\gamma\in\mathcal{P}(X)\,:\,\ell(\gamma)\leq T}\ell(\gamma)e^{-\ell(\gamma)/2}+T^{2}n\nonumber \\
 & \le & \frac{\left(\log n\right)^{A}}{n}\sum_{m=0}^{\left\lceil T-1\right\rceil }\sum_{\gamma\in\L(X)\,:\,m\leq\ell(\gamma)<m+1}(m+1)e^{-m/2}+T^{2}n\nonumber \\
 & \stackrel{\text{Lemma \ref{lem:coarse-counting-bound}}}{\ll_{X}} & \frac{\left(\log n\right)^{A}}{n}\sum_{m=0}^{\left\lceil T-1\right\rceil }(m+1)e^{-m/2}e^{m+1}+T^{2}n\nonumber \\
 & \ll & \frac{\left(\log n\right)^{A}}{n}Te^{T/2}+T^{2}n\nonumber \\
 & \stackrel{T=4\log n}{\ll_{\varepsilon}} & n^{1+\varepsilon/3},\label{eq:expectation-bound}
\end{eqnarray}
where $\varepsilon$ is the parameter given in Theorem \ref{thm:three-sixteenths}.
The third inequality above used that on a compact hyperbolic surface,
the lengths of closed geodesics are bounded below away from zero (by
the Collar Lemma \cite[Thm. 4.1.1]{Buser}), together with the fact
that $\varphi_{T}$ is supported in $[-T,T]$. So $\E_{g,n}\left[\widehat{\varphi_{T}}\left(r_{i_{1}}(X_{\phi})\right)\right]\le n^{1+\varepsilon/2}$
for large enough $n$, and for these values of $n$, by Markov's inequality
\begin{equation}
\P\left[\widehat{\varphi_{T}}(r_{i_{1}}(X_{\phi}))>n^{1+\varepsilon}\right]\leq n^{-\varepsilon/2}.\label{eq:markov}
\end{equation}
Lemma \ref{lem:lower-bound-on-fourier} implies that if $\lambda_{i_{1}}(X_{\phi})\leq\frac{3}{16}-\varepsilon$,
in which case $r_{i_{1}}(X_{\phi})=it_{\phi}$ with $t_{\phi}\in\R$
and $t_{\phi}\geq\sqrt{\frac{1}{16}+\varepsilon}\geq\frac{1}{4}+\varepsilon$
for $\varepsilon$ sufficiently small, then 
\begin{equation}
\widehat{\varphi_{T}}(r_{i_{1}}(X_{\phi}))\geq C_{\varepsilon}Te^{T(1-\varepsilon)t_{\phi}}\geq C_{\varepsilon}n^{4(1-\varepsilon)(1/4+\varepsilon)}\geq C_{\varepsilon}n^{1+2\varepsilon}>n^{1+\varepsilon},\label{eq:amplifier}
\end{equation}
by decreasing $\varepsilon$ if necessary, and then assuming $n$
is sufficiently large. Combining (\ref{eq:markov}) and (\ref{eq:amplifier})
gives 
\[
\P\left[\text{\ensuremath{X_{\phi}} has a new eigenvalue \ensuremath{\leq\frac{3}{16}-\varepsilon}}\right]\leq\P\left[\widehat{\varphi_{T}}(r_{i_{1}}(X_{\phi}))>n^{1+\varepsilon}\right]\leq n^{-\varepsilon/2}
\]
completing the proof of Theorem \ref{thm:three-sixteenths}, under
the assumption of Theorem \ref{thm:effective-error}. $\square$

\subsection{Proof of Theorem \ref{thm:density}}

We continue using the same notation as in the previous section, including
the choice of $T=4\log n$. We let 
\[
0\leq\lambda_{i_{1}}(X_{\phi})\leq\lambda_{i_{2}}(X_{\phi})\leq\cdots\leq\lambda_{i_{k(\phi)}}\leq\frac{1}{4}
\]
denote the collection of new eigenvalues of $X_{\phi}$ of size at
most $\frac{1}{4}$, with multiplicities included. For each such eigenvalue
we write $\lambda_{i_{j}}=s_{i_{j}}(1-s_{i_{j}})$ with $s_{i_{j}}\in\left[\frac{1}{2},1\right]$;
this has the result that $r_{i_{j}}=i(s_{i_{j}}-\frac{1}{2})$.

Again taking expectations of (\ref{eq:pre-taking-expectations}) with
respect to the uniform measure on $\X_{g,n}$, but this time, keeping
more terms, gives

\begin{align*}
\E_{g,n}\left[\sum_{j=1}^{k(\phi)}\widehat{\varphi_{T}}\left(r_{i_{j}}(X_{\phi})\right)\right] & \leq\sum_{\gamma\in\mathcal{P}(X)}\frac{\E_{g,n}\left[\fix_{\tilde{\gamma}}(\phi)-1\right]\ell(\gamma)}{2\sinh\left(\frac{\ell(\gamma)}{2}\right)}\varphi_{T}(\ell(\gamma))+O_{X}\left(T^{2}n\right)\\
 & \ll_{X,\varepsilon}n^{1+\varepsilon/3}
\end{align*}
by (\ref{eq:expectation-bound}). On the other hand, Lemma \ref{lem:lower-bound-on-fourier}
implies that for $\varepsilon\in(0,1)$
\[
\sum_{j=1}^{k(\phi)}\widehat{\varphi_{T}}\left(r_{i_{j}}\left(X_{\phi}\right)\right)\gg_{\varepsilon}\sum_{j=1}^{k(\phi)}Te^{T(1-\varepsilon)\left(s_{i_{j}}\left(X_{\phi}\right)-\frac{1}{2}\right)}\gg\sum_{j=1}^{k(\phi)}n^{4(1-\varepsilon)\left(s_{i_{j}}\left(X_{\phi}\right)-\frac{1}{2}\right)}.
\]
Therefore
\[
\E_{g,n}\left[\sum_{j=1}^{k(\phi)}n^{4(1-\varepsilon)\left(s_{i_{j}}\left(X_{\phi}\right)-\frac{1}{2}\right)}\right]\leq n^{1+\varepsilon/2}
\]
for $n$ sufficiently large. Markov's inequality therefore gives
\[
\P\left[\sum_{j=1}^{k(\phi)}n^{4(1-\varepsilon)\left(s_{i_{j}}\left(X_{\phi}\right)-\frac{1}{2}\right)}\geq n^{1+\varepsilon}\right]\leq n^{-\varepsilon/2}
\]
so a.a.s.~$\sum_{j=1}^{k(\phi)}n^{4(1-\varepsilon)\left(s_{i_{j}}\left(X_{\phi}\right)-\frac{1}{2}\right)}<n^{1+\varepsilon}$.
This gives that for any $\sigma\in\left(\frac{1}{2},1\right)$, a.a.s.
\[
\#\left\{ 1\leq j\leq k(\phi)\,:\,s_{i_{j}}>\sigma\right\} \leq n^{1+\varepsilon-4(1-\varepsilon)(\sigma-\frac{1}{2})}\leq n^{3-4\sigma+3\varepsilon}.
\]
This finishes the proof of Theorem \ref{thm:density} assuming Theorem
\ref{thm:effective-error}. $\square$

\section{Tiled surfaces\label{sec:tiled-surfaces}}

\subsection{Tiled surfaces}

Here we assume $g=2$, and let $\Gamma\eqdf\Gamma_{2}$. We write
$\X_{n}\eqdf\X_{2,n}$ throughout the rest of the paper. Consider
the construction of the surface $\Sigma_{2}$ from an octagon by identifying
its edges in pairs according to the pattern $aba^{-1}b^{-1}cdc^{-1}d^{-1}$.
This gives rise to a CW-structure on $\Sigma_{2}$ consisting of one
vertex (denoted $o$), four oriented $1-$cells (labeled by $a,b,c,d$)
and one $2$-cell which is the octagon glued along eight $1$-cells\footnote{We use the terms vertices, edges and octagons interchangeably with
$0$-cells, $1$-cells and $2$-cells, respectively.}. See Figure \ref{fig:Sigma_2}. We identify $\Gamma_{2}$ with $\pi_{1}\left(\Sigma_{2},o\right)$,
so that in the presentation (\ref{eq:fundamental-group-isomorphism}),
words in the generators $a,b,c,d$ correspond to the homotopy classes
of the corresponding closed paths based at $o$ along the $1$-skeleton
of $\Sigma_{2}$. 

\begin{figure}
\begin{centering}
\includegraphics[viewport=0bp 0bp 360.324bp 215.9145bp,scale=0.7]{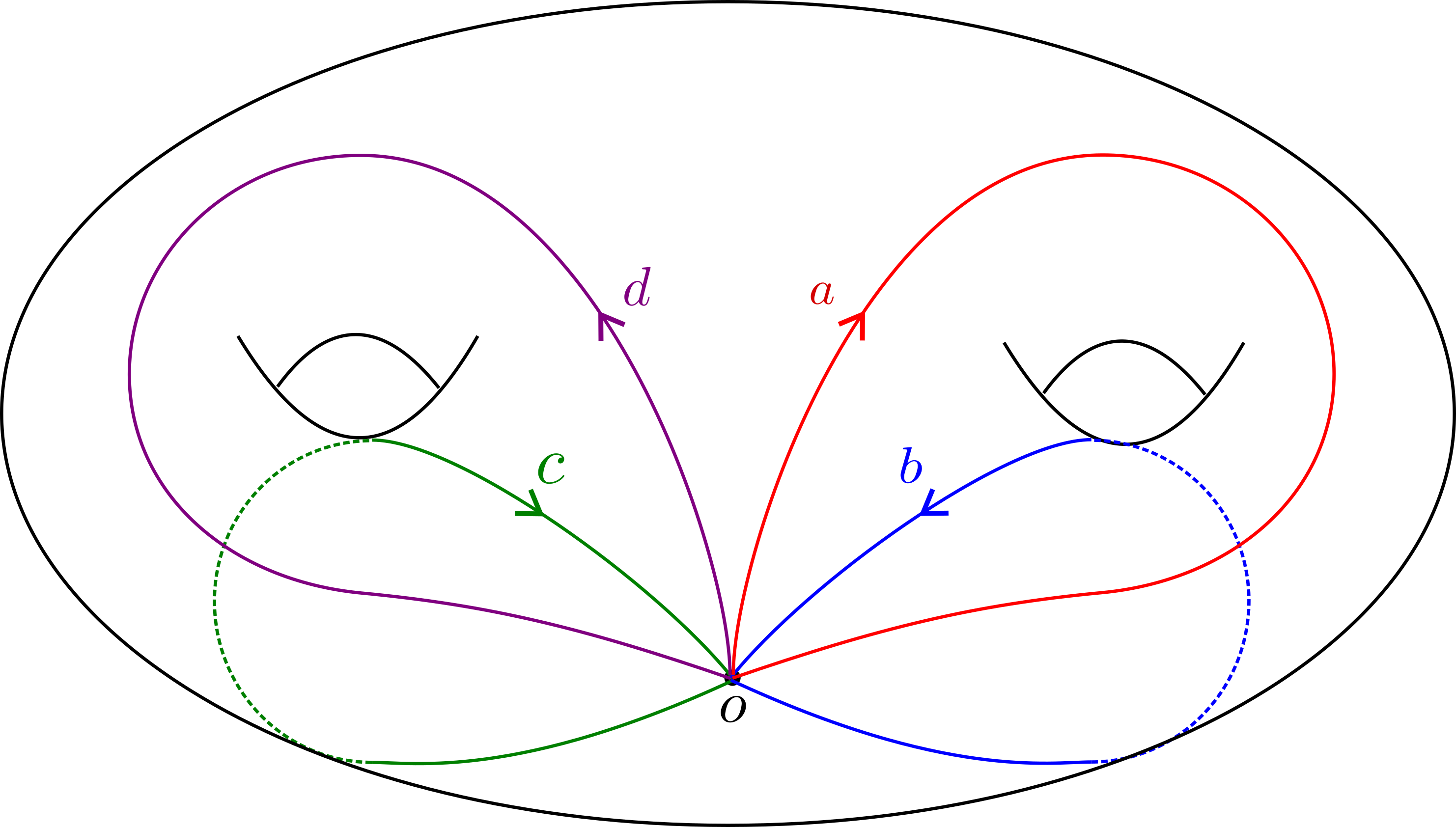}
\par\end{centering}
\caption{\label{fig:Sigma_2}The CW-structure we give to the surface $\Sigma_{2}$
with fundamental group $\Gamma=\Gamma_{2}=\left\langle a,b,c,d\,\middle|\,\left[a,b\right]\left[c,d\right]\right\rangle $:
it consists of a single vertex ($0$-cell), four edges ($1$-cells)
and one octagon (a $2$-cell).}
\end{figure}

Note that every covering space $p\colon\Upsilon\to\Sigma_{2}$ inherits
a CW-structure from $\Sigma_{2}$: the vertices are the pre-images
of $o$, and the open $1$-cells (2-cells) are the connected components
of the pre-images of the open 1-cells (2-cells, respectively) in $\Sigma_{2}$.
In particular, this is true for the universal covering space $\ts$
of $\Sigma_{2}$, which we can now think of as a CW-complex. A sub-complex
of a CW-complex is a subspace consisting of cells such that if some
cell belongs to the subcomplex, then so are the cells of smaller dimension
at its boundary. 
\begin{defn}[Tiled surface]
\label{def:tiled-surface}\cite[Def. 3.1]{MPcore} A \emph{tiled
surface} $Y$ is a sub-complex of a (not-necessarily-connected) covering
space of $\Sigma_{2}$. In particular, a tiled surface is equipped
with the restricted covering map $p\colon Y\to\Sigma_{2}$ which is
an immersion. We denote by $Y^{\left(0\right)}$ the set of vertices
and by $Y^{\left(1\right)}$ the $1$-skeleton of $Y$. If $Y$ is
compact, we write $\v\left(Y\right)$ for the number of vertices of
$Y$, $\e\left(Y\right)$ for the number of edges and $\f\left(Y\right)$\marginpar{$\text{\ensuremath{{\scriptstyle \protect\v\left(Y\right),\protect\e\left(Y\right),\protect\f\left(Y\right)}}}$}
for the number of octagons. 
\end{defn}

Alternatively, instead of considering a tiled surface $Y$ to be a
complex equipped with a restricted covering map, one may consider
$Y$ to be a complex as above with directed and labeled edges: the
directions and labels ($a,b,c,d$) are pulled back from $\Sigma_{2}$
via $p$. These labels uniquely determine $p$ as a combinatorial
map between complexes.

Note that a tiled surface is not always a surface: for example, it
may also contain vertices or edges with no $2$-cells incident to
them. However, as $Y$ is a sub-complex of a covering space of $\Sigma_{2}$,
namely, of a surface, any neighborhood of $Y$ inside the cover is
a surface, and it is sometimes beneficial to think of $Y$ as such. 
\begin{defn}[Thick version of a tiled surface]
\label{def:thick-version}\cite[Def. 3.2]{MPcore} Given a tiled
surface $Y$ which is a subcomplex of the covering space $\Upsilon$
of $\Sigma_{2}$, consider a small, closed, regular neighborhood of
$Y$ in $\Upsilon$. This neighborhood is a closed surface, possibly
with boundary, which is referred to as the \emph{thick version of
$Y$}.

We let $\partial Y$ denote the boundary of the thick version of $Y$
and $\d\left(Y\right)$\marginpar{$\partial Y,\protect\d\left(Y\right)$}
denote the number of edges along $\partial Y$ (so if an edge of $Y$
does not border any octagon, it is counted twice). 
\end{defn}

We stress that we do not think of $Y$ as a sub-complex, but rather
as a complex for its own sake, which happens to have the capacity
to be realized as a subcomplex of a covering space of $\Sigma_{2}$.
In particular, if $Y$ is compact, it is a combinatorial object given
by a finite amount of data. See \cite[\S 3]{MPcore} for a more detailed
discussion. 
\begin{defn}[Morphisms of tiled surfaces]
 Let $p_{i}\colon Y_{i}\to\Sigma_{2}$ be tiled surfaces for $i=1,2$.
A map $f\colon Y_{1}\to Y_{2}$ is a \emph{morphism }of tiled surfaces
if it is a combinatorial map of CW-complexes that commutes with the
restricted covering maps.
\[
\xymatrix{Y_{1}\ar[rr]^{f}\ar[rd]_{p_{1}} &  & Y_{2}\ar[ld]^{p_{2}}\\
 & \Sigma_{2}
}
\]
\end{defn}

In other words, a morphism of tiled surfaces is a combinatorial map
of CW-complexes sending $i$-cells to $i$-cells and which respects
the directions and labels of edges. 
\begin{example}
\label{exa:x_phi}The fibered product construction gives a one-to-one
correspondence between\linebreak{}
$\Hom(\Gamma,S_{n})$ and topological degree-$n$ covers of $\Sigma_{2}$
with a labeled fiber over the basepoint $o$. Explicitly, for $\phi\in\Hom(\Gamma,S_{n})$,
we can consider the quotient
\[
X_{\phi}\eqdf\Gamma\backslash\left(\ts\times[n]\right)
\]
where $\ts$ is the universal cover of $\Sigma_{2}$ (an open disc)
and $\Gamma$ acts on $\ts\times[n]$ diagonally, by the usual action
of $\Gamma$ on $\ts$ on the first factor, and via $\phi$ on the
second factor. The covering map $X_{\phi}\to\Sigma_{2}$ is induced
by the projection $\ts\times[n]\to\ts$. 

Being a covering space of $\Sigma_{2}$, each $X_{\phi}$ is automatically
also a tiled surface. The fiber of $o\in\Sigma_{2}$ is the collection
of vertices of $X_{\phi}$. We fix throughout the rest of the paper
a vertex $u\in\ts$ lying over $o\in\Sigma_{2}$. This identifies
the fiber over $o$ in $X_{\phi}$ with $\{u\}\times[n]$ and hence
gives a fixed bijection between the vertices of $X_{\phi}$ and the
numbers in $[n]$. The map $\phi\mapsto X_{\phi}$ is the desired
one-to-one correspondence between $\Hom(\Gamma,S_{n})$ and topological
degree-$n$ covers of $\Sigma_{2}$ with the fiber over $o$ labeled
bijectively by $\left[n\right]$. 
\end{example}

\begin{example}
\label{exa:the-loop-of-a-word}For any $1\ne\gamma\in\Gamma$, pick
a word $\tilde{\gamma}$ of minimal length in the letters $a,b,c,d$
and their inverses that represents an element in the conjugacy class
of $\gamma$ in $\Gamma$. In particular, $\tilde{\gamma}$ is cyclically
reduced. Now take a circle and divide it into $\{a,b,c,d\}$-labeled
and directed edges separated by vertices, such that following around
the circle from some vertex and in some orientation, and reading off
the labels and directions, spells out $\tilde{\gamma}$. Call the
resulting complex $\CC_{\gamma}$. That $\CC_{\gamma}$ is a tiled
surface follows from \cite{MPcore} (in particular, it is embedded
in the core surface $\core\left(\left\langle \gamma\right\rangle \right)$
which is itself a tiled surface, by \cite[Thm. 5.10]{MPcore}). Note
that generally $\CC_{\gamma}$ is not uniquely determined by $\gamma$
(e.g., \cite[Fig.~1.2 and \S 4, \S 5]{MPcore}), and we choose one
of the options arbitrarily. We have $\v(\CC_{\gamma})=\e\left(\CC_{\gamma}\right)=\ell_{w}(\gamma)$
and $\f\left(\CC_{\gamma}\right)=0$.
\end{example}

If $Y$ is a compact tiled surface, there are some simple relations
between the quantities $\v(Y)$, $\e(Y)$, $\f(Y)$, $\d(Y)$, and
$\chi(Y),$ the topological Euler characteristic of $Y$. We note
the following relations, which are straightforward or standard. For
example, $\e\left(Y\right)\le4\v\left(Y\right)$ as each vertex is
incident to at most 8 half-edges\footnote{For general $g\ge2$, (\ref{eq:d-e-f}) is $\d\left(Y\right)=2\e\left(Y\right)-4g\f\left(Y\right)$,
and (\ref{eq:v-e-f-inequality}) is $2g\f\left(Y\right)\le\e\left(Y\right)\le2g\f\left(Y\right)$.}. 
\begin{eqnarray}
\d\left(Y\right) & = & 2\e\left(Y\right)-8\f\left(Y\right).\label{eq:d-e-f}\\
4\f\left(Y\right) & \le & \e\left(Y\right)~\le~4\v\left(Y\right).\label{eq:v-e-f-inequality}
\end{eqnarray}
The following lemma will be useful later.
\begin{lem}
\label{lem:D-vs-d}Let $Y$ be a compact tiled surface without isolated
vertices. Then 
\[
\v\left(Y\right)\leq\f\left(Y\right)+\d(Y).
\]
\end{lem}

\begin{proof}
Let $\mathfrak{i}$ denote the number of \emph{internal }vertices
of $Y$, namely, vertices adjacent to $8$ octagons, and let $\mathfrak{p}$
denote the number of the remaining, \emph{peripheral} vertices. As
there are no isolated vertices, $\mathfrak{p}\le\d\left(Y\right)$
(when going through the boundary cycles, one edge at a time, one passes
at every step exactly one peripheral vertex, and each peripheral vertex
is traversed at least once, although possibly more than once). We
have 
\begin{align*}
8\f(Y) & =\sum_{O\text{ an octagon of \ensuremath{Y}}}\#\text{\{corners of \ensuremath{O}\}}\\
 & =\sum_{\text{\ensuremath{v} a vertex of \ensuremath{Y}}}\#\{\text{corners of octagons at \ensuremath{v}\}}\\
 & \geq8\mathfrak{i}=8\v(Y)-8\mathfrak{p}\ge8\v\left(Y\right)-8\d\left(Y\right).
\end{align*}
\end{proof}
The Euler characteristic $\chi(Y)$ is also controlled by $\f(Y)$
and $\d(Y)$.
\begin{lem}
\label{lem:Euler-char-bound}Let $Y$ be a compact tiled surface without
isolated vertices. Then\footnote{For arbitrary $g\ge2$, the bound is $\chi\left(Y\right)\le\frac{\d\left(Y\right)}{2}-\left(2g-2\right)\f\left(Y\right)$.}
\[
\chi(Y)\leq\frac{\d(Y)}{2}-2\f(Y).
\]
\end{lem}

\begin{proof}
We have
\begin{eqnarray*}
\chi\left(Y\right) & = & \v\left(Y\right)-\e\left(Y\right)+\f\left(Y\right)\stackrel{\eqref{eq:d-e-f}}{=}\v\left(Y\right)-3\f\left(Y\right)-\frac{\d\left(Y\right)}{2}\stackrel{\text{Lemma \ref{lem:D-vs-d}}}{\leq}\frac{\d\left(Y\right)}{2}-2\f\left(Y\right).
\end{eqnarray*}
\end{proof}

\subsection{Blocks and chains\label{subsec:Blocks-and-chains}}

Here we introduce language that was used in \cite{MPcore,MPasympcover},
based on terminology of Birman and Series from \cite{BirmanSeries}.
Let $Y$ denote a tiled surface throughout this $\S\S$\ref{subsec:Blocks-and-chains}.
When we refer to directed edges of $Y$, they are not necessarily
directed according to the definition of $Y$.

First of all, we augment $Y$ by adding \textbf{half-edges}, which
should be thought of as copies of $[0,\frac{1}{2})$. Of course, every
edge of $Y^{(1)}$ is thought of as containing two half edges, each
of which inherits a label in $\{a,b,c,d\}$ and a direction from their
ambient edge. We add to $Y$ $\{a,b,c,d\}$-labeled and directed half-edges
to form $Y_{+}$\marginpar{$Y_{+}$} so that every vertex of $Y_{+}$
has exactly 8 emanating half-edges, with labels and directions given
by `$a$-outgoing, $b$-incoming, $a$-incoming, $b$-outgoing, $c$-outgoing,
$d$-incoming, $c$-incoming, $d$-outgoing'. The cyclic order we
have written here induces a fixed cyclic ordering on the half-edges
at each vertex of $Y_{+}$. If a half-edge of $Y_{+}$ does not belong
to an edge of $Y$ (hence was added to $Y_{+}$), we call it a \textbf{hanging
half-edge}\emph{.} We may think of $Y_{+}$ as a surface too, by considering
the thick version of $Y$ and attaching a thin rectangle for every
hanging half-edge. We call the resulting surface the \textbf{thick
version of}\textbf{\emph{ $\bm{Y_{+}}$}}, and mark its boundary by
$\bm{\partial Y_{+}}$\marginpar{$\partial Y_{+}$}. See Figure \ref{fig:blocks orientation and cyclic order}
for the cyclic ordering of half-edges around every vertex and Figure
\ref{fig:piece} for a piece of $\partial Y_{+}$.

\begin{figure}
\begin{centering}
\includegraphics{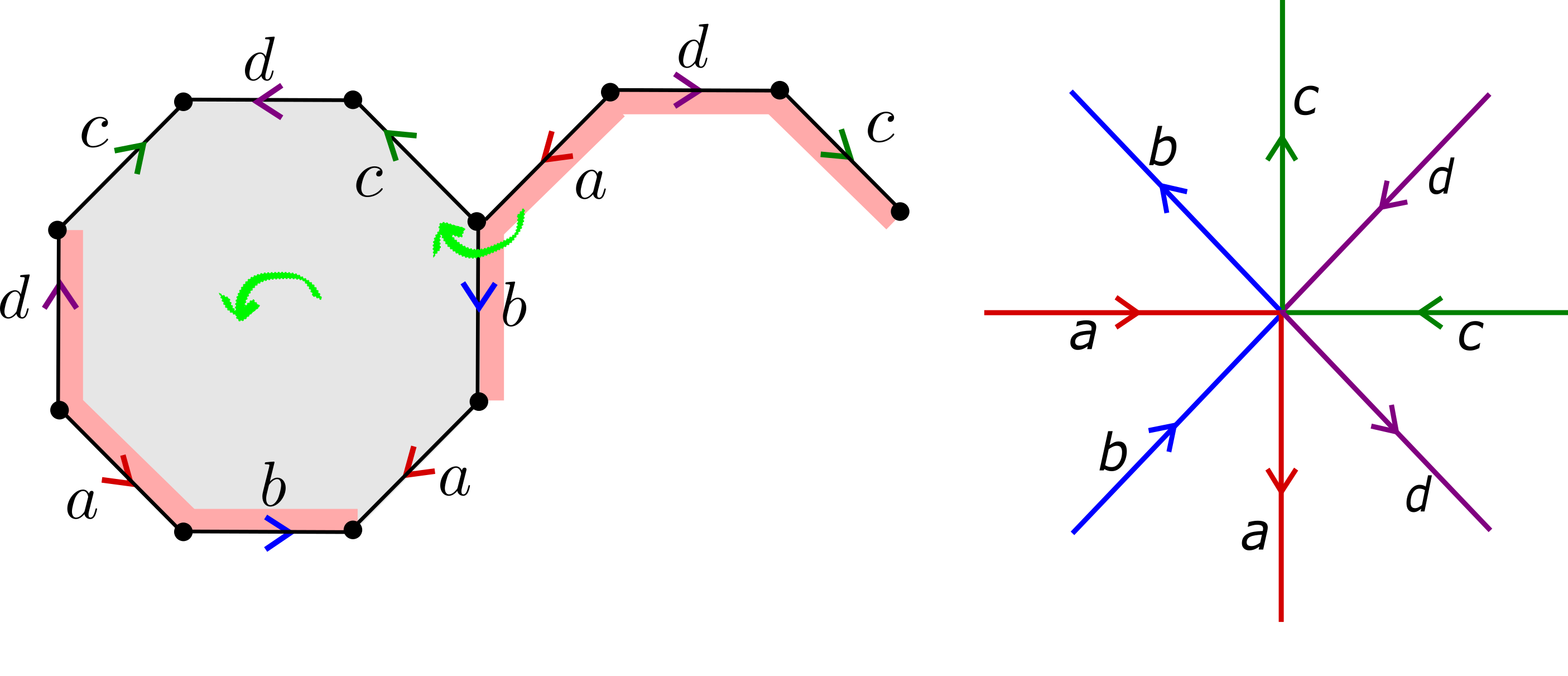}
\par\end{centering}
\caption{\label{fig:blocks orientation and cyclic order}The right figure shows
a vertex with 8 half-edges around it, ordered (clockwise) according
to the fixed cyclic order induced from the CW-structure on $\Sigma_{2}$.
On the left is a tiled surface with 11 vertices, 11 edges and one
octagon. The orientation on the octagon is counter-clockwise, while
around any vertex it is clockwise. The pink stripes describe blocks:
a half-block spelling $c^{-1}d^{-1}ab$ and a block of length $3$
spelling $d^{-1}ab$. The latter one can be extended in both ends. }
\end{figure}

For two directed edges $\vec{e_{1}}$ and $\vec{e}_{2}$ of $Y$ with
the terminal vertex $v$ of $\text{\ensuremath{\vec{e}_{1}} equal to the source of \ensuremath{\vec{e}_{2}}}$,
the \emph{half-edges}\textbf{\emph{ }}\emph{between $\vec{e}_{1}$
and $\vec{e}_{2}$ }are by definition the half edges of $Y_{+}$ at
$v$ that are strictly between $\text{\ensuremath{\vec{e}_{1}} and \ensuremath{\vec{e}_{2}}}$
in the given cyclic ordering. There are $m$ of these where $0\leq m\leq7$.

A \textbf{path}\emph{ }in $Y$ is a sequence ${\cal P}\text{\ensuremath{=}}(\vec{e_{1}},\ldots,\vec{e}_{k})$
of directed edges in $Y^{(1)}$, such that for each $1\leq i\leq k-1$
the terminal vertex of $\vec{e}_{i}$ is the initial vertex of $\vec{e}_{i+1}$.
A \textbf{cycle}\emph{ }in $Y$ is a cyclic sequence $\text{\ensuremath{\mathcal{C}=}}(\vec{e_{1}},\ldots,\vec{e}_{k})$
which is a path with the terminal vertex of $\vec{e}_{k}$ identical
to the initial vertex of $\vec{e}_{1}$. A \textbf{boundary cycle}\emph{
}of $Y$ is a cycle corresponding to a boundary component of the thick
version of $Y$. A boundary cycle is always oriented so that if $Y$
is embedded in the full cover $Z$, the boundary reads successive
segments of the boundaries of the neighboring octagons (in $Z-Y$)
with the orientation of each octagon coming from $\left[a,b\right]\left[c,d\right]$
(and not from the inverse word). For example, the unique boundary
cycle of the tiled surface in the left side of Figure \ref{fig:blocks orientation and cyclic order},
starting at the rightmost vertex, spells the cyclic word $c^{-1}d^{-1}abab^{-1}a^{-1}dcd^{-1}c^{-1}a^{-1}dc$.

If ${\cal P}$ is a path in $Y^{\left(1\right)}$, a \textbf{block}\emph{
}in ${\cal P}$ is a non-empty (possibly cyclic) subsequence of successive
edges, each successive pair of edges having no half-edges between
them (this means that a block reads necessarily a subword of the cyclic
word $\left[a,b\right]\left[c,d\right]$). A \textbf{half-block} is
a block of length $4$ (in general, $2g$) and a \textbf{long block}
is a block of length at least $5$ (in general, $2g+1$). See Figure
\ref{fig:blocks orientation and cyclic order}.

Two blocks $(\vec{e}_{i},\ldots,\vec{e}_{j})$ and $(\vec{e}_{k},\ldots,\vec{e}_{\ell})$
in a path ${\cal P}$ are called \emph{consecutive }if $(\vec{e}_{i},\ldots,\vec{e}_{j},\vec{e}_{k},\ldots,\vec{e}_{\ell})$
is a (possibly cyclic) subsequence of ${\cal P}$ and there is \uline{precisely}
one half-edge between $\vec{e}_{j}$ and $\vec{e}_{k}$. A \textbf{chain}
is a (possibly cyclic) sequence of consecutive blocks. Note that in
a chain, an $f$-edge with some $f\in\left\{ a^{+1},\ldots,d^{\pm1}\right\} $
is followed by an edge labeled by the letter $f'$ that follows $f$
in the cyclic word $\left[a,b\right]\left[c,d\right]$, or by the
letter that follows the inverse of $f'$. For example, a $b^{-1}$-edge
is always followed in a chain by either a $c$-edge or a $d^{-1}$-edge.
A \emph{cyclic chain} is a chain whose blocks pave an entire cycle
(with exactly one half-edge between the last block and the first blocks).
A \textbf{long chain} is a chain consisting of consecutive blocks
of lengths
\[
4,3,3,\ldots,3,4
\]
(in general, $2g,2g-1,2g-1,\ldots,2g-1,2g$). See Figure \ref{fig:long chain}.
A \textbf{half-chain}\emph{ }is a cyclic chain consisting of consecutive
blocks of length $3$ (in general, $2g-1$) each. 

\begin{figure}
\begin{centering}
\includegraphics[scale=0.7]{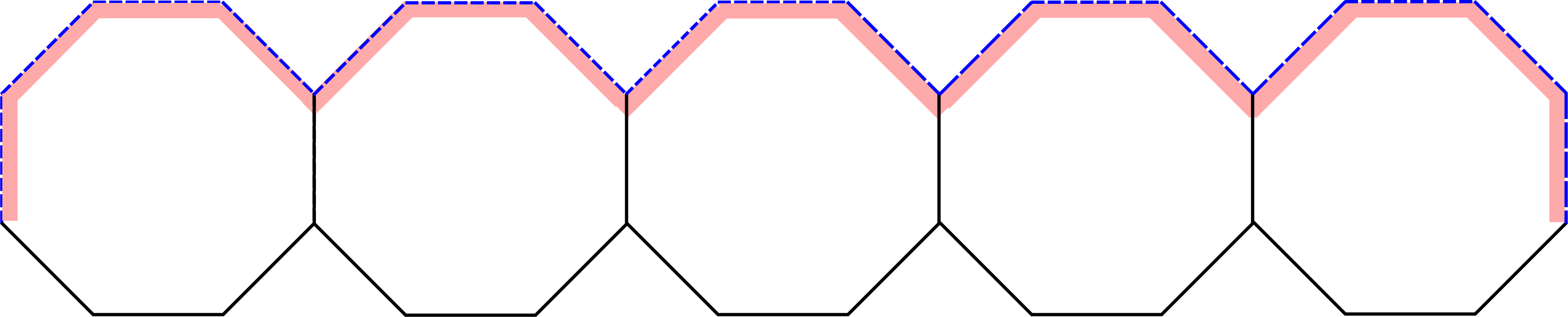}
\par\end{centering}
\caption{\label{fig:long chain}A long chain (the pink stripe) consisting of
five consecutive blocks of lengths $4,3,3,3,4$}
\end{figure}

\subsection{Boundary reduced and strongly boundary reduced tiled surfaces}

We recall the following definitions from \cite[Def. 4.1, 4.2]{MPcore}.
\begin{defn}[Boundary reduced]
A tiled surface $Y$ is \emph{boundary reduced} if no boundary cycle
of $Y$ contains a long block or a long chain.
\end{defn}

\begin{defn}[Strongly boundary reduced]
A tiled surface $Y$ is \emph{strongly boundary reduced} if no boundary
cycle of $Y$ contains a half-block or is a half-chain.
\end{defn}

Given a tiled surface $Y$ embedded in a boundary reduced tiled surface
$Z$, the $\br$-closure of $Y$ in $Z$, denoted $\br\left(Y\hookrightarrow Z\right)$
and introduced in \cite[Def.~4.4]{MPcore}, is defined as the intersection
of all boundary reduced sub-tiled surfaces of $Z$ containing $Y$.\textbf{
}We compile some properties of the $\br$-closure into the following
proposition. 
\begin{prop}
\label{prop:properties-of-BR-alg}Let $Y\hookrightarrow Z$ be an
embedding of a compact tiled surface $Y$ into a boundary reduced
tiled surface $Z$, and denote $Y'\eqdf\br\left(Y\hookrightarrow Z\right)$.
\begin{enumerate}
\item \cite[Prop.~4.5]{MPcore} $Y'$ is boundary reduced.
\item \cite[proof of Prop.~4.6]{MPcore} $Y'$ is compact, and $\d\left(Y'\right)\le\d\left(Y\right)$,
with equality if and only if $Y'=Y$.
\item \label{enu:BR-algo}\cite[proof of Prop.~4.6]{MPcore} $Y'$ can be
obtained from $Y$ by initializting $Y'=Y$ and then repreatedly either
$\left(i\right)$ annexing an octagon of $Z\setminus Y'$ which borders
a long block along $\partial Y'$, or $\left(ii\right)$ annexing
the octagons of $Z\setminus Y'$ bordering some long chain along $\partial Y'$,
until $Y'$ is boundary reduced.
\item \label{enu:octagon-growth} We have\footnote{For larger values of the genus $g$, we could get a tighter bound,
but the stated bound holds and is good enough.} 
\[
\f(Y')\leq\f(Y)+\frac{\d(Y)^{2}}{6}.
\]
\end{enumerate}
\end{prop}

\begin{proof}[Proof of item \ref{enu:octagon-growth}]
 Assume that $Y'$ is obtained from $Y$ by the procedure described
in item \ref{enu:BR-algo}. In each such step, $\d\left(Y'\right)$
decreases by at least two, so there are at most $\frac{\d\left(Y\right)}{2}$
steps where octagons are added. We will be done by showing that at
each step at most $\frac{\d\left(Y\right)}{3}$ octagons are added.
And indeed, in option $\left(i\right)$ exactly one octagon is added
(and $1\le\frac{\d\left(Y\right)}{3}$ or otherwise $Y$ is boundary
reduced). In option $\left(ii\right)$, if the long chain consists
of $\ell$ blocks, it is of length $3\ell+2\le\d\left(Y'\right)$,
and at most $\ell\le\frac{\d\left(Y'\right)-2}{3}<\frac{\d\left(Y\right)}{3}$
new octagons are added. 
\end{proof}

\subsection{Pieces and $\varepsilon$-adapted tiled surfaces\label{subsec:epsilon-adapted-tiled-surfaces}}

For the proof of Theorem \ref{thm:effective-error} we will need to
quantify (strongly) boundary reduced tiled surfaces. This is captured
by the notion of $\varepsilon$-adapted tiled surface we introduce
in this $\S\S$\ref{subsec:epsilon-adapted-tiled-surfaces}. The following
concepts of a piece and its defect play a crucial role here.
\begin{defn}[Piece, defect]
\label{def:piece} A \emph{piece }$P$ of $\partial Y_{+}$ is a
(possibly cyclic) path along $\partial Y_{+}$, consisting of whole
directed edges and/or whole hanging half-edges. We write $\e(P)$
for the number of full directed edges in $P$, $\he(P)$ for the number
of hanging half-edges in $P$, and $|P|\eqdf\e(P)+\he(P)$. We let
\[
\defect(P)\eqdf\e(P)-3\mathrm{\he}(P).
\]
(In general, $\defect(P)\eqdf\e(P)-\left(2g-1\right)\mathrm{\he}(P).$)
See Figure \ref{fig:piece} for an illustration of a piece.
\end{defn}

\begin{figure}
\begin{centering}
\includegraphics[scale=2]{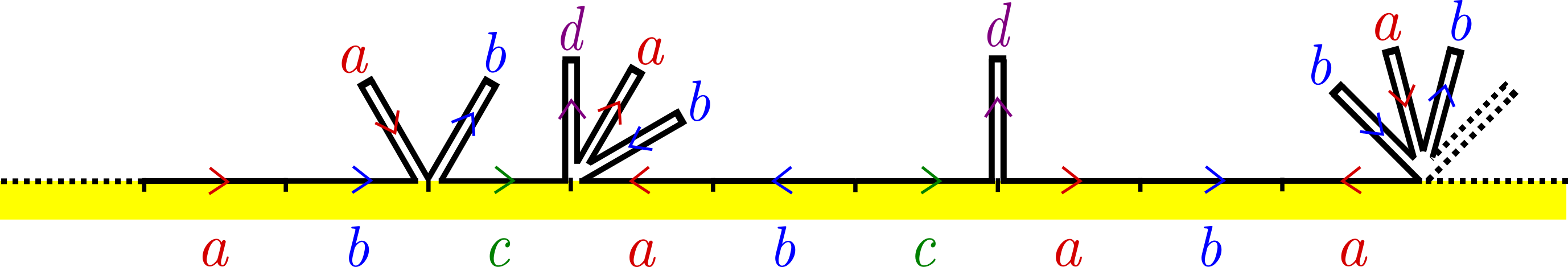}
\par\end{centering}
\caption{\label{fig:piece}A piece $P$ of $\partial Y_{+}$ is shown in black
line. The broken black line marks parts of $\partial Y_{+}$ adjacent
to but not part of $P$ and the yellow stripe marks the side of the
internal side of $Y$. This piece consists of 9 full directed edges
and 9 hanging half-edges, so $\protect\defect\left(P\right)=-18$.}
\end{figure}

\begin{defn}[$\varepsilon$-adapted]
\label{def:e-adapted} Let $\varepsilon\ge0$ and let $Y$ be a tiled
surface. A piece $P$ of $\partial Y_{+}$ is \emph{$\varepsilon$-adapted}
if it satisfies\footnote{In general, if $\defect(P)\leq2g\cdot\chi(P)-\varepsilon|P|.$ }
\begin{equation}
\defect(P)\leq4\chi(P)-\varepsilon|P|.\label{eq:defect-ineq}
\end{equation}
We have $\chi(P)=0$ if $P$ is a whole boundary component and $\chi(P)=1$
otherwise. We say that a piece $P$ is \emph{$\varepsilon$-bad} if
(\ref{eq:defect-ineq}) does not hold, i.e., if $\defect(P)>4\chi(P)-\varepsilon|P|$.
We say that $Y$ is $\varepsilon$-adapted if every piece of $Y$
is $\varepsilon$-adapted. 
\end{defn}

The following lemma shows that this notion indeed quantifies the notion
of strongly boundary reduced tiled surfaces.
\begin{lem}
\label{lem:BR and SBR are 0 and ge 0 adapted} Let $Y$ be a tiled
surface.
\begin{enumerate}
\item $Y$ is boundary reduced if and only if it is $0$-adapted.
\item $Y$ is strongly boundary reduced if and only if every piece of $\partial Y$
is $\varepsilon$-adapted for some $\varepsilon>0$.\\
If $Y$ is compact, this is equivalent to that $Y$ is $\varepsilon$-adapted
for some $\varepsilon>0$.
\end{enumerate}
\end{lem}

\begin{proof}
A block at $\partial Y$ is a piece $P$ with $\he\left(P\right)=0$.
Assume that $Y$ is $0$-adapted. If $P$ is a block at $\partial Y$,
then $\e\left(P\right)=\defect\left(P\right)\le4\chi\left(P\right)\le4$,
so $P$ cannot be a long block. If $P$ is a long chain at $\partial Y$
consisting of $k$ blocks ($k-2$ of length $3$ and two of length
$4$) and the $k-1$ hanging edges between them, then $\defect\left(P\right)=\left(3k+2\right)-3\left(k-1\right)=5>4=4\chi\left(P\right)$,
which is a contradiction. Similarly, if $P$ is a half-block or a
half-chain, then $\defect\left(P\right)=4\chi\left(P\right)$, and
so $P$ is $\varepsilon$-bad for any $\varepsilon>0$. The converse
implications are not hard and can be found in \cite[proof of Lem.~5.18]{MPasympcover}.
\end{proof}
We need the following lemma in the analysis of the next subsection.
\begin{lem}
\label{lem:lenght-of-bad-piece}If $0\le\varepsilon<3$ and $P$ is
an $\varepsilon$-bad piece of a compact tiled surface $Y$, then\footnote{For arbitrary $g\ge2$, the bound is $\left|P\right|<\frac{2g\cdot\d\left(Y\right)}{2g-1-\varepsilon}$.}
\begin{equation}
\left|P\right|<\frac{4\d\left(Y\right)}{3-\varepsilon}.\label{eq:upper bound for |P|}
\end{equation}
\end{lem}

\begin{proof}
If $P$ is $\varepsilon$-bad, then by definition $\e\left(P\right)-3\cdot\he\left(P\right)>4\chi\left(P\right)-\varepsilon\left|P\right|$.
So
\[
\left(3-\varepsilon\right)\left|P\right|<3\left(\e\left(P\right)+\he\left(P\right)\right)+\left(\e\left(P\right)-3\cdot\he\left(P\right)-4\chi\left(P\right)\right)\le4\cdot\e\left(P\right)\le4\d\left(Y\right).
\]
\end{proof}

\subsection{The octagons-vs-boundary algorithm\label{subsec:The-octagons-vs-boundary-algorit}}

In this $\S\S$\ref{subsec:The-octagons-vs-boundary-algorit} we describe
an algorithm whose purpose is to grow a given tiled surface in such
a way that either 
\begin{itemize}
\item the output $Y'$ is $\varepsilon$-adapted for some fixed $\varepsilon>0$,
or alternatively,
\item the number of octagons of $Y'$ is larger than the length of the boundary
of $Y'$.
\end{itemize}
If $Y'$ is $\varepsilon$-adapted for a suitable $\varepsilon$,
it is very well adapted to our methods, so that we can give an estimate
for $\E_{n}^{\emb}(Y')$ with an effective error term (e.g., Proposition
\ref{prop:main-term-asymp}). If, on the other hand, $\f(Y')>\d(Y')$,
then the Euler characteristic of $Y'$ can be linearly comparable
to the number of octagons in $Y'$ by Lemma \ref{lem:Euler-char-bound},
and see $\S\S$\ref{subsec:Part-I:non-e-adapted} where it is used.

The algorithm depends on a positive constant $\varepsilon>0$; we
shall see below that fixing $\varepsilon=\frac{1}{32}$ works fine
for our needs (for arbitrary $g\ge2$ we shall fix $\varepsilon=\frac{1}{16g}$.)
To force the algorithm to be deterministic, we a priori make some
choices: 
\begin{notation}
For every compact tiled surface $Y$ which is boundary reduced but
not $\varepsilon$-adapted, we pick an $\varepsilon$-bad piece $P(Y)$
of $\partial Y$.
\end{notation}

With the ambient parameter $\varepsilon$ fixed as well as the choices
of $\varepsilon$-bad pieces, the octagons-vs-boundary (OvB) algorithm
is as follows.
\begin{center}
\textbf{}%
\noindent\fbox{\begin{minipage}[t]{1\columnwidth - 2\fboxsep - 2\fboxrule}%
\textbf{Input. }An embedding of tiled surfaces $Y\hookrightarrow Z$
where $Y$ is compact and $Z$ has no boundary. %
\end{minipage}}
\par\end{center}

\begin{center}
\textbf{}%
\noindent\fbox{\begin{minipage}[t]{1\columnwidth - 2\fboxsep - 2\fboxrule}%
\textbf{Output. }A compact tiled surface $Y'$ and a factorization
of the input embedding $Y\hookrightarrow Z$ by $Y\hookrightarrow Y'\hookrightarrow Z$
where both maps are embeddings.%
\end{minipage}}
\par\end{center}

\begin{center}
\textbf{}%
\noindent\fbox{\begin{minipage}[t]{1\columnwidth - 2\fboxsep - 2\fboxrule}%
\textbf{Algorithm. }Let $Y'=Y$.

\textbf{(a)} Let $Y'=\br(Y'\hookrightarrow Z)$. If
\begin{equation}
\theta(Y')\eqdf\f(Y')-\d(Y')>0\label{eq:terminating-inequality}
\end{equation}
terminate the algorithm and return $Y'$. 

\textbf{(b) }If $Y'$ is not $\varepsilon$-adapted, add all the octagons
of $Z$ meeting\footnote{An octagon $O$ in $Z$ is said to meet $P\left(Y'\right)$ if some
directed edge or hanging-half-edge of $P\left(Y'\right)$ lies at
$\partial O$.} $P(Y')$ to $Y'$, and go to \textbf{(a)}.

Return $Y'$.%
\end{minipage}}
\par\end{center}

Note that the output $Y'$ of the algorithm is always boundary reduced.
Of course, we would like to know when/if this algorithm terminates.

In step \textbf{(a)}, if $\br\left(Y'\hookrightarrow Z\right)\ne Y'$
then $\d(Y')$ decreases by at least two, and $\f(Y')$ increases
by at least one. So $\theta(Y')$ increases by at least three.

In step \textbf{(b)}, if $Y'$ changes, the following lemma shows
that $\theta(Y')$ increases by at least one provided that $\varepsilon\le\frac{1}{16}$.

\begin{lem}
\label{lem:step-c-theta-increase}With notation as above, if $Y'$
is modified in step \textbf{(b)}, then
\begin{enumerate}
\item $\d(Y')$ increases by less than $2\varepsilon\left|P\left(Y'\right)\right|$.
\item $\theta(Y')$ increases by more than $\left(\frac{1}{8}-2\varepsilon\right)\left|P\left(Y'\right)\right|$,
so the increase is positive when\footnote{For arbitrary $g\ge2$, $\theta\left(Y'\right)$ increases by more
than $\left(\frac{1}{4g}-2\varepsilon\right)\left|P\left(Y'\right)\right|$,
so we need $\varepsilon\le\frac{1}{8g}$.} $\varepsilon\le\frac{1}{16}$.
\end{enumerate}
\end{lem}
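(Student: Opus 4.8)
The plan is to track exactly what happens to $\d(Y')$ and $\f(Y')$ when step \textbf{(b)} annexes the octagons of $Z$ meeting the chosen $\varepsilon$-bad piece $P = P(Y')$. Write $P$ as a path along $\partial Y'_{+}$ with $\e(P)$ full directed edges and $\he(P)$ hanging half-edges, so $|P| = \e(P) + \he(P)$ and $\defect(P) = \e(P) - 3\he(P) > 4\chi(P) - \varepsilon|P|$ since $P$ is $\varepsilon$-bad. First I would analyze how many octagons get added: each octagon of $Z$ meeting $P$ must share at least one edge (or provide an octagon adjacent to a hanging half-edge) with $P$, and since $Y'$ is boundary reduced, no long block sits along $\partial Y'$, so the edges of $P$ are partitioned into blocks of length $\le 4$ with single half-edges between consecutive blocks; counting octagons against this block structure shows the number of octagons annexed is comparable to $|P|$ — I expect something like $\f$ increases by at least $|P|/8$ or so after accounting for how each octagon borders a bounded number of edges/half-edges of $P$. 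This is the content of part (2)'s lower bound $(\tfrac18 - 2\varepsilon)|P|$, so the octagon gain term should be $\ge \tfrac18 |P|$ before subtracting the boundary change.

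Next I would bound how much $\d(Y')$ can grow. When an octagon is glued along a segment of $P$, the segment of $\partial Y'$ along which it is glued is removed from the boundary and replaced by the complementary part of the octagon's boundary (at most the remaining edges of the octagon not shared). The key point is that along $P$ itself the boundary only shrinks or stays controlled — the genuine increase in $\d$ comes only from the hanging half-edges of $P$, each of which, when "filled in" by an octagon, can contribute a bounded number of new boundary edges. Since $P$ being $\varepsilon$-bad forces $\he(P)$ to be small relative to $\e(P)$ (from $\e(P) - 3\he(P) > 4\chi(P) - \varepsilon|P| \ge -\varepsilon|P|$, we get $\he(P) < \tfrac{1+\varepsilon}{3-\varepsilon}\,\e(P)$, hence $\he(P)$ is a controlled fraction of $|P|$), the total boundary increase is $< 2\varepsilon|P|$ — this is part (1). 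Combining, $\theta(Y') = \f(Y') - \d(Y')$ increases by more than $\tfrac18|P| - 2\varepsilon|P| = (\tfrac18 - 2\varepsilon)|P|$, which is positive exactly when $\varepsilon < \tfrac1{16}$, giving part (2).

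The main obstacle I anticipate is the careful bookkeeping in part (1): showing that the new boundary edges created by annexation are controlled by $2\varepsilon|P|$ rather than by $|P|$ itself. This requires using that $P$ is $\varepsilon$-bad in an essential way — the defect inequality failing is precisely what guarantees $\he(P)$ is a small fraction of $|P|$, and it is the hanging half-edges (not the full edges, which get "covered" by octagons on the $Y'$ side) that are responsible for boundary growth. One must also be careful that annexing octagons meeting $P$ doesn't inadvertently create new boundary elsewhere on $\partial Y'$; here I would invoke that $Y'$ is boundary reduced and that, by the structure of $\ts$, an octagon of $Z$ is attached along a connected arc, so its effect on $\partial Y'$ is local to a neighborhood of $P$. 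The octagon-count lower bound in part (2) is comparatively routine once the block/chain decomposition of $P$ along $\partial Y'$ is in hand, using that each block of length $\ell$ along the boundary forces at least $\lceil (\ell+\text{something})/\text{const}\rceil$ octagons to be present in the no-boundary complex $Z$.
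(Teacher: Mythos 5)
Your overall architecture matches the paper's (bound the number $k$ of annexed octagons from below, bound the boundary growth from above, combine), but the justification you give for part (1) has a genuine gap, and it is precisely the step you yourself flag as the "main obstacle."

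The gap: you argue that the boundary increase is controlled because each hanging half-edge contributes a bounded number of new boundary edges and because $\varepsilon$-badness forces $\he(P)$ to be a small fraction of $|P|$. But the inequality you derive, $\he(P)<\frac{1+\varepsilon}{3-\varepsilon}\,\e(P)$, only says $\he(P)$ is at most roughly a third of $\e(P)$ (hence roughly a quarter of $|P|$) — it is \emph{not} of order $\varepsilon|P|$. So "bounded constant times $\he(P)$" is of order $|P|$, not $2\varepsilon|P|$, and part (1) does not follow. The correct mechanism is a cancellation, not a smallness-of-$\he(P)$ argument: each annexed octagon with $j$ of its eight edges lying in $P$ contributes a net of at most $6-2j$ to $\d$ (it adds at most $8-j-2$ new boundary edges, since two of its edges contain hanging half-edges of $P$ and become interior, and it deletes $j$ old boundary edges), so summing over octagons and using $k\le\he(P)+1$ gives
\begin{equation*}
\d(Y'')-\d(Y')\;\le\;6k+2-2\,\e(P)\;\le\;2\bigl(3\,\he(P)-\e(P)\bigr)+8\;=\;-2\,\defect(P)+8\;<\;2\bigl(\varepsilon|P|-4\chi(P)\bigr)+8\;=\;2\varepsilon|P|
\end{equation*}
in the path case. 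The $\varepsilon$-bad inequality must be applied to the full quantity $\defect(P)=\e(P)-3\he(P)$ (with the $4\chi(P)$ term absorbing the additive constant), not merely to the ratio $\he(P)/\e(P)$.

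A secondary, smaller issue is part (2): you assert $k\ge|P|/8$ but do not prove it, and the block-decomposition route you sketch would naturally count only the full edges of $P$ (giving $8k\ge\e(P)$, hence only $k\gtrsim\frac{3}{32}|P|$), and also misdescribes pieces as having single half-edges between blocks. The paper's count is a double-counting over the $8k$ directed edges on the boundaries of the new octagons: $\e(P)$ of them are edges of $P$, each remaining one hosts at most two hanging half-edges of $P$, and each hanging half-edge of $P$ appears in exactly two such directed edges, whence $2\he(P)\le2(8k-\e(P))$ and $8k\ge|P|$. Without this, the constant $\tfrac18$ in the lemma (and hence the threshold $\varepsilon\le\tfrac1{16}$) is not obtained.
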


Note that $\theta\left(Y'\right)$ is an integer, so any positive
increase is an increase by at least one.
\begin{proof}
Suppose that in step \textbf{(b)} $Y'$ is modified. Let $Y''$ denote
the result of this modification and let $P=P(Y')$. Let $k$ denote
the number of new octagons added. First assume that $P$ is a non-closed
path, so $\chi\left(P\right)=1$. We have $k\le\he\left(P\right)+1$
because every hanging half-edge along $P$ marks the passing from
one new octagon to the next one. Every new octagon borders $8$ edges
in $Z$. For most new octagons, two of these edges contain hanging
half-edges of $P$ and are internal edges in $Y''$, so if $j$ of
the edges belong to $P$, the net contribution of the octagon to $\d\left(Y''\right)-\d\left(Y'\right)$
is at most $6-2j$. The exceptions are the two extreme octagons, which
possibly meet only one hanging half-edge of $P$, and contribute a
net of at most $7-2j$. The sum of the parameter $j$ over all new
octagons is exactly $\e\left(P\right)$. In total, we obtain:
\begin{eqnarray*}
\d\left(Y''\right)-\d\left(Y'\right) & \le & 6k+2-2\cdot\e\left(P\right)\\
 & \le & 6\left(\he\left(P\right)+1\right)+2-2\cdot\e\left(P\right)\\
 & = & 2\left(3\cdot\he\left(P\right)-\e\left(P\right)\right)+8\\
 & < & 2\left(\varepsilon\left|P\right|-4\chi\left(P\right)\right)+8=2\cdot\varepsilon\left|P\right|,
\end{eqnarray*}
where the last inequality comes from the definition of an $\varepsilon$-bad
piece. If $P$ is a whole boundary cycle of $Y'_{+}$, we have $k\le\he\left(P\right)$
and all octagons contribute at most $6-2j$ to $\d\left(Y''\right)-\d\left(Y'\right)$,
so 
\begin{eqnarray*}
\d\left(Y''\right)-\d\left(Y'\right) & \le & 6k-2\cdot\e\left(P\right)\le6\cdot\he\left(P\right)-2\cdot\e\left(P\right)<2\left(\varepsilon\left|P\right|-4\chi\left(P\right)\right)=2\varepsilon\left|P\right|.
\end{eqnarray*}
This proves Part 1.

There is a total of $8k$ directed edges at the boundaries of the
new octagons. Of these, $\e\left(P\right)$ are edges of $P$. Each
of the remaining $8k-\e\left(P\right)$ can `host' two hanging half-edges
of $P$, and each hanging half-edge appears in exactly $2$ directed
edges of new octagons. This gives
\[
2\he\left(P\right)\le2\left(8k-\e\left(P\right)\right),
\]
so $8k\ge\he\left(P\right)+\e\left(P\right)=\left|P\right|$. Hence
\[
\theta\left(Y''\right)-\theta\left(Y'\right)=k-\left(\d\left(Y''\right)-\d\left(Y'\right)\right)>\frac{1}{8}\left|P\right|-2\varepsilon\left|P\right|=\left(\frac{1}{8}-2\varepsilon\right)\left|P\right|.
\]
\end{proof}
The upshot of the previous observations and Lemma \ref{lem:step-c-theta-increase}
is that, provided $\varepsilon\le\frac{1}{16}$, every time step \textbf{(a)}
of the algorithm is reached, except for the first time, $Y'$ has
changed in step \textbf{(b)}, so $\text{\ensuremath{\theta(Y')}}$
has increased by at least one. Since 
\[
\theta(Y)=\f(Y)-\d(Y)\geq-\d(Y),
\]
and the algorithm halts at the latest after the first time that $\theta\left(Y\right)$
is positive, we deduce the following lemma:
\begin{lem}
If $\varepsilon\le\frac{1}{16}$, then during the octagons-vs-boundary
algorithm, step \textbf{(a) }is reached at most $\d(Y)+2$ times.
In particular, the algorithm always terminates.
\end{lem}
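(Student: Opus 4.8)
The plan is to combine the two quantitative observations made just before the statement—one about step \textbf{(a)} and one about step \textbf{(b)}, the latter being Lemma \ref{lem:step-c-theta-increase}—into a monotonicity argument for the integer-valued potential function $\theta(Y')=\f(Y')-\d(Y')$. First I would observe that the algorithm only revisits step \textbf{(a)} after passing through step \textbf{(b)} (this is immediate from the control flow: after the first entry into \textbf{(a)}, the only way back to \textbf{(a)} is via the ``go to \textbf{(a)}'' at the end of \textbf{(b)}, which is executed only when $Y'$ is not $\varepsilon$-adapted, in which case $Y'$ is genuinely modified since new octagons meeting $P(Y')$ are added—and $P(Y')$ being a piece of $\partial Y'$, there is at least one such octagon). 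So every entry into \textbf{(a)} except the first is immediately preceded by a modification of $Y'$ in step \textbf{(b)}.

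Next I would quantify the change in $\theta$ across one such loop iteration. Step \textbf{(b)}, by Lemma \ref{lem:step-c-theta-increase}(2), increases $\theta(Y')$ by more than $\left(\frac18-2\varepsilon\right)\bigl|P(Y')\bigr|$, which is strictly positive when $\varepsilon\le\frac{1}{16}$; since $\theta$ is integer-valued (as $\f$ and $\d$ are), this is an increase by at least $1$. The subsequent invocation of step \textbf{(a)}, i.e.\ replacing $Y'$ by $\br(Y'\hookrightarrow Z)$, can only further increase $\theta$: either $\br(Y'\hookrightarrow Z)=Y'$ and $\theta$ is unchanged, or by the displayed remark preceding the statement ($\d$ drops by at least two and $\f$ rises by at least one, so $\theta$ rises by at least three). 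Hence each time step \textbf{(a)} is reached after the first, $\theta(Y')$ has strictly increased since the previous time step \textbf{(a)} was reached—by at least $1$.

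Then I would bound the total possible increase of $\theta$. Write $Y$ for the original input. Initially $Y'=Y$, and the first application of step \textbf{(a)} can only increase $\theta$, so at the first time step \textbf{(a)} is (fully) executed we have $\theta(Y')\ge \theta(Y)=\f(Y)-\d(Y)\ge -\d(Y)$ (using $\f(Y)\ge0$). On the other hand, the algorithm terminates immediately inside step \textbf{(a)} as soon as $\theta(Y')>0$, i.e.\ as soon as $\theta(Y')\ge 1$; therefore at every execution of step \textbf{(a)} except possibly the last, $\theta(Y')\le 0$. Combining: along all but the last visit to \textbf{(a)}, $\theta(Y')$ ranges within $[-\d(Y),0]$, an interval of size $\d(Y)$, and strictly increases (by at least $1$) from one visit to the next; so there are at most $\d(Y)+1$ visits to \textbf{(a)} before the final one, hence at most $\d(Y)+2$ visits in total. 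In particular only finitely many iterations occur and the algorithm terminates, which is the claim. I do not expect any real obstacle here: the content is entirely in Lemma \ref{lem:step-c-theta-increase} and the step-\textbf{(a)} remark, both already established; the only mild subtlety to state carefully is the bookkeeping that each non-first visit to \textbf{(a)} is preceded by a genuine change in step \textbf{(b)}, and that the terminating test is checked at the end of \textbf{(a)} so the ``$\le 0$'' bound holds at all but the last visit.
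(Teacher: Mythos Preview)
Your proof is correct and follows essentially the same approach as the paper: both argue that $\theta(Y')$ is integer-valued, starts at $\ge -\d(Y)$, increases by at least one between consecutive visits to step \textbf{(a)} (via Lemma \ref{lem:step-c-theta-increase}), and can rise to at most $1$ before termination. Your write-up is more careful about the bookkeeping (e.g.\ that each non-first visit to \textbf{(a)} is preceded by a genuine modification in \textbf{(b)}), but the underlying argument is the same.
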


Now that we know the algorithm always terminates (assuming $\varepsilon\le\frac{1}{16}$),
and it clearly has deterministic output due to our a priori choices,
if $Y\hookrightarrow Z$ is an embedding of a compact tiled surface
$Y$ into a tiled surface $Z$ without boundary we write $\OVB_{\varepsilon}(Y\hookrightarrow Z)$\marginpar{${\scriptstyle \protect\OVB_{\varepsilon}(Y\protect\hookrightarrow Z)}$}
for the output of the OvB algorithm with parameter $\varepsilon$
applied to $Y\hookrightarrow Z$. Thus $\OVB_{\varepsilon}(Y\hookrightarrow Z)$
is a tiled surface $Y'$ with an attached embedding $Y\hookrightarrow Y'$.
We can now make the following easy observation.
\begin{lem}
\label{lem:two-options-for-OVB-alg}Let $\varepsilon\le\frac{1}{16}$,
let $Y\hookrightarrow Z$ be an embedding of a compact tiled surface
$Y$ into a tiled surface $Z$ without boundary, and let $Y'=\OVB_{\varepsilon}(Y\hookrightarrow Z)$.
Then at least one of the following holds:
\begin{itemize}
\item $Y'$ is $\varepsilon$-adapted.
\item $Y'$ is boundary reduced and $\f(Y')>\d(Y')$.
\end{itemize}
\end{lem}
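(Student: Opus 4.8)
The plan is a short case analysis on the two ways the OvB algorithm can halt. By the preceding lemma (which guarantees termination once $\varepsilon \le \frac{1}{16}$), the algorithm applied to $Y \hookrightarrow Z$ stops, and inspecting the pseudocode there are precisely two instructions that can end it: the ``terminate the algorithm and return $Y'$'' inside step \textbf{(a)}, and the final ``Return $Y'$'' line reached from step \textbf{(b)}. I would show that the first exit lands us in the second alternative of the statement and the second exit lands us in the first.

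First I would treat the case where the algorithm halts inside step \textbf{(a)}. At that moment $Y'$ has just been reassigned to $\br(Y' \hookrightarrow Z)$, so $Y'$ is boundary reduced by Proposition~\ref{prop:properties-of-BR-alg}(1). The halting condition in step \textbf{(a)} is exactly $\theta(Y') = \f(Y') - \d(Y') > 0$, i.e.\ $\f(Y') > \d(Y')$. So $Y'$ is boundary reduced with $\f(Y') > \d(Y')$, which is the second alternative.

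Next I would treat the case where the algorithm instead falls through to the final ``Return $Y'$''. Tracing the control flow, this line is reached only when step \textbf{(b)} is entered and the test ``$Y'$ is not $\varepsilon$-adapted'' fails — otherwise step \textbf{(b)} would adjoin octagons and loop back to step \textbf{(a)}. Hence at termination $Y'$ is $\varepsilon$-adapted, the first alternative. Since these exhaust the ways the algorithm can stop, one of the two alternatives always holds. For completeness one should also note that $Y'$ is compact, as claimed for the algorithm's output: $Y$ is compact to start, the $\br$-closure of a compact subsurface of $Z$ is again compact by Proposition~\ref{prop:properties-of-BR-alg}(2), and step \textbf{(b)} only ever adjoins finitely many octagons.

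I do not expect a genuine obstacle here; the lemma is essentially bookkeeping about the two exit points of the algorithm, and the only inputs are the termination lemma just above and the fact (Proposition~\ref{prop:properties-of-BR-alg}(1)) that a $\br$-closure is boundary reduced.
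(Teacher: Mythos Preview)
Your proposal is correct and matches the paper's approach: the paper presents this lemma as an ``easy observation'' with no written proof, and your case analysis on the two exit points of the algorithm is exactly the intended argument.
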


We also want an upper bound on how $\d(Y')$ and $\f\left(Y'\right)$
increase during the OvB algorithm. 
\begin{lem}
\label{lem:face-and-boundary-change-during-OvB}Assume\footnote{For arbitrary $g\ge2$, we pick $\varepsilon\le\frac{1}{16g}$. The
statement of the lemma holds as is. } $\varepsilon\le\frac{1}{32}$. Let $Y$ be a compact tiled surface,
$Z$ be a boundary-less tiled surface and denote $\overline{Y}=\OVB_{\varepsilon}\left(Y\hookrightarrow Z\right)$.
Then
\begin{align}
\d(\overline{Y}) & \leq3\d(Y),\label{eq:boundary change during OvB}\\
\f(\overline{Y}) & \leq\f(Y)+4\d\left(Y\right)+\d(Y)^{2}.\label{eq:octagon change during OvB}
\end{align}
\end{lem}

\begin{proof}
If step \textbf{(a) }is only reached once, then the result of the
algorithm, $\overline{Y}$, is equal to $\br(Y\hookrightarrow Z)$.
In this case we have $\d\left(\overline{Y}\right)\le\d\left(Y\right)$
and $\f\left(\overline{Y}\right)\le\f\left(Y\right)+\frac{\d\left(Y\right)^{2}}{6}$
by Proposition \ref{prop:properties-of-BR-alg} part \ref{enu:octagon-growth},
so the statement of the lemma holds. So from now on suppose step \textbf{(a)}
is reached more than once. 

Let $Y_{1}=Y'$ at the penultimate time that step \textbf{(a)} \uline{is
completed}. Between the penultimate time that step \textbf{(a)} is
completed and the algorithm terminates, step \textbf{(b)} takes place
to form $Y_{2}=Y'$, and then step \textbf{(a)} takes place one more
time to form $Y_{3}=\overline{Y}$ which is the output of the algorithm.

First we prove the bound on $\d\left(Y_{3}\right)$. We have $\theta(Y_{1})\leq0$,
so 
\[
\theta\left(Y_{1}\right)-\theta\left(Y\right)\le0-\left(\f\left(Y\right)-\d\left(Y\right)\right)\le\d\left(Y\right).
\]
We claim that in every step of the OvB algorithm, the increase in
$\theta$ is larger then the increase in $\d$. Indeed, this is obviously
true in step \textbf{(a)}, where $\theta$ does not decrease and $\d$
does not increase. It is also true in step \textbf{(b)} by Lemma \ref{lem:step-c-theta-increase}
and our assumption that $\varepsilon\le\frac{1}{32}$. Therefore,
\[
\d\left(Y_{1}\right)-\d\left(Y\right)\le\theta\left(Y_{1}\right)-\theta\left(Y\right)\le\d\left(Y\right),
\]
and we conclude that $\d\left(Y_{1}\right)\le2\d\left(Y\right)$.

Let $P=P\left(Y_{1}\right)$. By Lemma \ref{lem:step-c-theta-increase},
\begin{eqnarray*}
\d\left(Y_{2}\right) & \le & \d\left(Y_{1}\right)+2\varepsilon\left|P\right|\stackrel{\eqref{eq:upper bound for |P|}}{\le}\d\left(Y_{1}\right)+2\varepsilon\cdot\frac{4\d\left(Y_{1}\right)}{3-\varepsilon}\\
 & = & \d\left(Y_{1}\right)\left[1+\frac{8\varepsilon}{3-\varepsilon}\right]\le1.1\cdot\d\left(Y_{1}\right)\le2.2\cdot\d\left(Y\right),
\end{eqnarray*}
where the penultimate inequality is based on that $\varepsilon\le\frac{1}{32}$.
Finally, $\d\left(Y_{3}\right)\le\d\left(Y_{2}\right)$, so (\ref{eq:boundary change during OvB})
is proven.

For the number of octagons, note first that 
\[
\f\left(Y_{1}\right)=\theta\left(Y_{1}\right)+\d\left(Y_{1}\right)\le\d\left(Y_{1}\right)\le2\d\left(Y\right).
\]
Let $k$ denote the number of new octagons added in step \textbf{(b)
}to form $Y_{2}$ from $Y_{1}$. As noted in the proof of Lemma \ref{lem:step-c-theta-increase},
$k\le\he\left(P\right)+1$. As $P=P\left(Y_{1}\right)$ is $\varepsilon$-bad,
we have
\[
\he\left(P\right)\le\frac{1}{3}\left(\e\left(P\right)+\varepsilon\left|P\right|\right)\stackrel{\eqref{eq:upper bound for |P|}}{\le}\frac{1}{3}\d\left(Y_{1}\right)\left(1+\frac{4\varepsilon}{3-\varepsilon}\right)<\d\left(Y_{1}\right)\le2\d\left(Y\right),
\]
the penultimate inequality is based again on that $\varepsilon\le\frac{1}{32}$.
Thus $\f\left(Y_{2}\right)-\f\left(Y_{1}\right)\le\he\left(P\right)+1\le2\d\left(Y\right)$.

Finally, by Proposition \ref{prop:properties-of-BR-alg} part \ref{enu:octagon-growth},
$\f\left(Y_{3}\right)-\f\left(Y_{2}\right)\le\frac{\d\left(Y_{2}\right)^{2}}{6}\le\d\left(Y\right)^{2}$,
and we conclude 
\begin{eqnarray*}
\f\left(Y_{3}\right) & = & \f\left(Y_{1}\right)+\left[\f\left(Y_{2}\right)-\f\left(Y_{1}\right)\right]+\left[\f\left(Y_{3}\right)-\f\left(Y_{2}\right)\right]\\
 & \le & 2\d\left(Y\right)+2\d\left(Y\right)+\d\left(Y\right)^{2}=4\d\left(Y\right)+\d\left(Y\right)^{2},
\end{eqnarray*}
which proves (\ref{eq:octagon change during OvB}) in this case as
well.
\end{proof}

\subsection{Resolutions from the octagons-vs-boundary algorithm}

Recall the definition of the tiled surface $X_{\phi}$ from $\S$\ref{sec:Introduction}
and Example \ref{exa:x_phi}. Given a tiled surface $Y$, we define
\[
\E_{n}(Y)\eqdf\E_{\phi\in\X_{n}}[\#\text{morphisms \ensuremath{Y\to X_{\phi}].}}
\]
This is the expected number of morphisms from $Y$ to $X_{\phi}$.
Recall that we use the uniform probability measure on $\X_{n}$. We
have the following result that relates this concept to Theorem \ref{thm:effective-error}.
\begin{lem}
\label{lem:fix-gamma-fix-cycle}Given $1\ne\gamma\in\Gamma$, let
$\mathcal{C}_{\gamma}$ be as in Example \ref{exa:the-loop-of-a-word}.
Then
\begin{equation}
\E_{n}[\fix_{\gamma}]=\E_{n}(\CC_{\gamma}).\label{eq:fix_gamma_by_cycles}
\end{equation}
\end{lem}

\begin{proof}
This is not hard to check but also follows from \cite[Lem.~2.7]{MPasympcover}.
\end{proof}
We need to work not only with $\E_{n}(Y)$ for various tiled surfaces,
but also with the expected number of times that $Y$ \emph{embeds
}into $X_{\phi}$. For a tiled surface $Y$, this is given by 
\[
\E_{n}^{\emb}(Y)\eqdf\E_{\phi\in\X_{n}}[\#\text{embeddings \ensuremath{Y\hookrightarrow X_{\phi}].}}
\]
We recall the following definition from \cite[Def.~2.8]{MPasympcover}.
\begin{defn}[Resolutions]
A resolution $\mathcal{\mathcal{R}}$ of a tiled surface $Y$ is
a collection of morphisms of tiled surfaces 
\[
\mathcal{R}=\left\{ f\colon Y\to W_{f}\right\} ,
\]
such that every morphism $h\colon Y\to Z$ of $Y$ into a tiled surface
$Z$ with no boundary decomposes \uline{uniquely} as $Y\stackrel{f}{\to}W_{f}\stackrel{\overline{h}}{\hookrightarrow}Z$,
where $f\in\mathcal{R}$ and $\overline{h}$ is an \uline{embedding}.
\end{defn}

The point of this definition is the following lemma also recorded
in \cite[Lem.~2.9]{MPasympcover}.
\begin{lem}
\label{lem:resolution-sum-of-expectations}If $Y$ is a compact tiled
surface and $\mathcal{R}$ is a finite resolution of $Y$, then
\begin{equation}
\mathbb{E}_{n}\left(Y\right)=\sum_{f\in\mathcal{R}}\mathbb{E}_{n}^{\emb}\left(W_{f}\right).\label{eq:resolution}
\end{equation}
\end{lem}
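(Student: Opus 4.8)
The plan is to unwind both sides of (\ref{eq:resolution}) and show that they count the same thing. By definition, $\E_n(Y) = \E_{\phi \in \X_n}[\#\{\text{morphisms } Y \to X_\phi\}]$. Since $X_\phi$ is a tiled surface with no boundary (being a covering space of $\Sigma_2$), every morphism $h \colon Y \to X_\phi$ falls under the scope of the defining property of a resolution $\mathcal{R}$: it factors \emph{uniquely} as $Y \xrightarrow{f} W_f \xhookrightarrow{\overline{h}} X_\phi$ with $f \in \mathcal{R}$ and $\overline h$ an embedding. First I would observe that this sets up, for each fixed $\phi$, a bijection
\[
\{\text{morphisms } Y \to X_\phi\} \;\longleftrightarrow\; \bigsqcup_{f \in \mathcal{R}} \{\text{embeddings } W_f \hookrightarrow X_\phi\}.
\]
One direction sends $h$ to the pair $(f, \overline h)$ given by the unique factorization; the other sends a pair $(f, e)$ with $e \colon W_f \hookrightarrow X_\phi$ an embedding to the composite $e \circ f \colon Y \to X_\phi$. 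These two maps are mutually inverse precisely by the uniqueness clause in the definition of resolution: starting from $h$, the factorization data $(f,\overline h)$ recomposes to $h$; starting from $(f,e)$, the composite $e\circ f$ is a morphism whose unique resolution-factorization must, by uniqueness, be $(f,e)$ itself (one needs here that $e$ is an embedding, which is part of the data). I should be slightly careful that morphisms are taken up to isotopy through morphisms, as stipulated in the paper, so the bijection is really between isotopy classes; the uniqueness in the resolution definition is stated at that level, so this is consistent.

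Having the bijection for each $\phi$, I would take cardinalities to get
\[
\#\{\text{morphisms } Y \to X_\phi\} = \sum_{f \in \mathcal{R}} \#\{\text{embeddings } W_f \hookrightarrow X_\phi\},
\]
a finite sum since $\mathcal{R}$ is finite by hypothesis. Then I take expectations over $\phi \in \X_n$ with respect to the uniform measure and use linearity of expectation (legitimate because the sum is finite), obtaining
\[
\E_n(Y) = \E_{\phi}\Big[\sum_{f \in \mathcal{R}} \#\{\text{embeddings } W_f \hookrightarrow X_\phi\}\Big] = \sum_{f \in \mathcal{R}} \E_{\phi}\big[\#\{\text{embeddings } W_f \hookrightarrow X_\phi\}\big] = \sum_{f \in \mathcal{R}} \E_n^{\emb}(W_f),
\]
which is exactly (\ref{eq:resolution}).

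The main (and really only) obstacle is making sure the defining property of a resolution is applied correctly: it must be checked that $X_\phi$ genuinely has no boundary so that the factorization property applies to \emph{every} morphism $h \colon Y \to X_\phi$, and that the correspondence $(f, e) \mapsto e \circ f$ lands back in the right fiber — i.e., that no two distinct pairs give isotopic composites and that every morphism arises this way. Both points are immediate consequences of the uniqueness built into the definition, so there is no real analytic or combinatorial difficulty here; the lemma is essentially a bookkeeping identity, and indeed it is quoted from \cite[Lemma 2.9]{MPasympcover}. I would keep the write-up to the few lines above.
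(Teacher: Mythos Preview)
Your proof is correct and is exactly the intended argument: the paper does not actually prove this lemma but merely cites \cite[Lemma 2.9]{MPasympcover}, where the same unwinding-of-definitions argument (bijection for each $\phi$ via the unique factorization, then linearity of expectation) is used. There is nothing to add.
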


The type of resolution we wish to use in this paper is the following.
\begin{defn}[$\mathcal{R}_{\varepsilon}(Y)$]
\label{def:epsilon-resolution} For a compact tiled surface $Y$,
let $\mathcal{R}_{\varepsilon}(Y)$ denote the collection of all morphisms
$Y\xrightarrow{f}W_{f}$ obtained as follows:
\begin{itemize}
\item $F:Y\to Z$ is a morphism of $Y$ into a boundary-less tiled surface
$Z$.
\item $U_{F}$ is the image of $F$ in $Z$. Hence there is a given embedding
$\iota_{F}:U_{F}\hookrightarrow Z$.
\item $W_{f}$ is given by $W_{f}=\OVB_{\varepsilon}(U_{f}\hookrightarrow Z)$
and $f=\iota_{F}\circ F\colon Y\to W_{f}$.
\end{itemize}
\end{defn}

\begin{thm}
\label{thm:certification of resolution}Given a compact tiled surface
$Y$ amd $\varepsilon\le\frac{1}{32}$ (or $\varepsilon\le\frac{1}{16g}$
for arbitrary $g\ge2$), the collection $\mathcal{R}_{\varepsilon}(Y)$
defined in Definition \ref{def:epsilon-resolution} is a finite resolution
of $Y$.
\end{thm}

\begin{proof}
To see that $\mathcal{R}_{\varepsilon}(Y)$ is finite, note that there
are finitely many options for $U_{F}$ (this is a quotient of the
compact complex $Y$). For any such $U_{F}$ we have $\f(U_{F})\leq\f(Y)$
and $\d(U_{F})\leq\d(Y)$, and hence by Lemma \ref{lem:face-and-boundary-change-during-OvB}
there is a bound on $\f(W_{f})$ depending only on $Y$. As we add
a bounded number of octagons to obtain $W_{f}$, there is a bound
also on $\v\left(W_{f}\right)$ and on $\e\left(W_{f}\right)$. This
means that $W_{f}$ is one of only finitely many tiled surfaces, and
there are finitely many morphisms of $Y$ to one of these.

Now we explain why $\mathcal{R}_{\varepsilon}(Y)$ is a resolution
-- this is essentially the same as \cite[proof of Thm.~2.14]{MPasympcover}.
Let $F\colon Y\to Z$ be a morphism with $\partial Z=\emptyset$.
By the definition of $\mathcal{R}_{\varepsilon}(Y)$, it is clear
that $F$ decomposes as $Y\stackrel{f}{\to}W_{f}\hookrightarrow Z$
for the $f\in\mathcal{R_{\varepsilon}}(Y)$ that originates in $F$.
To show uniqueness, assume that $F$ decomposes in an additional way
\[
Y\stackrel{f'}{\to}W_{f'}\hookrightarrow Z
\]
where $W_{f'}$ is the result of the $OvB$ algorithm for some $F'\colon Y\to Z'$
with $\partial Z'=\emptyset$. We claim that both decompositions are
precisely the same decomposition of $F$ (namely $W_{f'}=W_{f}$ and
$f'=f$). First, $U_{F'}=F'\left(Y\right)\hookrightarrow W_{f'}\hookrightarrow Z$,
so $U_{F'}=F'\left(Y\right)=F\left(Y\right)=U_{F}$. The OvB algorithm
with input $F'\left(Y\right)\hookrightarrow Z'$ takes place entirely
inside $W_{f'}$, and does not depend on the structure of $Z'\backslash W_{f'}$:
the choices are made depending only on the structure of the boundary
of $Y'$ in step \textbf{(b)} of the OvB algorithm, as well as in
every step of the procedure described in Proposition \ref{prop:properties-of-BR-alg}(\ref{enu:BR-algo})
to obtain $\br\left(Y'\hookrightarrow Z\right)$ in step \textbf{(a)}.
Moreover, the result of these steps depends only on the octagons of
$Z$ immediately adjacent to the boundary of $Y'$. But $W_{f'}$
is embedded in $Z$, and so it must be identical to $W_{f}$ and $f'$
identical to $f$.
\end{proof}
It is the following corollary of the previous results, applied to
a tiled surface $\CC_{\gamma}$ as in Example \ref{exa:the-loop-of-a-word},
that will be used in the rest of the paper. Recall that for $\gamma\in\Gamma$,
$\ell_{w}\left(\gamma\right)$ denotes the word-length, with respect
to the generators $\left\{ a,b,c,d\right\} $, of a shortest representative
of the conjugacy class of $\gamma$ in $\Gamma$.
\begin{cor}
\label{cor:facts-about-the-resolution}Let $1\ne\gamma\in\Gamma$
and\footnote{For arbitrary $g\ge2$, take $\varepsilon\le\frac{1}{16g}$. The same
result holds.} $\varepsilon\le\frac{1}{32}$. For any $f:\CC_{\gamma}\to W_{f}$
in $\mathcal{R}_{\varepsilon}(\CC_{\gamma})$, either
\begin{enumerate}
\item $W_{f}$ is boundary reduced, and $\chi(W_{f})<-\f(W_{f})<-\d(W_{f})$,
or
\item $W_{f}$ is $\varepsilon$-adapted.
\end{enumerate}
Moreover, in either case,
\begin{align}
\d(W_{f}) & \leq6\ell_{w}\left(\gamma\right)\label{eq:res-delta-control}\\
\f(W_{f}) & \leq8\ell_{w}\left(\gamma\right)+4\left(\ell_{w}\left(\gamma\right)\right)^{2}.\label{eq:res-F-control}
\end{align}
\end{cor}

\begin{proof}
The inequalities (\ref{eq:res-delta-control}) and (\ref{eq:res-F-control})
are from Lemma \ref{lem:face-and-boundary-change-during-OvB} and
the fact that $\d\left(\CC_{\gamma}\right)=2\ell_{w}\left(\gamma\right)$
and $\f\left(\CC_{\gamma}\right)=0$. It follows from the construction
of $\mathcal{R}_{\varepsilon}(\CC_{\gamma})$ using the OvB algorithm
that if $f\in\mathcal{R}_{\varepsilon}(Y)$ with $f:Y\to W_{f}$,
and $W_{f}$ is not $\varepsilon$-adapted, then $W_{f}$ is boundary
reduced and $\d(W_{f})<\f(W_{f})$. Combined with Lemma \ref{lem:Euler-char-bound}
this gives
\begin{align*}
\chi(W_{f}) & \leq-2\f(W_{f})+\frac{1}{2}\d(W_{f})<-2\f(W_{f})+\frac{1}{2}\f(W_{f})\leq-\f(W_{f}).
\end{align*}
\end{proof}

\section{Representation theory of symmetric groups\label{sec:rep-theory-symmetric}}

\subsection{Background\label{subsec:rep-theory-background}}

We write $S_{n}$ for the symmetric group of permutations of the set
$[n]$. By convention $S_{0}$ is the trivial group with one element.
If $m\le n$, we always let $S_{m}\leq S_{n}$ be the subgroup of
permutations fixing $[m+1,n]$ element-wise. For $k\leq n$, we will
let $S'_{k}\leq S_{n}$ be our notation for the subgroup of permutations
fixing $[n-k]$ element-wise. We write $\C[S_{n}]$ for the group
algebra of $S_{n}$ with complex coefficients.

\subsubsection*{Young diagrams}

A Young diagram (YD) of size $n$ is a collection of $n$ boxes, arranged
in left-aligned rows in the plane, such that the number of boxes in
each row is non-increasing from top to bottom. A Young diagram is
uniquely specified by the sequence $\lambda_{1},\lambda_{2},\ldots,\lambda_{r}$
where $\lambda_{i}$ is the number of boxes in the $i$th row (and
there are $r$ rows). We have $\lambda_{1}\geq\lambda_{2}\geq\cdots\geq\lambda_{r}>0$;
such a sequence of integers is called a \emph{partition. }We view
YDs and partitions interchangeably in this paper. If $\sum_{i}\lambda_{i}=n$
we write $\lambda\vdash n$. Two important examples of partitions
are $(n)$, with all boxes of the corresponding YD in the first row,
and $(1)^{n}\eqdf(\underbrace{1,\ldots,1}_{n})$, with all boxes of
the corresponding YD in the first column. If $\mu,\lambda$ are YDs,
we write $\mu\subset\lambda$ if all boxes of $\mu$ are contained
in $\lambda$ (when both are aligned to the same top-left borders).
We say $\mu\subset_{k}\lambda$ if $\mu\subset\lambda$ and there
are $k$ boxes of $\lambda$ that are not in $\mu$. We write $\emptyset$
for the empty YD with no boxes. If $\lambda$ is a YD, $\check{\lambda}$
is the\emph{ }conjugate YD obtained by reflecting $\lambda$ in the
diagonal (switching rows and columns). 

A skew Young diagram (SYD) is a pair of Young diagrams $\mu$ and
$\lambda$ with $\mu\subset\lambda$. This pair is denoted $\lambda/\mu$
and thought of as the collection of boxes of $\lambda$ that are not
in $\mu$. We identify a YD $\lambda$ with the SYD $\lambda/\emptyset$
so that YDs are special cases of SYDs. The size of a SYD $\lambda/\mu$
is the number of boxes it contains; i.e.~the number of boxes of $\lambda$
that are not in $\mu$. The size is denoted by $|\lambda/\mu|$, or
if $\lambda$ is a YD, $|\lambda|$.

\subsubsection*{Young tableaux}

Let $\lambda/\mu$ be a SYD, with $\lambda\vdash n$ and $\mu\vdash k$.
A standard Young tableau of shape $\lambda/\mu$ is a filling of the
boxes of $\lambda/\mu$ with the numbers $[k+1,n]$ such that each
number appears in exactly one box and the numbers in each row (resp.~column)
are strictly increasing from left to right (resp.~top to bottom).
We refer to standard Young tableaux just as \emph{tableaux} in this
paper. We write $\Tab(\lambda/\mu)$ for the collection of tableaux
of shape $\lambda/\mu$. Given a tableau $T$, we denote by $T\lvert_{\leq m}$
(resp.~$T\lvert_{>m})$ the tableau formed by the numbers-in-boxes
of $T$ with numbers in the set $[m]$ (resp.~$[m+1,n]$). The shape
of $T\lvert_{\le m}$ and of $T\lvert_{>m}$ is a SYD in general.
If $T$ is a tableau and the shape of $T$ is a YD we let $\mu_{m}(T)$
be the YD that is the shape of $T\lvert_{\leq m}$. If $\nu\subset\mu\subset\lambda$,
$T\in\Tab(\mu/\nu)$ and $R\in\Tab(\lambda/\mu)$, then we write $T\sqcup R$
for the tableau in $\Tab(\lambda/\nu)$ obtained by adjoining $R$
to $T$ in the obvious way.

\subsubsection*{Irreducible representations}

The equivalence classes of irreducible unitary representations of
$S_{n}$ are in one-to-one correspondence with Young diagrams of size
$n$. Given a YD $\lambda\vdash n$, we write $V^{\lambda}$ for the
corresponding irreducible representation of $S_{n}$; each $V^{\lambda}$
is a finite dimensional Hermitian complex vector space with an action
of $S_{n}$ by unitary linear automorphisms. Hence $V^{\lambda}$
can also be thought of as a module for $\C[S_{n}]$. We write $d_{\lambda}\eqdf\dim V^{\lambda}$.
It is well-known, and also follows from the discussion of the next
paragraphs, that $d_{\lambda}=|\Tab(\lambda)|$. Note that $d_{\lambda}=d_{\check{\lambda}}$
since reflection in the diagonal gives a bijection between $\Tab(\lambda)$
and $\Tab(\check{\lambda})$.

We now give an account of the Vershik-Okounkov approach to the representation
theory of symmetric groups from \cite{VershikOkounkov}. According
to the usual ordering of $[n]$ there is a filtration of subgroups
\[
S_{0}\leq S_{1}\leq S_{2}\leq\cdots\leq S_{n}.
\]
If $W$ is any unitary representation of $S_{n}$, $m\in[n]$ and
$\mu\vdash m$, we write $W_{\mu}$ for the span of vectors in copies
of $V^{\mu}$ in the restriction of $W$ to $S_{m}$; we call $W_{\mu}$
the $\mu$-isotypic subspace of $W$.

It follows from the branching law for restriction of representations
between $S_{m}$ and $S_{m-1}$ that for $\lambda\vdash n$ and $T\in\Tab(\lambda)$
the intersection
\[
\left(V^{\lambda}\right)_{\mu_{1}(T)}\cap\left(V^{\lambda}\right)_{\mu_{2}(T)}\cap\cdots\cap\left(V^{\lambda}\right)_{\mu_{n-1}(T)}
\]
is one-dimensional. Vershik-Okounkov specify a unit vector $v_{T}$
in this intersection. The collection 
\[
\{\,v_{T}\,:\,T\in\Tab(\lambda)\,\}
\]
 is an orthonormal basis for $V^{\lambda}$ called a Gelfand-Tsetlin
basis.

\subsubsection*{Modules from SYDs}

If $m,n\in\N$, $\lambda\vdash n$, $\mu\vdash m$ and $\mu\subset\lambda$,
then 
\[
V^{\lambda/\mu}\eqdf\Hom_{S_{m}}(V^{\mu},V^{\lambda})
\]
is a unitary representation of $S'_{n-m}$ as $S'_{n-m}$ is in the
centralizer of $S_{m}$ in $S_{n}$. We write $d_{\lambda/\mu}$ for
the dimension of this representation. There is also an analogous Gelfand-Tsetlin
orthonormal basis of $V^{\lambda/\mu}$ indexed by $T\in\Tab(\lambda/\mu$);
the basis element corresponding to a skew tableau $T$ will be denoted
$w_{T}$. It follows that $d_{\lambda/\mu}=|\Tab(\lambda/\mu)|$.
Note that when $\mu=\lambda$, $\Tab\left(\lambda/\mu\right)=\left\{ \emptyset\right\} $
($\emptyset$ the empty tableau), and the representation $V^{\lambda/\mu}$
is one-dimensional with basis $w_{\emptyset}$. 

One has the following consequence of Frobenius reciprocity (cf. e.g.~\cite[Lem.~3.1]{MPasympcover}).
\begin{lem}
\label{lem:induced-rep-dimension}Let $n\in\N$, $m\in\left[n\right]$
and $\mu\vdash m$. Then
\[
\sum_{\lambda\vdash n\colon\mu\subset\lambda}d_{\lambda/\mu}d_{\lambda}=\frac{n!}{m!}d_{\mu}.
\]
\end{lem}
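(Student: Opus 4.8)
The plan is to compute the dimension of the induced representation $\Ind_{S_{m}}^{S_{n}} V^{\mu}$ in two ways. On the one hand, $\dim \Ind_{S_{m}}^{S_{n}} V^{\mu} = [S_{n}:S_{m}] \cdot \dim V^{\mu} = \frac{n!}{m!} d_{\mu}$, which is the right-hand side. On the other hand, I would decompose this induced module into irreducibles of $S_{n}$ and count dimensions. By Frobenius reciprocity, the multiplicity of $V^{\lambda}$ in $\Ind_{S_{m}}^{S_{n}} V^{\mu}$ equals $\dim \Hom_{S_{n}}(V^{\lambda}, \Ind_{S_{m}}^{S_{n}} V^{\mu}) = \dim \Hom_{S_{m}}(\mathrm{Res}_{S_{m}}^{S_{n}} V^{\lambda}, V^{\mu})$. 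This latter space is, by Schur's lemma applied to the $S_{m}$-action, exactly $\Hom_{S_{m}}(V^{\mu}, V^{\lambda})^{*}$ up to the conventions — more precisely its dimension is the multiplicity of $V^{\mu}$ in $\mathrm{Res}_{S_{m}}^{S_{n}} V^{\lambda}$, which is precisely $d_{\lambda/\mu}$ by the definition $V^{\lambda/\mu} \eqdf \Hom_{S_{m}}(V^{\mu}, V^{\lambda})$ given in the excerpt (and this multiplicity is $0$ unless $\mu \subset \lambda$, by the branching/Littlewood–Richardson rule). Therefore
\[
\Ind_{S_{m}}^{S_{n}} V^{\mu} \cong \bigoplus_{\lambda \vdash n \colon \mu \subset \lambda} \left(V^{\lambda}\right)^{\oplus d_{\lambda/\mu}},
\]
and taking dimensions gives $\frac{n!}{m!} d_{\mu} = \sum_{\lambda \vdash n\colon \mu \subset \lambda} d_{\lambda/\mu} d_{\lambda}$, which is the claim.

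The key steps in order: first, identify the left-hand side with $\dim \Ind_{S_{m}}^{S_{n}} V^{\mu}$ by recognizing $d_{\lambda/\mu} d_{\lambda}$ as (multiplicity of $V^{\lambda}$) $\times$ ($\dim V^{\lambda}$); second, invoke Frobenius reciprocity to rewrite the multiplicity of $V^{\lambda}$ in the induced module as $\dim \Hom_{S_{m}}(V^{\mu}, \mathrm{Res}\, V^{\lambda})$; third, match this with the definition of $d_{\lambda/\mu}$ as the dimension of $V^{\lambda/\mu} = \Hom_{S_{m}}(V^{\mu},V^{\lambda})$ and note the support condition $\mu \subset \lambda$ comes for free since $\Hom_{S_{m}}(V^{\mu}, \mathrm{Res}\,V^{\lambda}) = 0$ otherwise; fourth, compute $\dim \Ind_{S_{m}}^{S_{n}} V^{\mu}$ directly as $[S_{n}:S_{m}]\dim V^{\mu}$. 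Alternatively, one can bypass the language of induction entirely: decompose $\mathrm{Res}_{S_{m}}^{S_{n}} V^{\lambda} \cong \bigoplus_{\mu} (V^{\mu})^{\oplus d_{\lambda/\mu}}$, take dimensions to get $d_{\lambda} = \sum_{\mu \subset \lambda} d_{\lambda/\mu} d_{\mu}$ for each fixed $\lambda$, but that is a different identity; the stated one genuinely needs the "column sum" over $\lambda$, which is why the induced-representation viewpoint (or equivalently counting pairs (tableau of $\lambda/\mu$, tableau of $\mu$) as sub-tableaux of tableaux of $\lambda$) is the natural route.

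I do not expect a serious obstacle here — this is a standard consequence of Frobenius reciprocity, and indeed the excerpt itself flags it as "the following consequence of Frobenius reciprocity (cf.\ \cite[Lemma 3.1]{MPasympcover})." The only point requiring a little care is bookkeeping with the conjugate-linear/dual conventions in the definition $V^{\lambda/\mu} = \Hom_{S_{m}}(V^{\mu}, V^{\lambda})$ versus the Hom-space appearing on the reciprocity side, but since we only ever take dimensions this is immaterial; $\dim \Hom_{S_{m}}(V^{\mu}, V^{\lambda}) = \dim \Hom_{S_{m}}(V^{\lambda}, V^{\mu})$ because both equal the common multiplicity. A purely combinatorial alternative proof, in the spirit of the rest of this section, is to use $d_{\lambda} = |\Tab(\lambda)|$ and $d_{\lambda/\mu} = |\Tab(\lambda/\mu)|$: a tableau of shape $\lambda$ restricts to a tableau $T|_{\le m}$ of some shape $\mu = \mu_{m}(T) \vdash m$ together with the complementary skew tableau $T|_{>m}$ of shape $\lambda/\mu$; conversely these data reconstruct $T$. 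Summing $|\Tab(\lambda)|$ over all $\lambda \vdash n$ containing a \emph{fixed} $\mu$, weighted appropriately, does not directly give the identity, so instead one counts pairs: $\sum_{\lambda \colon \mu \subset \lambda} d_{\lambda/\mu} d_{\lambda}$ counts triples (a tableau $R$ of shape $\lambda/\mu$, a tableau $S$ of shape $\lambda$) which, forgetting $\mu$'s filling, is the number of ways to choose a standard tableau of shape $\lambda$ on $[n]$ and an independent filling — this recovers $\frac{n!}{m!}d_{\mu}$ after recognizing $\frac{n!}{m!}d_{\mu}$ as the number of pairs (an injection $[m] \hookrightarrow [n]$ "of standard-tableau type", a tableau of shape $\mu$). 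I would present the representation-theoretic proof as the clean one and only mention the combinatorial reading as a remark if space permits.
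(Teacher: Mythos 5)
Your proof is correct and is precisely the argument the paper intends: the paper gives no written proof of this lemma, simply labeling it a ``consequence of Frobenius reciprocity'' and citing the prequel, and your computation of $\dim \Ind_{S_m}^{S_n} V^{\mu}$ in two ways---via $[S_n:S_m]\,d_\mu$ on one hand and via the multiplicities $d_{\lambda/\mu}=\dim\Hom_{S_m}(V^\mu,V^\lambda)$ supplied by Frobenius reciprocity on the other---is exactly that standard argument.
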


\subsection{Effective bounds for dimensions\label{subsec:Effective-bounds-for-dimensions}}

Throughout the paper, we will write $b_{\lambda}$\marginpar{$b_{\lambda},\check{b}_{\lambda}$}
for the number of boxes outside the first row of a YD $\lambda$,
and write $\check{b}_{\lambda}$ for the number of boxes outside the
first column of $\lambda$. More generally, we write $b_{\lambda/\nu}$\marginpar{$b_{\lambda/\nu},\check{b}_{\lambda/\nu}$}
(resp.~$\check{b}_{\lambda/\nu}$) for the number of boxes outside
the first row (resp.~column) of the SYD $\lambda/\nu$, so $b_{\lambda/\nu}=b_{\lambda}-b_{\nu}$
and $\check{b}_{\lambda/\nu}=\check{b}_{\lambda}-\check{b}_{\nu}$.
We need the following bounds on dimensions of representations.
\begin{lem}
\label{lem:dim-ratio-bound}\cite[Lem.~4.3]{MPasympcover} If $n\in\N$,
$m\in\left[n\right]$, $\lambda\vdash n$, $\nu\vdash m$, $\nu\subset\lambda$
and $m\geq2b_{\lambda}$, then
\begin{equation}
\frac{(n-b_{\lambda})^{b_{\lambda}}}{b_{\lambda}^{~b_{\lambda}}m^{b_{\nu}}}\leq\frac{d_{\lambda}}{d_{\nu}}\leq\frac{b_{\nu}^{~b_{\nu}}n^{b_{\lambda}}}{(m-b_{\nu})^{b_{\nu}}}.\label{eq:dim-ratio-bound}
\end{equation}
\end{lem}

The condition $m\geq2b_{\lambda}$ ensures that both $\nu$ and $\lambda$
have most boxes in their first row. This is an important and recurring
theme of the paper (see e.g.~Proposition \ref{prop:effective-Liebeck-Shalev}).
\begin{lem}
\label{lem:dimension-bound-of-skew-module}Let $\lambda/\nu$ be a
skew Young diagram of size $n$. Then
\[
d_{\lambda/\nu}\leq(n)_{b_{\lambda/\nu}}~~~~~~~\mathrm{and~~~~~~~}d_{\lambda/\nu}\leq(n)_{\check{b}_{\lambda/\nu}}.
\]
\end{lem}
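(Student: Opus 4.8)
The plan is, after fixing a set of $n$ distinct values (say $[n]$) for the fillings, to construct an explicit injection from $\Tab(\lambda/\nu)$ into the set of ordered $b$-tuples of distinct elements of $[n]$, where $b\eqdf b_{\lambda/\nu}$. Since that set has cardinality $(n)_{b}$, this yields $d_{\lambda/\nu}=|\Tab(\lambda/\nu)|\le (n)_{b}$. Concretely, fix once and for all a reading order of the boxes of $\lambda/\nu$ lying outside the first row (e.g.\ left-to-right within each row, rows ordered top-to-bottom), and send a tableau $T\in\Tab(\lambda/\nu)$ to the tuple of its entries in those boxes, listed in this order. This map is injective: from the tuple we recover the set $S$ of entries of $T$ outside the first row, hence the first row of $T$ is filled by the elements of $[n]\setminus S$, and strict increase along the first row forces those elements to occupy the first-row boxes in increasing order; thus $T$ is completely determined. (If the first row of $\lambda/\nu$ is empty the statement localizes trivially: $S=[n]$, $b=n$, and the claim is the trivial bound $d_{\lambda/\nu}\le n!=(n)_{n}$.)

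For the second inequality, reflection in the main diagonal sends $\Tab(\lambda/\nu)$ bijectively onto $\Tab(\check\lambda/\check\nu)$ — note that $\nu\subset\lambda$ gives $\check\nu\subset\check\lambda$, and that $|\check\lambda/\check\nu|=|\lambda/\nu|=n$ — so $d_{\lambda/\nu}=d_{\check\lambda/\check\nu}$. Since the boxes outside the first row of $\check\lambda/\check\nu$ are precisely the reflections of the boxes outside the first column of $\lambda/\nu$, we have $b_{\check\lambda/\check\nu}=\check b_{\lambda/\nu}$, and applying the first inequality to $\check\lambda/\check\nu$ gives $d_{\lambda/\nu}=d_{\check\lambda/\check\nu}\le (n)_{\check b_{\lambda/\nu}}$.

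The only point that requires care is the injectivity claim, namely that a standard skew tableau is determined by its restriction to the boxes outside the first row together with the mere \emph{set} of remaining entries; this rests only on row-strictness and on the shape being fixed, but one should dispatch cleanly the degenerate case of an empty first row. As an alternative, one can argue by induction on $n$ using the standard recursion $d_{\lambda/\nu}=\sum_{c}d_{(\lambda/\nu)-c}$, the sum over removable corners $c$ (the position of the largest entry): the skew shape $\lambda/\nu$ has at most one removable corner in its first row (only the right end of that row can be one) and at most $b$ removable corners outside it (these lie in distinct rows, each of which contains a box of $\lambda/\nu$, of which there are only $b$ outside the first row), and then the telescoping identity $(n-1)_{b}+b\,(n-1)_{b-1}=(n)_{b}$ closes the induction, with the trivial base case $n=0$.
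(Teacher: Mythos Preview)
Your proof is correct and is essentially the same argument as the paper's: the paper observes that there are at most $\binom{n}{b_{\lambda/\nu}}$ choices for the set of entries outside the first row and then at most $b_{\lambda/\nu}!$ ways to place them, giving the bound $(n)_{b_{\lambda/\nu}}$, with the second inequality handled ``analogously''. Your injection into ordered $b$-tuples of distinct elements is just a repackaging of this count, and your reflection argument for the second inequality makes the ``analogous'' step explicit.
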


\begin{proof}
There are at most $\binom{n}{b_{\lambda/\nu}}$ options for the set
of  $b_{\lambda/\nu}$ elements outside the first row. Given these,
there are at most $b_{\lambda/\nu}!$ choices for how to place them
outside the first row. The proof of the second inequality is analogous.
\end{proof}

\subsection{Effective bounds for the zeta function of the symmetric group\label{subsec:Effective-bounds-for-the-zeta-function}}

The Witten zeta function of the symmetric group $S_{n}$ is defined
for a real parameter $s$ as
\begin{equation}
\zeta^{S_{n}}(s)\eqdf\sum_{\lambda\vdash n}\frac{1}{d_{\lambda}^{~s}}.\label{eq:zeta-function-def}
\end{equation}
This function, and various closely related functions, play a major
role in this paper. One main reason for its appearance is due to a
formula going back to Hurwitz \cite{hurwitz1902ueber} that states
\begin{equation}
|\X_{g,n}|=|\Hom(\Gamma_{g},S_{n})|=|S_{n}|^{2g-1}\zeta^{S_{n}}(2g-2).\label{eq:mednykhs-formula}
\end{equation}
This is also sometimes called Mednykh's formula \cite{Mednyhk}. We
first give the following result due to Liebeck and Shalev \cite[Thm.~1.1]{LiebeckShalev}
and independently, Gamburd \cite[Prop. 4.2]{gamburd2006poisson}.
We refer the reader to $\S\S$\ref{subsec:Notation} for the definition
of notations (e.g.~$O,\ll$) that we use in this $\S$\ref{sec:rep-theory-symmetric}.
\begin{thm}
\cite{LiebeckShalev,gamburd2006poisson}\label{thm:zeta approximation}
For any $s>0$, as $n\to\infty$
\[
\zeta^{S_{n}}(s)=2+O\left(n^{-s}\right).
\]
\end{thm}

This has the following corollary when combined with (\ref{eq:mednykhs-formula}).
\begin{cor}
\label{cor:size-of-X_n}For any $g\in\N$ with $g\geq2$, we have
\[
\frac{|\X_{g,n}|}{(n!)^{2g-1}}=2+O(n^{-2}).
\]
\end{cor}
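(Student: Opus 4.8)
The plan is to simply combine the two displayed results immediately preceding the corollary. First I would set $s = 2g-2$ in the Hurwitz/Mednykh formula (\ref{eq:mednykhs-formula}), which gives $|\X_{g,n}| = |S_n|^{2g-1}\,\zeta^{S_n}(2g-2)$, and note that $|S_n| = n!$, so that
\[
\frac{|\X_{g,n}|}{(n!)^{2g-1}} = \zeta^{S_n}(2g-2).
\]
Next I would apply the Liebeck--Shalev/Gamburd theorem with the same parameter $s = 2g-2$; since $g \geq 2$ we have $2g-2 > 0$, so the theorem applies and yields $\zeta^{S_n}(2g-2) = 2 + O(n^{-(2g-2)})$ as $n \to \infty$.

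Finally, using again that $g \geq 2$, so $2g - 2 \geq 2$, we have $n^{-(2g-2)} \leq n^{-2}$ for all $n \geq 1$, hence $O(n^{-(2g-2)}) = O(n^{-2})$. Substituting this into the previous display gives $\frac{|\X_{g,n}|}{(n!)^{2g-1}} = 2 + O(n^{-2})$, as claimed.

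There is essentially no obstacle here: the corollary is a direct specialization and combination of the two quoted inputs, and the only thing to check is the elementary inequality $2g-2 \geq 2$ used to absorb the error term into $O(n^{-2})$. If one wanted to be careful, the single point worth remarking is that the implied constant in the big-$O$ depends (a priori) on $s = 2g-2$, hence on $g$, which is consistent with the paper's convention (stated in Section~\ref{subsec:Notation}) that unsubscripted implied constants may depend on the fixed genus $g$. $\square$
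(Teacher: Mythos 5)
Your proposal is correct and is exactly the argument the paper intends: the corollary is stated as an immediate consequence of combining the Hurwitz/Mednykh formula (\ref{eq:mednykhs-formula}) with the Liebeck--Shalev/Gamburd theorem at $s=2g-2$, and your observation that $2g-2\geq 2$ absorbs the error into $O(n^{-2})$ is the only step needed. Nothing is missing.
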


As well as the previous results, we also need to know how well $\zeta^{S_{n}}(2g-2)$
is approximated by restricting the summation in (\ref{eq:zeta-function-def})
to $\lambda$ with a bounded number of boxes either outside the first
row or outside the first column. We let $\Lambda(n,b)$ denote the
collection of $\lambda\vdash n$ such that $\lambda_{1}\leq n-b$
and $\check{\lambda}_{1}\le n-b$. In other words, $\Lambda(n,b)$
is the collection of YDs $\lambda\vdash n$ with both $b_{\lambda}\geq b$
and $\check{b}_{\lambda}\geq b$. A version of the next proposition,
when $b$ is fixed and $n\to\infty$, is due independently to Liebeck
and Shalev \cite[Prop. 2.5]{LiebeckShalev} and Gamburd \cite[Prop. 4.2]{gamburd2006poisson}.
Here, we need a version that holds uniformly over $b$ that is not
too large compared to $n$. 
\begin{prop}
\label{prop:effective-Liebeck-Shalev}Fix $s>0$. There exists a constant
$\kappa=\kappa(s)>1$ such that when $b^{2}\le\frac{n}{3}$, 
\begin{equation}
\sum_{\lambda\in\Lambda(n,b)}\frac{1}{d_{\lambda}^{~s}}\ll_{s}\left(\frac{\kappa b^{2s}}{\left(n-b^{2}\right)^{s}}\right)^{b}.\label{eq:liebeck-shalev-gamburd-effectiv}
\end{equation}
\end{prop}

\begin{proof}
Here we follow Liebeck and Shalev \cite[proof of Prop. 2.5]{LiebeckShalev}
and make the proof uniform over $b$. Let $\Lambda_{0}(n,b)$ denote
the collection of $\lambda\vdash n$ with $\check{\lambda}_{1}\leq\lambda_{1}\leq n-b$.
Since $d_{\lambda}=d_{\check{\lambda}}$, 
\[
\sum_{\lambda\in\Lambda(n,b)}\frac{1}{d_{\lambda}^{~s}}\leq2\sum_{\lambda\in\Lambda_{0}(n,b)}\frac{1}{d_{\lambda}^{~s}},
\]
so it suffices to prove a bound for $\sum_{\lambda\in\Lambda_{0}(n,b)}\frac{1}{d_{\lambda}^{~s}}$.
Let $\Lambda_{1}(n,b)$ denote the elements $\lambda$ of $\Lambda_{0}(n,b)$
with $\lambda_{1}\geq\frac{2n}{3}$. We write 
\[
\sum_{\lambda\in\Lambda_{0}(n,b)}\frac{1}{d_{\lambda}^{~s}}=\Sigma_{1}+\Sigma_{2}
\]
 where
\[
\Sigma_{1}\eqdf\sum_{\lambda\in\Lambda_{1}(n,b)}\frac{1}{d_{\lambda}^{~s}},\quad\Sigma_{2}\eqdf\sum_{\lambda\in\Lambda_{0}(n,b)-\Lambda_{1}(n,b)}\frac{1}{d_{\lambda}^{~s}}.
\]

\noindent \textbf{Bound for $\Sigma_{1}$}. By \cite[Lem.~2.1]{LiebeckShalev}
if $\lambda\in\Lambda_{1}(n,b)$ then since $\lambda_{1}\geq\frac{n}{2}$,
$d_{\lambda}\geq\binom{\lambda_{1}}{n-\lambda_{1}}.$ Indeed, for
completeness, following \cite[Proof of Lem.~2.1]{LiebeckShalev} we
can find many tableaux of shape $\lambda$ as follows. Put the numbers
$1,\ldots,n-\lambda_{1}$ in the left most entries of the first row
of $\lambda$. Then for any of the $\binom{\lambda_{1}}{n-\lambda_{1}}$
choices of size $n-\lambda_{1}$ subsets of $[n-\lambda_{1}+1,n]$,
there is obviously a tableau of shape $\lambda$ with those numbers
outside the first row.

\noindent Let $p(m)$ denote the number of $\mu\vdash m$. The number
of $\lambda\in\Lambda_{1}(n,b)$ with a valid fixed value of $\lambda_{1}$
is $p(n-\lambda_{1})$ since $\lambda_{1}\geq\frac{n}{2}$ and hence
any YD with $n-\lambda_{1}$ boxes can be added below the fixed first
row of $\lambda_{1}$ boxes to form $\lambda$. Therefore
\[
\Sigma_{1}\le\sum_{\lambda_{1}=\lceil\frac{2n}{3}\rceil}^{n-b}\frac{p\left(n-\lambda_{1}\right)}{\binom{\lambda_{1}}{n-\lambda_{1}}^{s}}=\sum_{\ell=b}^{\lfloor\frac{n}{3}\rfloor}\frac{p(\ell)}{\binom{n-\ell}{\ell}^{s}}.
\]
We now split the sum into two ranges to estimate $\Sigma_{1}\leq\Sigma'_{1}+\Sigma''_{1}$
where 
\[
\Sigma'_{1}=\sum_{\ell=b}^{b^{2}}\frac{p(\ell)}{\binom{n-\ell}{\ell}^{s}},\quad\Sigma''_{1}=\sum_{\ell=b^{2}+1}^{\lfloor\frac{n}{3}\rfloor}\frac{p(\ell)}{\binom{n-\ell}{\ell}^{s}}.
\]
First we deal with $\Sigma'_{1}$. We have $p(\ell)\leq c_{1}^{\sqrt{\ell}}$
for some $c_{1}>1$ \cite[Thm. 14.5]{ApostolNT}. As $\ell\le n-\ell$,
\[
\binom{n-\ell}{\ell}\geq\frac{(n-\ell)^{\ell}}{\ell^{\ell}}.
\]
This gives 
\begin{align}
\Sigma'_{1} & \leq\sum_{\ell=b}^{b^{2}}c_{1}^{\sqrt{\ell}}\left(\frac{\ell}{n-\ell}\right)^{s\ell}\le c_{1}^{~b}\sum_{\ell=b}^{b^{2}}\left(\frac{b^{2}}{n-b^{2}}\right)^{s\ell}\ll_{s}c_{1}^{~b}\left(\frac{b^{2}}{n-b^{2}}\right)^{sb},\label{eq:sigma-bound-1}
\end{align}
where the last inequality used that $\frac{b^{2}}{(n-b^{2})}\leq\frac{1}{2}$
as we assume $b^{2}\leq\frac{n}{3}$.

To deal with $\Sigma''_{1}$ we make the following claim.

\noindent \emph{Claim. }There is $n_{00}>0$ such that when $n\geq n_{00}$
and $\ell\leq\frac{n}{3}$
\begin{equation}
\binom{n-\ell}{\ell}\geq\left(\frac{2n}{3}\right)^{\sqrt{\ell}}.\label{eq:claim-inequality}
\end{equation}

\noindent \emph{Proof of claim. }Observe that when $\ell\leq\frac{n}{3}$
\begin{align*}
\binom{n-\ell}{\ell} & \geq\frac{(n-\ell)^{\ell}}{\ell^{\ell}}=(n-\ell)^{\sqrt{\ell}}\left(n-\ell\right)^{\ell-\sqrt{\ell}}\ell^{-\ell}\\
 & \geq\left(\frac{2n}{3}\right)^{\sqrt{\ell}}\left(2\ell\right)^{\ell-\sqrt{\ell}}\ell^{-\ell}=\left(\frac{2n}{3}\right)^{\sqrt{\ell}}\left(\frac{2^{\sqrt{\ell}-1}}{\ell}\right)^{\sqrt{\ell}}.
\end{align*}
We have $2^{\sqrt{\ell}-1}\geq\ell$ when $\ell\geq49$ which proves
the claim in this case. On the other hand, it is easy to see that
there is a $n_{00}>0$ such that (\ref{eq:claim-inequality}) holds
when $n\geq n_{00}$ and $1\leq\ell<49$. \emph{This proves the claim.}

The claim gives
\[
\Sigma''_{1}\leq\sum_{\ell=b^{2}+1}^{\lfloor\frac{n}{3}\rfloor}\left(\frac{c_{2}}{n^{s}}\right)^{\sqrt{\ell}}
\]
for some $c_{2}=c_{2}(s)>1$ when $n\geq n_{00}$. Let $n_{0}=n_{0}(s)\geq n_{00}$
be such that when $n\geq n_{0}$, $\frac{c_{2}}{n^{s}}<e^{-1}$. Let
$q=q\left(n\right)\eqdf\frac{c_{2}}{n^{s}}.$ Then when $n\geq n_{0}$,
$\log(q)\leq-1$ and
\[
\Sigma''_{1}\leq\int_{b^{2}}^{\infty}q^{\sqrt{x}}dx=\frac{2q^{b}}{\log q}\left(\frac{1}{\log q}-b\right).
\]
We obtain
\begin{equation}
\Sigma''_{1}\leq2(b+1)q^{b}\leq\frac{2(b+1)c_{2}^{~b}}{n^{sb}}.\label{eq:sigma-bound-2}
\end{equation}
Together with (\ref{eq:sigma-bound-1}) this yields:
\begin{equation}
\Sigma_{1}\ll_{s}c_{1}^{~b}\left(\frac{b^{2}}{n-b^{2}}\right)^{sb}+\frac{2(b+1)c_{2}^{~b}}{n^{sb}}\ll_{s}\left(\frac{\kappa b^{2s}}{\left(n-b^{2}\right)^{s}}\right)^{b}\label{eq:sigma_1_bound}
\end{equation}
with $\kappa=\kappa\left(s\right)=\max\left(c_{1},c_{2}\right)$.

\noindent \textbf{Bound for $\Sigma_{2}$. }If $\lambda\in\Lambda_{0}(n,b)-\Lambda_{1}(n,b)$
then $\check{\lambda}_{1}\leq\lambda_{1}<\frac{2n}{3}$ and \cite[Prop. 2.4]{LiebeckShalev}
gives the existence of an absolute $c_{0}>1$ such that 
\[
d_{\lambda}\geq c_{0}^{~n}.
\]
Thus for large enough $n$ and $b^{2}\leq\frac{n}{3}$
\begin{equation}
\Sigma_{2}\leq\sum_{\lambda\in\Lambda_{0}(n,b)-\Lambda_{1}(n,b)}c_{0}^{-ns}\leq p(n)c_{0}^{-ns}\leq c_{1}^{\sqrt{n}}c_{0}^{-ns}\ll_{s}n^{-bs}.\label{eq:sigma-bound-3}
\end{equation}
Putting (\ref{eq:sigma_1_bound}) and (\ref{eq:sigma-bound-3}) together
proves the proposition.
\end{proof}

\section{Estimates for the probabilities of tiled surfaces\label{sec:Estimates-for-the-probabilities-of-tiled-surfaces}}

Before reading this $\S$\ref{sec:Estimates-for-the-probabilities-of-tiled-surfaces},
we recommend the reader to have read $\S\S$\ref{subsec:Overview-of-the-paper}
for context and motivation.

\subsection{Prior results\label{subsec:Prior-results}}

The aim of this $\S\S$\ref{subsec:Prior-results} is to introduce
an already known formula (Theorem \ref{thm:E_n-emb-exact-expression})
for the quantities $\E_{n}^{\emb}(Y)$ that are essential to this
paper, and to give some known first estimates for the quantities appearing
therein (Lemma \ref{lem:product-of-matrix-coefs-bounded-using-D}).
To better understand their source and logic, the reader is advised
to look at \cite[\S 5]{MPasympcover}. 

We continue to assume $g=2$. Throughout this entire $\S$\ref{sec:Estimates-for-the-probabilities-of-tiled-surfaces}
we will assume that $Y$ is a fixed compact tiled surface. We let
$\v=\v(Y)$, $\e=\e(Y)$, $\f=\f(Y)$ denote the number of vertices,
edges, and octagons of $Y$, respectively. Throughout this section,
$f$ will stand for one of the letters $a,b,c,d$. For each letter
$f\in\{a,b,c,d\}$, let $\e_{f}$ denote the number of $f$-labeled
edges of $Y$.

In \cite[\S\S 5.3]{MPasympcover} we constructed permutations
\begin{align*}
\sigma_{f}^{+},\sigma_{f}^{-},\tau_{f}^{+},\tau_{f}^{-}\in S'_{\v}\subset S_{n}
\end{align*}
for each $f\in\{a,b,c,d\}$ satisfying certain five properties named
\textbf{P1},\textbf{ P2},\textbf{ P3},\textbf{ P4},\textbf{ }and \textbf{P5
}that are essential to the development of the theory, but not illuminating
to state precisely here. We henceforth view these permutations as
fixed, given $Y$. 

Recall from $\S\S$\ref{subsec:rep-theory-background} that for YDs
$\mu\subset\lambda$ we say $\mu\subset_{k}\lambda$ if $\lambda$
has $k$ more boxes than $\mu$. Also recall from $\S\S$\ref{subsec:rep-theory-background}
that $\sqcup$ denotes concatenation of Young tableaux, and for a
SYD $\lambda/\nu$, if $T$ is a (standard) tableau of shape $\lambda/\nu$,
$w_{T}$ denotes a Gelfand-Tsetlin basis vector in $V^{\lambda/\nu}$
associated to $T$. In the same situation, we write $\langle\bullet,\bullet\rangle$
for the inner product in the unitary representation $V^{\lambda/\nu}$.
In the prequel paper \cite[Thm. 5.10]{MPasympcover} the following
theorem was proved.
\begin{thm}
\label{thm:E_n-emb-exact-expression}For $n\geq\v$ we have
\begin{equation}
\E_{n}^{\emb}(Y)=\frac{\left(n!\right)^{3}}{\left|\X_{n}\right|}\cdot\frac{\left(n\right)_{\v}\left(n\right)_{\f}}{\prod_{f}\left(n\right)_{\e_{f}}}\cdot\Xi_{n}(Y)\label{eq:prob-to-xi-relation}
\end{equation}
where
\begin{equation}
\Xi_{n}(Y)\eqdf\sum_{\substack{\lambda,\nu:\\
\nu\subset_{\v-\f}\lambda\vdash n-\f
}
}d_{\lambda}d_{\nu}\sum_{\substack{\mu_{a},\mu_{b},\mu_{c},\mu_{d}\\
\forall f,\,\nu\subset\mu_{f}\subset_{\e_{f}-\f}\lambda
}
}\frac{1}{d_{\mu_{a}}d_{\mu_{b}}d_{\mu_{c}}d_{\mu_{d}}}\Upsilon_{n}\left(\left\{ \sigma_{f}^{\pm},\tau_{f}^{\pm}\right\} ,\nu,\left\{ \mu_{f}\right\} ,\lambda\right),\label{eq:Xi-def}
\end{equation}
\begin{equation}
\Upsilon_{n}\left(\left\{ \sigma_{f}^{\pm},\tau_{f}^{\pm}\right\} ,\nu,\left\{ \mu_{f}\right\} ,\lambda\right)\eqdf\sum_{\begin{gathered}r_{f}^{+},r_{f}^{-}\in\Tab\left(\mu_{f}/\nu\right)\\
s_{f},t_{f}\in\Tab\left(\lambda/\mu_{f}\right)
\end{gathered}
}\M\left(\left\{ \sigma_{f}^{\pm},\tau_{f}^{\pm},r_{f}^{\pm},s_{f},t_{f}\right\} \right)\label{eq:Upsilon-def}
\end{equation}
and $\M(\{\sigma_{f}^{\pm},\tau_{f}^{\pm},r_{f}^{\pm},s_{f},t_{f}\})$
is the following product of matrix coefficients:
\begin{eqnarray}
\M\left(\left\{ \sigma_{f}^{\pm},\tau_{f}^{\pm},r_{f}^{\pm},s_{f},t_{f}\right\} \right) & \eqdf & \left\langle \sigma_{b}^{-}\left(\sigma_{a}^{+}\right)^{-1}w_{r_{a}^{+}\sqcup s_{a}},w_{r_{b}^{-}\sqcup s{}_{b}}\right\rangle \left\langle \tau_{a}^{+}\left(\sigma_{b}^{+}\right)^{-1}w_{r_{b}^{+}\sqcup s_{b}},w_{r_{a}^{+}\sqcup t_{a}}\right\rangle \cdot\nonumber \\
 &  & \left\langle \tau_{b}^{+}\left(\tau_{a}^{-}\right)^{-1}w_{r_{a}^{-}\sqcup t{}_{a}},w_{r_{b}^{+}\sqcup t_{b}}\right\rangle \left\langle \sigma_{c}^{-}\left(\tau_{b}^{-}\right)^{-1}w_{r_{b}^{-}\sqcup t{}_{b}},w_{r_{c}^{-}\sqcup s{}_{c}}\right\rangle \cdot\nonumber \\
 &  & \left\langle \sigma_{d}^{-}\left(\sigma_{c}^{+}\right)^{-1}w_{r_{c}^{+}\sqcup s_{c}},w_{r_{d}^{-}\sqcup s{}_{d}}\right\rangle \left\langle \tau_{c}^{+}\left(\sigma_{d}^{+}\right)^{-1}w_{r_{d}^{+}\sqcup s_{d}},w_{r_{c}^{+}\sqcup t_{c}}\right\rangle \cdot\nonumber \\
 &  & \left\langle \tau_{d}^{+}\left(\tau_{c}^{-}\right)^{-1}w_{r_{c}^{-}\sqcup t{}_{c}},w_{r_{d}^{+}\sqcup t_{d}}\right\rangle \left\langle \sigma_{a}^{-}\left(\tau_{d}^{-}\right)^{-1}w_{r_{d}^{-}\sqcup t{}_{d}},w_{r_{a}^{-}\sqcup s{}_{a}}\right\rangle .\label{eq:M-def}
\end{eqnarray}
\end{thm}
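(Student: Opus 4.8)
The plan is to reduce the computation of $\E_{n}^{\emb}(Y)$ to a count of homomorphisms $\phi\in\X_{n}$ subject to partial constraints on $\phi(a),\phi(b),\phi(c),\phi(d)$, and then to evaluate that count by non-abelian Fourier analysis on $S_{n}$, using the Gelfand--Tsetlin bases of $\S\ref{sec:rep-theory-symmetric}$ to expand the single surface relation of $\Gamma_{2}$ into the eight matrix coefficients appearing in (\ref{eq:M-def}). For the first step: the $1$- and $2$-cells of a tiled surface are determined by their labelled and directed vertex data, and the structural map of $X_{\phi}$ from Example \ref{exa:x_phi} is an immersion, so a morphism $Y\to X_{\phi}$ that is injective on vertices is automatically an embedding; hence $\E_{n}^{\emb}(Y)$ equals the number of injective placements of the $\v$ vertices of $Y$ times the probability that $\phi$ realises the edges of $Y$ at a fixed placement. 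The uniform measure on $\X_{n}$ is invariant under post-composition with conjugation by $S_{n}$, which acts transitively on injections $Y^{(0)}\hookrightarrow[n]$, so all $(n)_{\v}$ placements contribute equally; using the framing $\J_{n}$ identifying $Y^{(0)}$ with $[n-\v+1,n]$, an $f$-edge from $i$ to $j$ is realised iff $\phi(f)(i)=j$, whence
\[
\E_{n}^{\emb}(Y)=\frac{(n)_{\v}}{|\X_{n}|}\,\#\bigl\{\phi\in\X_{n}\;:\;\phi(f)\!\mid_{\V_{f}^{-}}=g_{f}^{0}\!\mid_{\V_{f}^{-}}\text{ for every }f\in\{a,b,c,d\}\bigr\}.
\]
By properties (P1)--(P2) the constraint on $\phi(f)$ is equivalent to $\sigma_{f}^{+}\,\phi(f)\,(\sigma_{f}^{-})^{-1}\in S_{n-\e_{f}}$, and also to $\tau_{f}^{+}\,\phi(f)\,(\tau_{f}^{-})^{-1}\in S_{n-\e_{f}}$, where $S_{n-\e_{f}}$ is the standard subgroup fixing $[n-\e_{f}+1,n]$ pointwise; so the quantity to estimate is the number of $4$-tuples $(x_{a},x_{b},x_{c},x_{d})$ with each $x_{f}$ in the coset $(\sigma_{f}^{+})^{-1}S_{n-\e_{f}}\sigma_{f}^{-}$ and $[x_{a},x_{b}][x_{c},x_{d}]=1$.

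For the second step, apply the orthogonality relation $\mathbf{1}_{z=1}=\tfrac{1}{n!}\sum_{\lambda\vdash n}d_{\lambda}\,\chi_{\lambda}(z)$, with $\chi_{\lambda}$ the character of $V^{\lambda}$ and $z=[x_{a},x_{b}][x_{c},x_{d}]$, write $\chi_{\lambda}(z)$ as a cyclic product over the eight syllables $x_{a},x_{b},x_{a}^{-1},x_{b}^{-1},x_{c},x_{d},x_{c}^{-1},x_{d}^{-1}$, and insert between consecutive syllables the resolution $\Id=\sum_{T}w_{T}\langle w_{T},\,\cdot\,\rangle$ in a Gelfand--Tsetlin basis of the appropriate (skew) module. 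Summing each $x_{f}$ over its coset amounts to an isotypic projection along a chain of Young subgroups, and by (P1), (P3) this projection is diagonal in the Gelfand--Tsetlin basis once everything has been conjugated into standard position by the $\sigma_{f}^{\pm},\tau_{f}^{\pm}$. The two occurrences of each letter $f$ cut $\lambda$ first down to an intermediate diagram $\mu_{f}$ of size $n-\e_{f}$ with $\mu_{f}\subset\lambda$ (the ``edge level'', contributing $s_{f},t_{f}\in\Tab(\lambda/\mu_{f})$) and then down to a common diagram $\nu$ of size $n-\v$ with $\nu\subset\mu_{f}$ (the ``vertex level'', contributing $r_{f}^{\pm}\in\Tab(\mu_{f}/\nu)$), while the eight leftover transition permutations are exactly the eight products listed in (P4). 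The surviving matrix coefficients, assembled along the cyclic word, are precisely $\M(\{\sigma_{f}^{\pm},\tau_{f}^{\pm},r_{f}^{\pm},s_{f},t_{f}\})$; summing over the tableaux gives $\Upsilon_{n}$ as in (\ref{eq:Upsilon-def}) and over the diagrams gives $\Xi_{n}$ as in (\ref{eq:Xi-def}), the ratios $d_{\lambda}d_{\nu}/\prod_{f}d_{\mu_{f}}$ arising from normalising the projections.

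It remains to account for the octagons and collect the elementary factors. Property (P4) forces the eight transition permutations into $S_{n-\f}$, which is what lets the sum over $\lambda\vdash n$ be replaced by a sum over $\lambda\vdash n-\f$: restricting $V^{\lambda}$ along $S_{n}\ge\cdots\ge S_{n-\f}$ and applying the branching rule repeatedly, the ``top $\f$'' coordinates pass through trivially and contribute the factor $(n)_{\f}$, one closing-up factor per octagon of $Y$. Combining the four denominators $(n-\e_{f})!=|S_{n-\e_{f}}|$ from the coset sums (rewritten as $n!/(n)_{\e_{f}}$), the factor $(n)_{\v}$ from the vertex placements, the factor $(n)_{\f}$ from the octagon reduction, and the prefactor $1/|\X_{n}|$ (recall $|\X_{n}|=(n!)^{3}\zeta^{S_{n}}(2)$ by (\ref{eq:mednykhs-formula})), one obtains exactly (\ref{eq:prob-to-xi-relation}).

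The main obstacle is entirely in the second step, and it is combinatorial rather than representation-theoretic: one must \emph{construct} permutations $\sigma_{f}^{\pm},\tau_{f}^{\pm}$ satisfying (P1)--(P4) (and the auxiliary fifth property) so that the iterated Gelfand--Tsetlin resolutions genuinely collapse to the stated eight matrix coefficients, and one must verify that the ``coset sum $=$ isotypic projection'' step is simultaneously compatible with all the Young-subgroup chains in play (there are $\v$ vertex levels, $\e_{f}$ edge levels per letter $f$, and $\f$ octagon levels). Equivalently, the difficulty lies in choosing basis-adapted coordinates on $\Hom(\Gamma_{2},S_{n})$ that disentangle the single relation $[a,b][c,d]=1$ into eight independent matrix-coefficient factors; once such coordinates are fixed, the representation theory invoked (Schur orthogonality, the branching rule, and the Gelfand--Tsetlin bases of $\S\ref{sec:rep-theory-symmetric}$) is routine.
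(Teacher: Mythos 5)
Your sketch is correct in outline and follows essentially the same route as the actual proof (which this paper imports from the prequel \cite[Thm.~5.10]{MPasympcover}): reduce $\E_{n}^{\emb}(Y)$ by conjugation-invariance to a count of $\phi$ with prescribed partial one-skeleton data, rewrite the constraints as coset conditions via \textbf{P1}--\textbf{P2}, apply Schur orthogonality to the relator, and collapse the coset sums into isotypic projections in Gelfand--Tsetlin bases, with \textbf{P4} and Lemma~\ref{lem:induced-rep-dimension} producing the $(n)_{\f}$ factor and the reduction to $\lambda\vdash n-\f$. This is exactly the $B_{\lambda}$/$Q^{\pm}$ trace computation that the paper revisits in $\S\S$\ref{subsec:A-new-expression-for_xi-dnu=00003D0}, so no further comment is needed.
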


Note that $\frac{\left(n!\right)^{3}}{\left|\X_{n}\right|}\stackrel{n\to\infty}{\to}2$
by (\ref{eq:mednykhs-formula}) and Theorem \ref{thm:zeta approximation},
and that $\frac{\left(n\right)_{\v}\left(n\right)_{\f}}{\prod_{f}\left(n\right)_{\e_{f}}}=n^{\chi\left(Y\right)}\left(1+O\left(n^{-1}\right)\right)$,
so the more mysterious term in (\ref{eq:prob-to-xi-relation}) is
$\Xi_{n}\left(Y\right)$. In light of Theorem \ref{thm:E_n-emb-exact-expression},
we will repeatedly discuss $\nu,\{\mu_{f}\},\lambda$ satisfying
\begin{equation}
\nu\subset_{\v-\e_{f}}\mu_{f}\subset_{\e_{f}-\f}\lambda\vdash n-\f\,\,\,~~~~~~~~\forall f\in\{a,b,c,d\}\label{eq:nu-mu-lambda-setup}
\end{equation}
and $\{r_{f}^{\pm},s_{f},t_{f}\}$ satisfying 
\begin{equation}
r_{f}^{+},r_{f}^{-}\in\Tab(\mu_{f}/\nu),\quad s_{f},t_{f}\in\Tab(\lambda/\mu_{f})\,\,\,~~~~~~~~\forall f\in\{a,b,c,d\}.\label{eq:r-s-t-setup}
\end{equation}
To give good estimates for $\Xi_{n}(Y)$, we need an effective bound
for the quantities $\M(\{\sigma_{f}^{\pm},\tau_{f}^{\pm},r_{f}^{\pm},s_{f},t_{f}\})$
that was obtained in \cite{MPasympcover}. Before giving this bound,
we recall some notation. For $T\in\Tab(\lambda/\nu)$, we write $\tp(T)$
for the set of elements in the top row of $T$ (the row of length
$\lambda_{1}-\nu_{1}$ which may be empty). For any two sets $A,B$
in $[n]$, we define $d(A,B)=|A\backslash B|$. Given $\{r_{f}^{\pm},s_{f},t_{f}\}$
as in (\ref{eq:r-s-t-setup}), we define
\begin{align}
 & D_{\tp}\left(\left\{ \sigma_{f}^{\pm},\tau_{f}^{\pm},r_{f}^{\pm},s_{f},t_{f}\right\} \right)\eqdf\label{eq:Dtopdef}\\
 & ~~~d\left(\sigma_{b}^{-}\left(\sigma_{a}^{+}\right)^{-1}\tp(r_{a}^{+}\sqcup s_{a}),\tp(r_{b}^{-}\sqcup s{}_{b})\right)+d\left(\tau_{a}^{+}\left(\sigma_{b}^{+}\right)^{-1}\tp(r_{b}^{+}\sqcup s_{b}),\tp(r_{a}^{+}\sqcup t_{a})\right)+\nonumber \\
 & ~~~d\left(\tau_{b}^{+}\left(\tau_{a}^{-}\right)^{-1}\tp(r_{a}^{-}\sqcup t{}_{a}),\tp(r_{b}^{+}\sqcup t_{b})\right)+d\left(\sigma_{c}^{-}\left(\tau_{b}^{-}\right)^{-1}\tp(r_{b}^{-}\sqcup t{}_{b}),\tp(r_{c}^{-}\sqcup s{}_{c})\right)+\nonumber \\
 & ~~~d\left(\sigma_{d}^{-}\left(\sigma_{c}^{+}\right)^{-1}\tp(r_{c}^{+}\sqcup s_{c}),\tp(r_{d}^{-}\sqcup s{}_{d})\right)+d\left(\tau_{c}^{+}\left(\sigma_{d}^{+}\right)^{-1}\tp(r_{d}^{+}\sqcup s_{d}),\tp(r_{c}^{+}\sqcup t_{c})\right)+\nonumber \\
 & ~~~d\left(\tau_{d}^{+}\left(\tau_{c}^{-}\right)^{-1}\tp(r_{c}^{-}\sqcup t{}_{c}),\tp(r_{d}^{+}\sqcup t_{d})\right)+d\left(\sigma_{a}^{-}\left(\tau_{d}^{-}\right)^{-1}\tp(r_{d}^{-}\sqcup t{}_{d}),\tp(r_{a}^{-}\sqcup s{}_{a})\right).\nonumber 
\end{align}

\begin{lem}
\cite[Lem.~5.14]{MPasympcover}\label{lem:product-of-matrix-coefs-bounded-using-D}
Let $\nu,\{\mu_{f}\},\lambda$ be as in (\ref{eq:nu-mu-lambda-setup})
and $\{r_{f}^{\pm},s_{f},t_{f}\}$ be as in (\ref{eq:r-s-t-setup}).
If $\lambda_{1}+\nu_{1}>n-\f+(\v-\f)^{2}$, then 
\[
\left|\M\left(\left\{ \sigma_{f}^{\pm},\tau_{f}^{\pm},r_{f}^{\pm},s_{f},t_{f}\right\} \right)\right|\leq\left(\frac{(\v-\f)^{2}}{\lambda{}_{1}+\nu_{1}-(n-\f)}\right)^{D_{\tp}\left(\left\{ \sigma_{f}^{\pm},\tau_{f}^{\pm},r_{f}^{\pm},s_{f},t_{f}\right\} \right)}.
\]
\end{lem}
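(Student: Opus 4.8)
The plan is to analyze each of the eight matrix coefficients in the definition \eqref{eq:M-def} of $\M$ individually, bound each in absolute value by $1$ trivially (since the representations are unitary and the $w_T$ are unit vectors), but extract a quantitative improvement whenever the two skew-tableaux involved have sufficiently different top rows. The key observation is that each factor has the shape $\langle \pi\, w_{R_1}, w_{R_2}\rangle$ where $\pi\in S'_{\v}$ acts on the skew module $V^{\lambda/\nu}$ and $R_1, R_2 \in \Tab(\lambda/\nu)$. Because $\pi$ fixes $[n-\v+1,n]$ pointwise in a way compatible with property \textbf{P4}, and because $\lambda_1+\nu_1$ is close to $n-\f$, the first row of $\lambda/\nu$ is very long: it has length $\lambda_1 - \nu_1 \geq (n-\f) - n + \v + \nu_1 - \nu_1 + \cdots$; more precisely the hypothesis $\lambda_1+\nu_1 > n-\f+(\v-\f)^2$ forces the number of boxes of $\lambda/\nu$ outside its first row, call it $b$, to satisfy $b \leq (\v-\f)^2 /$ (something), so that the Gelfand–Tsetlin vectors $w_R$ are largely determined by the small amount of data off the first row.

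First I would recall the mechanism from \cite[\S 5]{MPasympcover} (or reprove it): for $\pi$ of the special form appearing in \textbf{P4} and for $R_1,R_2\in\Tab(\lambda/\nu)$, one has the estimate
\[
\left|\left\langle \pi\, w_{R_1}, w_{R_2}\right\rangle\right| \le \left(\frac{(\v-\f)^2}{\lambda_1+\nu_1-(n-\f)}\right)^{d\left(\pi\,\tp(R_1),\,\tp(R_2)\right)},
\]
which is exactly the per-factor version of the claimed bound. The idea behind this single-factor estimate is that $d(\pi\,\tp(R_1),\tp(R_2))$ counts how many elements that $\pi$ wants to keep in the top row of $R_1$ are forced out of the top row of $R_2$; each such "collision" costs a factor comparable to (size of the off-first-row region)$/$(length of the first row), because moving a box out of the long first row in the branching filtration is a low-probability event controlled by the Vershik–Okounkov weights. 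This is where the hypothesis $\lambda_1+\nu_1 > n-\f+(\v-\f)^2$ is used: it guarantees the base of the exponent is strictly less than $1$, so the bound is genuinely decaying and the factors do not blow up.

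Then I would simply multiply the eight per-factor estimates together. Since the common base $\frac{(\v-\f)^2}{\lambda_1+\nu_1-(n-\f)}$ is $\le 1$, the product of the eight powers is bounded by the base raised to the \emph{sum} of the eight exponents, and that sum is by definition $D_{\tp}(\{\sigma_f^{\pm},\tau_f^{\pm},r_f^{\pm},s_f,t_f\})$ as given in \eqref{eq:Dtopdef}. This yields
\[
\left|\M\left(\left\{\sigma_f^{\pm},\tau_f^{\pm},r_f^{\pm},s_f,t_f\right\}\right)\right| \le \left(\frac{(\v-\f)^2}{\lambda_1+\nu_1-(n-\f)}\right)^{D_{\tp}\left(\left\{\sigma_f^{\pm},\tau_f^{\pm},r_f^{\pm},s_f,t_f\right\}\right)},
\]
which is the assertion. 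The main obstacle is establishing the single-factor inequality cleanly: one must carefully set up the Gelfand–Tsetlin basis of $V^{\lambda/\nu}$, understand how the action of a permutation in $S'_{\v}$ (which permutes only the "large" labels $[n-\v+1,n]$) moves a basis vector $w_{R_1}$ relative to $w_{R_2}$, and quantify the decay in terms of the top-row discrepancy $d(\pi\,\tp(R_1),\tp(R_2))$. Since this is precisely the content of \cite[Lemma 5.14]{MPasympcover}, the cleanest route is to cite it directly; if one wants a self-contained argument, the work is in tracking the Vershik–Okounkov spectral data for the chain $S_{n-\v}\le\cdots\le S_{n-\f}$ and bounding the relevant transition coefficients.
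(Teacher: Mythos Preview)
Your proposal is correct and matches the paper's treatment: the paper does not reprove this lemma but simply cites \cite[Lemma~5.14]{MPasympcover}, and your sketch of the argument (bound each of the eight factors of $\M$ separately, then multiply, with the sum of the eight exponents equalling $D_{\tp}$ by definition) is exactly the structure of the proof in \cite{MPasympcover}. One small slip: you write that $\pi$ ``fixes $[n-\v+1,n]$ pointwise'', but in fact $\pi\in S'_{\v}$ fixes $[n-\v]$ pointwise and, by \textbf{P4}, also fixes $[n-\f+1,n]$ pointwise; it is precisely this latter fact (that $\pi$ acts only on the indices $[n-\v+1,n-\f]$ appearing in the skew tableau) that drives the per-factor bound.
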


The condition $\lambda_{1}+\nu_{1}>n-\f+(\v-\f)^{2}$ corresponds
to the bound given by Lemma \ref{lem:product-of-matrix-coefs-bounded-using-D}
being non-trivial, and we will be applying Lemma \ref{lem:product-of-matrix-coefs-bounded-using-D}
when both $\lambda$ and $\nu$ have $O(n^{1/4})$ boxes outside their
first rows and $\v,\f\ll n^{1/4}$. In particular, $\lambda_{1}+\nu_{1}$
is of order $2n$, while $\f$ and $\left(\v-\f\right)^{2}$ are of
much smaller order. Hence the condition will be met for sufficiently
large $n$.

Recall from $\S\S$\ref{subsec:Effective-bounds-for-dimensions} that
$b_{\nu}$ is the number of boxes of a Young diagram $\nu$ outside
the first row, and $\check{b}_{\nu}$ is the number of boxes outside
the first column. We have the following trivial upper bound for $D_{\tp}(\{\sigma_{f}^{\pm},\tau_{f}^{\pm},r_{f}^{\pm},s_{f},t_{f}\})$:
\begin{eqnarray}
D_{\tp}\left(\left\{ \sigma_{f}^{\pm},\tau_{f}^{\pm},r_{f}^{\pm},s_{f},t_{f}\right\} \right) & \le & 8\left(b_{\lambda}-b_{\nu}\right).\label{eq:dtop-upper-bound}
\end{eqnarray}
We recall the following estimate obtained in \cite[Prop. 5.22]{MPasympcover}.
\begin{prop}
\label{prop:geometric-D-bounds}Let $\varepsilon\geq0$. Suppose that
$\nu,\{\mu_{f}\},\lambda$ are as in (\ref{eq:nu-mu-lambda-setup})
and $\{r_{f}^{\pm},s_{f},t_{f}\}$ are as in (\ref{eq:r-s-t-setup}).
If $Y$ is $\varepsilon$-adapted then
\begin{align}
D_{\tp}\left(\left\{ \sigma_{f}^{\pm},\tau_{f}^{\pm},r_{f}^{\pm},s_{f},t_{f}\right\} \right) & \geq b_{\lambda}+3b_{\nu}-b_{\mu_{a}}-b_{\mu_{b}}-b_{\mu_{c}}-b_{\mu_{d}}+\varepsilon b_{\lambda/\nu}.\label{eq:d-to-b-bound}
\end{align}
\end{prop}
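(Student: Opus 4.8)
The plan is to bound each of the eight distance terms appearing in the definition \eqref{eq:Dtopdef} of $D_{\tp}$ from below, and then sum. The starting point is a general inequality of the form $d(\pi \tp(R), \tp(R')) \geq |\tp(R)| - |\tp(R') \cap \pi\tp(R)|$, but the key input is that the permutations $\pi$ occurring here — namely $\sigma_b^-(\sigma_a^+)^{-1}$, $\tau_a^+(\sigma_b^+)^{-1}$, and the other six in property \textbf{P4} — fix $[n-\f+1,n]$ pointwise, so they move at most $n - \f$ points. Meanwhile each $\tp(r_f^\pm \sqcup s_f)$ or $\tp(r_f^\pm \sqcup t_f)$ is the top row of a skew tableau of shape $\lambda/\nu$, which consists of $\lambda_1 - \nu_1$ boxes; since $\lambda \vdash n-\f$, the complement of the top row has size $b_\lambda = (n-\f) - \lambda_1$, and similarly for $\nu$. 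So I would first translate the condition ``$x$ lies in the top row of $r_f^\pm \sqcup s_f$'' into a statement about membership in an interval of $[n]$, using property \textbf{P1} which places the images of the relevant vertex sets at the top of $[n]$.

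Next I would carry out the combinatorial heart of the argument. For each of the eight matrix coefficients, the associated distance $d(\pi \tp(A), \tp(B))$ counts elements of $\pi\tp(A)$ not in $\tp(B)$; an element of $\pi\tp(A)$ fails to be in $\tp(B)$ either because $\pi$ moved it (at most $b$-many such moves, controlled by the fixed-point structure \textbf{P4}) or because it genuinely lies outside the top row of $B$. Summing the ``genuinely outside'' contributions over all eight terms, and carefully tracking how each of the tableaux $r_f^{\pm}, s_f, t_f$ appears — each $r_f^{\pm}$ appears in exactly two of the eight coefficients, each $s_f$ and $t_f$ likewise — produces a telescoping count. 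The boxes of $\lambda/\mu_f$ outside the top row contribute $b_{\lambda/\mu_f} = b_\lambda - b_{\mu_f}$ each time an $s_f$ or $t_f$ appears, and the boxes of $\mu_f/\nu$ outside the top row contribute $b_{\mu_f/\nu} = b_{\mu_f} - b_\nu$ each time an $r_f^{\pm}$ appears. Doing the bookkeeping yields a main term of the shape $b_\lambda + 3 b_\nu - \sum_f b_{\mu_f}$ after the $b_{\mu_f}$ terms partially cancel. This reproduces the $\varepsilon = 0$ case (which is essentially the boundary-reduced estimate already in \cite{MPasympcover}).

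The extra term $\varepsilon\, b_{\lambda/\nu}$ is where the $\varepsilon$-adapted hypothesis enters, and I expect this to be the main obstacle. The point is that when $Y$ is $\varepsilon$-adapted, every piece $P$ of $\partial Y$ satisfies $\defect(P) \leq 4\chi(P) - \varepsilon|P|$, and one has to convert this boundary inequality into a gain in the $D_{\tp}$ count. Concretely, the boundary cycles of $Y$ are paved by blocks and half-edges, and a careful analysis (following \cite[\S 5]{MPasympcover}, where the $\varepsilon = 0$ version is done) shows that each block of length $2g = 4$ along the boundary forces one of the distance terms to pick up an extra unit beyond the naive count, while a half-block would not; the $\varepsilon$-adapted condition precisely quantifies the surplus of non-half blocks over half blocks, and translating $\defect(P) \leq 4\chi(P) - \varepsilon|P|$ through the identities relating $|P|$, $\e(P)$, $\he(P)$ to the block structure gives the claimed extra $\varepsilon\, b_{\lambda/\nu}$. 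I would organize this by isolating, for each boundary piece, the contribution of its blocks to the eight-term sum, invoking the bound on pieces, and then summing over pieces; the bulk of the work is in matching the combinatorics of blocks/half-blocks to the eight specific matrix coefficients in \eqref{eq:M-def}, which is dictated by the octagon relation $aba^{-1}b^{-1}cdc^{-1}d^{-1}$.
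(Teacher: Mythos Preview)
The paper does not prove this proposition here; it is quoted verbatim from the prequel \cite[Prop.~5.22]{MPasympcover}, so there is no in-paper proof to compare against. That said, your outline is broadly in the spirit of the argument in \cite{MPasympcover}, with one significant misreading. Property \textbf{P4} is not used as you describe, namely to bound ``how many points the eight permutations move''; rather, \textbf{P1}--\textbf{P4} together encode precisely how the eight permutations $\sigma_b^-(\sigma_a^+)^{-1},\ldots,\sigma_a^-(\tau_d^-)^{-1}$ act on the vertex labels $[n-\v+1,n]$ of $Y$, and following an element under successive applications of these permutations literally traces a path along $\partial Y_+$. The quantity $D_{\tp}$ is then reinterpreted not via a telescoping count of tableau appearances, but geometrically: each box of $\lambda/\nu$ outside the first row corresponds to a vertex of $Y$, and the eight $d(\cdot,\cdot)$ terms count transitions (edges versus hanging half-edges) along the boundary pieces determined by these vertices. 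The inequality $\defect(P)\le 4\chi(P)-\varepsilon|P|$ is applied piece by piece, and the main term $b_\lambda+3b_\nu-\sum_f b_{\mu_f}$ drops out of an identity relating $\sum_P(4\chi(P))$ and the edge/half-edge counts to the $b$-numbers; the extra $\varepsilon b_{\lambda/\nu}$ arises because the total length $\sum_P|P|$ of the pieces involved equals $b_{\lambda/\nu}$. So your instinct that the $\varepsilon$-adapted hypothesis enters through summing the piece inequality is exactly right, but the bridge from $D_{\tp}$ to boundary pieces is a direct geometric correspondence rather than the tableau-bookkeeping you sketch.
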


\begin{figure}
\begin{centering}
$\begin{array}{cccccc}
\nu= & \ydiagram{3,1} & \mu_{a}= & \ydiagram{4,2,1} & \lambda= & \ydiagram{6,2,1}\\
|\nu|= & n-\v=4~~~~~~~~~~~~ & |\mu_{a}|= & n-\e_{a}=7~~~~~~~~~~~~ & |\lambda|= & n-\f=9~~~~~~~~~~~~~~\\
b_{\nu}= & 1~~~~~~~~~~~~~~~~~~~~~~~ & b_{\mu_{a}}= & 3~~~~~~~~~~~~~~~~~~~~~~~ & b_{\lambda}= & 3~~~~~~~~~~~~~~~~~~~~~~~
\end{array}$
\par\end{centering}
\caption{An example of possible $\nu,\mu_{a},\lambda$ appearing in Theorem
\ref{thm:E_n-emb-exact-expression} (supposing e.g.~$n=10,\protect\v=6,\protect\e_{a}=3,\protect\f=1$)}
\end{figure}

\subsection{Partitioning $\Xi_{n}$ and preliminary estimates\label{subsec:PartitioningXiandpreliminaryestimates}}

In this $\S\S\ref{subsec:PartitioningXiandpreliminaryestimates}$
we show how the condition that $Y$ is $\varepsilon$-adapted leads
to bounds on $\Xi_{n}$. We continue to view $Y$ as fixed and hence
suppress dependence of quantities on $Y$. We write $\D=\D\left(Y\right)\eqdf\v-\f$.
Note that $\D\ge0$ by (\ref{eq:v-e-f-inequality}), with equality
if and only if $Y$ has no boundary. By Lemma \ref{lem:D-vs-d}, $\D\le\d\left(Y\right)$.
So $\D$ is another measure of the size of the boundary of $Y$, and
it plays an important role in some of our bounds below. We will use
the notation $\Xi_{n}^{P(\nu)}$ where $P$ is a proposition concerning
$\nu$ to mean
\[
\Xi_{n}^{P(\nu)}\eqdf\sum_{\substack{\nu\subset_{\v-\f}\lambda\vdash\,n-\f\\
P(\nu)\text{ holds true}
}
}d_{\lambda}d_{\nu}\sum_{\nu\subset\mu_{f}\subset_{\e_{f}-\f}\lambda}\frac{1}{d_{\mu_{a}}d_{\mu_{b}}d_{\mu_{c}}d_{\mu_{d}}}\Upsilon_{n}\left(\left\{ \sigma_{f}^{\pm},\tau_{f}^{\pm}\right\} ,\nu,\left\{ \mu_{f}\right\} ,\lambda\right).
\]
We will continue to use this notation, for various propositions $P$,
throughout the rest of the paper. We want to give bounds for various
$\Xi_{n}^{P(\nu)}$ under the condition that $Y$ is either boundary
reduced (namely, $0$-adapted) or, moreover, $\varepsilon$-adapted
for some $\varepsilon>0$. We will always assume $\v\leq n^{1/4}$
and so also $\D=\v-\f\le n^{1/4}$. Note that $b_{\nu}\le\D$ and
$\check{b}_{\nu}\le\D$ cannot hold simultaneously as $\v,\D\leq n^{1/4}$,
and as all but one box of $\nu\vdash n-\v$ is either outside the
first row or first column, one has the simple inequality $b_{\nu}+\check{b}_{\nu}+1\geq n-\v$.

Then for $n\gg1$ we have 
\[
\Xi_{n}=\Xi_{n}^{\nu=(n-\v)}+\Xi_{n}^{\nu=(1)^{n-\v}}+\Xi_{n}^{0<b_{\nu}\leq\D;\check{b}_{\nu}>0}+\Xi_{n}^{0<\check{b}_{\nu}\leq\D;b_{\nu}>0}+\Xi_{n}^{b_{\nu},\check{b}_{\nu}>\D}.
\]
Moreover by \cite[Lem.~5.9]{MPasympcover} we have 
\[
\Xi_{n}^{\nu=(n-\v)}=\Xi_{n}^{\nu=(1)^{n-\v}},\quad\Xi_{n}^{0<b_{\nu}\leq\D;\check{b}_{\nu}>0}=\Xi_{n}^{0<\check{b}_{\nu}\leq\D;b_{\nu}>0}
\]
hence 
\begin{equation}
\Xi_{n}=2\Xi_{n}^{\nu=(n-\v)}+2\Xi_{n}^{0<b_{\nu}\leq\D;\check{b}_{\nu}>0}+\Xi_{n}^{b_{\nu},\check{b}_{\nu}>\D}.\label{eq:Xin-splitting}
\end{equation}
This is according to three regimes for $b_{\nu}$ and $\check{b}_{\nu}$:
\begin{itemize}
\item The \emph{zero regime}: when $b_{\nu}$ or $\check{b}_{\nu}$ equal
$0$. The contribution from here is $2\Xi_{n}^{\nu=(n-\nu)}$.
\item The \emph{intermediate regime:} when $b_{\nu},\check{b}_{\nu}>0$
but one of them is at most $\D$. The contribution from this regime
is $2\Xi_{n}^{0<b_{\nu}\leq\D;\check{b}_{\nu}>0}$. 
\item The \emph{large regime:} when both $b_{\nu},\check{b}_{\nu}>\D$.
The contribution from this regime is $\Xi_{n}^{b_{\nu},\check{b}_{\nu}>\D}$.
\end{itemize}
The strategy for bounding these different contributions is to further
partition the tuples $(\nu,\left\{ \mu_{f}\right\} ,\lambda)$ according
to the data $b_{\lambda},\{b_{\mu_{f}}\},b_{\nu},\check{b}_{\lambda},\{\check{b}_{\mu_{f}}\},\check{b}_{\nu}$. 
\begin{defn}
\label{def:B}For $\overline{B}=\left(B_{\lambda},\{B_{\mu_{f}}\},B_{\nu},\check{B}_{\lambda},\{\check{B}_{\mu_{f}}\},\check{B}_{\nu}\right)$
we write\marginpar{${\scriptstyle \left(\nu,\left\{ \mu_{f}\right\} ,\lambda\right)\vdash\overline{B}}$}
\[
\left(\nu,\left\{ \mu_{f}\right\} ,\lambda\right)\vdash\overline{B}
\]
if (\ref{eq:nu-mu-lambda-setup}) holds, and $\nu$, $\left\{ \mu_{f}\right\} $
and $\lambda$ have the prescribed number of blocks outside the first
row and outside the first column, namely,
\[
b_{\lambda}=B_{\lambda},\check{b}_{\lambda}=\check{B}_{\lambda},\,b_{\nu}=B_{\nu},\check{b}_{\nu}=\check{B}_{\nu}~~~~~~\mathrm{and}~~~~~~\forall f\in\left\{ a,b,c,d\right\} ~~b_{\mu_{f}}=B_{\mu_{f}},\check{b}_{\mu_{f}}=\check{B}_{\mu_{f}}.
\]
We denote by $\B_{n}\left(Y\right)$ the collection of tuples $\overline{B}$
which admit at least one tuple of YDs $(\nu,\left\{ \mu_{f}\right\} ,\lambda)$.
Finally, we let
\begin{align}
\Xi_{n}^{\overline{B}}=\Xi_{n}^{\overline{B}}\left(Y\right)\eqdf & \sum_{\left(\nu,\{\mu_{f}\},\lambda\right)\vdash\overline{B}}\frac{d_{\lambda}d_{\nu}}{d_{\mu_{a}}d_{\mu_{b}}d_{\mu_{c}}d_{\mu_{d}}}\Upsilon_{n}\left(\left\{ \sigma_{f}^{\pm},\tau_{f}^{\pm}\right\} ,\nu,\left\{ \mu_{f}\right\} ,\lambda\right).\label{eq:xi-B-def}
\end{align}
\end{defn}

Note that $\Xi_{n}\left(Y\right)=\sum_{\overline{B}\in\B_{n}\left(Y\right)}\Xi_{n}^{\overline{B}}$.
Also, note that $\overline{B}\in\B_{n}\left(Y\right)$ imposes restrictions
on the possible values of $B_{\lambda},\{B_{\mu_{f}}\},B_{\nu},\check{B}_{\lambda},\{\check{B}_{\mu_{f}}\},\check{B}_{\nu}$.
For example, for every $f\in\{a,b,c,d\}$, $0\le B_{\mu_{f}}-B_{\nu}\le\v-\e_{f}$
and $0\le B_{\lambda}-B_{\mu_{f}}\le\e_{f}-\f$, and likewise for
the $\check{B}$'s. In addition, $B_{\nu}+\check{B}_{\nu}+1\ge n-\nu$,
and so on. 

We first give a general estimate for the quotient of dimensions in
the summands in (\ref{eq:xi-B-def}).
\begin{lem}
\label{lem:generic-summand-bound}Suppose that $\v\leq n^{1/4}$ and
that $(\nu,\{\mu_{f}\},\lambda)$ satisfy (\ref{eq:nu-mu-lambda-setup}).
If $b_{\nu}\le\D$ then
\begin{equation}
\frac{d_{\lambda}d_{\nu}}{d_{\mu_{a}}d_{\mu_{b}}d_{\mu_{c}}d_{\mu_{d}}}\ll\frac{1}{d_{\nu}^{~2}}b_{\lambda}^{5b_{\lambda}}n^{\left(b_{\lambda}+3b_{\nu}-\sum_{f}b_{\mu_{f}}\right)}.\label{eq:generic-summand-ineq}
\end{equation}
\end{lem}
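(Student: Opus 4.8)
The plan is to reduce everything to the dimension-ratio estimate Lemma \ref{lem:dim-ratio-bound}, after first observing that the hypothesis $b_{\nu}\le\D$ together with the nesting constraints \eqref{eq:nu-mu-lambda-setup} forces $b_{\lambda}$ (and hence all the relevant $b$'s) to be of size $O(n^{1/4})$, which is exactly what makes that lemma applicable.

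\textbf{Step 1 (book-keeping on Young diagrams).} From \eqref{eq:nu-mu-lambda-setup} one reads off $|\lambda|=n-\f$ and $|\nu|=n-\v$, so $\lambda$ is obtained from $\nu$ by adding $\D=\v-\f$ boxes, with $\nu\subset\mu_{f}\subset\lambda$. Since adding a box to a Young diagram never decreases the number of boxes outside the first row and increases it by at most one, we get $b_{\nu}\le b_{\mu_{f}}\le b_{\lambda}$ for every $f$ and $b_{\lambda}-b_{\nu}\le\D$. Using $b_{\nu}\le\D$ and $\v\le n^{1/4}$ this gives $b_{\lambda}\le2\D\le 2n^{1/4}$, and likewise $\v,\e_{f},\f,\D$ are all $O(n^{1/4})=o(n)$.

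\textbf{Step 2 (split the ratio and apply Lemma \ref{lem:dim-ratio-bound}).} I would use the identity
\[
\frac{d_{\lambda}d_{\nu}}{d_{\mu_{a}}d_{\mu_{b}}d_{\mu_{c}}d_{\mu_{d}}}=\frac{1}{d_{\nu}^{~2}}\cdot\frac{d_{\lambda}}{d_{\nu}}\cdot\prod_{f}\frac{d_{\nu}}{d_{\mu_{f}}},
\]
which isolates the required factor $d_{\nu}^{-2}$. By Step 1 the hypothesis ``$m\ge2b_{\lambda}$'' of Lemma \ref{lem:dim-ratio-bound} holds (for $n$ large) both for the pair $(\lambda,\nu)$, with $m=n-\v$, and for each pair $(\mu_{f},\nu)$, with $m=n-\v$. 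The upper bound in that lemma gives $d_{\lambda}/d_{\nu}\le b_{\nu}^{~b_{\nu}}(n-\f)^{b_{\lambda}}/(n-\v-b_{\nu})^{b_{\nu}}$, and the lower bound, applied to $d_{\mu_{f}}/d_{\nu}$ and inverted, gives $d_{\nu}/d_{\mu_{f}}\le b_{\mu_{f}}^{~b_{\mu_{f}}}(n-\v)^{b_{\nu}}/(n-\e_{f}-b_{\mu_{f}})^{b_{\mu_{f}}}$.

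\textbf{Step 3 (multiply and simplify).} Multiplying the five bounds, the combinatorial prefactor is $b_{\nu}^{~b_{\nu}}\prod_{f}b_{\mu_{f}}^{~b_{\mu_{f}}}\le b_{\lambda}^{~b_{\nu}+\sum_{f}b_{\mu_{f}}}\le b_{\lambda}^{~5b_{\lambda}}$, using $b_{\nu},b_{\mu_{f}}\le b_{\lambda}$ and $b_{\nu}+\sum_{f}b_{\mu_{f}}\le5b_{\lambda}$. For the $n$-dependent factor, the numerator is $(n-\f)^{b_{\lambda}}(n-\v)^{4b_{\nu}}\le n^{b_{\lambda}+4b_{\nu}}$, while the denominator is a product of $1+4$ factors each of the form $n-O(n^{1/4})$ raised to exponents summing to $b_{\nu}+\sum_{f}b_{\mu_{f}}\le5b_{\lambda}=O(n^{1/4})$; hence it is at least $n^{b_{\nu}+\sum_{f}b_{\mu_{f}}}\bigl(1-O(n^{-3/4})\bigr)^{O(n^{1/4})}\ge\frac{1}{2}n^{b_{\nu}+\sum_{f}b_{\mu_{f}}}$ for $n$ large. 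This leaves an $n$-power of $2\,n^{b_{\lambda}+3b_{\nu}-\sum_{f}b_{\mu_{f}}}$, and multiplying back by $d_{\nu}^{-2}$ yields \eqref{eq:generic-summand-ineq} with an absolute implied constant. The only real point requiring attention is Step 1 — that $b_{\nu}\le\D$ plus the nesting pins down $b_{\lambda}\le2\D$, without which Lemma \ref{lem:dim-ratio-bound} could not be invoked; the remaining estimates (the $(1-O(n^{-3/4}))^{O(n^{1/4})}\to1$ bound and collecting powers of $b_{\lambda}$) are routine.
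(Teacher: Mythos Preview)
Your proof is correct and follows essentially the same route as the paper: factor the ratio as $d_{\nu}^{-2}\cdot(d_{\lambda}/d_{\nu})\cdot\prod_{f}(d_{\nu}/d_{\mu_{f}})$, apply the two sides of Lemma~\ref{lem:dim-ratio-bound} to the five dimension quotients, bound the combinatorial prefactors by $b_{\lambda}^{5b_{\lambda}}$, and absorb the $(1-O(n^{-3/4}))^{O(n^{1/4})}$ correction into the implied constant. The paper's version differs only cosmetically, working directly with $d_{\lambda}d_{\nu}^{3}/\prod_{f}d_{\mu_{f}}$ and the explicit bound $(n-2n^{1/4})$ in the denominators.
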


\begin{proof}
By Lemma \ref{lem:dim-ratio-bound}, 
\[
\frac{d_{\nu}}{d_{\mu_{f}}}\le\frac{b_{\mu_{f}}^{~b_{\mu_{f}}}\left(n-\v\right)^{b_{\nu}}}{(n-\e_{f}-b_{\mu_{f}})^{b_{\mu_{f}}}}\le\frac{b_{\lambda}^{~b_{\lambda}}n^{b_{\nu}}}{\left(n-2n^{1/4}\right)^{b_{\mu_{f}}}},
\]
where the second inequality is based on that $\e_{f}+b_{\mu_{f}}\le\e_{f}+\left(b_{\nu}+\v-\e_{f}\right)=b_{\nu}+\v\le2n^{1/4}$.
The hypotheses of Lemma \ref{lem:dim-ratio-bound} are met here since
\[
2b_{\mu_{f}}-|\nu|\leq2(b_{\nu}+\v-\e_{f})-(n-\v)\leq5\v-n\leq5n^{\frac{1}{4}}-n\leq0
\]
for $n\gg1$. Similarly, since $2b_{\lambda}-|\nu|\leq2(b_{\nu}+\D)-(n-\v)\leq5n^{\frac{1}{4}}-n\leq0$
for $n\gg1$, Lemma \ref{lem:dim-ratio-bound} gives $\frac{d_{\lambda}}{d_{\nu}}\le\frac{b_{\nu}^{b_{\nu}}(n-\f)^{b_{\lambda}}}{(n-\v-b_{\nu})^{b_{\nu}}}\le\frac{b_{\lambda}^{~b_{\lambda}}n^{b_{\lambda}}}{\left(n-2n^{1/4}\right)^{b_{\nu}}}.$
Altogether,
\begin{eqnarray*}
\frac{d_{\lambda}d_{\nu}^{~3}}{d_{\mu_{a}}d_{\mu_{b}}d_{\mu_{c}}d_{\mu_{d}}} & \le & \frac{b_{\lambda}^{~5b_{\lambda}}n^{\left(b_{\lambda}+4b_{\nu}\right)}}{\left(n-2n^{1/4}\right)^{b_{\nu}+\sum_{f}b_{\mu_{f}}}}=b_{\lambda}^{~5b_{\lambda}}n^{\left(b_{\lambda}+3b_{\nu}-\sum_{f}b_{\mu_{f}}\right)}\left(\frac{1}{1-2n^{-3/4}}\right)^{b_{\nu}+\sum_{f}b_{\mu_{f}}}\\
 & \le & b_{\lambda}^{~5b_{\lambda}}n^{\left(b_{\lambda}+3b_{\nu}-\sum_{f}b_{\mu_{f}}\right)}\cdot\left(\frac{1}{1-2n^{-3/4}}\right)^{9n^{1/4}}.
\end{eqnarray*}
As $\left(\frac{1}{1-2n^{-3/4}}\right)^{9n^{1/4}}\stackrel{n\to\infty}{\to}1$,
the right hand side of the last inequality is at most $2b_{\lambda}^{~5b_{\lambda}}n^{\left(b_{\lambda}+3b_{\nu}-\sum_{f}b_{\mu_{f}}\right)}$
for large enough $n$. 
\end{proof}
We next give bounds for the individual $\Xi_{n}^{\overline{B}}$.
\begin{lem}
\label{lem:xi-B-bounds-small-B_nu}There is $\kappa>1$ such that
if $Y$ is $\varepsilon$-adapted for $\varepsilon\geq0$, $\v\leq n^{1/4}$
and $B_{\nu}\le\D$, then
\begin{align*}
\left|\Xi_{n}^{\overline{B}}\right| & \ll B_{\lambda}^{~10B_{\lambda}}\left(\D^{24}n^{-\varepsilon}\right)^{B_{\lambda}-B_{\nu}}\left(\frac{\kappa\D^{4}}{\left(n-\v-\D^{2}\right)^{2}}\right)^{B_{\nu}}.
\end{align*}
\end{lem}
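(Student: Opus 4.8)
The plan is to estimate $\Xi_n^{\overline B}$ termwise, bounding each summand $\frac{d_\lambda d_\nu}{d_{\mu_a}d_{\mu_b}d_{\mu_c}d_{\mu_d}}\Upsilon_n(\cdots)$ and then controlling the number of tuples $(\nu,\{\mu_f\},\lambda)\vdash\overline B$. First I would bound $\Upsilon_n$: by definition (\ref{eq:Upsilon-def}) it is a sum over $r_f^\pm\in\Tab(\mu_f/\nu)$ and $s_f,t_f\in\Tab(\lambda/\mu_f)$ of the matrix-coefficient products $\M(\cdots)$. Using Lemma \ref{lem:product-of-matrix-coefs-bounded-using-D} each $|\M|$ is bounded by $\bigl((\v-\f)^2/(\lambda_1+\nu_1-(n-\f))\bigr)^{D_\tp}$, and since $Y$ is $\varepsilon$-adapted, Proposition \ref{prop:geometric-D-bounds} gives the lower bound $D_\tp\ge b_\lambda+3b_\nu-\sum_f b_{\mu_f}+\varepsilon b_{\lambda/\nu}$. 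I would need to be slightly careful about the hypothesis $\lambda_1+\nu_1>n-\f+(\v-\f)^2$ in Lemma \ref{lem:product-of-matrix-coefs-bounded-using-D}: in the regime $B_\nu\le\D$ with $\nu\vdash n-\v$ one has $\nu_1\ge n-\v-\D$, and $\lambda_1\ge n-\f-b_\lambda\ge n-\f-\D$ (since $b_\lambda\le\v-\f+b_\nu\le\D+\D$, needing $\v\le n^{1/4}$ so this is $o(n)$), so $\lambda_1+\nu_1-(n-\f)$ is comparable to $n$, and in fact one gets the cleaner denominator $n-\v-\D^2$ after absorbing lower-order terms. Thus $|\M|\ll\bigl(\D^2/(n-\v-\D^2)\bigr)^{D_\tp}$, and counting the number of tableaux — there are at most $(b_\lambda)!^{O(1)}\le b_\lambda^{O(b_\lambda)}$ fillings, or more precisely one uses $|\Tab(\mu_f/\nu)|=d_{\mu_f/\nu}\le(n)_{b_{\mu_f/\nu}}$ type bounds together with the fact that the $d_\tp$-exponent already kills all but $b_\lambda^{O(b_\lambda)}$-many — yields $\Upsilon_n\ll b_\lambda^{O(b_\lambda)}\bigl(\D^2/(n-\v-\D^2)\bigr)^{b_\lambda+3b_\nu-\sum_f b_{\mu_f}}$ after using $\varepsilon b_{\lambda/\nu}\ge 0$ or keeping that factor; actually I want to split the $D_\tp$ lower bound as $(b_\lambda+3b_\nu-\sum b_{\mu_f})+\varepsilon(b_\lambda-b_\nu)$ and keep the $\varepsilon$-part to produce the $n^{-\varepsilon(B_\lambda-B_\nu)}$ gain.

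Next I would combine this with the dimension-ratio estimate. Lemma \ref{lem:generic-summand-bound} (valid since $b_\nu\le\D$ and $\v\le n^{1/4}$) gives $\frac{d_\lambda d_\nu}{d_{\mu_a}\cdots d_{\mu_d}}\ll d_\nu^{-2}b_\lambda^{5b_\lambda}n^{b_\lambda+3b_\nu-\sum_f b_{\mu_f}}$. Multiplying by the $\Upsilon_n$ bound, the factor $n^{b_\lambda+3b_\nu-\sum b_{\mu_f}}$ cancels against $\bigl(\D^2/n\bigr)^{b_\lambda+3b_\nu-\sum b_{\mu_f}}$ up to the $\D^{2(\cdots)}$ numerator and the $(1-\v/n-\D^2/n)$-type denominator correction; the leftover from the non-$\varepsilon$ part of $D_\tp$ is a factor $\D^{2(b_\lambda+3b_\nu-\sum b_{\mu_f})}$, which since $b_\lambda+3b_\nu-\sum b_{\mu_f}\le b_\lambda+3b_\nu\le 4b_\lambda$ (roughly) and each exponent is $O(b_\lambda)$, is absorbed into a $\D^{O(b_\lambda)}$ factor — and $\D^{O(B_\lambda)}\cdot b_\lambda^{O(B_\lambda)}\le B_\lambda^{O(B_\lambda)}$ since $\D\le\v$ and, for the tiled surfaces we care about, $\D\le B_\lambda$ or at any rate $\D^{B_\lambda}\le n^{B_\lambda/4}$ is dominated — here I would match constants to land exactly on the stated $B_\lambda^{10B_\lambda}$. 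The $\varepsilon$-part contributes $n^{-\varepsilon(B_\lambda-B_\nu)}$, and together with a $\D^{24(B_\lambda-B_\nu)}$ factor (the precise power $24$ coming from bookkeeping the eight matrix coefficients, each contributing up to $\D^3$ per unit of defect, but I would track this carefully) this gives the $(\D^{24}n^{-\varepsilon})^{B_\lambda-B_\nu}$ factor. Finally the residual $d_\nu^{-2}$ summed over $\nu$ with $b_\nu=B_\nu$, $\check b_\nu=\check B_\nu$ is bounded using Lemma \ref{lem:dim-ratio-bound}: $d_\nu\ge (n-\v-B_\nu)^{B_\nu}/B_\nu^{B_\nu}\cdot(\text{stuff})$, actually more simply $d_\nu\gg (n-\v-\D^2)^{B_\nu}/\D^{2B_\nu}$ type bound in this regime, giving $d_\nu^{-2}\ll(\kappa\D^4/(n-\v-\D^2)^2)^{B_\nu}$, and the number of $\nu$ with prescribed $B_\nu,\check B_\nu$ is polynomially (indeed $B_\lambda^{O(B_\lambda)}$) bounded and absorbed.

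The main obstacle I anticipate is the bookkeeping to get the \emph{exact} exponents ($10B_\lambda$ and $24(B_\lambda-B_\nu)$) rather than merely $O(B_\lambda)$ and $O(B_\lambda-B_\nu)$: this requires carefully tracking how the $\D^2$-numerators from the eight matrix coefficients in Lemma \ref{lem:product-of-matrix-coefs-bounded-using-D} interact with the $b_\lambda^{5b_\lambda}$ from Lemma \ref{lem:generic-summand-bound}, the $b_\lambda^{O(b_\lambda)}$ from counting tableaux, and the splitting $D_\tp=(b_\lambda+3b_\nu-\sum b_{\mu_f})+\varepsilon b_{\lambda/\nu}$ — in particular making sure the non-$\varepsilon$ portion of $D_\tp$ is used to cancel the $n^{b_\lambda+3b_\nu-\sum b_{\mu_f}}$ \emph{exactly}, so the only surviving $n$-dependence is the $n^{-\varepsilon(B_\lambda-B_\nu)}$ gain and the benign $(n-\v-\D^2)^{-2B_\nu}$. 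A secondary subtlety is verifying the applicability hypothesis of Lemma \ref{lem:product-of-matrix-coefs-bounded-using-D} uniformly, i.e.\ that $\lambda_1+\nu_1-(n-\f)$ is bounded below by something like $n-\v-\D^2$ throughout the relevant range; this uses $b_\nu=B_\nu\le\D$, $\nu\vdash n-\v$, and $\v,\D\le n^{1/4}$, and I would state it as a preliminary observation before the main computation.
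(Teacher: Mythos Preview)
Your proposal is correct and follows essentially the same architecture as the paper's proof: bound $|\M|$ via Lemma~\ref{lem:product-of-matrix-coefs-bounded-using-D}, invoke Proposition~\ref{prop:geometric-D-bounds} for the $\varepsilon$-adapted lower bound on $D_\tp$, combine with the dimension ratio from Lemma~\ref{lem:generic-summand-bound}, count tableaux by Lemma~\ref{lem:dimension-bound-of-skew-module}, count tuples $(\{\mu_f\},\lambda)$ for fixed $\nu$, and sum $d_\nu^{-2}$ over $\nu$.

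Two differences worth flagging. First, the paper organizes the $D_\tp$ step differently: after writing $n^{b_\lambda+3b_\nu-\sum_f b_{\mu_f}}\le n^{D_\tp-\varepsilon(B_\lambda-B_\nu)}$ and combining with $|\M|$, it obtains a factor $(n\D^2/(n-4\v))^{D_\tp}$ whose base exceeds $1$, and then applies the \emph{trivial upper bound} $D_\tp\le 8(B_\lambda-B_\nu)$ from (\ref{eq:dtop-upper-bound}) to get $\ll\D^{16(B_\lambda-B_\nu)}$; together with $\D^{8(B_\lambda-B_\nu)}$ from the eight tableaux this produces the exponent $24$. Your sketch uses only the lower bound on $D_\tp$ together with $\D^2/n<1$, which also works (and would yield a smaller exponent on $\D$), but you should make the monotonicity explicit rather than speaking of ``exact cancellation''. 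Second, the paper bounds $\sum_{\nu:\,b_\nu=B_\nu}d_\nu^{-2}$ by invoking Proposition~\ref{prop:effective-Liebeck-Shalev}, which is precisely where the constant $\kappa$ in the statement comes from; your proposed single-$\nu$ lower bound on $d_\nu$ via Lemma~\ref{lem:dim-ratio-bound} would also work after multiplying by the partition count, but you would then need to supply $\kappa$ yourself. Finally, your aside ``$\D\le B_\lambda$'' is false in general (e.g.\ $B_\lambda=0$, $\D>0$), but it plays no role in the actual argument, which keeps $\D$ and $B_\lambda$ separate throughout.
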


\begin{proof}
By assumption, $B_{\nu}\le\D\leq\v\leq n^{\frac{1}{4}}$. So for every
$\left(\nu,\{\mu_{f}\},\lambda\right)\vdash\overline{B}$, 
\[
\lambda_{1}+\nu_{1}-\left(n-\f\right)=\left(n-\f-B_{\lambda}\right)+\left(n-\v-B_{\nu}\right)-\left(n-\f\right)\ge n-\v-B_{\nu}-\left(B_{\nu}+\D\right)\ge n-4\v,
\]
and Lemma \ref{lem:product-of-matrix-coefs-bounded-using-D} gives
that whenever $\left\{ r_{f}^{\pm},s_{f},t_{f}\right\} $ satisfy
(\ref{eq:r-s-t-setup}),
\begin{equation}
\left|\M\left(\left\{ \sigma_{f}^{\pm},\tau_{f}^{\pm},r_{f}^{\pm},s_{f},t_{f}\right\} \right)\right|\le\left(\frac{\D^{2}}{n-4\v}\right)^{D_{\tp}\left(\left\{ \sigma_{f}^{\pm},\tau_{f}^{\pm},r_{f}^{\pm},s_{f},t_{f}\right\} \right)}.\label{eq:half-result-for-M}
\end{equation}
Proposition \ref{prop:geometric-D-bounds} gives
\[
B_{\lambda}+3B_{\nu}-\sum_{f}B_{\mu_{f}}\le D_{\tp}\left(\left\{ \sigma_{f}^{\pm},\tau_{f}^{\pm},r_{f}^{\pm},s_{f},t_{f}\right\} \right)-\varepsilon\left(B_{\lambda}-B_{\nu}\right),
\]
so by Lemma \ref{lem:generic-summand-bound}
\[
\frac{d_{\lambda}d_{\nu}^{3}}{d_{\mu_{a}}d_{\mu_{b}}d_{\mu_{c}}d_{\mu_{d}}}\left|\M\left(\left\{ \sigma_{f}^{\pm},\tau_{f}^{\pm},r_{f}^{\pm},s_{f},t_{f}\right\} \right)\right|\ll B_{\lambda}^{~5B_{\lambda}}n^{-\varepsilon\left(B_{\lambda}-B_{\nu}\right)}\left(\frac{n\D^{2}}{n-4\v}\right)^{D_{\tp}\left(\left\{ \sigma_{f}^{\pm},\tau_{f}^{\pm},r_{f}^{\pm},s_{f},t_{f}\right\} \right)}.
\]
Now using the trivial upper bound $D_{\tp}\left(\left\{ \sigma_{f}^{\pm},\tau_{f}^{\pm},r_{f}^{\pm},s_{f},t_{f}\right\} \right)\leq8(B_{\lambda}-B_{\nu})$
in (\ref{eq:dtop-upper-bound}) and $B_{\lambda}-B_{\nu}\le\v-\f\leq\v\le n^{\frac{1}{4}}$,
we obtain that for large enough $n$,
\[
\left(\frac{n\D^{2}}{n-4\v}\right)^{D_{\tp}\left(\left\{ \sigma_{f}^{\pm},\tau_{f}^{\pm},r_{f}^{\pm},s_{f},t_{f}\right\} \right)}\leq\D^{16\left(B_{\lambda}-B_{\nu}\right)}\left(\frac{1}{1-4n^{-3/4}}\right)^{8n^{1/4}}\leq2\D^{16(B_{\lambda}-B_{\nu})}.
\]
Therefore, 
\begin{align*}
\frac{d_{\lambda}d_{\nu}^{3}}{d_{\mu_{a}}d_{\mu_{b}}d_{\mu_{c}}d_{\mu_{d}}}\left|\M\left(\left\{ \sigma_{f}^{\pm},\tau_{f}^{\pm},r_{f}^{\pm},s_{f},t_{f}\right\} \right)\right| & \ll B_{\lambda}^{~5B_{\lambda}}\left(\D^{16}n^{-\varepsilon}\right)^{B_{\lambda}-B_{\nu}}.
\end{align*}
From this we obtain 
\begin{eqnarray*}
\left|\Xi_{n}^{\overline{B}}\right| & \ll & B_{\lambda}^{~5B_{\lambda}}\left(\D^{16}n^{-\varepsilon}\right)^{B_{\lambda}-B_{\nu}}\sum_{\left(\nu,\{\mu_{f}\},\lambda\right)\vdash\overline{B}}\frac{1}{d_{\nu}^{2}}\sum_{\begin{gathered}r_{f}^{+},r_{f}^{-}\in\Tab\left(\mu_{f}/\nu\right)\\
s_{f},t_{f}\in\Tab\left(\lambda/\mu_{f}\right)
\end{gathered}
}1\\
 & \le & B_{\lambda}^{~5B_{\lambda}}\left(\D^{24}n^{-\varepsilon}\right)^{B_{\lambda}-B_{\nu}}\sum_{\left(\nu,\{\mu_{f}\},\lambda\right)\vdash\overline{B}}\frac{1}{d_{\nu}^{2}}
\end{eqnarray*}
since there are at most $(\D)_{(B_{\lambda}-B_{\nu})}\leq\D^{(B_{\lambda}-B_{\nu})}$
choices of $r_{f}^{+}\sqcup s_{f}$ or of $r_{f}^{-}\sqcup t_{f}$
for all $f$, by Lemma \ref{lem:dimension-bound-of-skew-module}.
For fixed $\nu$ above, there are at most $B_{\lambda}^{5B_{\lambda}}$
choices of $\{\mu_{f}\}$ and $\lambda$ such that $(\nu,\{\mu_{f}\},\lambda)\vdash\overline{B}$.
For example, the boxes outside the first row of $\lambda$ uniquely
determine $\lambda$ and form a YD of size $B_{\lambda}$; there are
at most $B_{\lambda}!\leq B_{\lambda}^{B_{\lambda}}$ of these. Hence
\begin{align*}
\left|\Xi_{n}^{\overline{B}}\right| & \ll B_{\lambda}^{~10B_{\lambda}}\left(\D^{24}n^{-\varepsilon}\right)^{B_{\lambda}-B_{\nu}}\sum_{\nu\vdash n-\v\colon b_{\nu}=B_{\nu}}\frac{1}{d_{\nu}^{2}}.
\end{align*}

Note that above, we have $\nu_{1}=n-\v-B_{\nu}\geq n-2n^{\frac{1}{4}}$,
so $\check{b}_{\nu}\geq n-2n^{\frac{1}{4}}-1\geq n^{\frac{1}{4}}\geq B_{\nu}$
for $n\gg1$, and in this case $\nu\in\Lambda(n-\v,B_{\nu})$. Moreover,
for $n\gg1$, $B_{\nu}^{2}\leq n^{\frac{1}{2}}\leq\frac{n-n^{\frac{1}{4}}}{3}\leq\frac{n-\v}{3}$
and so we can finally apply Proposition \ref{prop:effective-Liebeck-Shalev}
to obtain for the same $\kappa=\kappa\left(2\right)>1$ from Proposition
\ref{prop:effective-Liebeck-Shalev} that
\begin{eqnarray*}
\left|\Xi_{n}^{\overline{B}}\right| & \ll & B_{\lambda}^{10B_{\lambda}}\left(\D^{24}n^{-\varepsilon}\right)^{B_{\lambda}-B_{\nu}}\left(\frac{\kappa B_{\nu}^{~4}}{\left(n-\v-B_{\nu}^{~2}\right)^{2}}\right)^{B_{\nu}}\\
 & \le & B_{\lambda}^{~10B_{\lambda}}\left(\D^{24}n^{-\varepsilon}\right)^{B_{\lambda}-B_{\nu}}\left(\frac{\kappa\D^{4}}{\left(n-\v-\D^{2}\right)^{2}}\right)^{B_{\nu}}.
\end{eqnarray*}
\end{proof}
Since Lemma \ref{lem:xi-B-bounds-small-B_nu} is only useful for $B_{\nu}$
or $\check{B}_{\nu}$ small compared to $n$ we have to supplement
it with the following weaker bound.
\begin{lem}
\label{lem:Xi-B-bounds-generic}If $Y$ is any tiled surface and $\overline{B}\in\B_{n}\left(Y\right)$
then
\begin{align*}
\left|\Xi_{n}^{\overline{B}}\right| & \leq\left(\D!\right)^{8}\sum_{\left(\nu,\{\mu_{f}\},\lambda\right)\vdash\overline{B}}\frac{d_{\lambda}}{d_{\nu}^{3}}.
\end{align*}
\end{lem}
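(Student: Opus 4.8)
The plan is to bound $|\Xi_{n}^{\overline{B}}|$ term-by-term using only crude estimates on the matrix coefficients and on the number of tableaux, since no $\varepsilon$-adaptedness is available here. Recall from \eqref{eq:xi-B-def} and \eqref{eq:Upsilon-def} that
\[
\Xi_{n}^{\overline{B}}=\sum_{\left(\nu,\{\mu_{f}\},\lambda\right)\vdash\overline{B}}\frac{d_{\lambda}d_{\nu}}{d_{\mu_{a}}d_{\mu_{b}}d_{\mu_{c}}d_{\mu_{d}}}\sum_{\substack{r_{f}^{+},r_{f}^{-}\in\Tab(\mu_{f}/\nu)\\ s_{f},t_{f}\in\Tab(\lambda/\mu_{f})}}\M\left(\left\{ \sigma_{f}^{\pm},\tau_{f}^{\pm},r_{f}^{\pm},s_{f},t_{f}\right\} \right).
\]
First I would use the trivial bound $\left|\M(\cdots)\right|\le1$: each factor of $\M$ in \eqref{eq:M-def} is a matrix coefficient $\langle g\, w_{T},w_{T'}\rangle$ of a \emph{unitary} operator between \emph{unit} Gelfand--Tsetlin vectors, hence has absolute value at most $1$, and $\M$ is a product of eight such factors. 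This reduces the inner sum to counting tuples of tableaux.

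Next I would count the tableaux. For each $f$, the data $r_{f}^{+}\sqcup s_{f}$ is a single tableau of shape $\lambda/\nu$, so it is an element of $\Tab(\lambda/\nu)$, and likewise $r_{f}^{-}\sqcup t_{f}\in\Tab(\lambda/\nu)$; thus the $\{r_f^\pm,s_f,t_f\}$ are determined by a choice, for each $f\in\{a,b,c,d\}$, of two tableaux in $\Tab(\lambda/\nu)$, giving at most $|\Tab(\lambda/\nu)|^{8}=d_{\lambda/\nu}^{8}$ tuples. By Lemma~\ref{lem:dimension-bound-of-skew-module}, $d_{\lambda/\nu}\le (n-\f-k)_{\,b_{\lambda/\nu}}$ where $\lambda/\nu$ has $|\lambda/\nu|=\v-\f$ boxes; more to the point, $b_{\lambda/\nu}=b_{\lambda}-b_{\nu}=B_{\lambda}-B_{\nu}\le\v-\f=\D$, and the number of boxes in $\lambda/\nu$ is $\D$, so the number of tableaux of this shape is at most $\D!$. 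Hence the inner sum over $\{r_f^\pm,s_f,t_f\}$ contributes a factor of at most $(\D!)^{8}$.

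Finally, for the dimension quotient, I would observe that $d_{\mu_f}\ge d_\nu$ for every $f$ since $\nu\subset\mu_f$ and dimensions of irreducibles are monotone under inclusion of Young diagrams (which follows from the branching rule, as dimensions are sums of $d_{\mu}$ over $\mu\subset\mu_f$ obtained by removing boxes, and iterating); therefore $\dfrac{d_\lambda d_\nu}{d_{\mu_a}d_{\mu_b}d_{\mu_c}d_{\mu_d}}\le\dfrac{d_\lambda d_\nu}{d_\nu^{4}}=\dfrac{d_\lambda}{d_\nu^{3}}$. Combining the three estimates yields
\[
\left|\Xi_{n}^{\overline{B}}\right|\le(\D!)^{8}\sum_{\left(\nu,\{\mu_{f}\},\lambda\right)\vdash\overline{B}}\frac{d_{\lambda}}{d_{\nu}^{3}},
\]
as claimed. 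The only point requiring a little care — and the main thing I would want to double-check against the conventions of \cite{MPasympcover} — is the precise combinatorial identification of the tableau data $\{r_f^\pm,s_f,t_f\}$ with pairs of elements of $\Tab(\lambda/\nu)$, i.e.\ that $r_f^+\sqcup s_f$ and $r_f^-\sqcup t_f$ each range over $\Tab(\lambda/\nu)$ and together determine the tuple, so that the count is exactly $d_{\lambda/\nu}^{8}\le(\D!)^{8}$; everything else is the two routine monotonicity facts (unitarity $\Rightarrow$ $|\M|\le1$, and $\nu\subset\mu_f\Rightarrow d_\nu\le d_{\mu_f}$).
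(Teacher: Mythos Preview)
Your proof is correct and follows essentially the same route as the paper: bound $|\M|\le 1$ by unitarity, count the tableaux by noting that $(r_f^+\sqcup s_f,\,r_f^-\sqcup t_f)\in\Tab(\lambda/\nu)^2$ for each $f$ so that there are at most $(|\lambda/\nu|!)^8=(\D!)^8$ tuples, and then use $d_{\mu_f}\ge d_\nu$ to replace the dimension quotient by $d_\lambda/d_\nu^3$. The combinatorial identification you flagged is exactly what the paper uses in its step $(*)$, and it is sound since knowing $r_f^+\sqcup s_f$ together with the fixed size $|\mu_f|=n-\e_f$ recovers $r_f^+$ and $s_f$ separately.
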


\begin{proof}
Since $\M(\{\sigma_{f}^{\pm},\tau_{f}^{\pm},r_{f}^{\pm}s_{f},t_{f}\})$
is a product of matrix coefficients of unit vectors in unitary representations,
we obtain $|\M(\{\sigma_{f}^{\pm},\tau_{f}^{\pm},r_{f}^{\pm}s_{f},t_{f}\})|\leq1$.
Therefore, with assumptions as in the lemma, and arguing similarly
as in the proof of Lemma \ref{lem:xi-B-bounds-small-B_nu}, we obtain
\begin{align*}
\left|\Xi_{n}^{\overline{B}}\right| & \leq\sum_{\left(\nu,\{\mu_{f}\},\lambda\right)\vdash\overline{B}}\frac{d_{\lambda}d_{\nu}}{d_{\mu_{a}}d_{\mu_{b}}d_{\mu_{c}}d_{\mu_{d}}}\sum_{\begin{gathered}r_{f}^{+},r_{f}^{-}\in\Tab\left(\mu_{f}/\nu\right)\\
s_{f},t_{f}\in\Tab\left(\lambda/\mu_{f}\right)
\end{gathered}
}1\\
 & \stackrel{\left(*\right)}{\le}\left(\D!\right)^{8}\sum_{\left(\nu,\{\mu_{f}\},\lambda\right)\vdash\overline{B}}\frac{d_{\lambda}d_{\nu}}{d_{\mu_{a}}d_{\mu_{b}}d_{\mu_{c}}d_{\mu_{d}}}\\
 & \leq(\D!)^{8}\sum_{\left(\nu,\{\mu_{f}\},\lambda\right)\vdash\overline{B}}\frac{d_{\lambda}}{d_{\nu}^{3}},
\end{align*}
where in $\left(*\right)$ we used the fact there are at most $\left|\lambda/\nu\right|!=\left(\v-\f\right)!$
choices of $r_{f}^{+}\sqcup s_{f}$ and of $r_{f}^{-}\sqcup t_{f}$.
\end{proof}

\subsection{The zero regime of $b_{\nu}$\label{subsec:The-zero-regime}}

We only need analytic estimates for $\Xi_{n}^{\nu=(n-\v)}$ when $Y$
is boundary reduced (so $0$-adapted); when $Y$ is $\varepsilon$-adapted
for $\varepsilon>0$ we will take a different, more algebraic approach
to $\Xi_{n}^{\nu=(n-\v)}$ in $\S\S$\ref{subsec:A-new-expression-for_xi-dnu=00003D0}. 
\begin{lem}
\label{lem:zero-regime-BR}If $Y$ is boundary reduced and $\v\leq n^{1/4}$
then
\[
\left|\Xi_{n}^{\nu=(n-\v)}\right|\ll(\D+1)^{9}\D^{34\D}.
\]
\end{lem}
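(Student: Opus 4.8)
The plan is to deduce this from the per-block estimate of Lemma~\ref{lem:xi-B-bounds-small-B_nu} with $\varepsilon=0$. First I would note that the condition $\nu=(n-\v)$ is exactly $b_\nu=0$, so, using the partition $\Xi_n(Y)=\sum_{\overline B\in\B_n(Y)}\Xi_n^{\overline B}$,
\[
\Xi_n^{\nu=(n-\v)}=\sum_{\substack{\overline B\in\B_n(Y)\\ B_\nu=0}}\Xi_n^{\overline B},
\]
and every such $\overline B$ automatically has $\check B_\nu=n-\v-1$. Since $Y$ is boundary reduced it is $0$-adapted by Lemma~\ref{lem:BR and SBR are 0 and ge 0 adapted}, and the hypothesis $B_\nu=0\le\D$ is satisfied, so Lemma~\ref{lem:xi-B-bounds-small-B_nu} applies with $\varepsilon=0$. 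The crucial gain is that $B_\nu=0$ kills the factor $\bigl(\kappa\D^4/(n-\v-\D^2)^2\bigr)^{B_\nu}$ and makes the whole bound $n$-independent, leaving
\[
\bigl|\Xi_n^{\overline B}\bigr|\ll B_\lambda^{\,10B_\lambda}\,\D^{24B_\lambda}\le\D^{34B_\lambda},
\]
where the last inequality uses $B_\lambda\le\v-\f=\D$.

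Next I would count the tuples $\overline B$ with $B_\nu=0$, organizing the sum by the value of $B_\lambda\in\{0,1,\dots,\D\}$. Given $B_\lambda$, the four quantities $B_{\mu_f}$ each lie in $\{0,\dots,B_\lambda\}$; moreover a Young diagram with $b$ boxes outside its first row has at most $b+1$ rows, so each of $\check B_\lambda$ and the four $\check B_{\mu_f}$ ranges over an interval of length at most $B_\lambda$. Hence there are at most $(B_\lambda+1)^9$ tuples $\overline B$ for each value of $B_\lambda$, and therefore
\[
\bigl|\Xi_n^{\nu=(n-\v)}\bigr|\ll\sum_{B_\lambda=0}^{\D}(B_\lambda+1)^9\,\D^{34B_\lambda}\ll(\D+1)^9\,\D^{34\D}\ll\D^{\,9+34\D},
\]
the middle step bounding the polynomial-times-geometric sum by a constant multiple of its top term (valid once $\D^{34}\ge 2$, the finitely many remaining small values of $\D$ being absorbed into the implied constant) and the last step using $\D+1\le 2\D$.

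The main obstacle is essentially absent from this lemma itself: the analytic heavy lifting — effectivizing the matrix-coefficient estimates of \cite{MPasympcover} — has already been packaged into Lemma~\ref{lem:xi-B-bounds-small-B_nu} (via Proposition~\ref{prop:effective-Liebeck-Shalev} and Lemma~\ref{lem:product-of-matrix-coefs-bounded-using-D}). What is left is bookkeeping, and the only two points needing any care are: (i) recognizing that $\nu=(n-\v)\iff b_\nu=0$, which is what makes the denominator $(n-\v-\D^2)$ in Lemma~\ref{lem:xi-B-bounds-small-B_nu} drop out entirely and yields a bound uniform in $n$; and (ii) keeping the number of relevant $\overline B$ down to $O(\D^9)$ rather than the crude $O(\D^{10})$, which is exactly what summing the geometric series in $B_\lambda$ buys over bounding each summand by $\D^{34\D}$.
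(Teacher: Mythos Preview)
Your proof is correct and follows essentially the same route as the paper's: both recognize that $\nu=(n-\v)$ forces $B_\nu=0$, invoke Lemma~\ref{lem:xi-B-bounds-small-B_nu} with $\varepsilon=0$ so that the $n$-dependent factor disappears, bound $B_\lambda\le\D$, count the remaining parameters in $\overline{B}$ by at most $\D^9$ (you use the equivalent $(B_\lambda+1)^9$), and sum the resulting geometric series in $B_\lambda$. The only cosmetic differences are your slightly more explicit handling of the small-$\D$ edge cases and your phrasing of the counting of the $\check B$ parameters via the number of rows.
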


\begin{proof}
If $\nu=(n-\v)$ then $B_{\nu}=0$. Inserting the bounds from Lemma
\ref{lem:xi-B-bounds-small-B_nu} with $\varepsilon=0$ (since $Y$
is boundary reduced, see Lemma \ref{lem:BR and SBR are 0 and ge 0 adapted})
and $B_{\nu}=0$ gives 
\begin{align*}
\left|\Xi_{n}^{\nu=(n-\v)}\right|\ll\sum_{\overline{B}\in\B_{n}\left(Y\right)\colon B_{\nu}=0}B_{\lambda}^{~10B_{\lambda}}\D^{24B_{\lambda}}.
\end{align*}
Because $\overline{B}\in\B_{n}\left(Y\right)$, there exist some $\left(\nu,\left\{ \mu_{f}\right\} ,\lambda\right)\vdash\overline{B}$
and satisfying (\ref{eq:nu-mu-lambda-setup}). We then have since
$\nu\subset_{\v-\f}\lambda$, and $b_{\nu}=0$, $B_{\lambda}=b_{\lambda}\leq b_{\nu}+\v-\f=\v-\f=\D$.
In $\B_{n}\left(Y\right)$, the set of $\overline{B}'s$ with $B_{\nu}=0$
and a fixed value of $B_{\lambda}$ is of size at most $(\D+1)^{9}$.
Indeed, there are at most $B_{\lambda}+1\le\D+1$ options for $B_{\mu_{f}}$
for each $f$. Since $n-\v-1=\check{B}_{\nu}\leq\check{B}_{\mu_{f}}\leq\check{B}_{\lambda}\leq n-\f-1$,
there are at most $\v-\f+1=\D+1$ possible values of each of $\check{B}_{\mu_{f}}$
and $\check{B}_{\lambda}$. In total then there are at most $(\D+1)^{9}$
choices. Hence 
\begin{align*}
\left|\Xi_{n}^{\nu=(n-\v)}\right| & \ll(\D+1)^{9}\sum_{B_{\lambda}=0}^{\D}\left(B_{\lambda}^{~10}\D^{24}\right)^{B_{\lambda}}\le(\D+1)^{9}\sum_{B_{\lambda}=0}^{\D}(\D^{34})^{B_{\lambda}}\ll(\D+1)^{9}\D^{34\D}.
\end{align*}
\end{proof}

\subsection{The intermediate regime of $b_{\nu}$\label{subsec:The-intermediate-regime}}
\begin{lem}
\label{lem:intermediate-regime}Assume that $\v\le n^{1/4}.$
\begin{enumerate}
\item \label{enu:intermediate-BR}If $Y$ is boundary reduced with $\D\leq n^{1/10}$
then 
\begin{align}
\left|\Xi_{n}^{0<b_{\nu}\leq\D;\check{b}_{\nu}>0}\right|\ll\frac{\left(\D^{34}2^{10}\right)^{\D+1}}{\left(n-\v-\D^{2}\right)^{2}}.\label{eq:intermediate-br}
\end{align}
\item \label{enu:intermediate_region_dnu-e>0}For any $\varepsilon\in(0,1)$,
there is $\eta=\eta(\varepsilon)\in(0,\frac{1}{100})$ such that if
$Y$ is $\varepsilon$-adapted, with $\D\leq n^{\eta}$ then 
\begin{align}
\left|\Xi_{n}^{0<b_{\nu}\leq\D;\check{b}_{\nu}>0}\right|\ll_{\varepsilon}\frac{1}{n}.\label{eq:intermediate-epsilon>0}
\end{align}
\end{enumerate}
\end{lem}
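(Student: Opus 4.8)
The two parts of Lemma~\ref{lem:intermediate-regime} both estimate the intermediate-regime contribution $\Xi_{n}^{0<b_{\nu}\leq\D;\check{b}_{\nu}>0}$, which by definition is the sum of $\Xi_{n}^{\overline{B}}$ over all $\overline{B}\in\B_{n}(Y)$ with $0<B_{\nu}\leq\D$ and $\check{B}_{\nu}>0$. Since $B_{\nu}\leq\D$ in this whole regime, the bound of Lemma~\ref{lem:xi-B-bounds-small-B_nu} applies to each summand, with $\varepsilon=0$ for part~\ref{enu:intermediate-BR} (as $Y$ is boundary reduced, hence $0$-adapted by Lemma~\ref{lem:BR and SBR are 0 and ge 0 adapted}) and with the given $\varepsilon>0$ for part~\ref{enu:intermediate_region_dnu-e>0}. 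So the strategy in both cases is: (i) plug in Lemma~\ref{lem:xi-B-bounds-small-B_nu}; (ii) bound the number of tuples $\overline{B}$ with a fixed value of $(B_{\lambda},B_{\nu})$ by a power of $\D$, exactly as in the proof of Lemma~\ref{lem:zero-regime-BR} (at most $\D^{9}$ such tuples, accounting for $\check{B}_{\lambda}$ and the $\check{B}_{\mu_{f}}, B_{\mu_{f}}$); (iii) sum the resulting geometric-type series over $B_{\lambda}\in[B_{\nu},\D]$ and $B_{\nu}\in[1,\D]$.

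\textbf{Part (1): the boundary-reduced case.} With $\varepsilon=0$, Lemma~\ref{lem:xi-B-bounds-small-B_nu} gives
\[
\left|\Xi_{n}^{\overline{B}}\right|\ll B_{\lambda}^{10B_{\lambda}}\D^{24(B_{\lambda}-B_{\nu})}\left(\frac{\kappa\D^{4}}{(n-\v-\D^{2})^{2}}\right)^{B_{\nu}}.
\]
Since $B_{\nu}\geq1$ here, every summand carries at least one factor of $\dfrac{\kappa\D^{4}}{(n-\v-\D^{2})^{2}}$; using $\D\leq n^{1/10}$ and $\v\leq n^{1/4}$ this factor is $\ll \D^{4}/(n-\v-\D^{2})^{2}$ and is much less than $1$ for large $n$, so the sum over $B_{\nu}\in[1,\D]$ is dominated by $B_{\nu}=1$ and contributes at most a constant multiple of $\dfrac{\D^{4}}{(n-\v-\D^{2})^{2}}$ times the $B_{\nu}=1$ slice. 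For that slice we sum $B_{\lambda}^{10B_{\lambda}}\D^{24(B_{\lambda}-1)}\leq\D^{34B_{\lambda}}$ over $B_{\lambda}\leq\D$, getting $\ll\D^{34\D}$; multiplying by the $\D^{9}$ overcount of $\overline{B}$'s and absorbing constants (and the extra $2^{10}$ slack for the geometric summation) yields
\[
\left|\Xi_{n}^{0<b_{\nu}\leq\D;\check{b}_{\nu}>0}\right|\ll\frac{(\D^{34}2^{10})^{\D+1}}{(n-\v-\D^{2})^{2}},
\]
which is \eqref{eq:intermediate-br}. One should be slightly careful to bundle the $B_{\lambda}$-sum and the residual $B_{\nu}\geq2$ tail together into the stated $(\D^{34}2^{10})^{\D+1}$ form rather than tracking them separately.

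\textbf{Part (2): the $\varepsilon$-adapted case.} Now Lemma~\ref{lem:xi-B-bounds-small-B_nu} contributes an extra factor $(\D^{24}n^{-\varepsilon})^{B_{\lambda}-B_{\nu}}$, so the key point is that the remaining factor $B_{\lambda}^{10B_{\lambda}}$, which would otherwise be superpolynomial in $n$ when $B_{\lambda}$ is as large as $\D$, is now beaten by $n^{-\varepsilon(B_{\lambda}-B_{\nu})}$ provided $\D$ is a small enough power of $n$ — here one chooses $\eta=\eta(\varepsilon)$ so that $\D\leq n^{\eta}$ forces $\D^{10\D}\cdot\D^{24\D}\leq n^{\varepsilon\D/2}$, i.e. $\eta$ just needs $(34+o(1))\eta\log\D\ll\varepsilon$, which holds for $\eta$ a small multiple of $\varepsilon$. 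Then $B_{\lambda}^{10B_{\lambda}}(\D^{24}n^{-\varepsilon})^{B_{\lambda}-B_{\nu}}\leq n^{-\varepsilon(B_{\lambda}-B_{\nu})/2}$, and one is left with $\sum_{B_{\nu}=1}^{\D}\sum_{B_{\lambda}\geq B_{\nu}} n^{-\varepsilon(B_{\lambda}-B_{\nu})/2}\big(\tfrac{\kappa\D^{4}}{(n-\v-\D^{2})^{2}}\big)^{B_{\nu}}$, which (again dominated by $B_{\lambda}=B_{\nu}$, $B_{\nu}=1$, and with the $\D^{9}$ overcount) is $\ll_{\varepsilon}\D^{9+O(1)}\cdot\tfrac{\D^{4}}{(n-\v-\D^{2})^{2}}\ll_{\varepsilon}n^{-1}$, using $\D\leq n^{\eta}$ with $\eta<\tfrac{1}{100}$ to make the $\D$-powers negligible against $n$. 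This gives \eqref{eq:intermediate-epsilon>0}.

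\textbf{Main obstacle.} The only real subtlety is bookkeeping: making sure the crude factor $B_{\lambda}^{10B_{\lambda}}$ (and the $\D^{9}$ count of tuples $\overline{B}$) is genuinely dominated, in part~(2), by the gain $n^{-\varepsilon(B_{\lambda}-B_{\nu})}$ uniformly over the whole range $B_{\nu}\leq B_{\lambda}\leq\D$ — this is what pins down the admissible range $\D\leq n^{\eta(\varepsilon)}$, and one must check the worst case $B_{\lambda}=\D$, $B_{\nu}=1$. Everything else is a routine geometric summation, so no genuinely new idea beyond Lemma~\ref{lem:xi-B-bounds-small-B_nu} is needed.
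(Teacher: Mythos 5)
Your overall route is exactly the paper's: apply Lemma \ref{lem:xi-B-bounds-small-B_nu} to each $\Xi_{n}^{\overline{B}}$ in the regime $0<B_{\nu}\leq\D$, count the tuples $\overline{B}$ with fixed $(B_{\nu},B_{\lambda})$ by a power of $\D$, and perform a double geometric summation. Part (1) is fine modulo the constant-chasing you yourself flag (the $2^{10}$ slack absorbs the discrepancy between $B_{\lambda}^{10B_{\lambda}}$ and $\D^{10B_{\lambda}}$ for $B_{\lambda}\le B_{\nu}+\D\le 2\D$).

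In part (2), however, the displayed pointwise inequality
\[
B_{\lambda}^{10B_{\lambda}}\left(\D^{24}n^{-\varepsilon}\right)^{B_{\lambda}-B_{\nu}}\leq n^{-\varepsilon(B_{\lambda}-B_{\nu})/2}
\]
is false on and near the diagonal: for $B_{\lambda}=B_{\nu}=k\ge2$ the left side is $k^{10k}\ge2^{20}$ while the right side equals $1$, and more generally whenever $B_{\lambda}-B_{\nu}$ is small compared to $B_{\lambda}$ there is no $n^{-\varepsilon}$ gain available to beat $B_{\lambda}^{10B_{\lambda}}$. Relatedly, the ``worst case'' you single out, $(B_{\lambda},B_{\nu})=(\D,1)$, is actually the easy corner (there the gain $n^{-\varepsilon(\D-1)}$ crushes $\D^{34\D}$ once $\D^{34}\le n^{\varepsilon/2}$); the delicate corner is $B_{\lambda}=B_{\nu}=\D$, where the factor $B_{\lambda}^{10B_{\lambda}}=\D^{10\D}$ must instead be absorbed into $\bigl(\kappa\D^{4}/(n-\v-\D^{2})^{2}\bigr)^{B_{\nu}}$. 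The paper does this by splitting $B_{\lambda}^{10B_{\lambda}}=B_{\lambda}^{10B_{\nu}}\cdot B_{\lambda}^{10(B_{\lambda}-B_{\nu})}$, attaching $B_{\lambda}^{10}$ to each of the two geometric ratios, and checking that $\kappa B_{\lambda}^{10}\D^{4}/(n-\v-\D^{2})^{2}\le\frac{1}{2}$ (using $\D\le n^{1/10}$, $\v\le n^{1/4}$) and $\D^{24}B_{\lambda}^{10}n^{-\varepsilon}\le\D^{34}2^{10}n^{-\varepsilon}\le\frac{1}{2}$ (using $\D\le n^{\eta}$ with $\eta=\varepsilon/100$). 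With that redistribution your summation goes through and yields the stated $\ll_{\varepsilon}1/n$; as written, though, the step fails and your subsequent double sum omits an unbounded factor on the diagonal terms.
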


\begin{proof}
When $\D=0$, the inequality $0<b_{\nu}\le\D$ cannot hold, and so
$\Xi_{n}^{0<b_{\nu}\leq\D;\check{b}_{\nu}>0}=0$ by definition, and
both statements hold. So assume $\D\geq1$. We can also assume that
$\D\le n^{1/10}$. 

For any $\varepsilon\ge0$, the bounds from Lemma \ref{lem:xi-B-bounds-small-B_nu}
give 
\begin{align*}
\left|\Xi_{n}^{0<b_{\nu}\leq\D;\check{b}_{\nu}>0}\right|\ll\sum_{\substack{\overline{B}\in{\cal B}_{n}\left(Y\right)\colon\\
0<B_{\nu}\leq\D;\check{B}_{\nu}>0
}
}B_{\lambda}^{~10B_{\lambda}}\left(\D^{24}n^{-\varepsilon}\right)^{B_{\lambda}-B_{\nu}}\left(\frac{\kappa\D^{4}}{\left(n-\v-\D^{2}\right)^{2}}\right)^{B_{\nu}}.
\end{align*}
Arguing similarly as in the proof of Lemma \ref{lem:zero-regime-BR},
the number of $\overline{B}$'s in the sum above with a fixed value
of $B_{\nu}$ and $B_{\lambda}$ is $\ll\D^{10}$. Also note that
$B_{\lambda}\le B_{\nu}+\D\le2\D$. We obtain 
\begin{align*}
\left|\Xi_{n}^{0<b_{\nu}\leq\D;\check{b}_{\nu}>0}\right| & \ll\D^{10}\sum_{\substack{0<B_{\nu}\leq\D\\
B_{\nu}\le B_{\lambda}\leq B_{_{\nu}}+\D
}
}B_{\lambda}^{~10B_{\lambda}}\left(\D^{24}n^{-\varepsilon}\right)^{B_{\lambda}-B_{\nu}}\left(\frac{\kappa\D^{4}}{\left(n-\v-\D^{2}\right)^{2}}\right)^{B_{\nu}}\\
 & \leq\D^{10}\sum_{B_{\nu}=1}^{\D}\left(\frac{\kappa(2\D)^{10}\D^{4}}{\left(n-\v-\D^{2}\right)^{2}}\right)^{B_{\nu}}\sum_{B_{\lambda}=B_{\nu}}^{B_{\nu}+\D}\left(\D^{24}B_{\lambda}^{~10}n^{-\varepsilon}\right)^{B_{\lambda}-B_{\nu}}.
\end{align*}
As $B_{\lambda}\le2\D$, we bound the second summation by $\sum_{t=0}^{\D}\left(\D^{34}2^{10}n^{-\varepsilon}\right)^{t}$.
By our assumption that $\D\le n^{1/10}$ and $\v\le n^{1/4}$, we
have $\frac{\kappa(2\D)^{10}\D^{4}}{\left(n-\v-\D^{2}\right)^{2}}\le\frac{1}{2}$
for large enough $n$. Hence
\begin{eqnarray}
\left|\Xi_{n}^{0<b_{\nu}\leq\D;\check{b}_{\nu}>0}\right| & \ll & \frac{\D^{10}\cdot\kappa(2\D)^{10}\D^{4}}{\left(n-\v-\D^{2}\right)^{2}}\sum_{t=0}^{\D}\left(\D^{34}2^{10}n^{-\varepsilon}\right)^{t}\ll\frac{\D^{24}}{\left(n-\v-\D^{2}\right)^{2}}\sum_{t=0}^{\D}\left(\D^{34}2^{10}n^{-\varepsilon}\right)^{t}.~~~~~~~~~~\phantom{}\label{eq:intermediate}
\end{eqnarray}
If $Y$ is boundary reduced, it is $0$-adapted (Lemma \ref{lem:BR and SBR are 0 and ge 0 adapted}),
so (\ref{eq:intermediate}) yields
\begin{eqnarray*}
\left|\Xi_{n}^{0<b_{\nu}\leq\D;\check{b}_{\nu}>0}\right| & \ll & \frac{\D^{24}}{\left(n-\v-\D^{2}\right)^{2}}\cdot\left(\D^{34}2^{10}\right)^{\D}\le\frac{\left(\D^{34}2^{10}\right)^{\D+1}}{\left(n-\v-\D^{2}\right)^{2}}
\end{eqnarray*}
proving the first statement. 

For the second statement, given $\varepsilon>0$, let $\eta=\frac{\varepsilon}{100}$
and assume $1\leq\D\leq n^{\eta}$. The choice of $\eta$ implies
that for $n\gg_{\varepsilon}1$, $\D^{34}2^{10}n^{-\varepsilon}\le\frac{1}{2}$,
so (\ref{eq:intermediate}) gives
\begin{eqnarray*}
\left|\Xi_{n}^{0<b_{\nu}\leq\D;\check{b}_{\nu}>0}\right| & \ll_{\varepsilon} & \frac{\D^{24}}{\left(n-\v-\D^{2}\right)^{2}}\ll_{\varepsilon}\frac{1}{n}.
\end{eqnarray*}
\vspace{-20bp}
\end{proof}

\subsection{The large regime of $b_{\nu},\check{b}_{\nu}$\label{subsec:The-large-regime}}

In the large regime of $b_{\nu}$ and $\check{b}_{\nu}$ we use the
same estimate for any type of tiled surface.
\begin{lem}
\label{lem:large-regime}If $\v\leq n^{1/4}$ and $\D\leq n^{1/24}$
then 
\[
\left|\Xi_{n}^{b_{\nu},\check{b}_{\nu}>\D}\right|\ll\frac{(\D+1)^{4}}{\left(n-\v-\D^{2}\right)^{2}}.
\]
\end{lem}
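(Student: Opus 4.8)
The strategy is to start from the decomposition
$\Xi_{n}^{b_{\nu},\check{b}_{\nu}>\D}=\sum_{\overline{B}\in\B_{n}(Y)\colon B_{\nu},\check{B}_{\nu}>\D}\Xi_{n}^{\overline{B}}$
and bound each term using Lemma \ref{lem:Xi-B-bounds-generic}, which requires no adaptedness hypothesis and gives
$\left|\Xi_{n}^{\overline{B}}\right|\leq(\D!)^{8}\sum_{(\nu,\{\mu_{f}\},\lambda)\vdash\overline{B}}d_{\lambda}/d_{\nu}^{3}$.
First I would estimate the ratio $d_{\lambda}/d_{\nu}^{3}$ using Lemma \ref{lem:dim-ratio-bound}: since $\nu\subset_{\v-\f}\lambda$ and $b_{\lambda}-b_{\nu}=b_{\lambda/\nu}\leq\v-\f=\D$, and $m=n-\v\geq 2b_{\lambda}$ holds for $n$ large (as $b_\lambda\le\D\le\v\le n^{1/4}$), we get $d_{\lambda}/d_{\nu}\ll b_\lambda^{b_\lambda}n^{b_\lambda}/(n-\v)^{b_\nu}$; combined with a lower bound for $d_\nu^{2}$ this leaves $d_{\lambda}/d_{\nu}^{3}\ll b_{\lambda}^{b_{\lambda}}n^{b_{\lambda}-2b_{\nu}}(\text{const})$, which is genuinely small precisely because $b_{\nu}$ is large.

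The main work is to get a lower bound on $d_{\nu}$ that beats the factor $(\D!)^{8}$ and the number of tuples. The key point: since both $b_{\nu}>\D$ and $\check{b}_{\nu}>\D$, we have $\nu\in\Lambda(n-\v,\D+1)$ in the notation of $\S\S$\ref{subsec:Effective-bounds-for-the-zeta-function}. So I would sum over $\nu$ first: bounding $\sum_{\nu\in\Lambda(n-\v,\D+1)}1/d_{\nu}^{2}$ directly by Proposition \ref{prop:effective-Liebeck-Shalev} with $s=2$ and $b=\D+1$ (valid since $(\D+1)^{2}\leq\frac{n-\v}{3}$ when $\D\leq n^{1/24}$), yielding $\ll(\kappa(\D+1)^{4}/(n-\v-(\D+1)^{2})^{2})^{\D+1}$. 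For the $d_\lambda/d_\nu$ factor and the tableaux count: there are at most $\D^{O(\D)}$ choices of $\overline B$ with fixed $B_\lambda$, at most $B_\lambda^{B_\lambda}\le\D^\D$ choices of $\lambda\supset\nu$ of the right size, and at most $(\D!)^8\le\D^{O(\D)}$ from the tableaux; meanwhile $d_\lambda/d_\nu\le b_\nu^{b_\nu}n^{b_\lambda}/(n-\v-b_\nu)^{b_\nu}$. Here the danger is the factor $n^{b_\lambda}$ with $b_\lambda$ possibly as large as $\D+b_\nu$, i.e.\ unbounded. This is the main obstacle and must be absorbed: I would pair $n^{b_\lambda}=n^{b_\nu}n^{b_{\lambda/\nu}}$ with $1/d_\nu^{2}$, using instead the sharper consequence of Proposition \ref{prop:effective-Liebeck-Shalev} that $\sum_{\nu\in\Lambda(n-\v,b)}1/d_\nu^{2}\ll(\kappa b^4/(n-b^2)^2)^{b}$ applied with the true value $b=b_\nu$ of each $\nu$ — more precisely, split $\sum_{b_\nu>\D}$ into a sum over $b=b_\nu$ and observe $n^{b_\nu}/d_\nu^2\ll(\kappa'/n)^{b_\nu}$ for each such $\nu$, which decays geometrically and kills all the polynomial-in-$\D$ prefactors, while $n^{b_{\lambda/\nu}}\le n^{\D}$ is controlled by $(\D!)^8\cdot\D^{O(\D)}=n^{o(\D)}\ll$ anything, using $\D\le n^{1/24}$.

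Assembling: the geometric sum over $b_\nu\in(\D,\infty)$ of $(\kappa'/n)^{b_\nu}$ times the polynomially-bounded combinatorial overhead contributes a factor $\ll(\kappa'(\D+1)^{?}/n)^{\D+1}$, and crucially the dominant contribution comes from $b_\nu=\D+1$ (the smallest allowed value), giving the stated $\ll(\D+1)^{4}/(n-\v-\D^{2})^{2}$ after noting the exponent $\D+1\ge1$ means the whole bound is at most its $(\D+1)$-st-power base times $1$, i.e.\ $\le\kappa'(\D+1)^4/(n-\v-\D^2)^2$ once one checks the base is $\le1$ (which holds for $n$ large since $\D\le n^{1/24}$). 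The one subtlety to handle carefully is the transition from a bound of the form $C^{\D+1}$ with $C\ll(\D+1)^4/n$ to the claimed linear-in-$(\D+1)^4$ bound: this is exactly the elementary fact that if $0\le C\le 1$ then $C^{k}\le C$ for $k\ge1$, so $\sum_{b>\D}C^{b}\ll C^{\D+1}\le C\ll(\D+1)^4/(n-\v-\D^2)^2$.
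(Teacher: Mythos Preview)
There is a genuine gap in your approach. The core problem is your treatment of $d_{\lambda}/d_{\nu}$ via Lemma~\ref{lem:dim-ratio-bound}. You write ``$m=n-\v\geq 2b_{\lambda}$ holds for $n$ large (as $b_{\lambda}\le\D\le\v\le n^{1/4}$)'', but this is false in the large regime: here $b_{\nu}>\D$, and since $b_{\lambda}=b_{\nu}+b_{\lambda/\nu}\ge b_{\nu}$, the quantity $b_{\lambda}$ is \emph{not} bounded by $\D$ and can be as large as $n-\f$. So Lemma~\ref{lem:dim-ratio-bound} does not even apply to most terms. You seem to notice this later (``$b_{\lambda}$ possibly as large as $\D+b_{\nu}$, i.e.\ unbounded''), but your proposed fix fails too: the claim ``$n^{b_{\nu}}/d_{\nu}^{2}\ll(\kappa'/n)^{b_{\nu}}$ for each such $\nu$'' is not true pointwise (e.g.\ for square-ish $\nu$ with $b_{\nu}\sim n/2$ one only has $d_{\nu}\le c_{0}^{n}$, so $n^{b_{\nu}}/d_{\nu}^{2}$ blows up), and even summing over $\nu$ with fixed $b_{\nu}=b$ and applying Proposition~\ref{prop:effective-Liebeck-Shalev} gives at best $n^{b}\cdot(\kappa b^{4}/n^{2})^{b}=(\kappa b^{4}/n)^{b}$, which is \emph{not} summable once $b$ exceeds roughly $n^{1/4}$.

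The missing idea is to avoid any pointwise estimate of $d_{\lambda}/d_{\nu}$ and instead sum over $\lambda$ first using Lemma~\ref{lem:induced-rep-dimension} (Frobenius reciprocity): for fixed $\nu$,
\[
\sum_{\nu\subset_{\v-\f}\lambda}d_{\lambda}\ \le\ \sum_{\nu\subset_{\v-\f}\lambda}d_{\lambda/\nu}d_{\lambda}\ =\ \frac{(n-\f)!}{(n-\v)!}\,d_{\nu}.
\]
This absorbs one factor of $d_{\nu}$, turning $\sum_{\nu}d_{\nu}^{-3}\sum_{\lambda}d_{\lambda}$ into $\frac{(n-\f)!}{(n-\v)!}\sum_{\nu}d_{\nu}^{-2}\le n^{\D}\sum_{\nu\in\Lambda(n-\v,\D+1)}d_{\nu}^{-2}$. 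Now Proposition~\ref{prop:effective-Liebeck-Shalev} with $b=\D+1$ applies directly, and the single factor $n^{\D}$ (together with $(\D!)^{12}$ from the $\mu_{f}$ and tableaux counts) is killed by the extra exponent in $\bigl(\kappa(\D+1)^{4}/(n-\v-(\D+1)^{2})^{2}\bigr)^{\D+1}$ precisely because $\D\le n^{1/24}$ forces $\kappa n\D^{12}(\D+1)^{4}\ll n^{2}$.
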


\begin{proof}
Using the bound from Lemma \ref{lem:Xi-B-bounds-generic} gives
\begin{align*}
\left|\Xi_{n}^{b_{\nu},\check{b}_{\nu}>\D}\right| & \leq\sum_{\overline{B}\in\B_{n}\left(Y\right)\colon~B_{\nu},\check{B}_{\nu}>\D}(\D!)^{8}\sum_{\left(\nu,\{\mu_{f}\},\lambda\right)\vdash\overline{B}}\frac{d_{\lambda}}{d_{\nu}^{3}}\\
 & \leq\left(\D!\right)^{8}\sum_{\nu\vdash n-\v,b_{\nu}>\D,\check{b}_{\nu}>\D}d_{\nu}^{-3}\sum_{\nu\subset_{\v-\f}\lambda}d_{\lambda}\sum_{\nu\subset\mu_{f}\subset_{\e_{f}-\f}\lambda}1\\
 & \leq\left(\D!\right)^{12}\sum_{\nu\vdash n-\v,b_{\nu}>\D,\check{b}_{\nu}>\D}d_{\nu}^{-3}\sum_{\nu\subset_{\v-\f}\lambda}d_{\lambda}~\le~\D^{12\D}\frac{(n-\f)!}{(n-\v)!}\sum_{\nu\vdash n-\v,b_{\nu}>\D,\check{b}_{\nu}>\D}d_{\nu}^{-2}\\
 & \ll\D^{12\D}n^{\D}\left(\frac{\kappa\left(\D+1\right)^{4}}{\left(n-\v-\left(\D+1\right)^{2}\right)^{2}}\right)^{\D+1}=\left(\frac{\kappa n\D^{12}\left(\D+1\right)^{4}}{\left(n-\v-\left(\D+1\right)^{2}\right)^{2}}\right)^{\D}\frac{\kappa\left(\D+1\right)^{4}}{\left(n-\v-\left(\D+1\right)^{2}\right)^{2}}.
\end{align*}
The second-last inequality used Lemma \ref{lem:induced-rep-dimension}
and the final inequality used Proposition \ref{prop:effective-Liebeck-Shalev}.
Since we assume $\D\leq n^{1/24}$ and $\v\le n^{1/4}$ we obtain
the stated result.
\end{proof}

\subsection{Assembly of analytic estimates for $\Xi_{n}$}

Now we combine the estimates obtained in $\S\S$\ref{subsec:The-zero-regime},
\ref{subsec:The-intermediate-regime}, \ref{subsec:The-large-regime}.
First we give the culmination of our previous estimates when $Y$
is boundary reduced.
\begin{prop}
\label{prop:Xi-bound-final-BR}There is $A_{0}>0$ such that if $Y$
is boundary reduced, $\v\leq n^{1/4}$, and $\D\leq n^{1/24}$, then
\[
|\Xi_{n}|\ll(A_{0}\D)^{A_{0}\D}.
\]
\end{prop}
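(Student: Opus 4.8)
The plan is to assemble Proposition~\ref{prop:Xi-bound-final-BR} from the splitting \eqref{eq:Xin-splitting},
\[
\Xi_{n}=2\Xi_{n}^{\nu=(n-\v)}+2\Xi_{n}^{0<b_{\nu}\leq\D;\check{b}_{\nu}>0}+\Xi_{n}^{b_{\nu},\check{b}_{\nu}>\D},
\]
and bound each of the three terms using the estimates already proved for a boundary reduced $Y$ (equivalently, a $0$-adapted $Y$ by Lemma~\ref{lem:BR and SBR are 0 and ge 0 adapted}).

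First I would record the hypotheses $\v\le n^{1/4}$ and $\D\le n^{1/24}$, and note that these imply all the side conditions needed below (in particular $\D\le n^{1/10}$ for Lemma~\ref{lem:intermediate-regime}\eqref{enu:intermediate-BR}, and that the denominators $n-\v-\D^{2}$ are $\asymp n$ for large $n$). Then:
(i) by Lemma~\ref{lem:zero-regime-BR}, $\bigl|\Xi_{n}^{\nu=(n-\v)}\bigr|\ll\D^{9+34\D}$;
(ii) by Lemma~\ref{lem:intermediate-regime}\eqref{enu:intermediate-BR}, $\bigl|\Xi_{n}^{0<b_{\nu}\leq\D;\check{b}_{\nu}>0}\bigr|\ll\dfrac{(\D^{34}2^{10})^{\D+1}}{(n-\v-\D^{2})^{2}}\ll(\D^{34}2^{10})^{\D+1}$, since the denominator is $\gg1$;
(iii) by Lemma~\ref{lem:large-regime}, $\bigl|\Xi_{n}^{b_{\nu},\check{b}_{\nu}>\D}\bigr|\ll\dfrac{(\D+1)^{4}}{(n-\v-\D^{2})^{2}}\ll(\D+1)^{4}$.
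The dominant term among these is (i)--(ii): both are of the form $C^{\D}\D^{O(\D)}$, i.e.\ bounded by $(A_{0}\D)^{A_{0}\D}$ once $A_{0}$ is chosen large enough (for instance $A_{0}=100$ absorbs the $34$, the $2^{10}=(2^{10})^{1}$ which is $\le \D^{O(1)\cdot\D}$ for $\D\ge1$ and is a bounded constant for $\D=0$, and the additive constants in the exponents). Term (iii) is polynomial in $\D$ and is trivially dominated. Adding the three contributions and the factors of $2$ changes nothing, so $|\Xi_{n}|\ll(A_{0}\D)^{A_{0}\D}$.

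The only genuinely delicate point is bookkeeping the edge case $\D=0$: then $\Xi_{n}^{0<b_{\nu}\le\D;\check b_{\nu}>0}=0$ by definition, $\Xi_{n}^{\nu=(n-\v)}\ll\D^{9}=1$ (reading $\D^{34\D}=1$), and $(A_{0}\D)^{A_{0}\D}=1$ under the convention $0^{0}=1$, so the bound still holds; I would state this convention explicitly. I do not expect any real obstacle here — the proposition is purely a matter of combining the three regime estimates and choosing the constant $A_{0}=A_{0}(g)$ large enough to dominate all the exponential-in-$\D$ and polynomial-in-$\D$ factors simultaneously. The one thing to be careful about is that $A_{0}$ must be allowed to depend on the genus $g$ (it does, through the constants hidden in Lemmas~\ref{lem:zero-regime-BR}, \ref{lem:intermediate-regime}, \ref{lem:large-regime}), which matches the convention in $\S$\ref{subsec:Notation} that unsubscripted implied constants may depend on $g$.
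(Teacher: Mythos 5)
Your proposal is correct and follows essentially the same route as the paper: split $\Xi_{n}$ via (\ref{eq:Xin-splitting}), invoke Lemma \ref{lem:zero-regime-BR}, Lemma \ref{lem:intermediate-regime}(\ref{enu:intermediate-BR}), and Lemma \ref{lem:large-regime}, then absorb all factors into $(A_{0}\D)^{A_{0}\D}$ for $A_{0}$ large, treating $\D=0$ separately. The paper's proof is exactly this assembly, including the explicit $\D=0$ case.
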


\begin{proof}
With assumptions as in the proposition, splitting $\Xi_{n}$ as in
(\ref{eq:Xin-splitting}) and using Lemmas \ref{lem:zero-regime-BR},
\ref{lem:intermediate-regime}(\ref{enu:intermediate-BR}), and \ref{lem:large-regime}
gives
\[
\left|\Xi_{n}\right|\ll(\D+1)^{9}\D^{34\D}+\frac{\left(\D^{34}2^{10}\right)^{\D+1}}{\left(n-\v-\D^{2}\right)^{2}}+\frac{(\D+1)^{4}}{\left(n-\v-\D^{2}\right)^{2}}.
\]
If $\D=0$ this gives $|\Xi_{n}|\ll1$ which proves the result. If
$1\leq\D\leq n^{1/24}$ we obtain $|\Xi_{n}|\ll(A_{0}\D)^{A_{0}\D}$
as required.
\end{proof}
Next we show that if $Y$ is $\varepsilon$-adapted, then $\D$ can
be as large as a fractional power of $n$ while $\Xi_{n}$ is still
very well approximated by $2\Xi_{n}^{\nu=(n-\v)}$.
\begin{prop}
\label{prop:Xi-bound-final-e-adapted}For any $\varepsilon\in(0,1)$,
there is $\eta=\eta(\varepsilon)\in(0,\frac{1}{100})$ such that if
$Y$ is $\varepsilon$-adapted with $\D\leq n^{\eta}$ and $\v\leq n^{1/4}$,
then
\[
\left|\Xi_{n}-2\Xi_{n}^{\nu=(n-\v)}\right|\ll_{\varepsilon}\frac{1}{n}.
\]
\end{prop}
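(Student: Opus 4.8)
The plan is to combine the decomposition \eqref{eq:Xin-splitting} with the three regime estimates already established. Recall that for a compact tiled surface $Y$ with $\v\leq n^{1/4}$ and $n\gg1$,
\[
\Xi_{n}=2\Xi_{n}^{\nu=(n-\v)}+2\Xi_{n}^{0<b_{\nu}\leq\D;\check{b}_{\nu}>0}+\Xi_{n}^{b_{\nu},\check{b}_{\nu}>\D},
\]
so that
\[
\left|\Xi_{n}-2\Xi_{n}^{\nu=(n-\v)}\right|\leq2\left|\Xi_{n}^{0<b_{\nu}\leq\D;\check{b}_{\nu}>0}\right|+\left|\Xi_{n}^{b_{\nu},\check{b}_{\nu}>\D}\right|.
\]
It therefore suffices to bound each of the two terms on the right by $O_\varepsilon(1/n)$.

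Given $\varepsilon\in(0,1)$, take $\eta=\eta(\varepsilon)\in(0,\tfrac{1}{100})$ to be the constant furnished by Lemma \ref{lem:intermediate-regime}(\ref{enu:intermediate_region_dnu-e>0}), shrinking it if necessary so that also $\eta\leq\tfrac{1}{24}$; this is harmless since we only need $\eta$ small. Now assume $Y$ is $\varepsilon$-adapted with $\D\leq n^{\eta}$ and $\v\leq n^{1/4}$. For the intermediate regime, since $\D\leq n^{\eta}$ and $Y$ is $\varepsilon$-adapted, Lemma \ref{lem:intermediate-regime}(\ref{enu:intermediate_region_dnu-e>0}) applies directly and gives
\[
\left|\Xi_{n}^{0<b_{\nu}\leq\D;\check{b}_{\nu}>0}\right|\ll_{\varepsilon}\frac{1}{n}.
\]
For the large regime, since $\D\leq n^{\eta}\leq n^{1/24}$ and $\v\leq n^{1/4}$, Lemma \ref{lem:large-regime} gives
\[
\left|\Xi_{n}^{b_{\nu},\check{b}_{\nu}>\D}\right|\ll\frac{(\D+1)^{4}}{\left(n-\v-\D^{2}\right)^{2}}\ll\frac{(n^{\eta}+1)^{4}}{\left(n-n^{1/4}-n^{2\eta}\right)^{2}}\ll\frac{1}{n}
\]
for $n$ large, using that $4\eta<1$ (indeed $\eta<\tfrac{1}{100}$) so the numerator is $o(n)$ while the denominator is $\asymp n^{2}$. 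Combining the two displays with the triangle inequality above yields $\left|\Xi_{n}-2\Xi_{n}^{\nu=(n-\v)}\right|\ll_{\varepsilon}1/n$, as claimed. $\square$

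There is essentially no obstacle here: the proposition is a bookkeeping assembly of the per-regime bounds, and the only point requiring a little care is the compatibility of the constraints on $\eta$ — one must choose $\eta$ smaller than both the threshold $\eta(\varepsilon)$ of Lemma \ref{lem:intermediate-regime}(\ref{enu:intermediate_region_dnu-e>0}) and $\tfrac{1}{24}$ (automatic from $\eta<\tfrac{1}{100}$) so that Lemma \ref{lem:large-regime} is applicable. The zero regime term $2\Xi_{n}^{\nu=(n-\v)}$ is deliberately \emph{not} estimated here; as the text explains, for $\varepsilon$-adapted $Y$ it is handled by the separate algebraic analysis of $\S\S\ref{subsec:A-new-expression-for_xi-dnu=00003D0}$, which is why it appears on the left-hand side of the inequality rather than being bounded away.
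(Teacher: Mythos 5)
Your proof is correct and follows essentially the same route as the paper: both apply the splitting (\ref{eq:Xin-splitting}) and then invoke Lemma \ref{lem:intermediate-regime}(\ref{enu:intermediate_region_dnu-e>0}) for the intermediate regime and Lemma \ref{lem:large-regime} for the large regime, with $\eta<\frac{1}{100}<\frac{1}{24}$ ensuring the latter applies. Your added remarks on the compatibility of the constraints on $\eta$ are accurate but just make explicit what the paper leaves implicit.
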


\begin{proof}
Lemmas \ref{lem:intermediate-regime}(\ref{enu:intermediate_region_dnu-e>0})
and \ref{lem:large-regime} yield that given $\varepsilon\in(0,1)$,
there is $\eta=\eta(\varepsilon)\in(0,\frac{1}{100})$, such that
if $\D\leq n^{\eta}$, $\v\leq n^{1/4}$ and $Y$ is $\varepsilon$-adapted,
then
\begin{align*}
\left|\Xi_{n}-2\Xi_{n}^{\nu=(n-\v)}\right| & =\left|2\Xi_{n}^{0<b_{\nu}\leq\D;\check{b}_{\nu}>0}+\Xi_{n}^{b_{\nu},\check{b}_{\nu}>\D}\right|\\
 & \ll_{\varepsilon}\frac{1}{n}+\frac{(\D+1)^{4}}{\left(n-\v-\D^{2}\right)^{2}}\ll\frac{1}{n}.
\end{align*}
\end{proof}
\begin{rem*}
For general $g$, the condition $\eta(\epsilon)<\frac{1}{100}$ of
Proposition \ref{prop:Xi-bound-final-e-adapted} should be replaced
by $\eta(\epsilon)<\frac{1}{Cg}$ for some universal $C\geq100$.
\end{rem*}

\subsection{A new expression for $\Xi_{n}^{\nu=(n-\protect\v)}$\label{subsec:A-new-expression-for_xi-dnu=00003D0}}

We continue to fix a compact tiled surface $Y$. The goal of this
section is to give a formula for $\Xi_{n}^{\nu=(n-\v)}$ that is more
precise than is possible to obtain with the methods of the previous
section. This will be done by refining the methods of \cite[\S 5]{MPasympcover}. 

We will assume throughout that $n\geq\v$. We fix a bijective map
$\J:Y^{(0)}\to[\v]$, and as in \cite[\S 5]{MPasympcover} for each
$n\in\N$ we modify $\J$ by letting 
\begin{equation}
\J_{n}:Y^{(0)}\to[n-\v+1,n],\quad\J_{n}(v)\eqdf\J(v)+n-\v.\label{eq:shifted-framing}
\end{equation}
We use the map $\J_{n}$ to identify the vertex set of $Y$ with $[n-\v+1,n${]}.
Let $\V_{f}^{-}=\V_{f}^{-}(Y)\subset[n-\v+1,n]$ be the subset of
vertices of $Y$ with outgoing $f$-labeled edges, and $\V_{f}^{+}\subset[n-\v+1,n]$
those vertices of $Y$ with incoming $f$-labeled edges. Note that
$\e_{f}=|\V_{f}^{-}|=|\V_{f}^{+}|$. Recall that $S'_{\v}\le S_{n}$
is the subgroup of permutations fixing $\left[n-\v\right]$ element-wise.
For each $f\in\{a,b,c,d\}$ we fix $g_{f}^{0}\in S'_{\v}$ such that
for every pair of vertices $i,j$ of $Y$ in $[n-\v+1,n]$ with a
directed $f$-labeled edge from $i$ to $j$, we have $g_{f}^{0}(i)=j$.
Note that $g_{f}^{0}(\V_{f}^{-})=\V_{f}^{+}$. We let $g^{0}\eqdf(g_{a}^{0},g_{b}^{0},g_{c}^{0},g_{d}^{0})\in S_{n}^{4}$.
For each $f\in\{a,b,c,d\}$ let $G_{f}$ be the subgroup of $S_{n}$
fixing pointwise $\V_{f}^{-}$. Let $G\eqdf G_{a}\times G_{b}\times G_{c}\times G_{d}\leq S_{n}^{4}$.

Our formula for $\Xi_{n}^{\nu=(n-\v)}$ will involve the size of the
set
\begin{align}
\X_{n}^{*}(Y,\J)\eqdf\left\{ \left(\alpha_{a},\alpha_{b},\alpha_{c},\alpha_{d}\right)\in g^{0}G\,\middle|\,W\left(\alpha_{a},\alpha_{b},\alpha_{c},\alpha_{d}\right)\in S_{n-\v}\right\} \label{eq:X_n^*}
\end{align}
where\footnote{The reason we use this word instead of the relator $[g_{a},g_{b}][g_{c},g_{d}]$
of $\Gamma_{2}$ is the same as in \cite{MPasympcover}: the one-to-one
correspondence between $\X_{n}$ and degree-$n$ covers of a genus
$2$ surface uses the version of the symmetric group where permutations
are multiplied as functions acting from the \emph{right}, whereas
in this section we want to multiply permutations as functions on $[n]$
acting from the \emph{left}.} $W(g_{a},g_{b},g_{c},g_{d})\eqdf g_{d}^{-1}g_{c}^{-1}g_{d}g_{c}g_{b}^{-1}g_{a}^{-1}g_{b}g_{a}$.
Note that a similar set, denoted $\X_{n}(Y,\J)$ in \cite[\S\S 5.2]{MPasympcover},
is the set in which the condition is that $W\left(\alpha_{a},\alpha_{b},\alpha_{c},\alpha_{d}\right)=1$
rather than the identity only when restricted to $\left[n-\v+1,n\right]$,
as in (\ref{eq:X_n^*}). This smaller set $\X_{n}(Y,\J)$ counts the
number of covers $\phi\in\Hom\left(\Gamma_{2},S_{n}\right)$ in which
$\left(Y,\J\right)$ embeds.

The main result of this $\S\S$\ref{subsec:A-new-expression-for_xi-dnu=00003D0}
is the following.
\begin{prop}
\label{prop:Xi_n-refined}With notations as above,
\begin{align*}
\Xi_{n}^{\nu=(n-\v)}=\,\frac{\left(n\right)_{\v}\left|\X_{n}^{*}(Y,\J)\right|}{\left(n\right)_{\f}\prod_{f\in a,b,c,d}(n-\e_{f})!}.
\end{align*}
\end{prop}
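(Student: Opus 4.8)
The plan is to trace through the representation-theoretic derivation of Theorem~\ref{thm:E_n-emb-exact-expression} and isolate exactly the $\nu=(n-\v)$ term, which corresponds to the trivial representation of $S_{n-\v}$ appearing in the various restrictions. Recall that in Theorem~\ref{thm:E_n-emb-exact-expression}, the quantity $\Xi_n(Y)$ is a sum over $\nu\subset_{\v-\f}\lambda\vdash n-\f$, and the $\nu$ records an isotypic component for the subgroup $S_{n-\v}$ fixing the vertex set of $Y$. The special choice $\nu=(n-\v)$ corresponds to the trivial representation of $S_{n-\v}$. The key point is that extracting this isotypic component is the same as averaging over $S_{n-\v}$, i.e.\ projecting onto $S_{n-\v}$-invariant vectors, and this projection is exactly what counts, combinatorially, the homomorphisms $\phi$ whose images — after suitable adjustment on the free generators — land in $S_{n-\v}$ on the word $W$.

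\textbf{Key steps.} First I would recall from \cite[\S 5]{MPasympcover} the origin of the formula \eqref{eq:prob-to-xi-relation}: one writes $\E_n^{\emb}(Y)$ as a normalized count over $\X_n=\Hom(\Gamma_2,S_n)$ of embeddings, re-expresses the embedding condition via the sets $\V_f^\pm$ and the permutations $\sigma_f^\pm,\tau_f^\pm,g_f^0$, and then inserts the Fourier expansion on $S_n$ (or more precisely on the relevant double cosets) in terms of irreducible characters/matrix coefficients; the Young diagrams $\lambda,\mu_f,\nu$ index the irreducibles appearing when one restricts along the tower $S_n\ge S_{\v}\ge\cdots$ adapted to $Y$. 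Second, I would observe that the summand indexed by $\nu=(n-\v)$ singles out the contribution of the trivial $S_{n-\v}$-type; by the standard identity $d_{(n-\v)}=1$ and the fact that the matrix-coefficient product $\M$ collapses when $\nu$ is the one-row diagram (the skew tableaux $r_f^\pm\in\Tab(\mu_f/\nu)$ and $s_f,t_f\in\Tab(\lambda/\mu_f)$ have all boxes outside the first row, so the Gelfand--Tsetlin vectors simplify), the inner sum $\Upsilon_n$ becomes a sum of matrix coefficients that reassembles into the dimension of an explicit invariant space. Third, and this is the combinatorial heart, I would identify that invariant space with (a normalization of) the set $\X_n^*(Y,\J)$: projecting onto $S_{n-\v}$-invariants on both the source fiber data and the relator $W$ forces exactly the condition $W(\alpha_a,\alpha_b,\alpha_c,\alpha_d)\in S_{n-\v}$, while the requirement $(\alpha_a,\alpha_b,\alpha_c,\alpha_d)\in G^+g^0G^-$ encodes that $Y$, with its edge-data prescribed by $g_f^0$ and the local adjustments $\sigma_f^\pm,\tau_f^\pm$, embeds. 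Finally I would match the normalization factors: the prefactor $\frac{(n!)^3}{|\X_n|}\cdot\frac{(n)_\v (n)_\f}{\prod_f (n)_{\e_f}}$ in \eqref{eq:prob-to-xi-relation} together with the $d_\lambda d_\nu$ and $1/\prod_f d_{\mu_f}$ weights, when restricted to $\nu=(n-\v)$, should leave precisely $\frac{(n)_\v}{\prod_f (n-\e_f)!\,(n)_\f}$ times $|\X_n^*(Y,\J)|$.

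\textbf{Alternative (cleaner) route.} Rather than manipulating $\Upsilon_n$ and $\M$ directly, it is likely more transparent to go back one level in the proof of Theorem~\ref{thm:E_n-emb-exact-expression} in \cite{MPasympcover}: before the full Fourier expansion is inserted, there is an intermediate expression for $\E_n^{\emb}(Y)$ (or for the relevant count) in which one sums over choices $\alpha_f\in G^+g^0G^-$ with the constraint that $W(\alpha_a,\dots,\alpha_d)$ fixes $[n-\v+1,n]$ pointwise. Relaxing ``fixes $[n-\v+1,n]$ pointwise'' to ``lies in $S_{n-\v}$'' is exactly the $\nu=(n-\v)$ projection, so one obtains $|\X_n^*(Y,\J)|$ directly, and the claimed formula follows by bookkeeping the same normalizing constants that appear in \eqref{eq:prob-to-xi-relation}. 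Concretely: $\Xi_n^{\nu=(n-\v)}$ picks out the trivial-$S_{n-\v}$-isotypic part, and since the trivial isotypic projection of a permutation representation just counts orbits weighted appropriately, the abstract character sum becomes the honest cardinality $|\X_n^*(Y,\J)|$.

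\textbf{Main obstacle.} The delicate part is the exact bookkeeping of constants and the precise claim that the $\nu=(n-\v)$ isotypic piece equals $|\X_n^*(Y,\J)|$ rather than some multiple of it — i.e.\ verifying that no spurious factors of $d_\lambda$, $d_{\mu_f}$, or $\v!$-type terms survive. This requires carefully unwinding which normalizations in \cite[\S 5]{MPasympcover} were introduced (e.g.\ the shifted framing $\J_n$, the choices of $\sigma_f^\pm,\tau_f^\pm$ satisfying \textbf{P1}--\textbf{P4}, and the passage between left- and right-multiplication conventions flagged in the footnote defining $W$) and checking that they all cancel against the $(n)_\v,(n)_\f,(n)_{\e_f},|\X_n|$ factors except for the stated ratio. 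I expect the rest — the collapse of $\M$ and $\Upsilon_n$ when $\nu$ is a single row, and the identification of the resulting invariant-vector count with $\X_n^*(Y,\J)$ — to be essentially formal once the framework of \cite{MPasympcover} is in hand.
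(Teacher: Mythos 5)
Your proposal is correct in its essentials and follows the same route as the paper: the paper realizes the relaxation from ``$W(\alpha)$ is trivial on $[n-\v+1,n]$'' to ``$W(\alpha)\in S_{n-\v}$'' as an average over $\pi\in S_{n-\v}$, expands the indicator $\mathbf{1}\{g=1\}$ by Schur orthogonality, and identifies the resulting character integral as a trace against the projection $\mathfrak{p}$ onto $S_{n-\v}$-invariant vectors --- exactly your ``trivial-isotypic'' mechanism --- after which the normalizations cancel via $|G^{+}g^{0}G^{-}|=\prod_{f}(n-\e_{f})!$ and Frobenius reciprocity (Lemma \ref{lem:induced-rep-dimension}), with no $|\X_{n}|$ factor ever entering. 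Two minor corrections: the quantities $\M$ and $\Upsilon_{n}$ do not ``collapse'' when $\nu=(n-\v)$ (the only simplification used is $d_{(n-\v)}=1$, plus the fact that $\mathfrak{p}$ forces $\nu=(n-\v)$ in the trace computation), and your first route's bookkeeping via the prefactor of (\ref{eq:prob-to-xi-relation}) is an unnecessary detour since the identity involves neither $\E_{n}^{\emb}(Y)$ nor $|\X_{n}|$; your ``alternative (cleaner) route'' is the one the paper actually takes.
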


Recall that $(n)_{q}$ is the Pochhammer symbol as defined in $\S\S$\ref{subsec:Notation}.
\emph{In the rest of the paper, whenever we write an integral over
a group, it is performed with respect to the uniform measure on the
relevant group. }Let 
\begin{align*}
I & \eqdf\int_{h_{f}\in G_{f}}\int_{\pi\in S_{n-\v}}{\bf 1}\left\{ W\left(g_{a}^{0}h_{a},g_{b}^{0}h_{b},g_{c}^{0}h_{c},g_{d}^{0}h_{d}\right)\pi=1\right\} .
\end{align*}
 The following lemma is immediate as a result of relating sums to
normalized integrals\@.
\begin{lem}
\label{cor:double-coset-integral}We have $\left|\X_{n}^{*}(Y,\J)\right|=\left|S_{n-\v}\right|\cdot\left|G\right|\cdot I$.
\end{lem}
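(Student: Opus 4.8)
The plan is a change-of-variables computation, and the statement is essentially bookkeeping about how the uniform (probability) measures normalize. First I would integrate out $\pi$. For any tuple $\alpha=(\alpha_{a},\alpha_{b},\alpha_{c},\alpha_{d})\in S_{n}^{~4}$ the equation $W(\alpha_{a},\alpha_{b},\alpha_{c},\alpha_{d})\pi=1$ has the single solution $\pi=W(\alpha_{a},\alpha_{b},\alpha_{c},\alpha_{d})^{-1}$, and this solution lies in $S_{n-\v}$ exactly when $W(\alpha_{a},\alpha_{b},\alpha_{c},\alpha_{d})\in S_{n-\v}$; since the $\pi$-integral is against the uniform probability measure on $S_{n-\v}$,
\[
\int_{\pi\in S_{n-\v}}{\bf 1}\left\{ W(\alpha_{a},\alpha_{b},\alpha_{c},\alpha_{d})\pi=1\right\} =\frac{1}{\left|S_{n-\v}\right|}{\bf 1}\left\{ W(\alpha_{a},\alpha_{b},\alpha_{c},\alpha_{d})\in S_{n-\v}\right\} .
\]
Applying this with $\alpha_{f}=h_{f}^{+}g_{f}^{0}h_{f}^{-}$ gives
\[
I=\frac{1}{\left|S_{n-\v}\right|}\int_{h_{f}^{\pm}\in G_{f}^{\pm}}{\bf 1}\left\{ W\left(h_{a}^{+}g_{a}^{0}h_{a}^{-},h_{b}^{+}g_{b}^{0}h_{b}^{-},h_{c}^{+}g_{c}^{0}h_{c}^{-},h_{d}^{+}g_{d}^{0}h_{d}^{-}\right)\in S_{n-\v}\right\} .
\]

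Next I would push this remaining integral forward along the coordinatewise multiplication map $\mu\colon G^{+}\times G^{-}\to G^{+}g^{0}G^{-}$, $\mu(h^{+},h^{-})=h^{+}g^{0}h^{-}$. The one step that genuinely needs an argument is the claim that $\mu$ is surjective with every fibre of cardinality exactly $|G^{+}|$; granting this, $\mu$ carries the uniform probability measure on $G^{+}\times G^{-}$ to the uniform probability measure on the double coset $G^{+}g^{0}G^{-}$. To prove the fibre claim I would invoke the identity $G^{+}=g^{0}G^{-}(g^{0})^{-1}$ recorded just before the definition of $\X_{n}^{*}(Y,\J)$: it gives $G^{+}g^{0}=g^{0}G^{-}$, hence $G^{+}g^{0}G^{-}=g^{0}G^{-}$, so each $\alpha\in G^{+}g^{0}G^{-}$ is uniquely of the form $\alpha=g^{0}\beta$ with $\beta\in G^{-}$. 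For such $\alpha$ and any $h^{+}\in G^{+}$, the equation $h^{+}g^{0}h^{-}=g^{0}\beta$ has the unique solution $h^{-}=(g^{0})^{-1}(h^{+})^{-1}g^{0}\beta$, which does lie in $G^{-}$ since $(g^{0})^{-1}G^{+}g^{0}=G^{-}$; conversely every $(h^{+},h^{-})\in\mu^{-1}(\alpha)$ arises this way. Thus $\mu^{-1}(\alpha)\to G^{+}$, $(h^{+},h^{-})\mapsto h^{+}$, is a bijection, so the fibre has size $|G^{+}|$, independently of $\alpha$.

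Since the indicator ${\bf 1}\{W(h_{a}^{+}g_{a}^{0}h_{a}^{-},\dots)\in S_{n-\v}\}$ depends only on $\mu(h^{+},h^{-})=(h_{a}^{+}g_{a}^{0}h_{a}^{-},\dots,h_{d}^{+}g_{d}^{0}h_{d}^{-})$, the pushforward gives
\[
\int_{h_{f}^{\pm}\in G_{f}^{\pm}}{\bf 1}\left\{ W\left(h_{a}^{+}g_{a}^{0}h_{a}^{-},\dots\right)\in S_{n-\v}\right\} =\frac{1}{\left|G^{+}g^{0}G^{-}\right|}\sum_{\alpha\in G^{+}g^{0}G^{-}}{\bf 1}\left\{ W(\alpha)\in S_{n-\v}\right\} =\frac{\left|\X_{n}^{*}(Y,\J)\right|}{\left|G^{+}g^{0}G^{-}\right|},
\]
the last equality being the definition of $\X_{n}^{*}(Y,\J)$. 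Combining with the formula for $I$ above yields $I=\left|\X_{n}^{*}(Y,\J)\right|/\bigl(\left|S_{n-\v}\right|\left|G^{+}g^{0}G^{-}\right|\bigr)$, which rearranges to the assertion. I do not expect a real obstacle here: the content is entirely in keeping the three normalizations straight, and the only non-formal input is the constant-fibre property of $\mu$, which is precisely where the previously observed relation $G^{+}=g^{0}G^{-}(g^{0})^{-1}$ is used.
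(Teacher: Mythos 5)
Your proof is correct and follows essentially the same counting/change-of-variables argument as the paper: both reduce to the observation that $|S_{n-\v}|\,|G^{+}|\,|G^{-}|\,I$ counts pairs $(h^{+},h^{-})$ with $W(h^{+}g^{0}h^{-})\in S_{n-\v}$ and that the multiplication map onto the double coset has constant fibres. The only cosmetic difference is that the paper quotes the generic double-coset fibre size $|G^{+}|\,|G^{-}|/|G^{+}g^{0}G^{-}|$, while you derive the constant-fibre property from the specific relation $G^{+}=g^{0}G^{-}(g^{0})^{-1}$; both are fine.
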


For a Young diagram $\lambda$ of size $m$, we write $\chi_{\lambda}$
for the trace of the irreducible representation of $S_{m}$ on $V^{\lambda}$.
\begin{cor}
\label{cor:size-xn*}We have 
\[
|\X_{n}^{*}(Y,\J)|=\frac{\prod_{f\in a,b,c,d}(n-\e_{f})!}{(n)_{\v}}\sum_{\lambda\vdash n}d_{\lambda}\Theta_{\lambda}^{(n-\v)}(Y,\J)
\]
where 
\begin{equation}
\Theta_{\lambda}^{(n-\v)}(Y,\J)\eqdf\int_{h_{f}\in G_{f}}\int_{\pi\in S_{n-\v}}\chi_{\lambda}\left(W\left(g_{a}^{0}h_{a},g_{b}^{0}h_{b},g_{c}^{0}h_{c},g_{d}^{0}h_{d}\right)\pi\right).\label{eq:Theta-nu}
\end{equation}
\end{cor}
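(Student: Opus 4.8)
The plan is to obtain Corollary \ref{cor:size-xn*} by combining Lemma \ref{cor:double-coset-integral} with a character-theoretic evaluation of the integral $I$. First I would recall the identity $|G^{+}g^{0}G^{-}| = |G^{+}||G^{-}|/|G^{+}\cap (g^0)^{-1}G^+ g^0|$ and, more to the point, that by P1 and the definitions, $G_f^{\pm}$ is conjugate to $S'_{n-\e_f} = S_{n-\e_f}$ sitting inside $S_n$; this identifies $|G_f^{\pm}| = (n-\e_f)!$. Hence Lemma \ref{cor:double-coset-integral} rewrites as
\[
|\X_n^*(Y,\J)| = (n-\v)!\cdot|G^+ g^0 G^-|\cdot I,
\]
and it will suffice to show
\[
|G^+ g^0 G^-|\cdot I = \frac{\prod_f (n-\e_f)!}{(n)_{\v}(n-\v)!}\sum_{\lambda\vdash n} d_\lambda \Theta_\lambda^{(n-\v)}(Y,\J_n).
\]

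The heart of the matter is to expand the indicator function ${\bf 1}\{g = 1\}$ for $g\in S_n$ using the orthogonality of characters: for $g\in S_n$,
\[
{\bf 1}\{g=1\} = \frac{1}{n!}\sum_{\lambda\vdash n} d_\lambda \chi_\lambda(g).
\]
Applying this inside the definition of $I$ with $g = W(h_a^+ g_a^0 h_a^-,\ldots)\pi$, and interchanging the (finite) sum over $\lambda$ with the integrals over the compact groups $G_f^{\pm}$ and $S_{n-\v}$, gives
\[
I = \frac{1}{n!}\sum_{\lambda\vdash n} d_\lambda \int_{h_f^\pm\in G_f^\pm}\int_{\pi\in S_{n-\v}} \chi_\lambda\!\left(W\!\left(h_a^+ g_a^0 h_a^-,\ldots\right)\pi\right) = \frac{1}{n!}\sum_{\lambda\vdash n} d_\lambda\,\Theta_\lambda^{(n-\v)}(Y,\J_n),
\]
which is exactly the definition (\ref{eq:Theta-nu}). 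Plugging this into Lemma \ref{cor:double-coset-integral} yields
\[
|\X_n^*(Y,\J)| = (n-\v)!\cdot|G^+ g^0 G^-|\cdot\frac{1}{n!}\sum_{\lambda\vdash n} d_\lambda\,\Theta_\lambda^{(n-\v)}(Y,\J_n).
\]
So it remains to check the purely combinatorial identity $\dfrac{(n-\v)!\,|G^+ g^0 G^-|}{n!} = \dfrac{\prod_f (n-\e_f)!}{(n)_\v}$, equivalently $|G^+ g^0 G^-| = \dfrac{n!\,\prod_f(n-\e_f)!}{(n-\v)!\,(n)_\v} = \dfrac{(n!)\prod_f(n-\e_f)!}{((n-\v)!)^2\cdot\binom{n}{\v}^{-1}\cdots}$; the cleanest route is to note $(n)_\v = n!/(n-\v)!$ so the target reads $|G^+ g^0 G^-| = \prod_f (n-\e_f)!$. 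Wait — this would force a size identity that should be verified carefully; I would instead track the normalizing constants directly from Lemma \ref{cor:double-coset-integral}, which already packages $|S_{n-\v}||G^+g^0G^-|$ together, and only use the elementary facts $|S_{n-\v}| = (n-\v)!$, $|G_f^\pm| = (n-\e_f)!$, together with $|G^+ g^0 G^-| = |G^+||G^-|/|\Stab|$ where $\Stab$ is computed from the combinatorics of the edge-labels of $Y$; comparing with the $\v$-dependent factor $(n)_\v$ coming from $\J_n$ shifting vertices into $[n-\v+1,n]$ is the bookkeeping step that must be done with care.

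The main obstacle is precisely this normalization bookkeeping: ensuring that the Pochhammer factor $(n)_\v$ and the product $\prod_f (n-\e_f)!$ in the statement match what comes out of $|S_{n-\v}|\cdot|G^+ g^0 G^-|$ and the factor $\tfrac{1}{n!}$ from character orthogonality, with no off-by-$\v!$ or double-counting errors. Everything else — the character-expansion of the delta function, Fubini to pull the finite $\lambda$-sum outside the integrals, and recognition of $\Theta_\lambda^{(n-\v)}$ — is formal. I would therefore devote the bulk of the written proof to carefully establishing $|G^+ g^0 G^-| = \prod_f (n-\e_f)! \cdot \dfrac{(n-\v)!}{(n)_\v}\cdot\dfrac{1}{(n-\v)!}\cdot n!\cdot\!\cdots$ — i.e.\ to pinning down the constant — and only sketch the orthogonality-plus-Fubini step, citing that $\chi_\lambda$ is a class function and that all sums are finite so the interchange of sum and integral is trivially justified.
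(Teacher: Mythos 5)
Your proposal follows the paper's proof exactly: expand $\mathbf{1}\{g=1\}$ by Schur orthogonality, pull the finite sum over $\lambda$ through the integrals to recognize $I=\frac{1}{n!}\sum_{\lambda\vdash n}d_{\lambda}\Theta_{\lambda}^{(n-\v)}(Y,\J_{n})$, and combine with Lemma \ref{cor:double-coset-integral}. You also correctly isolate the one remaining ingredient, namely $|G^{+}g^{0}G^{-}|=\prod_{f}(n-\e_{f})!$, which together with $|S_{n-\v}|=(n-\v)!$ and $(n)_{\v}=n!/(n-\v)!$ gives the stated constant.

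The only defect is that you then balk at this identity and end with a garbled expression for $|G^{+}g^{0}G^{-}|$ instead of proving it. It is true and immediate: since $g_{f}^{0}(\V_{f}^{-})=\V_{f}^{+}$, conjugation gives $(g^{0})^{-1}G^{+}g^{0}=G^{-}$, hence $G^{+}g^{0}G^{-}=G^{+}g^{0}(g^{0})^{-1}G^{+}g^{0}=G^{+}g^{0}$ is a single left coset of $G^{+}$, of size $|G^{+}|=\prod_{f}|G_{f}^{+}|=\prod_{f}(n-\e_{f})!$. With that one line supplied, your argument is complete and coincides with the paper's; no further stabilizer bookkeeping is needed.
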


\begin{proof}
Using Schur orthogonality, write 
\begin{align*}
\mathbf{1}\{g=1\} & =\frac{1}{n!}\sum_{\lambda\vdash n}d_{\lambda}\chi_{\lambda}(g),
\end{align*}
hence
\[
I=\frac{1}{n!}\sum_{\lambda\vdash n}d_{\lambda}\Theta_{\lambda}^{(n-\v)}\left(Y,\J\right).
\]
We have $|G|=\prod_{f\in\left\{ a,b,c,d\right\} }(n-\e_{f})!$, hence
by Lemma \ref{cor:double-coset-integral}
\begin{align*}
\left|\X_{n}^{*}\left(Y,\J\right)\right| & =(n-\v)!\prod_{f\in\left\{ a,b,c,d\right\} }\left(n-\e_{f}\right)!\cdot\frac{1}{n!}\sum_{\lambda\vdash n}d_{\lambda}\Theta_{\lambda}^{(n-\v)}\left(Y,\J\right)\\
 & =\frac{\prod_{f\in a,b,c,d}(n-\e_{f})!}{(n)_{\v}}\sum_{\lambda\vdash n}d_{\lambda}\Theta_{\lambda}^{(n-\v)}\left(Y,\J\right).
\end{align*}
\end{proof}
Consider the vector space
\[
W^{\lambda}\eqdf V^{\lambda}\otimes\check{V}^{\lambda}\otimes V^{\lambda}\otimes\check{V}^{\lambda}\otimes V^{\lambda}\otimes\check{V}^{\lambda}\otimes V^{\lambda}\otimes\check{V}^{\lambda}
\]
as a unitary representation of $S_{n}^{8}$. This is a departure from
\cite[\S 5]{MPasympcover} where $W^{\lambda}$ was thought of as
a representation of $S_{n}^{4}$; we take a more flexible setup here.
The reader may find it useful to see \cite[\S\S 5.4]{MPasympcover}
for extra background on representation theory. The inner product on
$V^{\lambda}$ gives an isomorphism $V^{\lambda}\cong\check{V}^{\lambda}$,
$v\mapsto\check{v}$. Let $B_{\lambda}\in\End(W^{\lambda})$ be defined
as in \cite[eq. (5.9)]{MPasympcover} by the formula
\begin{align}
\left\langle B_{\lambda}\left(v_{1}\otimes\check{v}_{2}\otimes v_{3}\otimes\check{v}_{4}\otimes v_{5}\otimes\check{v}_{6}\otimes v_{7}\otimes\check{v}_{8}\right),w_{1}\otimes\check{w}_{2}\otimes w_{3}\otimes\check{w}_{4}\otimes w_{5}\otimes\check{w}_{6}\otimes w_{7}\otimes\check{w}_{8}\right\rangle  & \eqdf\nonumber \\
\langle v_{1},w_{3}\rangle\langle v_{3},v_{2}\rangle\langle w_{2},v_{4}\rangle\langle w_{4},w_{5}\rangle\langle v_{5},w_{7}\rangle\langle v_{7},v_{6}\rangle\langle w_{6},v_{8}\rangle\langle w_{8},w_{1}\rangle.\label{eq:B-lambda-def}
\end{align}

We note the following, extending \cite[Lem.~5.4]{MPasympcover}.
\begin{lem}
\label{lem:B-lambda-property}For any $(g_{1},g_{2},g_{3},g_{4},g_{5},g_{6},g_{7},g_{8})\in S_{n}^{8}$,
we have 
\[
\mathrm{tr}_{W^{\lambda}}(B_{\lambda}\circ(g_{1},g_{2},g_{3},g_{4},g_{5},g_{6},g_{7},g_{8}))=\chi_{\lambda}(g_{8}^{-1}g_{6}^{-1}g_{7}g_{5}g_{4}^{-1}g_{2}^{-1}g_{3}g_{1}).
\]
\end{lem}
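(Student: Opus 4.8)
The plan is to compute the trace of $B_\lambda \circ (g_1,\ldots,g_8)$ directly from the defining formula \eqref{eq:B-lambda-def}, using an orthonormal basis of $V^\lambda$ and the induced bases on the eight tensor factors of $W^\lambda$. First I would fix an orthonormal basis $\{e_i\}$ of $V^\lambda$, so that $\{\check e_i\}$ is the corresponding orthonormal basis of $\check V^\lambda$, and the vectors $e_{i_1}\otimes \check e_{i_2}\otimes e_{i_3}\otimes \check e_{i_4}\otimes e_{i_5}\otimes \check e_{i_6}\otimes e_{i_7}\otimes \check e_{i_8}$, as the multi-index ranges, form an orthonormal basis of $W^\lambda$. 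The trace of an operator on $W^\lambda$ is then the sum over all multi-indices of the diagonal matrix coefficient.

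The operator in question is $B_\lambda$ precomposed with the action of $(g_1,\ldots,g_8)$, where $g_j$ acts on the $j$-th tensor factor (via $V^\lambda$ on odd factors and via $\check V^\lambda$ on even factors; note that the contragredient action on $\check V^\lambda$ is by $\rho(g)^{-\mathsf{T}} = \overline{\rho(g)}$, but since we are using orthonormal bases the matrix of $g$ on $\check V^\lambda$ in the basis $\{\check e_i\}$ is the complex conjugate of its matrix on $V^\lambda$ — I would keep careful track of this, as it is exactly what produces the $g_j^{-1}$'s versus $g_j$'s in the final answer). So the key step is: write out
\[
\mathrm{tr}_{W^\lambda}(B_\lambda\circ(g_1,\ldots,g_8)) = \sum_{i_1,\ldots,i_8} \big\langle B_\lambda\big((g_1,\ldots,g_8)(e_{i_1}\otimes\cdots\otimes\check e_{i_8})\big),\, e_{i_1}\otimes\cdots\otimes\check e_{i_8}\big\rangle,
\]
expand the right-hand side using \eqref{eq:B-lambda-def}, and recognize that each of the eight inner-product factors $\langle v_a, w_b\rangle$ appearing in \eqref{eq:B-lambda-def} contracts a "$v$-slot" with a "$w$-slot", so summing over the $i$'s performs a sequence of matrix multiplications along a single cycle. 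The pattern of which slot pairs with which in \eqref{eq:B-lambda-def} — $v_1$ with $w_3$, $v_3$ with $v_2$, $w_2$ with $v_4$, $w_4$ with $w_5$, $v_5$ with $w_7$, $v_7$ with $v_6$, $w_6$ with $v_8$, $w_8$ with $w_1$ — is precisely engineered so that this cycle of contractions collapses to $\chi_\lambda$ of the single group element $g_8^{-1}g_6^{-1}g_7 g_5 g_4^{-1}g_2^{-1}g_3 g_1$.

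Concretely, after substituting the action of the $g_j$'s (matrices $\rho(g_j)$ on odd slots, $\overline{\rho(g_j)}$ on even slots) and summing over all indices, each of the eight $\langle\cdot,\cdot\rangle$ in \eqref{eq:B-lambda-def} becomes a Kronecker-type constraint or a matrix entry, and the whole sum telescopes into $\sum_i \big[\rho(g_8)^{-1}\rho(g_6)^{-1}\rho(g_7)\rho(g_5)\rho(g_4)^{-1}\rho(g_2)^{-1}\rho(g_3)\rho(g_1)\big]_{ii} = \mathrm{tr}\,\rho(g_8^{-1}g_6^{-1}g_7 g_5 g_4^{-1}g_2^{-1}g_3 g_1) = \chi_\lambda(g_8^{-1}g_6^{-1}g_7 g_5 g_4^{-1}g_2^{-1}g_3 g_1)$. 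The cleanest way to present this is either (i) as a single bracketed computation tracking the contraction graph, or (ii) by invoking \cite[Lemma 5.4]{MPasympcover}, of which this is a direct extension: the four-factor version there corresponds to a contraction cycle of length four producing $\chi_\lambda(g_4^{-1}g_2^{-1}g_3 g_1)$, and the present eight-factor $B_\lambda$ is built by composing two such commutator-type patterns end to end, so the argument of \cite[Lemma 5.4]{MPasympcover} applies verbatim after relabeling, giving the length-eight cycle. I would state the proof in the style of option (ii), pointing to the structural identity of the two computations, and only spelling out the index bookkeeping if a referee wants it.

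The main obstacle is purely bookkeeping: getting the bijection between "which tensor slot" and "which $g_j$" right, and correctly handling the contragredient factors so that the odd-indexed $g$'s appear without inverses and the even-indexed ones with inverses — equivalently, making sure the orientation of the contraction cycle matches the stated word $g_8^{-1}g_6^{-1}g_7 g_5 g_4^{-1}g_2^{-1}g_3 g_1$ and not its inverse or a cyclic rotation. There is no analytic difficulty whatsoever; the only risk is a transcription slip in the eight-fold index sum. I would mitigate this by first re-deriving the four-factor case from \cite[Lemma 5.4]{MPasympcover} to fix conventions, then extending.
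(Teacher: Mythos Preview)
Your proposal is correct and matches the paper's own proof, which simply states that the result is a direct calculation directly generalizing \cite[Lemma 5.4]{MPasympcover}. Your option (ii) is exactly what the paper does, and your option (i) spells out the bookkeeping that the paper leaves implicit.
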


\begin{proof}
The proof is a direct calculation directly generalizing \cite[Lem.~5.4]{MPasympcover}.
\end{proof}
Let $Q$ be the orthogonal projection in $W^{\lambda}$ onto the vectors
that are invariant by $G$ acting on $W^{\lambda}$ by the map
\[
(g_{a},g_{b},g_{c},g_{d})\in G\mapsto(g_{a},g_{a},g_{b},g_{b},g_{c},g_{c},g_{d},g_{d})\in S_{n}^{8}.
\]
This projection appeared also in \cite[\S\S 5.4]{MPasympcover}.
\begin{lem}
\label{lem:theta-nu-as-trace-of-B}We have $\Theta_{\lambda}^{(n-\v)}(Y,\J)=\mathrm{tr}_{W^{\lambda}}(\mathfrak{p}B_{\lambda}g^{0}Q)$
where $\mathfrak{p}$ denotes the operator
\[
\mathfrak{p}\eqdf\int_{\pi\in S_{n-\v}}\left(\pi,1,1,1,1,1,1,1\right)\in\End\left(W^{\lambda}\right).
\]
\end{lem}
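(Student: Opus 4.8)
The plan is to unwind the definitions on both sides of the claimed identity $\Theta_{\lambda}^{(n-\v)}(Y,\J_{n})=\mathrm{tr}_{W^{\lambda}}(\mathfrak{p}B_{\lambda}Q^{+}g^{0}Q^{-})$ and match them term by term. Starting from the right-hand side, expand $Q^{\pm}$ as averages over the groups $G^{\pm}$: since $Q^{\pm}$ is the orthogonal projection onto $G^{\pm}$-invariants, we have $Q^{\pm}=\int_{h^{\pm}\in G^{\pm}}\rho(h^{\pm})$, where $\rho$ is the representation of $G^{\pm}$ on $W^{\lambda}$ via the diagonal-in-pairs map $(g_a,g_b,g_c,g_d)\mapsto(g_a,g_a,g_b,g_b,g_c,g_c,g_d,g_d)$. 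Substituting these and the definition of $\mathfrak p$ as $\int_{\pi\in S_{n-\v}}(\pi,1,1,1,1,1,1,1)$ into the trace, and using that $g^0$ acts as $(g_a^0,g_a^0,g_b^0,g_b^0,g_c^0,g_c^0,g_d^0,g_d^0)$, one obtains
\[
\mathrm{tr}_{W^{\lambda}}(\mathfrak{p}B_{\lambda}Q^{+}g^{0}Q^{-})=\int_{h_f^{+}\in G_f^{+}}\int_{h_f^{-}\in G_f^{-}}\int_{\pi\in S_{n-\v}}\mathrm{tr}_{W^\lambda}\!\left(B_\lambda\circ(G_1,G_2,\dots,G_8)\right),
\]
where each $G_i$ is the appropriate product of $\pi$ (in the first slot only), the $h_f^{\pm}$'s, and the $g_f^0$'s, dictated by which factor of $W^\lambda$ the $i$th tensor slot corresponds to. The bookkeeping here is just tracking that slots $1,2$ carry the $a$-data, slots $3,4$ the $b$-data, slots $5,6$ the $c$-data, slots $7,8$ the $d$-data, and that $\pi$ only enters slot $1$.

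Next I would apply Lemma \ref{lem:B-lambda-property}, which says $\mathrm{tr}_{W^{\lambda}}(B_{\lambda}\circ(g_{1},\dots,g_{8}))=\chi_{\lambda}(g_{8}^{-1}g_{6}^{-1}g_{7}g_{5}g_{4}^{-1}g_{2}^{-1}g_{3}g_{1})$. The content of the proof is then the purely combinatorial check that, with $g_i=G_i$ as above, the word $g_{8}^{-1}g_{6}^{-1}g_{7}g_{5}g_{4}^{-1}g_{2}^{-1}g_{3}g_{1}$ simplifies exactly to $W(h_a^+ g_a^0 h_a^-,\,h_b^+ g_b^0 h_b^-,\,h_c^+ g_c^0 h_c^-,\,h_d^+ g_d^0 h_d^-)\,\pi$, matching the integrand in the definition \eqref{eq:Theta-nu} of $\Theta_{\lambda}^{(n-\v)}$. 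This requires choosing the assignment of slots $\to$ data consistently with how $Q^{\pm}$, $g^0$, and $\mathfrak p$ act, and verifying that the alternating pattern of inverses in $W(g_a,g_b,g_c,g_d)=g_d^{-1}g_c^{-1}g_dg_cg_b^{-1}g_a^{-1}g_bg_a$ is reproduced by the pattern $g_8^{-1}g_6^{-1}g_7g_5g_4^{-1}g_2^{-1}g_3g_1$ once one substitutes, e.g., $g_1=h_a^+ g_a^0 h_a^-\pi$ or $g_1=\pi$ together with $g_3=h_b^+g_b^0h_b^-$, etc. This matching is exactly the eight-fold analogue of the four-fold computation already carried out in \cite{MPasympcover}, and Lemma \ref{lem:B-lambda-property} is precisely the tool built to make it go through; one just has to be careful that the $S_{n-\v}$-average $\mathfrak p$ is inserted in the slot corresponding to the free position at the end of the word $W(\cdots)\pi$.

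Once the integrand is identified, the two triple integrals (over $h_f^+\in G_f^+$, over $h_f^-\in G_f^-$, and over $\pi\in S_{n-\v}$) coincide on both sides by Fubini, since everything is with respect to uniform (hence finite, positive) measures. The main obstacle — really the only nontrivial point — is the slot-assignment / word-rearrangement bookkeeping: getting the indices in $(g_1,\dots,g_8)$ lined up with the factors of $W^\lambda$ so that Lemma \ref{lem:B-lambda-property} outputs precisely $\chi_\lambda$ of $W(\cdots)\pi$ and not some conjugate or reordering of it. I would organize this by first writing down explicitly which tensor slot of $W^\lambda=V^\lambda\otimes\check V^\lambda\otimes\cdots$ is acted on by which of $\pi,h_f^\pm,g_f^0$ under $\mathfrak p B_\lambda Q^+ g^0 Q^-$, then reading off $g_1,\dots,g_8$, then plugging into the formula of Lemma \ref{lem:B-lambda-property} and simplifying. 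Everything else is formal. I would close by noting that this computation directly generalizes \cite[Lemma 5.8]{MPasympcover} (or the analogous statement there), the only new feature being the extra four tensor factors, which is exactly what Lemma \ref{lem:B-lambda-property} was set up to handle.
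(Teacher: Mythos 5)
Your proposal is correct and follows essentially the same route as the paper: expand $Q^{\pm}$ and $\mathfrak{p}$ as group averages, apply Lemma \ref{lem:B-lambda-property} to the resulting $8$-tuple (with $\pi$ absorbed into the first slot, i.e.\ $g_{1}=h_{a}^{+}g_{a}^{0}h_{a}^{-}\pi$ and $g_{2i-1},g_{2i}$ carrying the data of the $i$th letter), and verify that the output word is $W(\cdot)\pi$. The paper's proof is exactly this computation run from left to right, finished off by cyclicity of the trace.
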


\begin{rem}
Note that $\mathfrak{p}$ is the projection in $\End(W^{\lambda})$
onto the $\triv$-isotypic subspace for the action of $S_{n-\v}$
on the first factor of $W^{\lambda}$ (while being the identity on
the remaining seven factors). This is a self-adjoint operator.
\end{rem}

\begin{proof}
Recall the definition of $\Theta_{\lambda}^{(n-\v)}(Y,\J)$ in (\ref{eq:Theta-nu}).
Using Lemma \ref{lem:B-lambda-property}, for every set of fixed values
of the $h_{f}$ and $\pi$, we have 
\begin{align*}
 & \chi_{\lambda}\left(W\left(g_{a}^{0}h_{a},g_{b}^{0}h_{b},g_{c}^{0}h_{c},g_{d}^{0}h_{d}\right)\pi\right)=\\
 & ~~~~~~~\mathrm{tr}_{W^{\lambda}}\left(B_{\lambda}\circ\left(g_{a}^{0}h_{a}\pi,g_{a}^{0}h_{a},g_{b}^{0}h_{b},g_{b}^{0}h_{b},g_{c}^{0}h_{c},g_{c}^{0}h_{c},g_{d}^{0}h_{d},g_{d}^{0}h_{d}\right)\right)
\end{align*}
Therefore, 
\begin{align*}
\Theta_{\lambda}^{(n-\v)}(Y,\J) & =\mathrm{tr}_{W^{\lambda}}(B_{\lambda}g^{0}Q\mathfrak{p})=\mathrm{tr}_{W^{\lambda}}(\mathfrak{p}B_{\lambda}g^{0}Q).
\end{align*}
\end{proof}
Using Lemma \ref{lem:theta-nu-as-trace-of-B}, we now find a new expression
for $\Theta_{\lambda}^{(n-\v)}(Y,\J)$ by calculating $\mathrm{tr}_{W^{\lambda}}(\mathfrak{p}B_{\lambda}g^{0}Q)$.
\begin{prop}
\label{prop:theta-nu-expression}We have
\begin{eqnarray}
\Theta_{\lambda}^{(n-\v)}\left(Y,\J\right) & = & \sum_{(n-\v)\subset\mu_{f}\subset_{\e_{f}-\f}\lambda'\subset_{\f}\lambda}\frac{d_{\lambda/\lambda'}}{d_{\mu_{a}}d_{\mu_{b}}d_{\mu_{c}}d_{\mu_{d}}}\Upsilon_{n}\left(\left\{ \sigma_{f}^{\pm},\tau_{f}^{\pm}\right\} ,(n-\v),\left\{ \mu_{f}\right\} ,\lambda'\right).\label{eq:theta-nu-expression}
\end{eqnarray}
\end{prop}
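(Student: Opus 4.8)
The plan is to compute the trace $\mathrm{tr}_{W^{\lambda}}(\mathfrak{p}B_{\lambda}Q^{+}g^{0}Q^{-})$ from Lemma~\ref{lem:theta-nu-as-trace-of-B} by inserting complete orthonormal bases adapted to the branching filtrations, exactly paralleling the computation of $\Xi_n(Y)$ that led to Theorem~\ref{thm:E_n-emb-exact-expression}, but now keeping track of the extra projection $\mathfrak{p}$ onto the $\triv$-isotypic subspace for $S_{n-\v}$ acting on the first tensor factor. First I would recall that each tensor factor $V^{\lambda}$ (or its dual) decomposes under the relevant subgroup $G_f^{\pm}$ (which fixes $\V_f^{\pm}$ pointwise, hence contains a copy of $S_{n-\e_f}$ after conjugating by the $\sigma_f^{\pm},\tau_f^{\pm}$) into isotypic pieces indexed by $\mu_f \vdash n-\e_f$ with $\mu_f\subset\lambda$, and that the Gelfand--Tsetlin bases $w_T$ for $T\in\Tab(\lambda/\mu_f)$ diagonalize these; the projections $Q^{\pm}$ then become sums over the $\mu_f$ of products of rank-one projections, with the ``transport'' permutations $\sigma_f^{\pm}(\sigma_{f'}^{\pm})^{-1}$ etc.\ entering as matrix coefficients, precisely as in \cite[\S5]{MPasympcover}.

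The one new ingredient is the operator $\mathfrak{p}$. Since $\mathfrak{p}$ is the projection onto $S_{n-\v}$-invariants in the \emph{first} factor $V^{\lambda}$ and the identity elsewhere, and since $n-\v = (n-\f)-\D \le n-\f$, the image of $\mathfrak{p}$ restricted to $V^{\lambda}$ is spanned by those Gelfand--Tsetlin vectors $v_T$ whose shape $\mu_{n-\v}(T)$ equals $(n-\v)$, i.e.\ is the single-row diagram; equivalently, after identifying the relevant $S_{n-\v}\le S_{n}$ with $S'_{\v}$'s complement, this is the $\lambda/\lambda'$ decomposition where $\lambda'$ ranges over YDs with $(n-\v)\subset\lambda'\subset_{\f}\lambda$ (so $\lambda'\vdash n-\v+\f$ wait — rather $\lambda' \subset_{\f}\lambda$ means $\lambda$ has $\f$ more boxes than $\lambda'$, and $\lambda'$ must itself contain the row $(n-\v)$). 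Concretely, inserting $\mathfrak{p}$ forces the ``first-factor'' summation to land in $V^{\lambda/\lambda'}$ for such $\lambda'$, contributing a dimension factor $d_{\lambda/\lambda'}$ (the trace of the identity on that skew module) in place of a $d_\lambda$, while the remaining seven factors still carry the $\mu_f$-isotypic structure and produce the same $\Upsilon_n$-type product of matrix coefficients. Carrying out the bookkeeping — matching each $\langle\,\cdot\,,\cdot\,\rangle$ in \eqref{eq:B-lambda-def} against the eight matrix coefficients assembled into $\M$, and summing over the tableaux $r_f^{\pm}\in\Tab(\mu_f/(n-\v))$, $s_f,t_f\in\Tab(\lambda'/\mu_f)$ — yields exactly the claimed formula \eqref{eq:theta-nu-expression}, with the summation constraint $(n-\v)\subset\mu_f\subset_{\e_f-\f}\lambda'\subset_{\f}\lambda$ and the prefactor $d_{\lambda/\lambda'}/(d_{\mu_a}d_{\mu_b}d_{\mu_c}d_{\mu_d})$.

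I expect the main obstacle to be purely organizational rather than conceptual: one must be careful that the subgroups over which $Q^{\pm}$ project and the subgroup $S_{n-\v}$ over which $\mathfrak{p}$ projects interact correctly — in particular that the constraint ``$\mu_f \subset \lambda'$'' (rather than $\mu_f\subset\lambda$) emerges, which uses that $\e_f \le \v$ so $S_{n-\v}\le S_{n-\e_f}$ up to the fixed conjugations, together with the compatibility of the nested Gelfand--Tsetlin bases under these inclusions. One should also verify that $\mathfrak{p}$ commutes with the rank-one structure coming from $Q^\pm$ on the first factor sufficiently to let the trace factor as stated; this is where the ``more flexible'' eight-fold tensor setup (as opposed to the four-fold one of \cite{MPasympcover}) pays off, since it lets $\mathfrak{p}$ act on a single factor independently. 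Once \eqref{eq:theta-nu-expression} is in hand, combining it with Corollary~\ref{cor:size-xn*} and comparing with the definition \eqref{eq:Xi-def} of $\Xi_n$ (specialized to $\nu=(n-\v)$, noting that there $\lambda\vdash n-\f$ plays the role of $\lambda'$ here) will give Proposition~\ref{prop:Xi_n-refined}, since the extra $\lambda$-summation with weight $d_\lambda d_{\lambda/\lambda'}$ telescopes via Lemma~\ref{lem:induced-rep-dimension} against the Pochhammer prefactors.
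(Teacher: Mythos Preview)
Your approach is exactly the paper's: compute $\mathrm{tr}_{W^{\lambda}}(\mathfrak{p}B_{\lambda}Q^{+}g^{0}Q^{-})$ by rerunning the calculation of \cite[Prop.~5.8]{MPasympcover} with the extra projection $\mathfrak{p}$. The paper's proof is in fact just a one-paragraph pointer to that argument.

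One point in your write-up is misattributed and worth straightening out before you carry out the details. You suggest that $\mathfrak{p}$ is what produces the intermediate diagram $\lambda'\subset_{\f}\lambda$ and the factor $d_{\lambda/\lambda'}$. It is not: the appearance of $\lambda'\vdash n-\f$ and of $d_{\lambda/\lambda'}$ is already present in the computation of $\mathrm{tr}_{W^{\lambda}}(B_{\lambda}Q^{+}g^{0}Q^{-})$ \emph{without} $\mathfrak{p}$, and comes from property \textbf{P4} (the eight permutations entering $\M$ all fix $[n-\f+1,n]$ pointwise, so the relevant matrix coefficients factor through $V^{\lambda'}$ for $\lambda'\subset_{\f}\lambda$). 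The sole effect of $\mathfrak{p}$ is, as the paper states, to replace the summation over all $\nu\subset_{\v}\lambda$ (equivalently $\nu\subset_{\v-\f}\lambda'$) by the single term $\nu=(n-\v)$, since $\mathfrak{p}$ projects the first factor onto its $S_{n-\v}$-trivial isotypic component. Your parenthetical ``$\lambda'\vdash n-\v+\f$ wait\ldots'' already signals the confusion; once you separate the roles of \textbf{P4} (producing $\lambda'$) and $\mathfrak{p}$ (pinning $\nu$), the bookkeeping goes through cleanly.
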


\begin{proof}
This calculation is very similar to the proof of \cite[Prop. 5.8]{MPasympcover}
where $\mathrm{tr}_{W^{\lambda}}(B_{\lambda}g^{0}Q)$ was calculated.
The only difference here is the presence of the additional operator
$\mathfrak{p}$. Therefore we will not give all the details. The proof
follows \cite[proof of Prop. 5.8]{MPasympcover} using properties
\textbf{P1}-\textbf{P4 }of $\sigma_{f}^{\pm},\tau_{f}^{\pm}$. One
also uses that $\mathfrak{p}$ is a self-adjoint projection. The role
that $\text{\ensuremath{\mathfrak{p}}}$ plays in the proof is that
instead of obtaining a summation over all $\nu\subset_{\v}\lambda$,
the projection $\mathfrak{p}$ forces only the relevant $\nu=(n-\v)$
to appear.

Indeed, the calculation leading to \cite[eq. (5.17)]{MPasympcover}
is replaced by 
\begin{align*}
 & \left\langle \mathfrak{p}B_{\lambda}\left[\EE_{\mu_{a},S_{a},T_{a}}^{\lambda,a,+}\otimes\EE_{\mu_{b},S_{b},T_{b}}^{\lambda,b,+}\otimes\EE_{\mu_{c},S_{c},T_{c}}^{\lambda,c,+}\otimes\EE_{\mu_{d},S_{d},T_{d}}^{\lambda,d,+}\right],\EE_{\mu{}_{a},S_{a},T_{a}}^{\lambda,a,-}\otimes\EE_{\mu{}_{b},S_{b},T_{b}}^{\lambda,b,-}\otimes\EE_{\mu{}_{c},S_{c},T_{c}}^{\lambda,c,-}\otimes\EE_{\mu{}_{d},S_{d},T_{d}}^{\lambda,d,-}\right\rangle \\
= & \left\langle B_{\lambda}\left[\EE_{\mu_{a},S_{a},T_{a}}^{\lambda,a,+}\otimes\EE_{\mu_{b},S_{b},T_{b}}^{\lambda,b,+}\otimes\EE_{\mu_{c},S_{c},T_{c}}^{\lambda,c,+}\otimes\EE_{\mu_{d},S_{d},T_{d}}^{\lambda,d,+}\right],\mathfrak{p}\left(\EE_{\mu{}_{a},S_{a},T_{a}}^{\lambda,a,-}\otimes\EE_{\mu{}_{b},S_{b},T_{b}}^{\lambda,b,-}\otimes\EE_{\mu{}_{c},S_{c},T_{c}}^{\lambda,c,-}\otimes\EE_{\mu{}_{d},S_{d},T_{d}}^{\lambda,d,-}\right)\right\rangle \\
= & \frac{1}{d_{\mu_{a}}d_{\mu_{b}}d_{\mu_{c}}d_{\mu_{d}}}\sum_{R_{f}^{\pm}\in\Tab\left(\mu_{f}\right)}\left\langle v_{R_{a}^{+}\sqcup S_{a}}^{\sigma_{a}^{+}},v_{R_{b}^{-}\sqcup S_{b}}^{\sigma_{b}^{-}}\right\rangle \left\langle v_{R_{b}^{+}\sqcup S_{b}}^{\sigma_{b}^{+}},v_{R_{a}^{+}\sqcup T_{a}}^{\tau_{a}^{+}}\right\rangle \left\langle v_{R_{a}^{-}\sqcup T_{a}}^{\tau_{a}^{-}},v_{R_{b}^{+}\sqcup T_{b}}^{\tau_{b}^{+}}\right\rangle \cdot\\
 & ~~~~~\left\langle v_{R_{b}^{-}\sqcup T_{b}}^{\tau_{b}^{-}},v_{R_{c}^{-}\sqcup S_{c}}^{\sigma_{c}^{-}}\right\rangle \left\langle v_{R_{c}^{+}\sqcup S_{c}}^{\sigma_{c}^{+}},v_{R_{d}^{-}\sqcup S_{d}}^{\sigma_{d}^{-}}\right\rangle \left\langle v_{R_{d}^{+}\sqcup S_{d}}^{\sigma_{d}^{+}},v_{R_{c}^{+}\sqcup T_{c}}^{\tau_{c}^{+}}\right\rangle \left\langle v_{R_{c}^{-}\sqcup T_{c}}^{\tau_{c}^{-}},v_{R_{d}^{+}\sqcup T_{d}}^{\tau_{d}^{+}}\right\rangle \left\langle v_{R_{d}^{-}\sqcup T_{d}}^{\tau_{d}^{-}},\mathfrak{p}_{0}v_{R_{a}^{-}\sqcup S_{a}}^{\sigma_{a}^{-}}\right\rangle 
\end{align*}
where $\mathfrak{p}_{0}$ is orthogonal projection to the $S_{n-\v}$-invariant
vectors in $V^{\lambda}$. Then the same discussion as precedes \cite[eq. (5.17)]{MPasympcover}
applies now to show that the above is zero unless there is $\nu\vdash n-\v$
such that $\nu\subset\mu_{f}$ for all $f\in\{a,b,c,d\}$, and all
$R_{f}^{+}\lvert_{\le n-\v}$, $R_{f}^{-}\lvert_{\le n-\v}$ are equal
and of shape $\nu$, except now, the presence of $\mathfrak{\mathfrak{p}}_{0}$
forces $\nu=(n-\v)$. Then the rest of the proof is the same.
\end{proof}

\begin{proof}[Proof of Proposition \ref{prop:Xi_n-refined}]
Combining Corollary \ref{cor:size-xn*} and Proposition \ref{prop:theta-nu-expression}
we obtain
\begin{eqnarray*}
 &  & \left|\X_{n}^{*}(Y,\J)\right|\\
 & = & \frac{\prod_{f\in\left\{ a,b,c,d\right\} }(n-\e_{f})!}{(n)_{\v}}\sum_{\lambda\vdash n}d_{\lambda}\sum_{(n-\v)\subset\mu_{f}\subset_{\e_{f}-\f}\lambda'\subset_{\f}\lambda}\frac{d_{\lambda/\lambda'}}{d_{\mu_{a}}d_{\mu_{b}}d_{\mu_{c}}d_{\mu_{d}}}\Upsilon_{n}\left(\left\{ \sigma_{f}^{\pm},\tau_{f}^{\pm}\right\} ,(n-\v),\left\{ \mu_{f}\right\} ,\lambda'\right)\\
 & = & \frac{\prod_{f\in a,b,c,d}\left(n-\e_{f}\right)!(n)_{\f}}{(n)_{\v}}\sum_{(n-\v)\subset\mu_{f}\subset_{\e_{f}-\f}\lambda'\vdash n-\f}\frac{d_{\lambda'}}{d_{\mu_{a}}d_{\mu_{b}}d_{\mu_{c}}d_{\mu_{d}}}\Upsilon_{n}\left(\left\{ \sigma_{f}^{\pm},\tau_{f}^{\pm}\right\} ,(n-\v),\left\{ \mu_{f}\right\} ,\lambda'\right)\\
 & = & \frac{\prod_{f\in a,b,c,d}\left(n-\e_{f}\right)!(n)_{\f}}{(n)_{\v}}\Xi_{n}^{\nu=(n-\v)},
\end{eqnarray*}
where the second equality used Lemma \ref{lem:induced-rep-dimension}
and the third used $d_{(n-\v)}=1$. This gives the result.
\end{proof}

\subsection{Understanding $\left|\protect\X_{n}^{*}(Y,\protect\J)\right|$\label{subsec:UnderstandingX*_n}}

Recall the definition of $\X_{n}^{*}(Y,\J)$ in (\ref{eq:X_n^*}).
Because these 4-tuples of permutations generally do not correspond
to covers of the surface $\Sigma_{2}$, they are better analyzed as
$n$-degree covers of the bouquet of four loops, namely, as graphs
on $n$ vertices labeled by $\left[n\right]$ with directed edges
labeled by $a,b,c,d$, and exactly one incoming $f$-edge and one
outgoing $f$-edge in every vertex and every $f\in\left\{ a,b,c,d\right\} $.
Equivalently, these graphs are the Schreier graphs depicting the action
of $S_{n}$ on $\left[n\right]$ with respect to the four permutations
$\alpha_{a},\alpha_{b},\alpha_{c},\alpha_{d}$.

Such a Schreier graph $\G$ corresponds to some 4-tuple $\left(\alpha_{a},\alpha_{b},\alpha_{c},\alpha_{d}\right)\in\X_{n}^{*}(Y,\J)$
if and only if the following two conditions are satisfied. The assumption
that $\left(\alpha_{a},\alpha_{b},\alpha_{c},\alpha_{d}\right)\in g^{0}G$
means that $Y^{\left(1\right)}$, the 1-skeleton of $Y$, is embedded
in $\G$, in an embedding that extends $\J_{n}$ on the vertices.
The condition that $W\left(\alpha_{a},\alpha_{b},\alpha_{c},\alpha_{d}\right)\in S_{n-\v}$,
means that at every vertex of $\G$ with label in $[n-\v+1,n]$, there
is a closed path of length 8 that spells out the word $[a,b][c,d]$.

In Lemma \ref{lem:x_n^*-rational} below we show that the number of
such graphs (equal to $\left|\X_{n}^{*}\left(Y,\J\right)\right|$)
is rational in $n$. To this end, we apply techniques based on Stallings
core graphs, in a similar fashion to the techniques applied in \cite{puder2014primitive,PP15}.

Construct a finite graph $\hat{Y}$ as follows. Start with $Y^{\left(1\right)}$,
the $1$-skeleton of $Y$. At every vertex attach a closed cycle of
length $8$ spelling out $\left[a,b\right]\left[c,d\right]$. Then
fold the resulting graph, in the sense of Stallings\footnote{Folding a graph with directed and labeled edges means that as long
as there is a vertex with two incoming edges with the same label,
or two outgoing edges with the same label, these two edges are merged,
and so are their other endpoints. It is well known that this process
has a unique outcome \cite[\S 3]{stallings1983topology}.}, to obtain $\hat{Y}$. In other words, at each vertex $v$ of $Y^{\left(1\right)}$,
if there is a closed path at $v$ spelling $\left[a,b\right]\left[c,d\right]$,
do nothing. Otherwise, find the largest prefix of $\left[a,b\right]\left[c,d\right]$
that can be read on a path $p$ starting at $v$ and the largest suffix
of $\left[a,b\right]\left[c,d\right]$ that can be read on a path
$s$ terminating at $v$. Because $Y$ is a tiled surface, $\left|p\right|+\left|s\right|<8$.
Attach a path of length $8-\left|p\right|-\left|s\right|$ between
the endpoint of $p$ and the beginning of $s$ which spells out the
missing part of the word $\left[a,b\right]\left[c,d\right]$. In this
description, no folding is required. Note, in particular, that $Y^{\left(1\right)}$
is embedded in $\hat{Y}$. 

\begin{figure}
\begin{centering}
\includegraphics[viewport=0bp 12.00608bp 451bp 158bp]{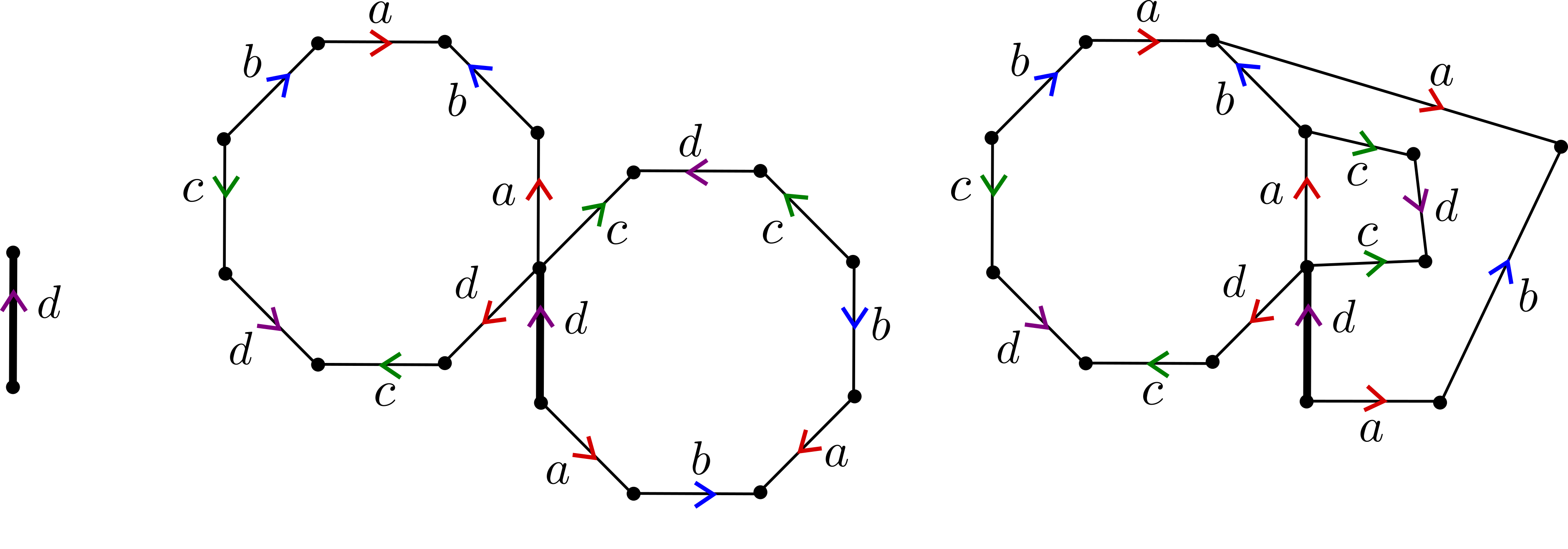}
\par\end{centering}
\caption{\label{fig:Yhat}On the left is a tiled surface $Y$ consisting of
two vertices and a single $d$-edge between them. The middle part
shows $\hat{Y}$: where we \textquotedblleft grow\textquotedblright{}
an octagon from every vertex of $Y$, and in which $Y^{\left(1\right)}$
is embedded. The figure on the right shows another element of ${\cal Q}\left(Y\right)$:
a folded quotient of $\hat{Y}$ where $Y^{\left(1\right)}$ is still
embedded.}
\end{figure}

By the discussion above, the Schreier graphs $\G$ corresponding to
$\X_{n}^{*}\left(Y,\J\right)$ are the graphs in which there is an
embedding of $Y^{\left(1\right)}$ which \emph{extends to a morphism
of directed edge-labeled graphs} of $\hat{Y}$. We group these $\G$
according to the image of $\hat{Y}$. So denote by ${\cal Q}\left(Y\right)$
the possible images of $\hat{Y}$ in the graphs $\G$: these are precisely
the folded quotients of $\hat{Y}$ (edges can only be merged with
equally-labeled other edges) which restrict to a bijection on $Y^{\left(1\right)}$.
In particular, $\hat{Y}\in{\cal Q}\left(Y\right)$. We illustrate
these concepts in Figure \ref{fig:Yhat}. As $\hat{Y}$ is a finite
graph, the set ${\cal Q}\left(Y\right)$ is finite.
\begin{lem}
\label{lem:x_n^*-rational}For every $n\ge8\v\left(Y\right)$,
\begin{equation}
\left|\X_{n}^{*}(Y,\J)\right|=\frac{\left(n!\right)^{4}}{(n)_{\v(Y)}}\sum_{H\in{\cal Q}\left(Y\right)}\frac{\left(n\right)_{\v\left(H\right)}}{\prod_{f\in\left\{ a,b,c,d\right\} }\left(n\right)_{\e_{f}(H)}}.\label{eq:rational expression for X*}
\end{equation}
\end{lem}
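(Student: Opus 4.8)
The plan is to reinterpret $|\X_n^*(Y,\J)|$ as a count of Schreier graphs and then split that count along Stallings-type quotients, exactly as in \cite{puder2014primitive,PP15}. First I would make the combinatorial description of $\X_n^*(Y,\J)$ precise: identifying a $4$-tuple $\left(\alpha_a,\alpha_b,\alpha_c,\alpha_d\right)\in S_n^{~4}$ with the Schreier graph $\G$ on vertex set $[n]$ that has an $f$-edge from $i$ to $\alpha_f(i)$, one has $\left(\alpha_a,\alpha_b,\alpha_c,\alpha_d\right)\in\X_n^*(Y,\J)$ if and only if $(i)$ the labelled subgraph $\iota\colon Y^{(1)}\hookrightarrow\G$ that places the vertex of $Y$ named $v$ at $\J_n(v)\in[n-\v+1,n]$ is present in $\G$, and $(ii)$ the word $[a,b][c,d]$ closes up at every vertex in $[n-\v+1,n]$. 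Indeed $(i)$ is the membership in $G^+g^0G^-$ — one checks that $\alpha_f\in G_f^+g_f^0G_f^-$ exactly when $\alpha_f$ agrees with $g_f^0$ on $\V_f^-$, i.e.\ when the $f$-edges of $Y^{(1)}$ sit correctly in $\G$ — and $(ii)$ is the condition $W(\alpha_a,\alpha_b,\alpha_c,\alpha_d)\in S_{n-\v}$ from \eqref{eq:X_n^*}.

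The key step is then to show that a Schreier graph $\G$ containing $\iota(Y^{(1)})$ admits an extension of $\iota$ to a morphism of directed labelled graphs $\psi\colon\hat Y\to\G$ if and only if $(ii)$ holds, and that such a $\psi$ is then unique. Both assertions use that $\G$ is a genuine Schreier graph (exactly one incoming and one outgoing $f$-edge at each vertex, for each $f$), so that every word in $a^{\pm1},b^{\pm1},c^{\pm1},d^{\pm1}$ can be traced uniquely and reversibly from any vertex: the arc attached at a vertex $v$ of $Y$ in the construction of $\hat Y$ is forced to follow the $[a,b][c,d]$-path issuing from $\iota(v)$, and it returns to the prescribed endpoint precisely because that loop is closed; the tiled-surface inequality $|p|+|s|<8$ is what guarantees that $Y^{(1)}$ survives into $\hat Y$ and that the attached arc is nonempty, so no collapsing of $\iota(Y^{(1)})$ occurs. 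Hence
\[
|\X_n^*(Y,\J)| \;=\; \#\{(\G,\psi)\;:\;\G \text{ a Schreier graph on }[n],\ \psi\colon\hat Y\to\G\text{ a morphism extending }\iota\}.
\]
Factoring each $\psi$ through its image, $\hat Y\twoheadrightarrow\psi(\hat Y)\hookrightarrow\G$, and noting $\psi|_{Y^{(1)}}=\iota$ is injective so $\psi(\hat Y)\in\mathcal Q(Y)$, identifies this set with the set of triples $\bigl(H\in\mathcal Q(Y),\ \text{an embedding of }H\text{ into some Schreier graph }\G\text{ on }[n]\text{ extending }\iota\bigr)$.

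It then remains to count, for a fixed $H\in\mathcal Q(Y)$, the pairs $(\G,H\hookrightarrow\G)$ with the embedding extending $\iota$. The $\v=\v(Y)$ vertices of $H$ in the image of $Y^{(1)}$ are pinned to $[n-\v+1,n]$ by $\J_n$; the remaining $\v(H)-\v$ vertices of $H$ can be placed injectively in $[n-\v]$ in $(n-\v)_{\v(H)-\v}=(n)_{\v(H)}/(n)_{\v}$ ways; and for each letter $f$ the $\e_f(H)$ many $f$-edges of $H$ impose a partial injection of size $\e_f(H)$ on $[n]$, which extends to a permutation $\alpha_f$ of $[n]$ in exactly $(n-\e_f(H))!$ ways. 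These counts are all valid because $n\ge 8\v(Y)\ge\v(\hat Y)\ge\v(H)\ge\e_f(H)$, using that $\hat Y$ is obtained from $Y^{(1)}$ (which has $\v$ vertices) by attaching at each of its vertices a path of length at most $8$, so $\v(\hat Y)\le 8\v(Y)$. Multiplying and summing over $H\in\mathcal Q(Y)$ gives
\[
|\X_n^*(Y,\J)| \;=\; \sum_{H\in\mathcal Q(Y)}\frac{(n)_{\v(H)}}{(n)_{\v(Y)}}\prod_{f\in\{a,b,c,d\}}(n-\e_f(H))!\,,
\]
and dividing by $(n!)^4=\prod_f n!$, using $(n-\e_f(H))!/n!=1/(n)_{\e_f(H)}$, yields exactly \eqref{eq:rational expression for X*}. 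The one genuinely delicate point is the key step — verifying that tracing $[a,b][c,d]$ in the deterministic graph $\G$ really defines a graph morphism out of $\hat Y$, and that the tiled-surface hypothesis rules out all degeneracies; once that bijection is in place the rest is bookkeeping with Pochhammer symbols.
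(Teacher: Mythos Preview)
Your proof is correct and follows essentially the same approach as the paper's: both partition $\X_n^*(Y,\J)$ according to the image $H\in\mathcal Q(Y)$ of the unique morphism $\hat Y\to\G$, then count vertex-labelings of $H$ extending $\J_n$ and extensions of the resulting partial permutations. Your write-up is more explicit about why the morphism $\psi$ exists and is unique and why the factorization through the image gives a genuine bijection, points the paper leaves to the preceding discussion.
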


\begin{proof}
By the discussion above it is enough to show that for every $H\in{\cal Q}\left(Y\right)$
and $n\ge8\v\left(Y\right)$, the number of Schreier graphs $\G$
on $n$ vertices where the image of $\hat{Y}$ is $H$, is precisely
\[
\frac{(n!)^{4}}{\left(n\right)_{\v\left(Y\right)}}\cdot\frac{\left(n\right)_{\v\left(H\right)}}{\prod_{f\in\left\{ a,b,c,d\right\} }\left(n\right)_{\e_{f}(H)}}.
\]
First, note that $\v\left(H\right)\le\v\left(\hat{Y}\right)\le8\v\left(Y\right)$,
so under the assumption that $n\ge8\v\left(Y\right)$, $H$ can indeed
be embedded in Schreier graphs on $n$ vertices. The number of possible
labelings of the vertices of $H$, which must extend the labeling
of the vertices of $Y^{\left(1\right)}$, is 
\[
\left(n-\v\left(Y\right)\right)\left(n-\v\left(Y\right)-1\right)\cdots\left(n-\v\left(H\right)+1\right)=\frac{\left(n\right)_{\v\left(H\right)}}{\left(n\right)_{\v\left(Y\right)}}.
\]
There are exactly $\e_{a}\left(H\right)$ constraints on the permutation
$\alpha_{a}$ for it to agree with the data in the vertex-labeled
$H$, so there are $\left(n-\e_{a}\left(H\right)\right)!=\frac{n!}{\left(n\right)_{\e_{a}\left(H\right)}}$
such permutations. The same logic applied to the other letters gives
the required result. 
\end{proof}
Combining Lemma \ref{lem:x_n^*-rational} with Proposition \ref{prop:Xi_n-refined}
gives the following corollary. 
\begin{cor}
\label{cor:rational-form-Xi-dnu}For $n\geq8\v(Y)$ we have 
\begin{eqnarray*}
\Xi_{n}^{\nu=(n-\v)}(Y) & = & \frac{\prod_{f\in\left\{ a,b,c,d\right\} }(n)_{\e_{f}(Y)}}{(n)_{\f(Y)}}\sum_{H\in{\cal Q}\left(Y\right)}\frac{\left(n\right)_{\v(H)}}{\prod_{f\in\left\{ a,b,c,d\right\} }(n)_{\e_{f}(H)}}.
\end{eqnarray*}
In particular, if $Y$ is fixed and $n\to\infty,$ we have

\begin{equation}
\Xi_{n}^{\nu=(n-\v)}(Y)=\sum_{H\in{\cal Q}\left(Y\right)}n^{\e\left(Y\right)-\f\left(Y\right)+\chi\left(H\right)}\left(1+O_{Y}\left(\frac{1}{n}\right)\right).\label{eq:EC of H in expression for Chi-dv=00003D1}
\end{equation}
\end{cor}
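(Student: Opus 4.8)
The plan is to combine Proposition~\ref{prop:Xi_n-refined} with Lemma~\ref{lem:x_n^*-rational} and then read off the leading-order term. First I would substitute the expression for $|\X_n^*(Y,\J)|$ obtained from Lemma~\ref{lem:x_n^*-rational}, namely $|\X_n^*(Y,\J)|=\frac{(n!)^4}{(n)_{\v(Y)}}\sum_{H\in{\cal Q}(Y)}\frac{(n)_{\v(H)}}{\prod_{f}(n)_{\e_f(H)}}$ (valid for $n\ge 8\v(Y)$), into the identity $\Xi_n^{\nu=(n-\v)}=\frac{(n)_\v\,|\X_n^*(Y,\J)|}{\prod_{f}(n-\e_f)!\,(n)_\f}$ of Proposition~\ref{prop:Xi_n-refined}. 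The factor $(n)_\v=(n)_{\v(Y)}$ cancels, and since
\[
\frac{(n!)^4}{\prod_{f}(n-\e_f(Y))!}=\prod_{f}\frac{n!}{(n-\e_f(Y))!}=\prod_{f}(n)_{\e_f(Y)},
\]
this collapses exactly to the stated closed form
\[
\Xi_n^{\nu=(n-\v)}(Y)=\frac{\prod_{f}(n)_{\e_f(Y)}}{(n)_{\f(Y)}}\sum_{H\in{\cal Q}(Y)}\frac{(n)_{\v(H)}}{\prod_{f}(n)_{\e_f(H)}}.
\]

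For the asymptotic statement I would fix $Y$. Then ${\cal Q}(Y)$ is a finite set and all of the quantities $\e_f(Y)$, $\f(Y)$, $\v(H)$, $\e_f(H)$ are bounded (in particular $\v(H)\le\v(\hat Y)\le 8\v(Y)$, and hence each $\e_f(H)$ is bounded as well). Applying the elementary estimate $(n)_q=n^q\bigl(1+O_q(n^{-1})\bigr)$, valid for each fixed $q$, to every Pochhammer factor gives, for each fixed $H\in{\cal Q}(Y)$,
\[
\frac{\prod_{f}(n)_{\e_f(Y)}}{(n)_{\f(Y)}}\cdot\frac{(n)_{\v(H)}}{\prod_{f}(n)_{\e_f(H)}}=n^{\,\sum_f\e_f(Y)-\f(Y)+\v(H)-\sum_f\e_f(H)}\bigl(1+O(n^{-1})\bigr).
\]
To identify the exponent I would use that in any tiled surface or labeled graph $Z$ each edge carries exactly one label, so $\sum_f\e_f(Z)=\e(Z)$; hence $\sum_f\e_f(Y)=\e(Y)$ and $\sum_f\e_f(H)=\e(H)$. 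Since $H$ is a graph, $\v(H)-\e(H)=\chi(H)$, so the exponent equals $\e(Y)-\f(Y)+\chi(H)$. Finally, since the sum over $H\in{\cal Q}(Y)$ has a fixed finite number of terms, the error terms combine into a single $O(n^{-1})$, yielding (\ref{eq:EC of H in expression for Chi-dv=00003D1}).

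The argument is essentially bookkeeping, so there is no real obstacle. The only points requiring a moment's attention are the exact cancellation of the combinatorial prefactors — verifying $\frac{(n!)^4}{\prod_f(n-\e_f(Y))!}=\prod_f(n)_{\e_f(Y)}$ and using $\sum_f\e_f=\e$ consistently for both $Y$ and $H$ — and checking that the hypothesis $n\ge 8\v(Y)$ of Lemma~\ref{lem:x_n^*-rational} is exactly the constraint inherited by the corollary, which it is.
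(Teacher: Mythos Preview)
Your proof is correct and follows exactly the approach indicated in the paper, which simply states that the corollary follows by combining Proposition~\ref{prop:Xi_n-refined} with Lemma~\ref{lem:x_n^*-rational}. Your verification of the cancellation $\frac{(n!)^4}{\prod_f(n-\e_f(Y))!}=\prod_f(n)_{\e_f(Y)}$ and the identification of the exponent via $\sum_f\e_f=\e$ and $\chi(H)=\v(H)-\e(H)$ are precisely the bookkeeping details the paper leaves implicit.
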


\begin{proof}
The first statement follows directly from Lemma \ref{lem:x_n^*-rational}
and Proposition \ref{prop:Xi_n-refined}. To obtain the second statement
from the first, we use that all Pochammer symbols $(n)_{q}$ appearing
therein have $q$ bounded depending on $Y$ and hence $(n)_{q}=n^{q}+O_{Y}(n^{q-1})$.
\end{proof}
Note that in the construction of $\hat{Y}$ from $Y^{\left(1\right)}$,
we add a ``handle'' (a sequence of edges) to the graph for every
vertex of $Y$ that does not admit a closed cycle spelling $\left[a,b\right]\left[c,d\right]$.
Hence the Euler characteristic of $\hat{Y}$ is equal to that of $Y^{\left(1\right)}$
minus the number of such vertices in $Y$. If $Y$ has an octagon
attached along every closed cycle spelling $\left[a,b\right]\left[c,d\right]$,
there are $\v\left(Y\right)-\f\left(Y\right)$ such vertices, so 
\begin{equation}
\chi\left(\hat{Y}\right)=\chi\left(Y^{\left(1\right)}\right)-\left(\v\left(Y\right)-\f\left(Y\right)\right)=\f\left(Y\right)-\e\left(Y\right).\label{eq:EC of H hat}
\end{equation}
In particular, this is the case when $Y$ is (strongly) boundary reduced.
This is important because of the role of $\chi\left(H\right)$ in
(\ref{eq:EC of H in expression for Chi-dv=00003D1}) for $H\in{\cal Q}(Y)$.
It turns out that when $Y$ is strongly boundary reduced, $\hat{Y}$
has Euler characteristic strictly larger than all other graphs in
${\cal Q}\left(Y\right)$:
\begin{lem}
\label{lem:x-max-f-e}If $Y$ is strongly boundary reduced, then for
every $H\in{\cal Q}(Y)\setminus\{\hat{Y}\}$,
\[
\chi\left(H\right)<\chi\left(\hat{Y}\right).
\]
\end{lem}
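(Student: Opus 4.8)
The plan is to analyze what happens to the Euler characteristic when $\hat{Y}$ is folded further, i.e.\ when we pass to a nontrivial quotient $H \in \mathcal{Q}(Y) \setminus \{\hat{Y}\}$. The key observation is that for graphs, Euler characteristic equals $\v - \e$, and identifying two vertices of a graph without changing the edge count raises $\chi$ by one, while identifying two edges (and hence their endpoints) raises $\chi$ by one or two depending on whether the endpoints were already identified. So each elementary Stallings fold applied to $\hat{Y}$ strictly increases $\chi$. Therefore $\chi(H) > \chi(\hat{Y})$ for \emph{any} proper quotient — which is the \emph{opposite} of what the lemma claims. The resolution must be that the map $\hat Y \to H$ for $H\in\mathcal Q(Y)$ is \emph{not} required to be a quotient by folds in the naive sense; rather (reading the definition of $\mathcal Q(Y)$) $H$ is a quotient of $\hat Y$ which \emph{restricts to a bijection on $Y^{(1)}$}. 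So the edges and vertices of $Y^{(1)}$ are not allowed to be identified with each other, but the ``handle'' edges added to build $\hat Y$ from $Y^{(1)}$ \emph{can} be folded into the rest of the graph — and crucially, folding a handle edge onto an already-present edge of $Y^{(1)}$ or of another handle \emph{removes} edges. I would therefore recast the statement as: every proper quotient in $\mathcal Q(Y)$ arises from a sequence of folds each of which identifies a handle-edge with an existing edge, and I must show that at least one such fold strictly \emph{decreases} $\v - \e$... which still goes the wrong way.

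Let me reconsider: the correct reading is that $\chi$ here is the topological Euler characteristic $\v-\e$, and for a fold that merges two edges sharing a common endpoint into one edge, merging their far endpoints, we go from $(\v,\e)$ to $(\v-1,\e-1)$, so $\chi$ is \emph{unchanged}; only when the two far endpoints were already equal does $\chi$ decrease by one (we lose an edge but no vertex, so $\chi$ drops). Thus a sequence of folds can only keep $\chi$ fixed or decrease it, and the claim $\chi(H) < \chi(\hat Y)$ will hold provided at least one fold in the sequence is of the ``edge-loop-collapsing'' type. So the core of the proof is: \emph{if $Y$ is strongly boundary reduced and $\hat Y$ admits a nontrivial fold relative to $Y^{(1)}$, then the very first fold already creates a loop-collapse, or more precisely the total folding must include at least one strict $\chi$-decrease.} The hard part — and where I expect the real work — is ruling out the possibility that $\hat Y$ folds onto $H$ purely through $\chi$-preserving folds, i.e.\ that $H$ is obtained from $\hat Y$ by a ``homotopy equivalence'' fold sequence. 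The mechanism must be: the handles of $\hat Y$ each spell (a sub-word of) $[a,b][c,d]$ and are attached at vertices of $Y^{(1)}$; any fold that merges a handle edge with a $Y^{(1)}$-edge, followed through, will eventually force two vertices of $Y^{(1)}$ to be identified (forbidden, contradiction) \emph{unless} it wraps a handle into a cycle, which is exactly a $\chi$-decreasing collapse. Strong boundary reduction is what forbids the configurations where a handle could be ``absorbed'' along the boundary of $Y$ without creating such a collapse — concretely, that no boundary cycle of $Y$ contains a half-block or half-chain, which by Lemma~\ref{lem:BR and SBR are 0 and ge 0 adapted} is the statement that $Y$ is $\varepsilon$-adapted for some $\varepsilon>0$.

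Concretely I would proceed as follows. First, set up notation: $\hat Y = Y^{(1)} \cup (\text{handles})$, where a handle is a path of length $8 - |p| - |s|$ attached at a single vertex $v$ of $Y^{(1)}$ (here $p,s$ are the maximal prefix/suffix of $[a,b][c,d]$ readable at $v$ inside $Y^{(1)}$), and note $\chi(\hat Y) = \chi(Y^{(1)}) - \#\{\text{handles}\}$ as in \eqref{eq:EC of H hat}. Second, recall that $\v - \e$ is a monotone-non-increasing quantity under Stallings folds, strictly decreasing exactly when a fold identifies an edge's two already-coincident endpoints (equivalently, creates or shortens a loop). Third, suppose for contradiction $H \in \mathcal Q(Y)$ with $\chi(H) \geq \chi(\hat Y)$ and $H \neq \hat Y$; then the fold sequence $\hat Y \to H$ consists entirely of $\chi$-preserving folds, so it never collapses a loop, so the quotient map $\hat Y \to H$ is $\pi_1$-injective, hence a homotopy equivalence onto its image. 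Fourth — the crux — analyze such a fold sequence: since folds on $Y^{(1)}$-edges among themselves are forbidden, and a $\chi$-preserving fold of a handle-edge against a $Y^{(1)}$-edge merges a handle vertex into a $Y^{(1)}$-vertex, an inductive/combinatorial argument (tracking which vertices of $\hat Y$ have been merged into $Y^{(1)}$) shows that the entire handle attached at $v$ gets zipped into $Y^{(1)}$ along a path reading $[a,b][c,d]$; this forces the existence of a closed path at $v$ in (a fold of) $Y$ spelling $[a,b][c,d]$ that was not present in $Y$, which by the combinatorics of blocks/chains (Section~\ref{subsec:Blocks-and-chains}) produces a half-block or half-chain in a boundary cycle of $Y$ — contradicting strong boundary reduction via Lemma~\ref{lem:BR and SBR are 0 and ge 0 adapted}. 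I expect step four to be the main obstacle: making precise ``the handle zips into $Y^{(1)}$'' and extracting from it the forbidden boundary configuration, which likely requires a careful case analysis of how a length-$\leq 8$ handle word can fold against the $1$-skeleton, organized by the possible lengths $|p|,|s|$ and the cyclic structure of $[a,b][c,d]$. I would lean on the already-established machinery for $\br$-closures and strong boundary reducedness (Proposition~\ref{prop:properties-of-BR-alg} and the definitions preceding it) rather than re-deriving the block/chain combinatorics from scratch.
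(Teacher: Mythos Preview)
Your proposal contains a sign error that undermines the whole argument. You write that when a Stallings fold merges two edges whose far endpoints already coincide, ``we lose an edge but no vertex, so $\chi$ drops.'' But for graphs $\chi = \v - \e$, so removing an edge without removing a vertex gives $\chi \mapsto \v - (\e-1) = \chi + 1$: the Euler characteristic \emph{increases}. Thus a sequence of Stallings folds can only preserve or \emph{increase} $\chi$, not decrease it as you claim. Since $\hat Y$ is already folded, the surjection $\hat Y \to H$ factors as: identify all vertices in each fiber (each such identification lowers $\chi$ by one), then fold away the resulting parallel edges (each such fold raises $\chi$ by one). Writing $\Delta_\v = \v(\hat Y)-\v(H)$ and $\Delta_\e = \e(\hat Y)-\e(H)$, one gets $\chi(H) = \chi(\hat Y) - \Delta_\v + \Delta_\e$, so the lemma is equivalent to $\Delta_\v > \Delta_\e$ for every proper quotient in $\mathcal Q(Y)$. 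Your ``step four'' plan --- arguing that a handle zipping into $Y^{(1)}$ must eventually produce a $\chi$-\emph{decreasing} collapse --- is aimed at the wrong inequality once the sign is corrected, and does not obviously yield $\Delta_\v > \Delta_\e$.

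The paper sidesteps the combinatorics entirely and proves the lemma indirectly. It invokes \cite[Prop.~5.26]{MPasympcover}, which gives $\Xi_n(Y) = 2 + O(n^{-1})$ for strongly boundary reduced $Y$, and combines it with Lemmas~\ref{lem:intermediate-regime} and~\ref{lem:large-regime} to get $\Xi_n^{\nu=(n-\v)}(Y) = 1 + O(n^{-1})$. Comparing this with the expansion of Corollary~\ref{cor:rational-form-Xi-dnu},
\[
\Xi_n^{\nu=(n-\v)}(Y) = \sum_{H\in\mathcal Q(Y)} n^{\e(Y)-\f(Y)+\chi(H)}\bigl(1+O(n^{-1})\bigr),
\]
forces exactly one $H\in\mathcal Q(Y)$ to have $\chi(H)=\f(Y)-\e(Y)$ and all others strictly smaller; equation~\eqref{eq:EC of H hat} then identifies that unique $H$ as $\hat Y$. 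So the combinatorial inequality is read off from representation-theoretic asymptotics already established, rather than proved by direct graph-theoretic analysis.
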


\begin{proof}
We use \cite[Prop.~5.26]{MPasympcover} that states that if $Y$ is
strongly boundary reduced, then as $n\to\infty$, 
\begin{equation}
\Xi_{n}(Y)=2+O_{Y}\left(n^{-1}\right).\label{eq:chi-max-temp1}
\end{equation}
When $Y$ is fixed and $n\to\infty$, it follows from Lemmas \ref{lem:intermediate-regime}(\ref{enu:intermediate-BR})
and \ref{lem:large-regime} that 
\begin{equation}
\Xi_{n}(Y)=2\Xi_{n}^{(n-\v)}(Y)+O_{Y}\left(n^{-2}\right).\label{eq:xmax-temp2}
\end{equation}
Combining (\ref{eq:chi-max-temp1}) and (\ref{eq:xmax-temp2}) gives
\begin{equation}
\Xi_{n}^{(n-\v)}(Y)=1+O_{Y}\left(n^{-1}\right).\label{eq:dnu=00003D0-asymptotic}
\end{equation}
Comparing (\ref{eq:dnu=00003D0-asymptotic}) with (\ref{eq:EC of H in expression for Chi-dv=00003D1})
shows that there is exactly one $H\in{\cal Q}(Y$) with $\chi\left(H\right)=\f(Y)-\e(Y)$,
and all remaining graphs in ${\cal Q}(Y)$ have strictly smaller Euler
characteristic. Finally, (\ref{eq:EC of H hat}) shows this $H$ must
be $\hat{Y}$ itself.
\end{proof}

\subsection{Bounds on $\protect\E_{n}^{\protect\emb}(Y)$ for $\varepsilon$-adapted
$Y$}

In this section we give the final implications of the previous sections
for $\E_{n}^{\emb}(Y)$ for $\varepsilon$-adapted $Y$. Recall the
definition of ${\cal Q}\left(Y\right)$ from $\S\S$\ref{subsec:UnderstandingX*_n}.
We will need the following easy bound for Pochhammer symbols.
\begin{lem}
\label{lem:Pochhammer-bounds}Let $n\in\N$ and $q\in\N\cup\{0\}$
with $q\leq\frac{1}{2}n$. Then 
\[
n^{q}\left(1-\frac{q^{2}}{n}\right)\leq n^{q}\exp\left(\frac{-q^{2}}{n}\right)\leq(n)_{q}\leq n^{q}.
\]
\end{lem}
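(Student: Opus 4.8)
The plan is to expand the Pochhammer symbol as a product and control each factor individually. First I would write
\[
(n)_{q}=\prod_{j=0}^{q-1}(n-j)=n^{q}\prod_{j=0}^{q-1}\left(1-\frac{j}{n}\right),
\]
with the convention that the empty product equals $1$ when $q=0$, so the statement is trivially true in that case. From this expression the rightmost inequality $(n)_{q}\le n^{q}$ is immediate, since every factor $1-\tfrac{j}{n}$ lies in $(0,1]$ for $0\le j\le q-1<n$.

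For the middle inequality I would use the elementary bound $1-x\ge e^{-2x}$, valid for $x\in[0,\tfrac12]$. This can be verified by setting $h(x)=1-x-e^{-2x}$, noting $h(0)=0$ and $h(\tfrac12)=\tfrac12-e^{-1}>0$, and observing that $h'(x)=-1+2e^{-2x}$ vanishes only at $x=\tfrac{\log 2}{2}$, so $h$ first increases then decreases on $[0,\tfrac12]$ and is therefore non-negative throughout. Since $q\le\tfrac{n}{2}$, every index $j$ with $0\le j\le q-1$ satisfies $\tfrac{j}{n}\le\tfrac{q-1}{n}<\tfrac12$, hence
\[
\prod_{j=0}^{q-1}\left(1-\frac{j}{n}\right)\ge\prod_{j=0}^{q-1}e^{-2j/n}=\exp\!\left(-\frac{2}{n}\sum_{j=0}^{q-1}j\right)=\exp\!\left(-\frac{q(q-1)}{n}\right)\ge\exp\!\left(-\frac{q^{2}}{n}\right).
\]
Multiplying by $n^{q}$ gives $(n)_{q}\ge n^{q}\exp(-q^{2}/n)$.

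Finally, the leftmost inequality follows from the universal estimate $e^{-t}\ge 1-t$ applied with $t=q^{2}/n\ge 0$, which yields $n^{q}\exp(-q^{2}/n)\ge n^{q}\bigl(1-\tfrac{q^{2}}{n}\bigr)$. Chaining the three bounds proves the lemma.

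There is essentially no serious obstacle here: the only non-formal ingredient is the one-variable inequality $1-x\ge e^{-2x}$ on $[0,\tfrac12]$, and even that could be replaced, if one wishes to avoid the short calculus computation, by the standard bound $1-x\ge e^{-x/(1-x)}$ combined with $\tfrac{x}{1-x}\le 2x$ for $x\le\tfrac12$, which gives the same conclusion.
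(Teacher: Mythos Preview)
Your proof is correct and follows exactly the same approach as the paper's own proof: factor $(n)_q=n^q\prod_{j=0}^{q-1}(1-j/n)$, use $1-x\ge e^{-2x}$ on $[0,\tfrac12]$ for the middle inequality, and $e^{-t}\ge 1-t$ for the leftmost one. You have simply supplied more detail than the paper's three-line sketch.
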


\begin{proof}
The first inequality is based on $1-x\le e^{-x}$. The second one
is based on writing $\left(n\right)_{q}=n^{q}\left(1-\frac{1}{n}\right)\cdots\left(1-\frac{q-1}{n}\right)$
and using $e^{-2x}\le1-x$ which holds for $x\in\left[0,\frac{1}{2}\right]$.
The third inequality is obvious.
\end{proof}
\begin{prop}
\label{prop:e-adapted-E_n_asympt}Let $\varepsilon\in(0,1)$ and $\eta=\eta(\varepsilon)\in(0,\frac{1}{100})$
be the parameter provided by Proposition \ref{prop:Xi-bound-final-e-adapted}
for this $\varepsilon$. Let $n\in\N$ and $M=M\left(n\right)\ge1$.
Let $Y$ be $\varepsilon$-adapted with $\D(Y)\leq n^{\eta}$ and
$\v(Y),\e(Y),\f(Y)\leq M\leq n^{1/4}$. Then 
\begin{equation}
\frac{\E_{n}^{\emb}(Y)}{n^{\chi(Y)}}=\left(1+O_{\varepsilon}\left(\frac{M^{2}}{n}\right)\right)\left(1+\sum_{H\in{\cal Q}\left(Y\right)\backslash\{\hat{Y}\}}n^{\chi(H)+\e(Y)-\f(Y)}\right).\label{eq:E_n-asmpy}
\end{equation}
\end{prop}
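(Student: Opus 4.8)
The plan is to assemble the formula from the three main ingredients already prepared: Theorem~\ref{thm:E_n-emb-exact-expression} relating $\E_n^{\emb}(Y)$ to $\Xi_n(Y)$, Proposition~\ref{prop:Xi-bound-final-e-adapted} reducing $\Xi_n(Y)$ to $2\Xi_n^{\nu=(n-\v)}(Y)$ up to $O_\varepsilon(1/n)$, and Corollary~\ref{cor:rational-form-Xi-dnu} giving the exact rational expression for $\Xi_n^{\nu=(n-\v)}(Y)$ in terms of $\mathcal Q(Y)$. First I would record the exact identity from Theorem~\ref{thm:E_n-emb-exact-expression},
\[
\E_{n}^{\emb}(Y)=\frac{(n!)^{3}}{|\X_{n}|}\cdot\frac{(n)_{\v}(n)_{\f}}{\prod_{f}(n)_{\e_{f}}}\cdot\Xi_{n}(Y),
\]
and substitute $\Xi_n(Y)=2\Xi_n^{\nu=(n-\v)}(Y)+O_\varepsilon(1/n)$ from Proposition~\ref{prop:Xi-bound-final-e-adapted} (whose hypotheses $\D(Y)\le n^\eta$, $\v(Y)\le n^{1/4}$ hold by assumption). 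Then I would plug in the formula of Corollary~\ref{cor:rational-form-Xi-dnu},
\[
\Xi_{n}^{\nu=(n-\v)}(Y)=\frac{\prod_{f}(n)_{\e_{f}(Y)}}{(n)_{\f(Y)}}\sum_{H\in\mathcal Q(Y)}\frac{(n)_{\v(H)}}{\prod_{f}(n)_{\e_{f}(H)}},
\]
valid since $n\geq 8\v(Y)$ for $n$ large (as $\v(Y)\le M\le n^{1/4}$). Multiplying these together, the factors $\prod_f(n)_{\e_f(Y)}$ and $(n)_{\f(Y)}$ cancel against the corresponding factors in the prefactor, leaving
\[
\E_{n}^{\emb}(Y)=\frac{(n!)^{3}}{|\X_{n}|}\left(2(n)_{\v(Y)}\sum_{H\in\mathcal Q(Y)}\frac{(n)_{\v(H)}}{\prod_{f}(n)_{\e_{f}(H)}}+O_\varepsilon\Bigl(\frac{(n)_{\v}(n)_{\f}}{n\prod_f(n)_{\e_f}}\Bigr)\right).
\]

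Next I would convert every Pochhammer symbol to a power of $n$ using Lemma~\ref{lem:Pochhammer-bounds}: since $\v(H)\le\v(\hat Y)\le 8\v(Y)\le 8M$ and $\e_f(H)\le\e(\hat Y)$, $\f(Y)\le M$ — all bounded by a constant multiple of $M\le n^{1/4}\le n/2$ — we get $(n)_q=n^q(1+O(q^2/n))=n^q(1+O(M^2/n))$ for each relevant $q$. Also $(n!)^3/|\X_n|=(n!)^3/(n!)^{2g-1}\cdot((n!)^{2g-1}/|\X_n|)$; with $g=2$ this is $(n!)^{2}/|\X_n|\cdot \ldots$ — more precisely, by Corollary~\ref{cor:size-of-X_n}, $|\X_n|/(n!)^{3}=2+O(n^{-2})$, so $(n!)^3/|\X_n|=\tfrac12(1+O(n^{-2}))$. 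The factor $2$ then cancels the $2$ in front of the sum. Collecting exponents, each term $H$ contributes $n^{\v(Y)+\v(H)-\sum_f\e_f(H)}=n^{\v(Y)+\chi(H)}$ where I use $\chi(H)=\v(H)-\e(H)=\v(H)-\sum_f\e_f(H)$ (every graph $H\in\mathcal Q(Y)$ is $4$-regular-in-labels, i.e.\ $\e(H)=\sum_f\e_f(H)$). Factoring out the $H=\hat Y$ term, which by \eqref{eq:EC of H hat} has $\chi(\hat Y)=\f(Y)-\e(Y)$ so that $n^{\v(Y)+\chi(\hat Y)}=n^{\v(Y)-\e(Y)+\f(Y)}=n^{\chi(Y)}$, I obtain
\[
\E_{n}^{\emb}(Y)=n^{\chi(Y)}\Bigl(1+O_\varepsilon(M^2/n)\Bigr)\Bigl(1+\sum_{H\in\mathcal Q(Y)\setminus\{\hat Y\}}n^{\chi(H)+\e(Y)-\f(Y)}\Bigr),
\]
after checking that the $O_\varepsilon$ error term from Proposition~\ref{prop:Xi-bound-final-e-adapted}, of relative size $O_\varepsilon(1/n)$ compared to $n^{\chi(Y)}$ times the $H=\hat Y$ term, is absorbed into $O_\varepsilon(M^2/n)$. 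Dividing by $n^{\chi(Y)}$ gives \eqref{eq:E_n-asmpy}.

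The only genuinely delicate point is bookkeeping the error terms uniformly: I must verify that the Lemma~\ref{lem:Pochhammer-bounds} corrections, applied to the bounded sum over $H\in\mathcal Q(Y)$, combine to a single $(1+O_\varepsilon(M^2/n))$ factor rather than accumulating — this is fine because $|\mathcal Q(Y)|$ is finite but \emph{not} bounded in terms of $M$ alone, so I should instead factor the Pochhammer corrections out of the product representation \emph{before} expanding the sum, i.e.\ write $(n)_{\v(H)}/\prod_f(n)_{\e_f(H)}=n^{\chi(H)-\chi(\hat Y)}\cdot n^{\chi(\hat Y)}\cdot(1+O(M^2/n))$ termwise and then note the geometric-type sum $\sum_{H}n^{\chi(H)-\chi(\hat Y)}$ is itself $O(1)$ since the exponents are non-positive integers and, for each value, the number of $H$ is polynomially bounded — or more cleanly, absorb the termwise $(1+O(M^2/n))$ into the global factor by pulling $\max$ over the (finitely many) $H$. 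I expect this uniformity argument — making sure no implied constant secretly depends on $|\mathcal Q(Y)|$ — to be the main thing requiring care; the rest is cancellation of Pochhammer prefactors and substituting Corollary~\ref{cor:size-of-X_n}.
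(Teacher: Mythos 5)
Your proposal is correct and follows essentially the same route as the paper: combine Theorem \ref{thm:E_n-emb-exact-expression}, Proposition \ref{prop:Xi-bound-final-e-adapted}, Corollary \ref{cor:rational-form-Xi-dnu}, Corollary \ref{cor:size-of-X_n} and Lemma \ref{lem:Pochhammer-bounds}, cancel the Pochhammer prefactors, and identify the $\hat{Y}$ term via (\ref{eq:EC of H hat}) and Lemma \ref{lem:x-max-f-e}. The uniformity issue you flag is handled in the paper exactly as you suggest, by noting $\v(H)\le 8\v(Y)\le 8M$ and $\e(H)\le 9M$ for every $H\in\mathcal{Q}(Y)$, so the termwise correction $1+O(M^{2}/n)$ is uniform and factors out of the sum of positive terms.
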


\begin{proof}
Assume all parameters are as in the statement of the proposition.
By Theorem \ref{thm:E_n-emb-exact-expression} and Proposition \ref{prop:Xi-bound-final-e-adapted}
we have
\[
\frac{\E_{n}^{\emb}(Y)}{n^{\chi(Y)}}=\frac{(n!)^{3}}{|\X_{n}|}\cdot\frac{(n)_{\v(Y)}(n)_{\f(Y)}}{\prod_{f}(n)_{\e_{f}(Y)}n^{\chi(Y)}}\left[2\Xi_{n}^{\nu=(n-\v)}\left(Y\right)+O_{\varepsilon}\left(\frac{1}{n}\right)\right].
\]
By Lemma \ref{lem:Pochhammer-bounds}, $\frac{(n)_{\v(Y)}(n)_{\f(Y)}}{\prod_{f}(n)_{\e_{f}(Y)}n^{\chi(Y)}}=1+O\left(\frac{M^{2}}{n}\right)$.
By Corollary \ref{cor:size-of-X_n}, $\frac{\left(n!\right)^{3}}{\left|\X_{n}\right|}=\frac{1}{2}+O\left(\frac{1}{n^{2}}\right)$.
With Corollary \ref{cor:rational-form-Xi-dnu}, this gives
\begin{eqnarray}
\frac{\E_{n}^{\emb}(Y)}{n^{\chi(Y)}} & = & \left[\frac{1}{2}+O\left(\frac{M^{2}}{n}\right)\right]\left[2\frac{\prod_{f}\left(n\right)_{\e_{f}\left(Y\right)}}{\left(n\right)_{\f\left(Y\right)}}\sum_{H\in{\cal Q}\left(Y\right)}\frac{\left(n\right)_{\v\left(H\right)}}{\prod_{f}\left(n\right)_{\e_{f}\left(H\right)}}\right]+O_{\varepsilon}\left(\frac{1}{n}\right)\nonumber \\
 & \stackrel{\text{Lem. \ref{lem:Pochhammer-bounds}}}{=} & \left[1+O\left(\frac{M^{2}}{n}\right)\right]\sum_{H\in{\cal Q}\left(Y\right)}n^{\e(Y)-\f(Y)+\chi(H)}+O_{\varepsilon}\left(\frac{1}{n}\right),\label{eq:n-vs-m-temp}
\end{eqnarray}
where the use of Lemma \ref{lem:Pochhammer-bounds} is justified since
for every $H\in{\cal Q}\left(Y\right)$, $\v\left(H\right)\le\v(\hat{Y})\le8\v\left(Y\right)\le8M$,
and $\e\left(H\right)\le\e(\hat{Y})\le\e\left(Y\right)+8\v\left(Y\right)\le9M$.
In the summation in (\ref{eq:n-vs-m-temp}), the top power of $n$
is realized by $\hat{Y}$ and is equal to zero (by (\ref{eq:EC of H hat})
and Lemma \ref{lem:x-max-f-e}), so we obtain
\begin{align*}
\frac{\E_{n}^{\emb}(Y)}{n^{\chi(Y)}} & =\left[1+O\left(\frac{M^{2}}{n}\right)\right]\left(1+\sum_{H\in{\cal Q}\left(Y\right)\backslash\{\hat{Y}\}}n^{\chi(H)+\e(Y)-\f(Y)}\right)+O_{\varepsilon}\left(\frac{1}{n}\right),
\end{align*}
which yields (\ref{eq:E_n-asmpy}).
\end{proof}
The drawback of Proposition \ref{prop:e-adapted-E_n_asympt} is that
we do not know how to directly estimate the sum over $H\in{\cal Q}\left(Y\right)\backslash\{\hat{Y}\}$
that appears therein. Because we can not directly deal with this sum,
we instead use Proposition \ref{prop:e-adapted-E_n_asympt} to deduce
in the remaining results of this section that for $\varepsilon$-adapted
$Y$ we can control $\E_{n}^{\emb}(Y)$ using $\E_{m}^{\emb}(Y)$
with $m$ much smaller than $n$.
\begin{cor}
\label{cor:Em-lower-bound}Let $\varepsilon\in(0,1)$, and $\eta=\eta(\varepsilon)\in(0,\frac{1}{100})$
be the parameter provided by Proposition \ref{prop:Xi-bound-final-e-adapted}
for this $\varepsilon$. Let $m\in\N$. Let $Y$ be $\varepsilon$-adapted
with $\D(Y)\leq m^{\eta}$ and $\v(Y),\e(Y),\f(Y)\leq m^{1/4}$. Then
\[
\frac{\E_{m}^{\emb}(Y)}{m^{\chi\left(Y\right)}}\gg_{\varepsilon}1+\sum_{H\in{\cal Q}\left(Y\right)\backslash\{\hat{Y}\}}m^{\chi(H)+\e(Y)-\f(Y)}.
\]
In particular, $\E_{m}^{\emb}(Y)\gg_{\epsilon}m^{\chi(Y)}.$ 
\end{cor}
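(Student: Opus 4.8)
The plan is to derive Corollary \ref{cor:Em-lower-bound} directly from Proposition \ref{prop:e-adapted-E_n_asympt} applied with $n$ replaced by $m$. The hypotheses of Corollary \ref{cor:Em-lower-bound} are precisely those needed to invoke Proposition \ref{prop:e-adapted-E_n_asympt} with $M=M(m)$ taken to be $\lceil m^{1/4}\rceil$ (or more simply $M=m^{1/4}$), since we assume $\v(Y),\e(Y),\f(Y)\leq m^{1/4}$ and $\D(Y)\leq m^{\eta}$, and $\eta$ is the same parameter from Proposition \ref{prop:Xi-bound-final-e-adapted} in both statements. This yields, as $m\to\infty$,
\[
\frac{\E_{m}^{\emb}(Y)}{m^{\chi(Y)}}=\left(1+O_{\varepsilon}\!\left(\frac{M^{2}}{m}\right)\right)\left(1+\sum_{H\in\mathcal{Q}(Y)\backslash\{\hat{Y}\}}m^{\chi(H)+\e(Y)-\f(Y)}\right).
\]

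\textbf{Second step: absorb the error factor.} With $M=m^{1/4}$ we have $M^{2}/m=m^{-1/2}\to 0$, so the factor $1+O_{\varepsilon}(M^{2}/m)$ is bounded below by a positive absolute constant (say $\tfrac12$) for all $m$ sufficiently large depending on $\varepsilon$. Next observe that the second factor on the right-hand side is a sum of non-negative terms: the leading $1$ is non-negative, and each summand $m^{\chi(H)+\e(Y)-\f(Y)}$ is a positive real power of $m$, hence positive. (One should note here, as in the proof of Proposition \ref{prop:e-adapted-E_n_asympt}, that $\chi(H)+\e(Y)-\f(Y)\le 0$ for every $H\in\mathcal{Q}(Y)$ by \eqref{eq:EC of H hat} and Lemma \ref{lem:x-max-f-e}, with equality exactly for $H=\hat{Y}$; but for the lower bound we only need that these quantities are genuine — that is, that each summand is positive, which is immediate.) Therefore
\[
\frac{\E_{m}^{\emb}(Y)}{m^{\chi(Y)}}\geq \tfrac12\left(1+\sum_{H\in\mathcal{Q}(Y)\backslash\{\hat{Y}\}}m^{\chi(H)+\e(Y)-\f(Y)}\right),
\]
which is exactly the asserted bound $\frac{\E_{m}^{\emb}(Y)}{m^{\chi(Y)}}\gg_{\varepsilon}1+\sum_{H\in\mathcal{Q}(Y)\backslash\{\hat{Y}\}}m^{\chi(H)+\e(Y)-\f(Y)}$.

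\textbf{Main obstacle.} There is essentially no deep obstacle here: the corollary is a one-line consequence of the already-proved Proposition \ref{prop:e-adapted-E_n_asympt} once one checks that the hypotheses line up. The only point requiring a moment's care is the choice of $M$: one must pick $M(m)$ so that simultaneously $M\le m^{1/4}$ (so Proposition \ref{prop:e-adapted-E_n_asympt} applies) and $M^{2}/m\to 0$ (so the error factor stays bounded away from $0$), and $M=m^{1/4}$ achieves both. One should also make sure the implied constant in $\gg_{\varepsilon}$ is allowed to depend on $\varepsilon$ — which it is, since the threshold on $m$ beyond which $1+O_{\varepsilon}(M^{2}/m)\geq\tfrac12$ holds depends on $\varepsilon$ through the implied constant in Proposition \ref{prop:e-adapted-E_n_asympt}.
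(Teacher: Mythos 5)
Your proposal is correct and is exactly the intended argument: the paper states this corollary without a separate proof, treating it as an immediate consequence of Proposition \ref{prop:e-adapted-E_n_asympt} applied at level $m$ with $M=m^{1/4}$, so that the error factor $1+O_{\varepsilon}(m^{-1/2})$ is eventually bounded below and the second factor is a sum of positive terms. (Only take $M=m^{1/4}$ rather than $\lceil m^{1/4}\rceil$, since the latter could slightly exceed the bound $M\le m^{1/4}$ required by the proposition.)
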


\begin{rem}
\label{rem:extra-remark}As a direct consequence of Corollary \ref{cor:Em-lower-bound},
under the same conditions, if $\chi(Y)<0$ and $n\geq m$ we obtain
\[
\frac{m}{n}\E_{m}^{\emb}(Y)\gg\frac{m^{\chi(Y)+1}}{n}\gg n^{\chi(Y)}.
\]
\end{rem}

While Corollary \ref{cor:Em-lower-bound} is a direct consequence
of Proposition \ref{prop:e-adapted-E_n_asympt}, we can get more information
by combining Proposition \ref{prop:e-adapted-E_n_asympt} with what
we already know about $\mathcal{Q}(Y)$. 
\begin{prop}
\label{prop:main-term-asymp}Let $\varepsilon\in(0,1)$, $\eta$ be
as in Proposition \ref{prop:Xi-bound-final-e-adapted} and $K>1$.
Let $n\in\N$ and $m=m\left(n\right)\in\N$ with $m<n$ and $m\stackrel{n\to\infty}{\to}\infty$.
Let $Y$ be $\varepsilon$-adapted and suppose that $\v(Y),\e(Y),\f(Y)\leq(K\log n)^{2}\le m^{1/4}$
and that $\D(Y)\leq K\log n\le m^{\eta}$. Then 
\begin{equation}
\frac{\E_{n}^{\emb}(Y)}{n^{\chi(Y)}}=1+O_{\varepsilon,K}\left(\frac{(\log n)^{4}}{n}\right)+O_{\varepsilon,K}\left(\frac{m}{n}\frac{\E_{m}^{\emb}(Y)}{m^{\chi(Y)}}\right).\label{eq:main-term-asymp}
\end{equation}
\end{prop}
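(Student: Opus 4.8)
The plan is to combine the asymptotic expansion for $\E_n^{\emb}(Y)$ from Proposition \ref{prop:e-adapted-E_n_asympt} with the lower bound on $\E_m^{\emb}(Y)$ from Corollary \ref{cor:Em-lower-bound}. First I would record what Proposition \ref{prop:e-adapted-E_n_asympt} gives under the hypotheses at hand: since $\v(Y),\e(Y),\f(Y)\le (K\log n)^2$ and $\D(Y)\le K\log n$, we may take $M=(K\log n)^2$ in that proposition (the hypotheses $\D(Y)\le n^\eta$ and $M\le n^{1/4}$ hold for $n$ large since $M=(K\log n)^2$). This yields
\[
\frac{\E_n^{\emb}(Y)}{n^{\chi(Y)}}=\left(1+O_{\varepsilon,K}\!\left(\frac{(\log n)^4}{n}\right)\right)\left(1+\sum_{H\in{\cal Q}(Y)\setminus\{\hat Y\}}n^{\chi(H)+\e(Y)-\f(Y)}\right).
\]
So the entire task reduces to bounding the sum $S_n\eqdf\sum_{H\in{\cal Q}(Y)\setminus\{\hat Y\}}n^{\chi(H)+\e(Y)-\f(Y)}$ by $O_{\varepsilon,K}\!\big(\frac{m}{n}\frac{\E_m^{\emb}(Y)}{m^{\chi(Y)}}\big)$; combined with the displayed expansion, that immediately gives (\ref{eq:main-term-asymp}) after noting $S_n\cdot O((\log n)^4/n)$ is absorbed into the error terms.

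The key point is that by Lemma \ref{lem:x-max-f-e}, every $H\in{\cal Q}(Y)\setminus\{\hat Y\}$ has $\chi(H)<\chi(\hat Y)=\f(Y)-\e(Y)$ (using (\ref{eq:EC of H hat}), valid since $\varepsilon$-adapted tiled surfaces have an octagon on every $[a,b][c,d]$-cycle, hence are in particular boundary reduced), so each exponent $\chi(H)+\e(Y)-\f(Y)$ is a strictly negative integer, i.e.\ $\le -1$. Therefore, for the comparison between $n$ and $m<n$, each term satisfies
\[
n^{\chi(H)+\e(Y)-\f(Y)}=n^{\chi(H)+\e(Y)-\f(Y)+1}\cdot n^{-1}\le m^{\chi(H)+\e(Y)-\f(Y)+1}\cdot n^{-1}=\frac{m}{n}\cdot m^{\chi(H)+\e(Y)-\f(Y)},
\]
where the inequality uses $m\le n$ together with the fact that the exponent $\chi(H)+\e(Y)-\f(Y)+1$ is $\le 0$. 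Summing over $H\in{\cal Q}(Y)\setminus\{\hat Y\}$ gives $S_n\le \frac{m}{n}\sum_{H\in{\cal Q}(Y)\setminus\{\hat Y\}}m^{\chi(H)+\e(Y)-\f(Y)}$, and by Corollary \ref{cor:Em-lower-bound} (whose hypotheses $\D(Y)\le m^\eta$ and $\v(Y),\e(Y),\f(Y)\le m^{1/4}$ are exactly the standing assumptions $\D(Y)\le K\log n\le m^\eta$ and $\v(Y),\e(Y),\f(Y)\le (K\log n)^2\le m^{1/4}$) we have $\sum_{H\in{\cal Q}(Y)\setminus\{\hat Y\}}m^{\chi(H)+\e(Y)-\f(Y)}\ll_\varepsilon \E_m^{\emb}(Y)/m^{\chi(Y)}$. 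Hence $S_n\ll_\varepsilon \frac{m}{n}\frac{\E_m^{\emb}(Y)}{m^{\chi(Y)}}$, which is what we needed.

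Assembling: plugging $S_n=O_{\varepsilon,K}\!\big(\frac{m}{n}\frac{\E_m^{\emb}(Y)}{m^{\chi(Y)}}\big)$ back into the expansion of $\E_n^{\emb}(Y)/n^{\chi(Y)}$, and expanding the product $\big(1+O((\log n)^4/n)\big)\big(1+S_n\big)=1+S_n+O((\log n)^4/n)+O((\log n)^4/n)\cdot S_n$, the cross term is lower order than $S_n$, so it is absorbed, giving (\ref{eq:main-term-asymp}). I do not anticipate a serious obstacle here: the content is entirely in the already-proven Proposition \ref{prop:e-adapted-E_n_asympt}, Corollary \ref{cor:Em-lower-bound}, and Lemma \ref{lem:x-max-f-e}. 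The one thing to be careful about is the bookkeeping of which error terms depend on $\varepsilon$ versus on $K$ (the $M=(K\log n)^2$ substitution turns $O(M^2/n)$ into $O_{\varepsilon,K}((\log n)^4/n)$), and verifying that $\E_m^{\emb}(Y)/m^{\chi(Y)}\ge c_\varepsilon>0$ so the second error term is genuinely an upper bound rather than something that could be negative — but this positivity is exactly what Corollary \ref{cor:Em-lower-bound} provides (its right-hand side is $\ge 1$).
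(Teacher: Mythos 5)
Your proposal is correct and follows essentially the same route as the paper's own proof: apply Proposition \ref{prop:e-adapted-E_n_asympt}, use that $\chi(H)+\e(Y)-\f(Y)\le-1$ for $H\ne\hat Y$ (via Lemma \ref{lem:x-max-f-e}) to convert $n^{\chi(H)+\e(Y)-\f(Y)}$ into $\frac{m}{n}\,m^{\chi(H)+\e(Y)-\f(Y)}$, and then invoke Corollary \ref{cor:Em-lower-bound}. The only (harmless) quibble is the backwards phrasing in your parenthetical about (\ref{eq:EC of H hat}): the implication runs from $\varepsilon$-adapted to strongly boundary reduced to having an octagon on every $[a,b][c,d]$-cycle, not the other way around.
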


\begin{proof}
With assumptions as in the proposition, Proposition \ref{prop:e-adapted-E_n_asympt}
gives
\begin{align*}
\frac{\E_{n}^{\emb}(Y)}{n^{\chi(Y)}} & =\left(1+O_{\varepsilon,K}\left(\frac{(\log n)^{4}}{n}\right)\right)\left(1+\sum_{H\in{\cal Q}\left(Y\right)\backslash\{\hat{Y}\}}n^{\chi(H)+\e(Y)-\f(Y)}\right)\\
 & =1+O_{\varepsilon,K}\left(\frac{(\log n)^{4}}{n}\right)+O_{\varepsilon,K}\left(\sum_{H\in{\cal Q}\left(Y\right)\backslash\{\hat{Y}\}}n^{\chi(H)+\e(Y)-\f(Y)}\right).
\end{align*}
Finally, because for every $H\in{\cal Q}\left(Y\right)\setminus\{\hat{Y}\}$
we have $\chi\left(H\right)+\e\left(Y\right)-\f\left(Y\right)\le-1$
and $m<n$,
\begin{eqnarray*}
\sum_{H\in{\cal Q}\left(Y\right)\backslash\{\hat{Y}\}}n^{\chi(H)+\e(Y)-\f(Y)} & = & \sum_{H\in{\cal Q}\left(Y\right)\backslash\{\hat{Y}\}}\left(\frac{n}{m}\right)^{\chi(H)+\e(Y)-\f(Y)}m^{\chi(H)+\e(Y)-\f(Y)}\\
 & \le & \frac{m}{n}\sum_{H\in{\cal Q}\left(Y\right)\backslash\{\hat{Y}\}}m^{\chi(H)+\e(Y)-\f(Y)}\stackrel{\text{Cor. \ref{cor:Em-lower-bound}}}{\ll_{\varepsilon}}\frac{m}{n}\frac{\E_{m}^{\emb}(Y)}{m^{\chi(Y)}},
\end{eqnarray*}
concluding the proof of the proposition.
\end{proof}

\section{Proof of Theorem \ref{thm:effective-error}\label{sec:Proof-of-Effective-error-theorem}}

The reader is suggested to have read the overview in $\S\S$\ref{subsec:Overview-of-the-paper}
before attempting to read this section of the paper.

\subsection{Setup}

We remind the reader that $g=2$. We are given $c>0$, and an element
$\gamma\in\Gamma$ of cyclic word length $\ell_{w}(\gamma)\leq c\log n$.
We assume that $\gamma$ is not a proper power of another element
of $\Gamma$. We remind the reader that $\mathcal{C}_{\gamma}$ is
an annular tiled surface associated to $\gamma$ as in Example \ref{exa:the-loop-of-a-word}.
By Lemma \ref{lem:fix-gamma-fix-cycle},
\[
\E_{n}\left[\fix_{\gamma}\right]=\E_{n}\left(\mathcal{C}_{\gamma}\right),
\]
where $\E_{n}(\CC_{\gamma})$ is the expected number of morphisms
from $\CC_{\gamma}$ to the random surface $X_{\phi}$. Let $\varepsilon=\frac{1}{32}$
(for general $g$, $\varepsilon=\frac{1}{16g}$) and let $\mathcal{R}_{\varepsilon}(\CC_{\gamma})$
be the finite resolution of $\mathcal{C_{\gamma}}$ provided by Definition
\ref{def:epsilon-resolution} and Theorem \ref{thm:certification of resolution}.
Each element of this resolution is a morphism $h:\CC_{\gamma}\to W_{h}$
where $W_{h}$ is a tiled surface. By Lemma \ref{lem:resolution-sum-of-expectations}
we have for any $n\geq1$
\begin{equation}
\E_{n}\left[\fix_{\gamma}\right]=\sum_{h\in\mathcal{R_{\varepsilon}}(\CC_{\gamma})}\mathbb{E}_{n}^{\emb}\left(W_{h}\right),\label{eq:main-proof-sum-of-expectations}
\end{equation}
where $\mathbb{E}_{n}^{\emb}\left(W_{h}\right)$ is the expected number
of embeddings of $W_{h}$ into the random tiled surface $X_{\phi}$.
Associated to each $W_{h}$ here, $\v(W_{h}),\e(W_{h}),$ and $\f(W_{h})$
are the number of vertices, edges, and faces of $W_{h}$. Also associated
to $W_{h}$ are $\d(W_{h})$, the number of edges in the boundary
of $W_{h}$, $\chi(W_{h})$, the topological Euler characteristic
of $W_{h}$, and $\D(W_{h})=\v(W_{h})-\f(W_{h})$.

By Corollary \ref{cor:facts-about-the-resolution}, there is a constant
$K=K(c)>0$, such that for each $h\in\mathcal{R}_{\varepsilon}(\CC_{\gamma})$,
and for $n\geq3$, we have 
\begin{align*}
\d(W_{h}) & \leq K\log n,\\
\f(W_{h}) & \leq K(\log n)^{2}.
\end{align*}
By Lemma \ref{lem:D-vs-d} we have $\v(W_{h})\leq\d(W_{h})+\f(W_{h})$,
so $\D\left(W_{h}\right)\le\d\left(W_{h}\right)$ and 
\[
\D(W_{h})\leq K\log n.
\]
We also have $\e(W_{h})\leq4\v(W_{h})$ by (\ref{eq:v-e-f-inequality}).
Hence by increasing $K$ if necessary we can also ensure
\[
\v(W_{h}),\e(W_{h})\leq K(\log n)^{2}.
\]

\subsection{Part I: The contribution from non-$\varepsilon$-adapted surfaces\label{subsec:Part-I:non-e-adapted}}

Our first goal is to control the contribution to $\E_{n}[\fix_{\gamma}]$
in (\ref{eq:main-proof-sum-of-expectations}) from non-$\varepsilon$-adapted
surfaces. Let $\mathcal{R_{\varepsilon}}^{(\text{non-}\varepsilon\text{-ad})}\left(\CC_{\gamma}\right)$
denote the set of morphisms $h:\mathcal{C}_{\gamma}\to W_{h}$ in
$\mathcal{R_{\varepsilon}}(\CC_{\gamma})$ such that $W_{h}$ is not
$\varepsilon$-adapted. In particular, such $W_{h}$ is boundary reduced
and $\f\left(W_{h}\right)>\d\left(W_{h}\right)$. 
\begin{prop}
\label{prop:contributions-from-non-adapted}There is a constant $A>0$
such that for any $c>0$, if $\ell_{w}(\gamma)\leq c\log n$, then
\[
\sum_{h\in\mathcal{R_{\varepsilon}}^{(\text{non-}\varepsilon\text{-ad})}\left(\CC_{\gamma}\right)}\mathbb{E}_{n}^{\emb}\left(W_{h}\right)\ll_{c}\frac{(\log n)^{A}}{n}.
\]
\end{prop}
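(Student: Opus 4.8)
The plan is to bound each summand $\E_n^{\emb}(W_h)$ for $h\in\mathcal{R}_\varepsilon^{(\text{non-}\varepsilon\text{-ad})}(\CC_\gamma)$ by something that decays like a power of $n^{-1}$ in $\f(W_h)$, and then to absorb the (super-exponentially large) number of such $h$ against this decay, using crucially that every such $W_h$ has strictly more octagons than boundary edges.

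\textbf{Step 1: a pointwise estimate.} For such an $h$, $W_h$ is boundary reduced, and by the bounds collected in the Setup, $\v(W_h)\le K(\log n)^2\le n^{1/4}$ and $\D(W_h)\le\d(W_h)\le K\log n\le n^{1/24}$ once $n$ is large in terms of $c$ (which also makes the $q\le n/2$ hypothesis of Lemma \ref{lem:Pochhammer-bounds} hold for $q\in\{\v(W_h),\f(W_h),\e_f(W_h)\}$). Hence Theorem \ref{thm:E_n-emb-exact-expression}, Corollary \ref{cor:size-of-X_n}, Lemma \ref{lem:Pochhammer-bounds} and Proposition \ref{prop:Xi-bound-final-BR} combine exactly as in \eqref{eq:br-bound-overview} to give, with $A_0=A_0(g)$,
\[
\E_n^{\emb}(W_h)\ \ll\ (A_0\d(W_h))^{A_0\d(W_h)}\,n^{\chi(W_h)}.
\]
Since $W_h$ is not $\varepsilon$-adapted, Corollary \ref{cor:facts-about-the-resolution}(1) gives $\chi(W_h)<-\f(W_h)<-\d(W_h)$; in particular $f:=\f(W_h)\ge1$ and $\d(W_h)<f$, so by monotonicity of $x\mapsto(A_0 x)^{A_0 x}$,
\[
\E_n^{\emb}(W_h)\ \ll\ (A_0 f)^{A_0 f}\,n^{-f}\ =\ \bigl((A_0 f)^{A_0}/n\bigr)^{f}.
\]

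\textbf{Step 2: counting the targets.} Group the sum by $f=\f(W_h)\in\{1,\dots,K(\log n)^2\}$. For fixed $f$ each target satisfies $\d(W_h)<f$, hence $\v(W_h)=\D(W_h)+\f(W_h)\le\d(W_h)+\f(W_h)<2f$ and $\e(W_h)\le4\v(W_h)<8f$. A crude enumeration (take vertex set $[\v]$; for each edge choose a label in $\{a,b,c,d\}$ and an ordered pair of endpoints; for each octagon choose its closed boundary path, determined by a start vertex and a cyclic rotation of $[a,b][c,d]$) bounds the number of isomorphism classes of tiled surfaces with this $f$ and with $\v<2f$, $\e<8f$ by $(Cf)^{Cf}$ for an absolute $C$. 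Furthermore, the number of $h\in\mathcal{R}_\varepsilon(\CC_\gamma)$ with a given target $W$ is at most the number of morphisms $\CC_\gamma\to W$, and since $\CC_\gamma$ is a labelled oriented cycle and every vertex of $W$ has at most one edge of each label and direction, such a morphism is determined by the image of the base vertex; there are thus at most $\v(W)<2f$ of them. Altogether, the number of $h\in\mathcal{R}_\varepsilon^{(\text{non-}\varepsilon\text{-ad})}(\CC_\gamma)$ with $\f(W_h)=f$ is at most $(C_1 f)^{C_1 f}$ for an absolute $C_1$.

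\textbf{Step 3: assembly.} Combining Steps 1 and 2 and absorbing $A_0=A_0(g)$ into the constant,
\[
\sum_{h\in\mathcal{R}_\varepsilon^{(\text{non-}\varepsilon\text{-ad})}(\CC_\gamma)}\E_n^{\emb}(W_h)\ \ll\ \sum_{f=1}^{K(\log n)^2}(C_2 f)^{C_2 f}\,n^{-f}\ =\ \sum_{f=1}^{K(\log n)^2}\bigl((C_2 f)^{C_2}/n\bigr)^{f},
\]
with $C_2=C_2(g)$. For $f\le K(\log n)^2$ we have $(C_2 f)^{C_2}\le C_3(c)(\log n)^{2C_2}$, so the base is $<\tfrac12$ once $n$ is large in terms of $c$; since $f\ge1$, each term is then at most $C_3(c)(\log n)^{2C_2}/n$, and there are at most $K(\log n)^2$ terms, giving the bound $\ll_c(\log n)^{2C_2+2}/n$, i.e.\ the claim with $A=2C_2+2=A(g)$. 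The only genuine obstacle is in Step 3: the count $(C_2 f)^{C_2 f}$ would swamp the decay $n^{-f}$ for unrestricted $f$, and it is precisely the terminating condition $\theta(W_h)>0$ of the OvB algorithm — forcing $\f(W_h)\le K(\log n)^2$, hence $(C_2 f)^{C_2}$ to be merely a power of $\log n$ — that makes the base $(C_2 f)^{C_2}/n$ small and the geometric-type sum converge to the stated power-saving.
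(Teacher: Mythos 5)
Your proposal is correct and follows essentially the same route as the paper's proof: the pointwise bound $\E_n^{\emb}(W_h)\ll (A_0\f(W_h))^{A_0\f(W_h)}n^{\chi(W_h)}$ via Theorem \ref{thm:E_n-emb-exact-expression}, Corollary \ref{cor:size-of-X_n}, Lemma \ref{lem:Pochhammer-bounds} and Proposition \ref{prop:Xi-bound-final-BR}, combined with $\chi(W_h)<-\f(W_h)<-\d(W_h)$ from Corollary \ref{cor:facts-about-the-resolution}, a crude $(Cf)^{Cf}$ count of targets and morphisms for each fixed $\f=f$, and a geometric-type summation over $f\le K(\log n)^2$. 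The only (immaterial) difference is bookkeeping: the paper stratifies by the pair $(\D_0,\f_0)$ and uses boundary-reducedness to let the one-skeleton determine the octagons, whereas you stratify by $f$ alone and enumerate the octagons' boundary paths directly.
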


\begin{proof}
We first do some counting. Let us count $h\in\mathcal{R_{\varepsilon}}^{(\text{non-}\varepsilon\text{-ad})}\left(\CC_{\gamma}\right)$
by their value of $\D(W_{h})$ and $\f(W_{h})$. By Corollary \ref{cor:facts-about-the-resolution}
every $h\in\mathcal{R_{\varepsilon}}^{(\text{non-}\varepsilon\text{-ad})}\left(\CC_{\gamma}\right)$
has 
\begin{equation}
\chi(W_{h})<-\f(W_{h})<-\d(W_{h}).\label{eq:chi < -f}
\end{equation}
Combining (\ref{eq:chi < -f}) with Lemma \ref{lem:D-vs-d} yields
\begin{equation}
0\leq\D(W_{h})\le\d\left(W_{h}\right)<\f(W_{h}).\label{eq:f>0}
\end{equation}
Notice that (\ref{eq:f>0}) implies $\f(W_{h})\geq1$. First we bound
the number of possible $W_{h}$ with $\D(W_{h})=\D_{0}$ and $\f(W_{h})=\f_{0}$
for fixed $\D_{0}<\f_{0}$. Note that in this case $\v(W_{h})=\v_{0}\eqdf\D_{0}+\f_{0}$.
We may over-count the number of $W_{h}$ with $\v_{0}$ vertices by
counting the number of $W_{h}$ together with a labeling of their
vertices by $[\v_{0}]$. We first construct the one-skeleton of such
a tiled surface: there are at most $\v_{0}^{~\v_{0}}$ choices for
the $a$-labeled edges, and also for the $b$-labeled edges etc. Because
$W_{h}$ are all boundary reduced, there is an octagon attached to
any closed $\left[a,b\right]\left[c,d\right]$ path, so the one-skeleton
completely determines the entire tiled surface. Hence there are at
most $\v_{0}^{~4\v_{0}}$ choices for $W_{h}$ with $\v(W_{h})=\v_{0}$. 

We also have to estimate how many ways there are to map $\mathcal{C}_{\gamma}$
into such a $W_{h}$. Fixing arbitrarily a vertex $v$ of $\mathcal{C}_{\gamma}$,
any morphism $\mathcal{C}_{\gamma}\to W_{h}$ is uniquely determined
by where $v$ goes; hence there are at most $\v_{0}$ morphisms and
so in total there are at most 
\begin{align*}
\v_{0}^{4\v_{0}+1} & \leq\v_{0}^{5\v_{0}}=(\D_{0}+\f_{0})^{5(\D_{0}+\f_{0})}\leq(2\f_{0})^{10\f_{0}}
\end{align*}
elements $h\in\mathcal{R_{\varepsilon}}^{(\text{non-}\varepsilon\text{-ad})}\left(\CC_{\gamma}\right)$
with $\D(W_{h})=\D_{0}$ and $\f(W_{h})=\f_{0}$. Hence there are
at most $K\log n\cdot(2\f_{0})^{10\f_{0}}$ elements $h\in\mathcal{R_{\varepsilon}}^{(\text{non-}\varepsilon\text{-ad})}\left(\CC_{\gamma}\right)$
with $\f(W_{h})=\f_{0}$.

We are going to use Theorem \ref{thm:E_n-emb-exact-expression} that
relates $\E_{n}^{\emb}\left(W_{h}\right)$ to a certain quantity $\Xi_{n}(W_{h})$.
By Proposition \ref{prop:Xi-bound-final-BR} there is $A_{0}>1$ such
that for $h\in\mathcal{R_{\varepsilon}}^{(\text{non-}\varepsilon\text{-ad})}\left(\CC_{\gamma}\right)$
\begin{equation}
\left|\Xi_{n}(W_{h})\right|\ll_{K}\left(A_{0}\D(W_{h})\right)^{A_{0}\D(W_{h})}\leq\left(A_{0}\f(W_{h})\right)^{A_{0}\f(W_{h})},\label{eq:ineq for Chi_n(W_h)}
\end{equation}
so by Theorem \ref{thm:E_n-emb-exact-expression}, Corollary \ref{cor:size-of-X_n},
and Lemma \ref{lem:Pochhammer-bounds} we get
\begin{eqnarray}
\E_{n}^{\emb}\left(W_{h}\right) & \stackrel{\mathrm{Thm~\ref{thm:E_n-emb-exact-expression}}}{=} & \frac{n!^{3}}{|\X_{n}|}\frac{(n)_{\v(W_{h})}(n)_{\f(W_{h})}}{\prod_{f}(n)_{\e_{f}(W_{h})}}\Xi_{n}\left(W_{h}\right)\stackrel{\mathrm{Cor.~\ref{cor:size-of-X_n}}}{\ll}\frac{(n)_{\v(W_{h})}(n)_{\f(W_{h})}}{\prod_{f}(n)_{\e_{f}(W_{h})}}\Xi_{n}\left(W_{h}\right)\nonumber \\
 & \stackrel{\mathrm{Lemma~\ref{lem:Pochhammer-bounds}}}{\ll_{K}} & n^{\chi(W_{h})}\Xi_{n}\left(W_{h}\right)\stackrel{\eqref{eq:ineq for Chi_n(W_h)}}{\ll_{K}}n^{\chi(W_{h})}\left(A_{0}\f(W_{h})\right)^{A_{0}\f(W_{h})}.\label{eq:for-overview}
\end{eqnarray}
Therefore, for every $1\le f_{0}\le K\left(\log n\right)^{2}$, 
\begin{eqnarray*}
\sum_{\substack{h\in\mathcal{R_{\varepsilon}}^{(\text{non-}\varepsilon\text{-ad})}\left(\CC_{\gamma}\right)\\
\f(W_{h})=\f_{0}
}
}\mathbb{E}_{n}^{\emb}\left(W_{h}\right) & \ll_{K} & (A_{0}\f_{0})^{A_{0}\f_{0}}\sum_{\substack{h\in\mathcal{R_{\varepsilon}}^{(\text{non-}\varepsilon\text{-ad})}\left(\CC_{\gamma}\right)\\
\f(W_{h})=\f_{0}
}
}n^{\chi(W_{h})}\stackrel{\eqref{eq:chi < -f}}{\le}(A_{0}\f_{0})^{A_{0}\f_{0}}\sum_{\substack{h\in\mathcal{R_{\varepsilon}}^{(\text{non-}\varepsilon\text{-ad})}\left(\CC_{\gamma}\right)\\
\f(W_{h})=\f_{0}
}
}n^{-\f_{0}}\\
 & \le & K\log n\left(\frac{\left(A_{0}\f_{0}\right)^{A_{0}}\left(2\f_{0}\right)^{10}}{n}\right)^{\f_{0}}\le K\log n\cdot\left(\frac{A_{0}^{A_{0}}2^{10}\left(K(\log n)^{2}\right)^{A_{0}+10}}{n}\right)^{\f_{0}}.
\end{eqnarray*}
So
\begin{eqnarray*}
\sum_{h\in\mathcal{R_{\varepsilon}}^{(\text{non-}\varepsilon\text{-ad})}\left(\CC_{\gamma}\right)}\mathbb{E}_{n}^{\emb}\left(W_{h}\right) & = & \sum_{\f_{0}=1}^{K(\log n)^{2}}\sum_{\substack{h\in\mathcal{R_{\varepsilon}}^{(\text{non-}\varepsilon\text{-ad})}\left(\CC_{\gamma}\right)\\
\f(W_{h})=\f_{0}
}
}\mathbb{E}_{n}^{\emb}\left(W_{h}\right)\\
 & \ll_{K} & K\log n\cdot\sum_{\f_{0}=1}^{K(\log n)^{2}}\left(\frac{A_{0}^{A_{0}}2^{10}\left(K(\log n)^{2}\right)^{A_{0}+10}}{n}\right)^{\f_{0}}\ll_{K}\frac{(\log n)^{2A_{0}+21}}{n},
\end{eqnarray*}
where the last inequality is based on that $\frac{A_{0}^{A_{0}}2^{10}\left(K(\log n)^{2}\right)^{A_{0}+10}}{n}\le\frac{1}{2}$
for $n\gg_{K}1$.
\end{proof}

\subsection{Part II: The contribution from $\varepsilon$-adapted surfaces\label{subsec:Part-II:-Counting}}

Write $\mathcal{R_{\varepsilon}}^{(\varepsilon\text{-ad})}(\CC_{\gamma})\subset\mathcal{R}_{\varepsilon}(\CC_{\gamma})$
for the collection of morphisms $h:\CC_{\gamma}\to W_{h}$ in $\mathcal{R}_{\varepsilon}(\CC_{\gamma})$
such that $W_{h}$ is $\varepsilon$-adapted. In light of Proposition
\ref{prop:contributions-from-non-adapted} it remains to deal with
the contributions to $\E_{n}[\fix_{\gamma}]$ from $\mathcal{R_{\varepsilon}}^{(\varepsilon\text{-ad})}(\CC_{\gamma})$.
Indeed we have by Proposition \ref{prop:contributions-from-non-adapted}
and (\ref{eq:main-proof-sum-of-expectations})

\begin{equation}
\E_{n}[\fix_{\gamma}]=\sum_{h\in\mathcal{R_{\varepsilon}}^{(\varepsilon\text{-ad})}(\CC_{\gamma})}\mathbb{E}_{n}^{\emb}\left(W_{h}\right)+O_{c}\left(\frac{(\log n)^{A}}{n}\right).\label{eq:final-proof-temp}
\end{equation}

Recall that if $W_{h}$ is $\varepsilon$-adapted, it is, in particular,
strongly boundary reduced, and so by \cite[\S\S 1.6]{MPasympcover},
$\mathbb{E}_{n}^{\emb}\left(W_{h}\right)=n^{\chi\left(W_{h}\right)}\left[1+O\left(n^{-1}\right)\right]$
as $n\to\infty$. By Theorem \ref{thm:asymptotic-non-effective},
$\E_{n}[\fix_{\gamma}]=1+O\left(n^{-1}\right)$. Comparing this with
(\ref{eq:final-proof-temp}), we conclude that there is exactly one
$h_{0}\in\mathcal{R}_{\varepsilon}(\CC_{\gamma})$ with $\chi\left(W_{h_{0}}\right)=0$.
This $h_{0}$ also satisfies that $W_{h_{0}}$ is $\varepsilon$-adapted\footnote{It can be shown that $h_{0}$ is the result of the OvB algorithm when
applied to the embedding $\CC_{\gamma}\hookrightarrow\left\langle \gamma\right\rangle \backslash\ts$
with $\ts$ the universal cover of $\Sigma_{2}$ -- see \cite[\S 2]{MPasympcover}.}. 

Still, we are missing some information about $\mathcal{R_{\varepsilon}}^{(\varepsilon\text{-ad})}(\CC_{\gamma})$
that we will need: for example, the ability to count how many $h:\CC_{\gamma}\to W_{h}$
there are in $\mathcal{R_{\varepsilon}}^{(\varepsilon\text{-ad})}(\CC_{\gamma})$
with different orders of contributions (i.e.~$n^{\chi(W_{h})}$)
to (\ref{eq:final-proof-temp}). We are going to use a trick to get
around this missing information. 

Let $\eta\in(0,\frac{1}{100})$ be the parameter provided by Proposition
\ref{prop:Xi-bound-final-e-adapted} for the current $\varepsilon=\frac{1}{32}$
(the reason for choosing $\eta$ like this now is just so that we
can momentarily apply Corollary \ref{cor:Em-lower-bound} and Proposition
\ref{prop:main-term-asymp}). Let $m$ be an auxiliary parameter given
by 
\[
m=\left\lceil \left(K\log n\right)^{1/\eta}\right\rceil 
\]
so that when $n\gg_{c}1$, for all $h\in\mathcal{R_{\varepsilon}}^{(\varepsilon\text{-ad})}(\CC_{\gamma})$,
$\D(W_{h})\leq K\log n\leq m^{\eta}$ and $\v(W_{h}),\e(W_{h}),\f(W_{h})\leq K(\log n)^{2}\leq m^{1/4}$.
Moreover, $(\log n)^{100}\ll_{c}m\ll_{c}(\log n)^{\frac{1}{\eta}}$.
To exploit the fact that each $\E_{n}^{\emb}(W_{h})$ is controlled
by $\E_{m}^{\emb}(W_{h})$ (Corollary \ref{cor:Em-lower-bound} and
Proposition \ref{prop:main-term-asymp}), we will at two points use
the inequality
\begin{align}
m & \geq\E_{m}[\fix_{\gamma}]\stackrel{\eqref{eq:main-proof-sum-of-expectations}}{=}\sum_{h\in\mathcal{R_{\varepsilon}}(\CC_{\gamma})}\mathbb{E}_{m}^{\emb}\left(W_{h}\right)\geq\sum_{h\in\mathcal{R_{\varepsilon}}^{(\varepsilon\text{-ad})}(\CC_{\gamma})}\mathbb{E}_{m}^{\emb}\left(W_{h}\right).\label{eq:m-domination}
\end{align}
We begin with
\begin{align}
\sum_{h\in\mathcal{R_{\varepsilon}}^{(\varepsilon\text{-ad})}(\CC_{\gamma})}\mathbb{E}_{n}^{\emb}\left(W_{h}\right) & \stackrel{\text{Prop. \ref{prop:main-term-asymp} }}{=}\sum_{h\in\mathcal{R_{\varepsilon}}^{(\varepsilon\text{-ad})}(\CC_{\gamma})}n^{\chi(W_{h})}\left[1+O_{c}\left(\frac{(\log n)^{4}}{n}\right)+O_{c}\left(\frac{m}{n}\frac{\E_{m}^{\emb}(W_{h})}{m^{\chi\left(W_{h}\right)}}\right)\right]\nonumber \\
 & =\sum_{h\in\mathcal{R_{\varepsilon}}^{(\varepsilon\text{-ad})}(\CC_{\gamma})}n^{\chi(W_{h})}\left[1+O_{c}\left(\frac{(\log n)^{4}}{n}\right)\right]+O_{c}\left(\frac{m}{n}\sum_{h\in\mathcal{R_{\varepsilon}}^{(\varepsilon\text{-ad})}(\CC_{\gamma})}\E_{m}^{\emb}(W_{h})\right)\nonumber \\
 & \stackrel{\eqref{eq:m-domination}}{=}\sum_{h\in\mathcal{R_{\varepsilon}}^{(\varepsilon\text{-ad})}(\CC_{\gamma})}n^{\chi(W_{h})}\left(1+O_{c}\left(\frac{(\log n)^{4}}{n}\right)\right)+O_{c}\left(\frac{m^{2}}{n}\right).\label{eq:first-run}
\end{align}
The middle estimate above used that $\chi(W_{h})\leq0$ for all $h\in\mathcal{R_{\varepsilon}}^{(\varepsilon\text{-ad})}(\CC_{\gamma})$,
and so $\left(\frac{n}{m}\right)^{\chi\left(W_{h}\right)}\le1$. The
contribution to (\ref{eq:first-run}) from $h_{0}$ is $1+O_{c}\left(\frac{(\log n)^{4}}{n}\right)$.
So we obtain
\begin{equation}
\sum_{h\in\mathcal{R_{\varepsilon}}^{(\varepsilon\text{-ad})}(\CC_{\gamma})}\mathbb{E}_{n}^{\emb}\left(W_{h}\right)=1+O_{c}\left(\frac{(\log n)^{4}}{n}\right)+O\left(\frac{m^{2}}{n}\right)+O\left(\sum_{\substack{h\in\mathcal{R_{\varepsilon}}^{(\varepsilon\text{-ad})}(\CC_{\gamma})\\
\chi(W_{h})<0
}
}n^{\chi(W_{h})}\right).\label{eq:intermediate-final-proof}
\end{equation}
To deal with the last error term, we relate it to the expectations
at level $m$. Indeed,
\begin{eqnarray*}
\left|\sum_{\substack{h\in\mathcal{R_{\varepsilon}}^{(\varepsilon\text{-ad})}(\CC_{\gamma})\\
\chi(W_{h})<0
}
}n^{\chi(W_{h})}\right| & = & \sum_{\substack{h\in\mathcal{R_{\varepsilon}}^{(\varepsilon\text{-ad})}(\CC_{\gamma})\\
\chi(W_{h})<0
}
}\left(\frac{n}{m}\right)^{\chi(W_{h})}m^{\chi(W_{h})}\leq\frac{m}{n}\sum_{\substack{h\in\mathcal{R_{\varepsilon}}^{(\varepsilon\text{-ad})}(\CC_{\gamma})}
}m^{\chi(W_{h})}\\
 & \stackrel{\text{Cor. \ref{cor:Em-lower-bound}}}{\ll} & \frac{m}{n}\sum_{\substack{h\in\mathcal{R_{\varepsilon}}^{(\varepsilon\text{-ad})}(\CC_{\gamma})}
}\E_{m}^{\emb}\left(W_{h}\right)\stackrel{\eqref{eq:m-domination}}{\leq}\frac{m^{2}}{n}.
\end{eqnarray*}
Incorporating this estimate into (\ref{eq:intermediate-final-proof})
gives
\begin{align*}
\sum_{h\in\mathcal{R_{\varepsilon}}^{(\varepsilon\text{-ad})}(\CC_{\gamma})}\mathbb{E}_{n}^{\emb}\left(W_{h}\right) & =1+O_{c}\left(\frac{(\log n)^{4}}{n}\right)+O\left(\frac{m^{2}}{n}\right)=1+O_{c}\left(\frac{(\log n)^{A}}{n}\right),
\end{align*}
where $A=\frac{2}{\eta}$. Combining this with (\ref{eq:final-proof-temp})
and increasing $A$ if necessary we obtain
\[
\E_{n}[\fix_{\gamma}]=1+O_{c}\left(\frac{(\log n)^{A}}{n}\right)
\]
as required. \uline{This concludes the proof of Theorem \mbox{\ref{thm:effective-error}}}.
$\square$
\begin{rem}
The arguments above show that 
\[
\sum_{\substack{h\in\mathcal{R_{\varepsilon}}^{(\varepsilon\text{-ad})}(\CC_{\gamma})}
}m^{\chi(W_{h})}\ll\sum_{\substack{h\in\mathcal{R_{\varepsilon}}^{(\varepsilon\text{-ad})}(\CC_{\gamma})}
}\E_{m}^{\emb}\left(W_{h}\right)\ll m,
\]
hence the number of elements of $h\in\mathcal{R_{\varepsilon}}^{(\varepsilon\text{-ad})}(\CC_{\gamma})$
with $\chi(W_{h})=\chi$ is $\ll m^{1-\chi}$. In general, given arbitrary
$\gamma\in\Gamma$, and $\epsilon>0$ we obtain by the same argument
that for some $\eta=\eta(\epsilon)>0$ we have
\[
\#\{\,h\in\mathcal{R_{\varepsilon}}^{(\varepsilon\text{-ad})}(\CC_{\gamma})\,:\,\chi(W_{h})=\chi\,\}\ll_{\epsilon}\left(\ell(\gamma)^{\frac{1}{\eta}}\right)^{1-\chi}.
\]
We mention this side-effect of our proof in case it is of independent
interest.
\end{rem}

{\small{}\bibliographystyle{amsalpha}
\bibliography{surface}
}{\small\par}

\noindent Michael Magee, Department of Mathematical Sciences, Durham
University, Lower Mountjoy, DH1 3LE Durham, United Kingdom

\noindent \texttt{michael.r.magee@durham.ac.uk}~\\

\noindent Frédéric Naud, IMJ-PRG - UMR7586, Université Pierre et Marie
Curie, 4 place Jussieu, 75252 Paris Cedex 05 \\
\texttt{frederic.naud@imj-prg.fr}~\\

\noindent Doron Puder, School of Mathematical Sciences, Tel Aviv University,
Tel Aviv, 6997801, Israel\\
\texttt{doronpuder@gmail.com}
\end{document}